\theoremstyle{plain}
\newtheorem{theorem}{Theorem}[section]
\newtheorem{maintheorem}{Theorem}
\newtheorem{proposition}[theorem]{Proposition}
\newtheorem{lemma}[theorem]{Lemma}
\newtheorem{corollary}[theorem]{Corollary}
\theoremstyle{definition}
\newtheorem{definition}[theorem]{Definition}
\newtheorem{remark}[theorem]{Remark}
\newtheorem{convention}[theorem]{Convention}
\newtheorem{question}[theorem]{Question}
\newcommand{\nc}{\newcommand}
\nc{\dmo}{\DeclareMathOperator}
\nc{\cA}{\mathcal{A}}
\nc{\cB}{\mathcal{B}}
\nc{\sB}{\mathscr{B}}
\nc{\C}{\mathbb{C}}
\nc{\cC}{\mathcal{C}}
\nc{\sC}{\mathscr{C}}
\nc{\BB}{\mathbb{B}}
\nc{\LL}{\mathcal{L}}
\nc{\bd}{\mathbf{d}}
\nc{\DD}{\mathbb{D}}
\nc{\cD}{\mathcal{D}}
\nc{\sD}{\mathscr{D}}
\nc{\cE}{\mathcal{E}}
\nc{\bF}{\mathbb{F}}
\nc{\cF}{\mathcal{F}}
\nc{\cG}{\mathcal{G}}
\nc{\cI}{\mathcal{I}}
\nc{\cH}{\mathcal{H}}
\nc{\cK}{\mathcal{K}}
\nc{\cL}{\mathcal{L}}
\nc{\cM}{\mathcal{M}}
\nc{\bM}{\mathbf{M}}
\nc{\N}{\mathbb{N}}
\nc{\cN}{\mathcal{N}}
\nc{\cO}{\mathcal{O}}
\nc{\bp}{\mathbf{p}}
\nc{\bP}{\mathbb{P}}
\renewcommand{\P}{\mathbb{P}}
\nc{\cP}{\mathcal{P}}
\nc{\Q}{\mathbb{Q}}
\nc{\R}{\mathbb{R}}
\nc{\cS}{\mathcal{S}}
\nc{\cT}{\mathcal T}
\nc{\cU}{\mathcal U}
\nc{\bV}{\mathbb V}
\nc{\bx}{\mathbf x}
\nc{\cX}{\mathcal{X}}
\nc{\cY}{\mathcal{Y}}
\nc{\Z}{\mathbb{Z}}
\nc{\disk}{\mathbb{D}}
\nc{\hyp}{\mathbb{H}}
\nc{\CP}{\mathbb{CP}}
\nc{\RP}{\mathbb{RP}}
\dmo{\Mod}{Mod}
\dmo{\MCG}{MCG}
\dmo{\PMod}{PMod}
\dmo{\LMod}{LMod}
\dmo{\Diff}{Diff}
\dmo{\Homeo}{Homeo}
\dmo{\dist}{dist}
\dmo\BDiff{BDiff}
\dmo\SO{SO}
\dmo\Sp{Sp}
\dmo\SL{SL}
\dmo\Out{Out}
\dmo\Aut{Aut}
\dmo\Inn{Inn}
\dmo\GL{GL}
\dmo\PGL{PGL}
\dmo\Gr{Gr}
\dmo\PSL{PSL}
\dmo\BHomeo{BHomeo}
\dmo\EHomeo{EHomeo}
\dmo\EDiff{EDiff}
\dmo\Disc{Disc}
\dmo\Aff{Aff}
\dmo\Teich{Teich}
\dmo{\Vol}{Vol}
\dmo{\Push}{Push}
\dmo{\Conf}{Conf}
\dmo{\PConf}{PConf}
\dmo{\Br}{Br}
\dmo{\PB}{PB}
\dmo{\PBr}{PBr}
\dmo{\Jac}{Jac}
\dmo{\Pic}{Pic}
\dmo{\Arf}{Arf}
\dmo{\Gal}{Gal}
\dmo{\SU}{SU}
\dmo{\spin}{spin}
\dmo{\even}{even}
\dmo{\odd}{odd}
\dmo{\orb}{orb}
\dmo{\pr}{pr}
\dmo{\lab}{lab}
\dmo{\Sym}{Sym}
\dmo{\Rad}{Rad}
\dmo{\Ind}{Ind}
\dmo{\Div}{Div}
\dmo{\Hur}{Hur}
\dmo\Hom{Hom}
\dmo\rank{rank}
\dmo\sig{sig}
\nc{\Span}[1]{\operatorname{Span}(#1)}
\dmo{\vcd}{vcd}
\dmo{\codim}{codim}
\dmo{\Res}{Res}
\dmo{\Ann}{Ann}
\dmo{\Int}{Int}
\dmo{\lcm}{lcm}
\dmo{\ab}{ab}
\dmo{\opp}{op}
\dmo{\End}{End}
\dmo{\Stab}{Stab}
\dmo{\id}{id}
\dmo{\im}{im}
\dmo\Fix{Fix}
\dmo{\Isom}{Isom}
\dmo{\res}{res}
\nc{\pair}[1]{\ensuremath{\left\langle #1 \right\rangle}}
\nc{\abs}[1]{\ensuremath{\left| #1 \right|}}
\nc{\action}{\circlearrowright}
\nc{\norm}[1]{\ensuremath{\left | \left | #1 \right | \right |}}
\nc{\abcd}[4]{\ensuremath{\left(\begin{array}{cc} #1 & #2 \\ #3 & #4 \end{array}\right)}}
\newcommand{\onto}{\twoheadrightarrow}
\nc{\normal}{\vartriangleleft}
\renewcommand{\hat}{\widehat}
\renewcommand{\bar}{\overline}
\renewcommand{\epsilon}{\varepsilon}
\renewcommand{\tilde}{\widetilde}
\renewcommand{\le}{\leqslant}
\renewcommand{\ge}{\geqslant}
\renewcommand{\ng}[1]{\mathbin{\langle\mkern-4mu\langle #1 \rangle \mkern-4mu\rangle}}
\nc{\margin}[1]{\marginpar{\scriptsize #1}}
\nc{\para}[1]{\medskip\noindent\textbf{#1.}}
\definecolor{myblue}{RGB}{102,153, 255}
\definecolor{myred}{RGB}{204,0,0}
\definecolor{mygreen}{RGB}{0,204,0}
\definecolor{myorange}{RGB}{255,102,0}
\definecolor{mypurple}{RGB}{138,43,226}
\nc{\red}[1]{\textcolor{myred}{#1}}
\nc{\blue}[1]{\textcolor{myblue}{#1}}
\renewcommand{\ul}[1]{\underline{#1}}
\dmo{\PSBr}{PSBr}
\dmo{\SBr}{SBr}
\date{December 3, 2025}
\title[Monodromy and vanishing cycles on simply connected surfaces]{Monodromy and vanishing cycles for sufficiently ample linear systems on simply connected surfaces}
\author{Ishan Banerjee}
\author{Nick Salter}
\address{Ishan Banerjee: Math Tower, the Ohio State University, 231 W 18th St Columbus OH 43210}
\email{banerjee.238@osu.edu}
\address{Nick Salter: Department of Mathematics, University of Notre Dame, 255 Hurley Building, Notre Dame, IN 46556}
\email{nsalter@nd.edu}
\begin{document}
\begin{abstract}
    We compute the mapping class group-valued monodromy of any sufficiently ample linear system on any smooth simply connected projective surface, identifying this with the $r$-spin mapping class group associated to a maximal root of the adjoint line bundle. This gives a characterization of the simple closed curves that can arise as vanishing cycles for nodal degenerations in the linear system, as well as other corollaries concerning discriminants, Lefschetz fibrations, and surfaces in $4$-manifolds.
\end{abstract}
\maketitle

\section{Introduction}

We work over the complex numbers. Let $X$ be a smooth projective surface, and let $L \in \Pic(X)$ be a line bundle. Let $\abs{L} := \P H^0(X;L)$ be the linear system, and let $U_L \subset \abs{L}$ be the locus of smooth curves in $\abs{L}$, i.e. the complement of the discriminant locus. There is a tautological family $\cC_L \to U_L$, defined via
\begin{equation}
    \label{eq:tautfamily}
    \cC_L = \{(C,x) \mid C \in U_L, x \in C\} \subset U_L \times X.
\end{equation}

The purpose of this paper is to determine the {\em topological monodromy group} of $\cC_L/U_L$ for any sufficiently ample $L$ on any {\em simply connected} $X$. Recall that any fiber bundle $p: \cE \to B$ with fiber $S$ a $2$-manifold admits a {\em topological monodromy representation} 
\[
\rho: \pi_1(B) \to \Mod(S),
\]
where $\Mod(S) := \pi_0(\Diff^+(S))$ is the {\em mapping class group} of $S$, the group of orientation-preserving diffeomorphisms of $S$ up to isotopy. The topological monodromy group is the image of $\rho$. In the present context, the monodromy of $\cC_L/U_L$ will be written $\Gamma_L$. 

In previous work, the problem of computing $\Gamma_L$ has been treated in various special settings: quintic plane curves \cite{quintic}, toric surfaces \cite{CL1,CL2,saltertoric}, and complete intersections \cite{CImonodromy}. The conclusion in each case has been that $\Gamma_L$ admits a description as an {\em $r$-spin mapping class group}. These will be defined in \Cref{S:rspin}; for now, let us remark that these are the subgroups of $\Mod(S)$ that preserve a topological avatar of an $r$-spin structure, i.e. a distinguished $r^{th}$ root of the canonical bundle of a general member of the family. In this paper we find that this is a completely general phenomenon, so long as $X$ is simply connected and $L$ is sufficiently ample.

\begin{maintheorem}\label{theorem:main}
    Let $X$ be a smooth projective surface with $\pi_1(X) = 1$. Let $L$ be a line bundle on $X$ expressible as $L = L_1 \otimes L_2$, where
    \begin{enumerate}
        \item $L_1$ is $6$-jet ample (see \Cref{remark:jetample}),
        \item $L_2$ is very ample.
    \end{enumerate} 
    Let $E \in \abs{L}$ be a general smooth curve, and let $\Gamma_L \le \Mod(E)$ denote the topological monodromy group. Let $\phi_M$ be the $r$-spin structure associated to the maximal $r^{th}$ root $M$ of the adjoint line bundle $\omega_X \otimes L$, and let $\Mod(E)[\phi_M]$ denote the associated $r$-spin mapping class group. Then 
    \[
   \Gamma_L = \Mod(E)[\phi_M].
    \]
\end{maintheorem}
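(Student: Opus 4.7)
The plan is to prove the two containments $\Gamma_L \subseteq \Mod(E)[\phi_M]$ and $\Mod(E)[\phi_M] \subseteq \Gamma_L$ separately. The first inclusion should be essentially formal: by adjunction, the restriction of the line bundle $M$ to any smooth $E \in \abs{L}$ is an $r$-th root of $\omega_E$, and because $M$ is a globally defined algebraic object on $X$, the associated topological $r$-spin structure extends over the total space of the tautological family $\cC_L \to U_L$. Parallel transport in such a family preserves the isotopy class of $\phi_M$, so the image of $\rho$ lands in $\Mod(E)[\phi_M]$.

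The hard direction is $\Mod(E)[\phi_M] \subseteq \Gamma_L$. My plan here is to invoke a generation theorem for $r$-spin mapping class groups (of the kind proved in the authors' previous work and used in \cite{CL1,CL2,CImonodromy,saltertoric,quintic}): $\Mod(E)[\phi_M]$ is generated by Dehn twists about a suitable finite collection of simple closed curves $\{c_i\}$ that are \emph{admissible} for $\phi_M$, typically organized as a chain or network together with a few extra separating curves to adjust the spin parity. By Picard--Lefschetz theory, every simple closed curve on $E$ arising as a vanishing cycle of a nodal degeneration in $\abs{L}$ contributes a Dehn twist to $\Gamma_L$. Thus it suffices to produce nodal degenerations in $\abs{L}$ whose vanishing cycles realize each $c_i$ up to the action of an element already known to be in $\Gamma_L$.

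To construct these degenerations I would exploit the factorization $L = L_1 \otimes L_2$. The very ampleness of $L_2$ allows $X$ to be treated as embedded in projective space and curves in $\abs{L_2}$ to be handled as hyperplane sections, putting complete-intersection techniques from \cite{CImonodromy} at our disposal. The $6$-jet ampleness of $L_1$, in turn, is what allows one to force prescribed local singularity types on curves in $\abs{L}$ at prescribed points, while keeping the rest of the curve under control. Concretely I would
\begin{enumerate}
    \item degenerate a generic $E \in \abs{L}$ to a reducible curve $E_1 \cup E_2$ with $E_i \in \abs{L_i}$ to produce separating vanishing cycles that serve as "bridges" between sub-surfaces;
    \item use a Lefschetz pencil in $\abs{L}$, producing an auxiliary fibered surface $\tilde X \to \P^1$ whose Picard--Lefschetz vanishing cycles span a rich subspace of $H_1(E;\Z)$;
    \item use the jet-ample local freedom to slide and compare vanishing cycles of different nodal degenerations, realizing any admissible isotopy class up to the action already generated.
\end{enumerate}
Bootstrapping from the known cases (toric surfaces and complete intersections in $\P^N$) via an embedding of $X$ into a suitable ambient and an induction on the splitting $L = L_1 \otimes L_2$ should be the organizing framework.

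The main obstacle, as in all such monodromy computations, is precisely matching the group \emph{generated} by the produced vanishing cycles with the target $r$-spin mapping class group: one must verify that the cycles obtained are admissible for $\phi_M$ (so $\Gamma_L$ does not overshoot into $\Mod(E)$) and that they include a full generating set (so $\Gamma_L$ does not land in a proper subgroup such as a deeper spin-level or Torelli subgroup). The admissibility check should follow from the global origin of $M$, but the completeness of the generating set is the subtle step and is where the $6$-jet ampleness of $L_1$ should do the heavy lifting, giving enough geometric flexibility to realize each member of an a priori fixed finite admissible generating configuration as a vanishing cycle on a single reference fiber $E$.
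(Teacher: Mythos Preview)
Your outline gets the overall architecture right (two containments, Picard--Lefschetz, degeneration to $C \cup D$ via $L = L_1 \otimes L_2$, jet ampleness for local singularity control), but there are two genuine gaps. First, the ``bootstrapping from known cases'' strategy does not work: as the paper's introduction explains, for a general simply connected $(X,L)$ there is no way to relate the linear system to one (toric or complete intersection) whose monodromy is already known, so no inductive base case is available. The paper works intrinsically on the smoothing $E = \tilde C \cup \tilde D$, and the decisive technical step is the Main Lemma (\Cref{lemma:main}): for any simple closed curve $a$ meeting each of $\tilde C, \tilde D$ in a single arc, some boundary-adjacent twist of $a$ is a vanishing cycle. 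Proving this requires a fine analysis of the pure simple braid group $\PSBr(\tilde C)$ and its intersection with the principal stabilizer $\Gamma[\tilde C]$ (\Cref{lemma:SBrSES}); this is exactly where the hypothesis $\pi_1(X)=1$ enters (via \Cref{lemma:VCsgenerate}). Your plan to realize an ``a priori fixed finite admissible generating configuration'' would demand precise control over how vanishing cycles from \emph{different} nodal degenerations intersect one another on a common reference fiber, which is precisely what the paper argues is infeasible. The assemblage criterion (\Cref{theorem:assemblages}) sidesteps this: each new curve need only meet the previously-built \emph{subsurface} in a single arc, and the Main Lemma supplies such curves. The $6$-jet ampleness is used not for ``sliding'' cycles but to implant an $E_6$ singularity (\Cref{lemma:E6VCs}) so that the core of the assemblage has the required $E$-arboreal shape.

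Second, your endgame is incomplete. The assemblage argument only yields a containment $\Mod(E)[\bar\phi_3] \le \Gamma_L$ for some $r'$-spin structure $\bar\phi_3$ with $r \mid r'$; it does not directly give $r' = r$. The paper closes this with a relative-Picard argument (\Cref{lemma:relpic}, \Cref{prop:charlie}): any containment $\Gamma_L \le \Mod(E)[\phi'']$ for an $r''$-spin structure forces an $(r'')^{th}$ root of $\omega_X \otimes L$ in $\Pic(X)$, so maximality of $M$ rules out containment in any deeper spin level. A separate group-theoretic lemma (\Cref{lemma:nonconmax}) then promotes the sandwich $\Mod(E)[\bar\phi_3] \le \Gamma_L \le \Mod(E)[\phi_M]$ together with this non-containment to the equality $\Gamma_L = \Mod(E)[\phi_M]$.
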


\begin{remark}[Jet ampleness]\label{remark:jetample}
    Recall that a line bundle $L$ is {\em $k$-jet ample} if for any choice of $r\ge 1$ points $x_1, \dots, x_r$ and integers $k_1, \dots, k_r$ such that $k+1 = \sum_{i = 1}^r k_i$, the evaluation map 
    \[
    X \times H^0(X; L) \to L/(L \otimes m_{x_1}^{k_1} \otimes \dots \otimes m_{x_r}^{k_r})
    \]
    is surjective, where $m_{x_i}$ denotes the maximal ideal at $x_i$. In other words, there must be enough sections to simultaneously separate $k_i$-jets. The case $k = 1$ is the ordinary notion of very ampleness, and it is not hard to show that if $L$ is $a$-jet ample and $L'$ is $b$-jet ample, then $L \otimes L'$ is $(a+b)$-jet ample. See \cite{BSjetample}.
\end{remark}

\begin{remark}[The ampleness hypothesis]
It is not immediately clear when $L = L_1 \otimes L_2$ admits a decomposition of the form required for \Cref{theorem:main}. Some hypothesis beyond ampleness is necessary, as there are examples of $L$ for which every section is hyperelliptic, constraining the monodromy within a {\em hyperelliptic mapping class group}. Following \Cref{remark:jetample}, we see that the hypothesis of \Cref{theorem:main} is a fairly general condition satisfied by any line bundle sufficiently deep in the interior of the nef cone, applicable, e.g., to any $L$ expressible as $L = L_1\otimes \dots \otimes L_7$, for $L_1, \dots, L_7$ very ample. In particular, this line of argument shows that if $\Lambda$ is a rational polyhedral cone contained in the interior of the nef cone of $X$, then \Cref{theorem:main} applies to all but finitely many of the line bundles lying in $\Lambda$. 
\end{remark}

\begin{remark}[Why simple connectivity?]
We emphasize that \Cref{theorem:main} requires $X$ to be simply connected. Without this assumption, \Cref{theorem:main} is dramatically false. While the $r$-spin mapping class groups are all of finite index in $\Mod(E)$, the first-named author showed in \cite{ishanpi1} that if $\pi_1(X)$ is infinite, there are examples of very ample linear systems on certain $X$ for which the monodromy group is of infinite index in $\Mod(E)$. This is caused by the fact that the topological monodromy must preserve the kernel of the inclusion map $i_*: \pi_1(E) \to \pi_1(X)$, a constraint which is vacuous in the simply connected setting.
\end{remark}

\begin{remark}[$r$ = 1?]
    It is worth remarking explicitly that in the case where $\omega_X \otimes L$ has no nontrivial root in $\Pic(X)$, the conclusion of \Cref{theorem:main} is simply that $\Gamma_L = \Mod(E)$ is the entire mapping class group of the fiber.
\end{remark}

\begin{remark}[``Curves'' and ``surfaces''] Our working environment straddles the worlds of complex algebraic geometry and topology, and unfortunately, the standard terminologies in these fields clash. Whatever the reader's ``native language'' (topology or algebraic geometry), they should be aware of the potential for ambiguity, though hopefully in most cases the correct meaning should be clear from context. Where necessary, we will attempt to disambiguate by prepending ``algebraic'' or ``topological'' as appropriate, so that e.g. the topological space underlying a smooth algebraic curve is a topological surface. Of particular importance will be {\em simple closed curves}, i.e. an embedded submanifold of a topological surface homeomorphic to the circle. We will also attempt to be consistent in our notation: $X$ always denotes an algebraic surface, $C,D,E$ denote algebraic curves, $S$ denotes a topological surface, and lowercase letters like $a,b,c$ denote simple closed curves in $S$. 
\end{remark}

\para{Corollary: vanishing cycles} Somewhat counter-intuitively, the monodromy of a linear system $\abs{L}$ encodes a great deal of information about the {\em non}-smooth sections of $L$, despite being constructed exclusively in terms of the locus of smooth sections $U_L \subset \abs{L}$. One aspect of this is the problem of characterizing the {\em vanishing cycles} for $L$. As recalled in \Cref{SS:PL}, a {\em nodal degeneration} in $L$ is a family $E_t \in \abs{L}$ defined for $t \in \disk$ the unit disk, for which $E_t$ is smooth for $t \ne 0$, and $E_0$ has a nodal singularity. A nodal degeneration has an associated {\em vanishing cycle}, the simple closed curve $a \subset E_1$ which contracts to a point as $t \to 0$. 

Motivated by questions arising in symplectic geometry and Floer theory, in \cite{donaldson}, Donaldson asks for a characterization of which simple closed curves arise as vanishing cycles for some nodal degeneration. This is a quintessentially {\em global} question: to make sense of it, one has to base all nodal degenerations at the same basepoint, or else understand how to systematically compare simple closed curves contained in different smooth fibers. Either approach requires an understanding of the monodromy group of the linear system (or perhaps the {\em monodromy groupoid}, cf. \Cref{SS:groupoid}). 

Following arguments given in \cite{CL1,saltertoric}, if $L$ is a line bundle for which $\Gamma_L = \Mod(E)[\phi_M]$ is the $r$-spin mapping class group associated to the maximal root $M$ of $\omega_X \otimes L$, there is a simple characterization of the set of vanishing cycles. As explained below in \Cref{SS:WNF}, $r$-spin structures on $E$ are encoded topologically in terms of {\em winding number functions}, which assign ``winding numbers'' valued in $\Z/r\Z$ to oriented simple closed curves on $E$. A simple closed curve $a \subset E$ is said to be {\em admissible} if it is non-separating ($E \setminus a$ is connected) and if $\phi_M(a) = 0$, where $\phi_M$ is the winding number function associated to $M$. 

\begin{corollary}
    Let $X,L$ satisfy the hypotheses of \Cref{theorem:main}, and let $E \in U_L$ be a general fiber. Then a nonseparating simple closed curve $a \subset E$ is a vanishing cycle if and only if it is admissible.
 \end{corollary}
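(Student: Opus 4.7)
The plan is to deduce the corollary from \Cref{theorem:main} by combining the Picard--Lefschetz formula with a transitivity property of $r$-spin mapping class groups on admissible curves.

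For the forward direction, suppose $a \subset E$ is a nonseparating vanishing cycle associated to a nodal degeneration $\{E_t\}_{t \in \disk}$. A small loop $\gamma$ in $U_L$ encircling the discriminant point corresponding to $E_0$ has topological monodromy equal, by the Picard--Lefschetz formula, to the Dehn twist $T_a \in \Mod(E)$. By \Cref{theorem:main}, $T_a$ lies in $\Mod(E)[\phi_M]$. It is a direct computation with winding number functions (as recalled in \Cref{SS:WNF}) that $T_a$ preserves $\phi_M$ if and only if $\phi_M(a) = 0$, so $a$ is admissible.

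For the converse, the strategy is to exhibit a single admissible vanishing cycle and then ``spread it around'' via the monodromy. The ampleness hypotheses on $L$ guarantee, by a Bertini-type argument, that the discriminant divisor $\abs{L} \setminus U_L$ has a component whose generic point parametrizes an irreducible curve with a single node. A one-parameter smoothing of such a curve produces a nodal degeneration whose vanishing cycle $a_0$ is nonseparating, and hence admissible by the forward direction. Now invoke the ``change of coordinates principle'' for $r$-spin mapping class groups: the group $\Mod(E)[\phi_M]$ acts transitively on the set of isotopy classes of admissible simple closed curves on $E$. Given an arbitrary admissible $a \subset E$, select $\psi \in \Mod(E)[\phi_M]$ with $\psi(a_0) = a$. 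By \Cref{theorem:main}, $\psi$ is realized as the monodromy of some loop $\delta \in \pi_1(U_L, E)$. The concatenation $\delta \cdot \gamma_0 \cdot \delta^{-1}$, where $\gamma_0$ is the Picard--Lefschetz loop producing $T_{a_0}$, has monodromy $\psi T_{a_0} \psi^{-1} = T_{\psi(a_0)} = T_a$, and geometrically corresponds to a transported nodal degeneration whose vanishing cycle is $a$.

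The most delicate step is the geometric interpretation of the final assertion: to pass from the algebraic equality of monodromies $T_a = \psi T_{a_0} \psi^{-1}$ to the concrete claim that $a$ itself bounds a vanishing disk, one must argue that the set of vanishing cycles (as a subset of the isotopy classes of simple closed curves in $E$) is closed under the monodromy action. This is cleanly handled via the \emph{monodromy groupoid} framework of \Cref{SS:groupoid}, or equivalently by working on the universal cover of $U_L$, where lifts of Picard--Lefschetz loops over a fixed discriminant component form a single $\Gamma_L$-orbit. Once this transport is set up, the argument follows the template of \cite{CL1,saltertoric}, with \Cref{theorem:main} supplying the crucial identification of $\Gamma_L$ that was the main input missing in full generality.
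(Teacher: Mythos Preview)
Your proof is correct and is essentially an unpacking of the argument behind \cite[Lemma 11.5]{saltertoric}, which is all the paper invokes: the forward direction via Picard--Lefschetz plus \Cref{lemma:admissibletwists}, and the converse via transitivity of $\Mod(E)[\phi_M]$ on admissible curves together with monodromy-invariance of the set of vanishing cycles. You have also correctly isolated the one point requiring care---that transporting a nodal degeneration along a loop $\delta$ genuinely yields a new nodal degeneration whose vanishing cycle is $\rho(\delta)(a_0)$, not merely a loop with monodromy $T_a$---and your appeal to the groupoid formalism of \Cref{SS:groupoid} is the right way to make this precise.
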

\begin{proof}
    \cite[Lemma 11.5]{saltertoric} shows how this statement follows from the determination $\Gamma_L = \Mod(E)[\phi_M]$. 
\end{proof}

\para{Corollary: monodromy kernels} The monodromy homomorphism $\rho: \pi_1(U_L) \to \Mod(E)$ is defined on the complement $U_L \subset \abs{L}$ of the {\em discriminant hypersurface} $\mathscr{D}_L \subset\abs{L}$ (we note in passing that as $L$ is very ample, $\mathscr{D}_L$ is given as the dual variety to the projective embedding of $X$ given by $L$). The structure of the discriminant and its complement is subject of enduring interest. Even basic topological information such as a description of the fundamental group is known only in the simplest cases - in \cite{lonne}, L\"onne gives a presentation for the discriminant complement in the setting of projective hypersurfaces (of any dimension), but this is essentially the only known result of this sort.

It is natural to wonder if perhaps $\rho$ could be {\em injective}, thereby giving a characterization of $\rho(U_L)$ as an $r$-spin mapping class group whenever \Cref{theorem:main} applies. Building off of work of Kuno \cite{kuno}, the second-named author showed in \cite{monkernel} that if the monodromy group is of finite index in the mapping class group in fact {\em obstructs} injectivity. As $r$-spin mapping class groups have finite index, combining \cite[Theorem A]{monkernel} with \Cref{theorem:main} yields following result.

\begin{corollary}
    Let $X,L$ satisfy the hypotheses of \Cref{theorem:main}. Then $\rho: \pi_1(U_L) \to \Mod(E)$ has infinite kernel.
\end{corollary}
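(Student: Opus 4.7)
The plan is to combine \Cref{theorem:main} with the main theorem of \cite{monkernel} in a direct manner; no additional geometric input is needed for this corollary. The first step is to invoke \Cref{theorem:main} to identify the monodromy group $\Gamma_L = \rho(\pi_1(U_L))$ with the $r$-spin mapping class group $\Mod(E)[\phi_M]$ associated to the maximal root $M$ of $\omega_X \otimes L$. This reduces the statement to a claim about the kernel of the restricted homomorphism $\pi_1(U_L) \to \Mod(E)[\phi_M]$.

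The second step is to verify that $\Mod(E)[\phi_M]$ has finite index in $\Mod(E)$. This should follow from the standard fact that $\Mod(E)$ acts on the finite set of $r$-spin structures on $E$ (this is a set of the same cardinality as $H^1(E; \Z/r\Z)$, and $\Mod(E)$ acts through a quotient factoring through $\Sp(2g, \Z/r\Z)$), so that the stabilizer of the distinguished $r$-spin structure $\phi_M$ is of finite index. This is presumably also recalled when $r$-spin mapping class groups are defined in the upcoming \Cref{S:rspin}.

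With these two ingredients in place, I would then apply \cite[Theorem A]{monkernel}, which says that if the image of the monodromy representation of a linear system has finite index in the mapping class group, then $\ker(\rho)$ is infinite. The combination directly yields the desired conclusion. I do not anticipate any real obstacle here: the substantive content is absorbed into \Cref{theorem:main} (which is the work of the paper) and into the theorem of \cite{monkernel} being cited. The only small bookkeeping check is to make sure that the hypotheses of the \cite{monkernel} result (presumably very ampleness of $L$, or something of that nature) are implied by the jet ampleness and very ampleness hypotheses imposed in \Cref{theorem:main}, which should be automatic.
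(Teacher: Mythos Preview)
Your proposal is correct and matches the paper's own argument essentially verbatim: the paper simply notes that $r$-spin mapping class groups have finite index and then combines \Cref{theorem:main} with \cite[Theorem A]{monkernel}. No additional verification beyond what you outlined is carried out in the paper.
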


\para{Corollary: Lefschetz pencils} Recall that a {\em Lefschetz pencil} on $X$ is a morphism $f: X \dashrightarrow \CP^1$ with finite base locus and such that fibers have at worst nodal singularities. 
For $X$ a smooth complex projective surface, a Lefschetz pencil on $X$ can be constructed from an embedding $X \into \CP^N$ arising from a very ample line bundle $L$ by projecting onto a generic line. Work of Donaldson \cite{donaldsonlefschetz} shows that when $X$ is merely a compact symplectic $4$-manifold, there is still the structure of a ``topological Lefschetz pencil'' on $X$, where the fibers are symplectic submanifolds. 

A Lefschetz pencil has finitely many singular fibers, and can be encoded combinatorially as a ``positive factorization of the identity'' $T_{a_1} \dots T_{a_k}=1$, where $a_1, \dots, a_k$ are the vanishing cycles of the singular fibers, taken relative to a well-chosen set of paths sharing a common basepoint, and $T_a$ denotes the Dehn twist about $a$ (see, e.g. \cite{aurouxsurvey}). The results of this paper show that when $X$ is complex projective and simply connected, it is always possible to choose a Lefschetz pencil on $X$ for which the monodromy {\em group} generated by such twists is ``as large as possible'':
\begin{corollary}\label{corollary:LP}
    Let $X$ be a smooth projective complex surface with $\pi_1(X) = 1$. Then there is a Lefschetz pencil $f: X \dashrightarrow \CP^1$ with generic fiber $E$, for which the monodromy group $\Gamma_f \le \Mod(E)$ is the $r$-spin mapping class group $\Mod(E)[\phi_M]$ associated to the maximal $r^{th}$ root $M$ of the adjoint line bundle $\omega_X \otimes \cO(E)$.
\end{corollary}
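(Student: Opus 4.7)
The plan is to produce the Lefschetz pencil as a generic pencil inside a linear system $|L|$ to which \Cref{theorem:main} applies, and then to compare the pencil's monodromy to $\Gamma_L$ via a Zariski--Lefschetz style argument on fundamental groups. First, I would fix any very ample line bundle $A$ on $X$ (which exists since $X$ is projective) and set $L = A^{\otimes 7}$. Since jet ampleness is additive under tensor products and very ampleness is $1$-jet ampleness, $A^{\otimes 6}$ is $6$-jet ample while $A$ itself is very ample, so writing $L = A^{\otimes 6} \otimes A$ exhibits $L$ as $L_1 \otimes L_2$ satisfying the hypotheses of \Cref{theorem:main}. Consequently $\Gamma_L = \Mod(E)[\phi_M]$ for any general smooth fiber $E \in |L|$, where $M$ is the maximal root of $\omega_X \otimes L = \omega_X \otimes \cO(E)$.

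Next, I would construct the pencil geometrically. Since $L$ is very ample, it determines an embedding of $X$ into a projective space, and any line $\ell \subset |L|$ determines a rational map $f_\ell: X \dashrightarrow \ell \cong \CP^1$ whose fibers are the divisors parameterized by $\ell$. A standard Bertini/dual variety argument shows that for generic $\ell$ the map $f_\ell$ is an honest Lefschetz pencil: the line $\ell$ meets the discriminant hypersurface $\mathscr{D}_L \subset |L|$ transversally in finitely many points, all of which lie in the smooth locus of $\mathscr{D}_L$, which parameterizes divisors with a single node.

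The key step is to identify $\Gamma_{f_\ell}$ with $\Gamma_L$. By definition, $\Gamma_{f_\ell}$ is the image of the monodromy map $\pi_1(\ell \setminus (\ell \cap \mathscr{D}_L)) \to \Mod(E)$, and this factors through the inclusion $\ell \cap U_L \hookrightarrow U_L$ followed by $\rho: \pi_1(U_L) \to \Mod(E)$. By Zariski's theorem on generic line sections, applied to the hypersurface $\mathscr{D}_L$ in the projective space $|L|$, for generic $\ell$ the induced map $\pi_1(\ell \cap U_L) \to \pi_1(U_L)$ is surjective. Hence $\Gamma_{f_\ell} = \rho(\pi_1(U_L)) = \Gamma_L = \Mod(E)[\phi_M]$, which is exactly the content of \Cref{corollary:LP}.

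Once \Cref{theorem:main} is granted, the main potential obstacle is precisely the Zariski-type surjectivity used in the last step, but this is a classical statement for complements of hypersurfaces in projective space and requires no new input. The only remaining technical point is to verify that a generic line in $|L|$ really does yield a topological Lefschetz fibration with the expected $1$-nodal singular fibers, which follows from standard properties of the dual variety of a smooth subvariety of projective space embedded by a very ample line bundle.
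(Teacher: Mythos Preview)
Your proposal is correct and follows essentially the same approach as the paper: take $L = A^{\otimes 7}$ for $A$ very ample so that \Cref{theorem:main} applies, then use the Zariski--van Kampen theorem to identify the monodromy of a generic pencil with $\Gamma_L$. Your write-up is in fact more explicit than the paper's own proof, which dispatches the argument in two sentences.
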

\begin{proof}
    Following \Cref{remark:jetample}, if $L$ is any very ample line bundle on $X$, then $L^{\otimes 7}$ satisfies the hypotheses of \Cref{theorem:main}. By the Zariski-van Kampen theorem, the monodromy of a Lefschetz pencil in $\abs{L}$ coincides with the monodromy of the full linear system; the result now follows from \Cref{theorem:main}.
\end{proof}

\.{I}. Baykur has raised the question of characterizing those (necessarily simply connected) compact symplectic $4$-manifolds that possess a Lefschetz pencil with monodromy group the full mapping class group. In the algebraic setting, such a characterization is nearly given by the Picard rank, with only sporadic exceptions.

\begin{corollary}
Let $X$ be a smooth projective surface with $\pi_1(X) = 1$. Assume $X$ is not $\CP^2$ or a $K3$ surface of Picard rank $1$. Then $X$ admits a Lefschetz pencil $f: X \dashrightarrow \CP^1$ with fiber $E$ and monodromy group $\Gamma_f = \Mod(E)$ if and only if $\mathrm{rk}(\Pic(X)) \ge 2$.  
\end{corollary}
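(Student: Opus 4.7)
The plan is to reduce both directions of the iff to the question of when $[\omega_X \otimes L]$ is primitive in $\Pic(X)$. The monodromy of any Lefschetz pencil with fiber class $L$ preserves the $r$-spin structure $\phi_M$ associated to the maximal root $M$ of $\omega_X \otimes L$: since $M$ is defined globally on $X$, its restriction $M|_E$ gives a globally consistent $r$-th root of $\omega_E = (\omega_X \otimes L)|_E$. Hence $\Gamma_f \subseteq \Mod(E)[\phi_M]$, and $\Gamma_f = \Mod(E)$ is possible exactly when $r = 1$, equivalently when $[\omega_X \otimes L]$ is primitive in $\Pic(X)$.

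For ($\Rightarrow$), assume $\rho := \rank(\Pic(X)) = 1$, so $\Pic(X) = \Z H$ with $H$ ample, $\omega_X = a H$, and $L = b H$ with $b \ge 1$; primitivity demands $|a+b| = 1$. I apply the Enriques--Kodaira classification to simply connected surfaces of Picard rank $1$: Kodaira dimension $-\infty$ forces $X = \CP^2$; dimension $0$ forces a K3 surface of Picard rank $1$; dimension $1$ is impossible (a minimal properly elliptic surface has $K_X^2 = 0$, but $K_X = a H$ with $a \ne 0$ and $H^2 > 0$ gives $K_X^2 > 0$); and dimension $2$ forces $K_X$ big and nef with $K_X \cdot H > 0$, giving $a \ge 1$ and hence $a + b \ge 2$. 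Excluding $\CP^2$ and the rank-$1$ K3 case leaves only general type, where no primitive $[\omega_X \otimes L]$ occurs.

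For ($\Leftarrow$), assume $\rho \ge 2$. Fix any very ample line bundle $A$, and consider the family $L(B) := A^{\otimes 7} \otimes B$ as $B$ varies over very ample line bundles. Each $L(B)$ decomposes as $A^{\otimes 6} \otimes (A \otimes B)$, with $A^{\otimes 6}$ being $6$-jet ample and $A \otimes B$ very ample by \Cref{remark:jetample}, so the hypotheses of \Cref{corollary:LP} are met. The class $[\omega_X \otimes L(B)] = [\omega_X] + 7[A] + [B]$ ranges over a translate of the set of very ample lattice classes in $\Pic(X)$, which contains a full-dimensional open sub-cone of $\Pic(X) \otimes \R \cong \R^\rho$. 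Since $\rho \ge 2$, an elementary density argument (primitive classes have density $1/\zeta(\rho) > 0$ in $\Z^\rho$) produces a very ample $B$ for which $[\omega_X \otimes L(B)]$ is primitive. Applying \Cref{corollary:LP} to this $L(B)$ yields a Lefschetz pencil with monodromy $\Mod(E)$.

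The main obstacle is the density step in ($\Leftarrow$): guaranteeing that primitive lattice points lie within the constrained translate of the very ample locus. This is automatic for $\rho \ge 2$, since open cones of full dimension in $\Z^\rho$ always meet the positive-density set of primitive classes; for $\rho = 1$ this flexibility vanishes, which is precisely why the exceptional cases ($\CP^2$ and rank-$1$ K3) must be excluded from the iff. The classification input in ($\Rightarrow$) — particularly the elimination of Kodaira dimension $1$ via $K_X^2 = 0$ — is the only nontrivial surface-theoretic step.
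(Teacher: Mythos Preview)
Your proof is correct, and both directions rest on the same reduction as the paper's: the existence of a full-monodromy pencil is governed by whether $[\omega_X\otimes L]$ can be chosen primitive.

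For $(\Leftarrow)$ you actually supply more than the paper does. The paper simply asserts that when $\rank\Pic(X)\ge 2$ one can choose $L$ satisfying the hypotheses of \Cref{theorem:main} with $\omega_X\otimes L$ primitive, and then invokes the argument of \Cref{corollary:LP}; your density argument makes this explicit. One small point: the claim that the very ample classes contain all lattice points of a full-dimensional open sub-cone is true (e.g.\ via Reider's theorem on surfaces, $K_X + 4A + \mathrm{Amp}(X)$ consists of very ample classes for any very ample $A$), but is worth a sentence of justification. Also, strictly speaking you are invoking the \emph{argument} of \Cref{corollary:LP} (Zariski--van Kampen plus \Cref{theorem:main}) rather than the corollary as stated, since the latter does not take $L$ as input.

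For $(\Rightarrow)$ your route via the Enriques--Kodaira classification is correct but longer than necessary. The paper argues directly: with $\Pic(X)=\Z L$, $\omega_X=nL$, and fiber class $mL$ with $m\ge 1$, the condition $|m+n|\le 1$ forces $n\le 0$; hence $-K_X$ is nef, so $X$ is del Pezzo (and the only rank-$1$ del Pezzo is $\CP^2$) or $K3$. This bypasses the case analysis on Kodaira dimension entirely. Your argument does have the virtue of making transparent \emph{why} no further rank-$1$ exceptions exist---Picard rank $1$ forces minimality, so $K_X^2=0$ genuinely excludes Kodaira dimension $1$, and $a\ge 1$ handles general type---but the paper's observation that $n\le 0$ already determines the sign of the canonical class is more economical.
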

\begin{proof}
    If $\mathrm{rk}(\Pic(X)) \ge 2$, then there is $L \in \Pic(X)$ satisfying the hypotheses of \Cref{theorem:main} for which moreover $\omega_X\otimes L$ has no nontrivial root; now apply the argument of \Cref{corollary:LP}.
    Conversely, suppose $X$ is a surface with Picard rank $1$. Let $L \in \Pic(X) \cong \Z$ be a generator, and assume that $L$ is ample, so that in particular, any line bundle $mL$ for $m < 0$ has no global sections. Let $\omega_X \cong nL$ for $n \in \Z$. Then the adjoint line bundle to $mL$ is $(m+n)L$. This possesses a nontrivial root so long as $\abs{m+n} > 1$, so suppose that $\abs{m+n} \le 1$. If the linear system $\abs{mL}$ is nonempty, then $m \ge 1$: we conclude that $n \le 0$, so that $X$ is either a $K3$ surface (of Picard rank $1$, by hypothesis), or else a del Pezzo surface, but the only del Pezzo of Picard rank $1$ is $\CP^2$.
\end{proof}

\begin{remark}
    In the case $X = \CP^2$, any Lefschetz pencil of curves of degree $d \le 4$ has monodromy group the full mapping class group. It is not immediately clear to the authors if there are examples of $K3$ surfaces of Picard rank $1$ that possess Lefschetz pencils with full monodromy.
\end{remark}

It would be interesting to see if the techniques of the paper can be extended to the symplectic setting.
\begin{question}
    Can \Cref{corollary:LP} (or even \Cref{theorem:main}) be extended to the setting of $X$ a compact simply connected symplectic $4$-manifold?
\end{question}

\para{Corollary: flexible surfaces in $4$-manifolds} Let $S$ be a topological surface embedded in a $4$-manifold $M$ (for the purposes of this discussion, we will work in the smooth category, but there are evident counterparts in the topological and locally flat settings). Define the mapping class group $\MCG(M) := \pi_0(\Diff^+(M))$ as in the $2$-dimensional setting,\footnote{We write $\MCG$, not $\Mod$, here, since the notation ``$\Mod$'' references the {\em Teichm\"uller modular group}, an analogy appropriate for surfaces but not for manifolds of other dimensions.} and the mapping class group $\MCG(M,S) := \pi_0(\Diff^+(M,S))$ as the group of isotopy classes of orientation-preserving diffeomorphisms that restrict to (potentially nontrivial) orientation-preserving diffeomorphisms of $S$. There is an evident homomorphism
\[
\res: \MCG(M,S) \to \Mod(S);
\]
the image $\cE(X,S) := \im(\res) \le \Mod(S)$ is called the group of {\em extendable mapping classes}, and a surface $S \subset M$ is said to be {\em flexible} if $\res$ is a surjection. The study of (non)flexibility has been studied by Hirose and Yasuhara \cite{hirose2,hirose,HY}; see also recent work of Lawande-Saha \cite{LS}. 

The monodromy calculations of \Cref{theorem:main} give lower bounds on the size of the extendable mapping class group. Indeed, by the isotopy extension theorem, any path $\gamma: [0,1] \to U_L$ of smooth $E_t$ in $\abs{L}$ can be extended to an isotopy $\tilde \gamma_t: X \to X$, leading to a ``surface-pushing homomorphism'' (cf. the classical theory of ``point-pushing'' in mapping class groups of surfaces)
\[
\cP: \pi_1(U_L) \to \MCG(X,E).
\]
This induces a factorization
\[
\xymatrix{
\pi_1(U_L) \ar[r]^-{\cP} \ar@/_1pc/[rr]_{\rho_L}  & \MCG(X,E) \ar[r]^{\res} & \Mod(E).
}
\]
Thus \Cref{theorem:main} produces numerous new examples of flexibly-embedded surfaces, and many more examples that are ``nearly'' flexible.

\begin{corollary}
Let $X, L$ satisfy the hypotheses of \Cref{theorem:main}, and let $E \in U_L$ be a smooth member of the linear system (topologically, a smoothly embedded subsurface of the smooth $4$-manifold underlying $X$). Then there is a containment
\[
\Mod(E)[\phi_M] \le \cE(X,E),
\]
with $\phi_M$ the $r$-spin structure associated to the maximal root of $\omega_X \otimes L$ as in \Cref{theorem:main}. In particular the index $[\Mod(E): \cE(X,E)]$ is finite, and if $r = 1$ then $E$ is flexible.
\end{corollary}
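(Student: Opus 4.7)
The proof plan is essentially to chase the factorization diagram already assembled in the excerpt: once one knows that $\rho_L$ factors through $\res: \MCG(X,E) \to \Mod(E)$, the conclusion about $\cE(X,E)$ follows almost tautologically from \Cref{theorem:main}.

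First I would justify the existence of the surface-pushing homomorphism $\cP: \pi_1(U_L) \to \MCG(X,E)$ by invoking the isotopy extension theorem (Thom's first isotopy lemma applied to the total space of the family $\cC_L$, or more elementarily the Ehresmann fibration theorem combined with the fact that the family $\cC_L \to U_L$ is locally trivial as a pair over $U_L$). This gives a continuous lift of any loop $\gamma \subset U_L$ to an isotopy of pairs $(X, E_t)$, whose endpoint defines an element of $\MCG(X, E)$; standard arguments show this is independent of the lift up to isotopy rel endpoints, and that concatenation of loops corresponds to composition of diffeomorphisms, so $\cP$ is a well-defined homomorphism. The restriction of $\cP(\gamma)$ to $E$ recovers the topological monodromy by construction, which gives the factorization $\rho_L = \res \circ \cP$.

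Next I would take images. By \Cref{theorem:main}, $\im(\rho_L) = \Gamma_L = \Mod(E)[\phi_M]$. Since $\rho_L$ factors through $\res$, we have
\[
\Mod(E)[\phi_M] = \im(\rho_L) = \res(\im(\cP)) \subseteq \im(\res) = \cE(X,E),
\]
establishing the required containment. For the finite-index claim, I would cite the fact recorded in the introduction (and proved in \Cref{S:rspin}) that $\Mod(E)[\phi_M]$ has finite index in $\Mod(E)$; sandwiching gives $[\Mod(E):\cE(X,E)] \le [\Mod(E):\Mod(E)[\phi_M]] < \infty$. Finally, if $r = 1$, then the maximal root $M$ defines the trivial $1$-spin structure, whose stabilizer $\Mod(E)[\phi_M]$ is all of $\Mod(E)$, so $\cE(X,E) = \Mod(E)$ and $\res$ is surjective by definition, i.e. $E$ is flexible.

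I do not anticipate a real obstacle here: the only content beyond \Cref{theorem:main} is the construction of $\cP$ and the verification that $\res \circ \cP = \rho_L$, both of which are standard applications of isotopy extension. If anything deserves care, it is ensuring that the lift produced by isotopy extension genuinely respects orientations on both $X$ and $E$ (so that the output lies in $\MCG^+(X,E)$ rather than a larger group), which follows because the family $\cC_L \to U_L$ is a smooth family of oriented pairs.
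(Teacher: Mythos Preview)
Your proposal is correct and matches the paper's argument: the paper establishes the factorization $\rho_L = \res \circ \cP$ via isotopy extension in the discussion immediately preceding the corollary, and the corollary is then left as an immediate consequence of \Cref{theorem:main} exactly as you describe. Your added remarks on orientations and the finite-index sandwich are harmless elaborations of points the paper leaves implicit.
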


\begin{remark}
    When $r > 1$, it is not clear the extent to which the containment $\Mod(E)[\phi_M] \le \cE(X,E)$ is sharp - is every {\em smooth} self-map of the pair $(X,E)$ isotopic to one induced by an isotopy of $E$ through algebraic curves in the linear system? Said differently, any such ``algebraic'' isotopy must preserve the $r$-spin structure $\phi_M$ on $E$. Must this be preserved by an arbitrary smooth isotopy? We think this is a very interesting question worthy of further study. 
    
    It is worth noting that there is one further piece of information available. If $r$ is even and the homology class $[E] \in H_2(X;\Z/2\Z)$ is {\em characteristic} (i.e. represents the second Stiefel-Whitney class), then every diffeomorphism of the pair $(X,E)$ must preserve a {\em Rokhlin quadratic form} on $H_1(E;\Z/2\Z)$, which coincides with the mod-$2$ reduction of the $r$-spin structure $\phi_M$. In particular, $E$ cannot be flexible. This is the case e.g. for plane curves of odd degree. See \cite{hirose2} or \cite{LS} for more discussion on this point.
\end{remark}
See also Hain \cite{hain} for a related discussion in higher dimensions.

\para{Previous work} In the 1980's, Janssen \cite{janssen1,janssen2} and Ebeling \cite{ebeling} (see also the survey of Beauville \cite{beauville}) considered the problem of describing the monodromy action of a linear system on the {\em homology} of the fiber - Janssen studied the odd-dimensional case, and Ebeling the even-dimensional case. Janssen's work shows that (under some ampleness hypotheses weaker than our own) the monodromy is either the full symplectic group $\Sp(2g,\Z)$ or the stabilizer $\Sp(2g,\Z)[q]$ of the mod-$2$ quadratic form obtained from the square root of the adjoint bundle, depending on whether such a square root exists (for a discussion of the connection between $r$-spin structures and quadratic forms over $\Z/2\Z$, see \cite[Section 3.4]{saltertoric}). 
Interest in the topological refinement of this question seems to have begun in the mid 2010's, with the near-simultaneous appearance of \cite{quintic} by the second-named author (treating the example of quintic plane curves), and the work \cite{CL1} of Cr\'etois-Lang, who studied the monodromy problem in toric surfaces by using tropical techniques to construct a large collection of vanishing cycles with controlled intersection patterns. Both of these papers conjectured that the monodromy group should equal an $r$-spin mapping class group in greater generality, but progress at this point was stymied by the lack of any sort of general theory of $r$-spin mapping class groups, particularly a method for constructing generating sets.

The paper \cite{saltertoric} by the second-named author subsequently developed some of this missing basic theory of $r$-spin mapping class groups, and used Cr\'etois-Lang's work to complete the analysis of monodromy groups for linear systems on smooth toric surfaces. Further progress on these questions then stalled, on account of the fact that for more general $X$, it is difficult to get a coherent ``picture'' of how the vanishing cycles for distinct nodal degenerations interact and intersect one another. In the case of $X$ a smooth complete intersection surface, this was resolved by the authors in \cite{CImonodromy} via induction, the basic idea being that if one can understand the monodromy group ``on a subsurface'' in sufficient detail, it is possible to circumvent a need to carefully build a configuration of vanishing cycles ``by hand''. But the method of \cite{CImonodromy} makes essential use of the fact that there are {\em base cases} available: the canonical embeddings of generic curves of genus $3,4,5$ all arise as complete intersections, giving an {\em a priori} understanding of their monodromy groups which can then be propagated inductively to general linear systems on complete intersection surfaces. For a completely arbitrary $(X,L)$, there does not seem to be a good way to relate it to a linear system whose monodromy is already understood.

\para{Overview of the argument I: general remarks} Establishing an equality of the form $\Gamma_L = \Mod(E)[\phi_M]$ has two halves - the containments in either direction. The containment $\Gamma_L \le \Mod(E)[\phi_M]$ (\Cref{lemma:rspincontain}) is straightforward, essentially following immediately once one understands that the set of $r$-spin structures carries an action by the mapping class group.

To establish the other containment $\Mod(E)[\phi_M] \le \Gamma_L$, it is necessary to understand when a set of mapping classes generates an $r$-spin mapping class group. In light of the Picard-Lefschetz formula (cf. \Cref{SS:PL}), $\Gamma_L$ contains the Dehn twist $T_a$ about any vanishing cycle $a \subset E$. It is reasonable to hope that $\Mod(E)[\phi_M]$ itself might be generated by some finite collection of such twists, and that is indeed what the theory shows. Classical results on generating the mapping class group, e.g. the Humphries generating set, require one to have a ``complete picture'' of the surface, and produce a collection $c_1, \dots, c_N$ of simple closed curves whose intersection pattern is precisely and totally specified, in order to conclude that the corresponding Dehn twists generate. As discussed in the previous paragraph, this would be extremely difficult to achieve in our present context. 

Luckily, there exist flexible and readily-satisfied {\em criteria} for a collection of Dehn twists to generate an $r$-spin mapping class group, formulated in terms of an {\em assemblage} of simple closed curves. Essentially, an assemblage describes a topological surface as being built by a sequence of $1$-handle attachments, by taking regular neighborhoods of a sequence of simple closed curves. The ``assemblage generating set criterion'' of \Cref{theorem:assemblages} gives a condition under which the Dehn twists about all of these curves together generate an $r$-spin mapping class group (more accurately, a related notion called a ``framed mapping class group''). The advantage of an assemblage is that precise control over the intersection pattern of the curves is unnecessary: it is only necessary to track how a curve $c_i$ in an assemblage intersects the {\em subsurface} $S_{i-1}$ determined by the previous curves, which is much easier to control. 

\para{Overview II: building vanishing cycles in $\mathbf{\abs{L}=\abs{L_1 \otimes L_2}}$} To summarize, the objective will be to manufacture a collection of vanishing cycles over which one has some modest topological control, and that end up ``filling'' the fiber. To do this, we decompose $L = L_1 \otimes L_2$ as in \Cref{theorem:main}, and study curves in $\abs{L}$ that arise by {\em smoothing} the union $C \cup D$ of smooth transversely-intersecting sections $C \in U_{L_1}$ and $D \in U_{L_2}$. We work throughout based at some $E \in \abs{L_1 \otimes L_2}$ constructed in this way: topologically $E$ decomposes as a union $E = \tilde C \cup \tilde D$, where $\tilde C$, resp. $\tilde D$ are subsurfaces obtained by taking the real oriented blowup of $C$, resp. $D$ at the points of intersection $C \cap D$. See \Cref{SS:CDE} for more details.

The theory of assemblages requires one to start from a ``core'' subsurface of $E$, on which one has a collection of vanishing cycles whose intersection pattern is more tightly controlled than in general. This step is carried out in \Cref{S:core}. The basic method is to ``implant'' isolated plane curve singularities, where there are classical techniques available to understand configurations of vanishing cycles. This is the origin of the requirement that $L_1$ be $6$-jet ample - this allows us in \Cref{SS:planecurvesing} to embed the versal deformation space of the $E_6$ singularity into the linear system, a key step in constructing the core.

From here, the problem is to fill out the rest of $E$ with a sequence of vanishing cycles. The key insight is $E$ can be filled by cycles $a \subset E$ that restrict on both halves $\tilde C, \tilde D$ as a single arc. Vanishing cycles of this type arise when the sections $C \in U_{L_1}$ and $D \in U_{L_2}$ approach a point of tangency (a {\em tacnode singularity}). In our previous work \cite{CImonodromy}, we showed how to construct such tacnodes with {\em complete control} on how the vanishing cycle intersects $\tilde C \subset E$. The inductive framework of \cite{CImonodromy} meant that it was not necessary to understand the intersection with the other half.

Here, to circumvent the need for an inductive hypothesis, we need to construct tacnodal vanishing cycles with control on the intersection with both $\tilde C$ and $\tilde D$. The ``Main Lemma'' \Cref{lemma:main} shows how this can be done. This is the most technically-demanding step in the argument, but the basic idea is to exploit a large auxiliary subgroup of the monodromy, arising by holding $C \in U_{L_1}$ fixed, letting $D \in U_{L_2}$ vary through smooth sections intersecting $C$ transversely, and smoothing. The monodromy of such a loop fixes the halves $\tilde C$ and $\tilde D$ and is describable on $\tilde C$ as a ``surface braid'', obtained by pushing the boundary components of $\tilde C$ around paths on $\tilde C$. A delicate argument in surface topology describes the further subgroup on which the monodromy on the $\tilde D$-side is trivial, and the action of this subgroup on arcs on $\tilde C$. \Cref{lemma:main}, which provides for an abundance of tacnodal vanishing cycles, then follows as a corollary. We remark that this is the only place in the argument in which the hypothesis $\pi_1(X) = 1$ is used.

\para{Organization of the paper} There are two halves to the paper: \Cref{S:rspin,S:smoothing,S:braids,S:monodromy} contain preliminary material, and \Cref{S:core,S:mainlemma,S:mainproof} give the sequence of arguments that culminate in the proof of \Cref{theorem:main}. In part one, \Cref{S:rspin} lays out the theory of $r$-spin and framed mapping class groups. \Cref{S:smoothing} discusses both topological and algebro-geometric aspects of sections of line bundles, especially the technical foundations of {\em smoothing} a union of sections from different line bundles (and related constructions in families). \Cref{S:braids} recalls the theory of surface braid groups and especially the subgroup of {\em simple} braids, which are essential to understand the monodromy elements induced by the smoothing procedure of the previous section. In \Cref{S:monodromy}, certain special subgroups of the monodromy group are constructed using techniques from singularity theory. 

In the second part, \Cref{S:core} expands on the local monodromy calculations of the previous section, using the methods of framed mapping class groups to show the containment of a useful intermediate subgroup. \Cref{S:mainlemma} states and proves the Main Lemma \Cref{lemma:main}, and finally \Cref{S:mainproof} establishes the last necessary lemma, then gives the proof of \Cref{theorem:main}.

\para{Acknowledgements}
The first author would like to thank Dan Abramovich for answering some questions on extending spin structures. The second-named author is supported in part by NSF CAREER grant no. DMS-2338485. He would like to thank \.{I}nan\c{c} Baykur and Eric Riedl for some helpful discussions.

\section{$r$-spin mapping class groups}\label{S:rspin}

\subsection{Framings and $r$-spin structures}\label{SS:WNF}
Here we recall the elements of the theory of framings on (topological) surfaces, mostly following \cite[Section 2]{strata3}. 

Let $S$ be an oriented surface with nonempty boundary. Fix an orientation $\theta$ of $S$ and a Riemannian metric $\mu$; then the structure group of the tangent bundle $TS$ of $S$ reduces to $\SO(2)$.  A {\em framing} of $S$ is a trivialization of the tangent bundle, i.e. an isomorphism of $\SO(2)$-bundles $\phi: TS \cong S \times \R^2$, or more concretely, a pair $\xi_1,\xi_2$ of vector fields that are everywhere linearly independent. Having fixed $\theta, \mu$, a single non-vanishing vector field $\xi$ on $S$ induces a framing of $S$ by taking $\xi_2$ to be the rotation of $\xi$ by $90$ degrees counter-clockwise. Two framings $\phi, \psi$ are said to be {\em isotopic} if the corresponding vector fields $\xi_\phi, \xi_\psi$ are isotopic through non-vanishing vector fields. 

Suppose that $\phi, \psi$ restrict to the same framing $\delta$ on $\partial S$. Then $\phi, \psi$ are said to be {\em relatively isotopic} if they are isotopic through framings restricting to $\delta$ on $\partial S$. With a choice of $\delta$ fixed, we say that $\phi$ is a {\em relative framing} if the restriction of $\phi$ to $\partial S$ is $\delta$. Often the particular $\delta$ is not terribly important, and we will abuse terminology, discussing (isotopy classes of) relative framings without reference to $\delta$. 

\para{(Relative) winding number functions} With $S$ as before a surface with nonempty boundary, let $\cS(S)$ denote the set of isotopy classes of oriented simple closed curves on $S$. Let $\phi$ be a framing of $S$. Then there is a {\em winding number function}
\[
\phi: \cS(S) \to \Z
\]
(abusively using the same symbol; cf. \Cref{prop:wn}) which measures the winding of the forward-pointing tangent vector of $c \in \cS$ relative to the global identification $\phi: TS \cong S \times \R^2$. 

Enumerate the boundary components of $S$ as $\Delta_1, \dots, \Delta_d$, and choose a basepoint $p_i \in \Delta_i$ for each $i$. Let $\cS^+(S)$ denote the set of isotopy classes of simple closed curves on $S$ along with the set of isotopy classes of arcs with boundary contained in $\{p_i\} \subset \partial S$. With slightly more care (see \cite[Section 2]{strata3}), such arcs can be assigned half-integral winding numbers, so that there is an extension of $\phi$ to a {\em relative winding number function}
\[
\phi: \cS^+(S) \to \tfrac{1}{2} \Z.
\]
It is clear that if two framings are (relatively) isotopic, then the (relative) winding number functions coincide. In fact, the (relative) winding number function is a {\em complete} encoding of the (relative) isotopy class, justifying our use of the same symbol for both.

\begin{proposition}[Cf. Proposition 2.4 of \cite{RW}]\label{prop:wn}
    Framings $\phi, \psi$ of a surface $S$ are isotopic if and only if there is an equality of the associated winding number functions. If moreover $\phi|_{\partial S} = \psi|_{\partial S}$, then $\phi$ and $\psi$ are relatively isotopic if and only if there is an equality of the associated relative winding number functions.
\end{proposition}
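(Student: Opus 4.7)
The forward direction is immediate: the winding number associated to a fixed oriented simple closed curve or arc varies continuously in the framing but is $\tfrac{1}{2}\Z$-valued, hence is constant along any isotopy of framings. For the reverse direction, my plan is to identify isotopy classes of framings with a cohomological torsor and reduce equality of framings to the vanishing of a difference class, which is detected on $H_1(S;\Z)$ (resp.\ on $H_1(S,\partial S;\Z)$) by classes of simple closed curves (resp.\ also by arcs between the chosen $p_i$).

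Since $S$ has nonempty boundary it is homotopy equivalent to a $1$-complex, so $TS$ is trivial and at least one framing exists. Any other framing differs from a fixed reference by a map $S \to \SO(2)$, so the set of isotopy classes of framings is a torsor over $[S,\SO(2)] = H^1(S;\Z)$, and two framings $\phi,\psi$ determine a difference class $d(\phi,\psi) \in H^1(S;\Z)$ that vanishes if and only if $\phi$ and $\psi$ are isotopic. The core computation is that for any oriented simple closed curve $c\subset S$,
\[
\phi(c) - \psi(c) = \langle d(\phi,\psi), [c]\rangle,
\]
obtained by restricting the difference map $S \to \SO(2)$ to $c$ and computing its degree. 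Since $H_1(S;\Z)$ is generated by isotopy classes of oriented simple closed curves (e.g.\ a geometric symplectic basis together with the boundary components), equality of the winding number functions on $\cS(S)$ forces $d(\phi,\psi)$ to annihilate a generating set of $H_1$, whence $d(\phi,\psi)=0$.

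For the relative version, with the boundary restriction $\delta$ fixed, relative isotopy classes of framings form a torsor over $H^1(S,\partial S;\Z)$, and by Poincar\'e--Lefschetz duality $H_1(S,\partial S;\Z)$ is generated by classes of simple closed curves together with arcs based at the $p_i \in \Delta_i$. The pairing formula extends to arcs once one checks that the half-integer offsets in the individual winding numbers cancel under the assumption $\phi|_{\partial S}=\psi|_{\partial S}$, leaving an integer difference equal to $\langle d_{\mathrm{rel}}(\phi,\psi),[\alpha]\rangle$ for every arc $\alpha$. I expect the main technical obstacle to be the bookkeeping here: verifying the pairing formula with correct sign conventions and confirming the half-integer cancellation for arcs. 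Neither step is deep, but each requires care, and the cancellation is precisely what is encoded in passing from $H^1(S;\Z)$ to $H^1(S,\partial S;\Z)$.
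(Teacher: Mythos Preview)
Your argument is correct and is essentially the standard obstruction-theoretic proof: identify (relative) isotopy classes of framings with a torsor over $H^1(S;\Z)$ (resp.\ $H^1(S,\partial S;\Z)$), verify that the winding number function computes the evaluation pairing against $H_1$, and use that $H_1$ is spanned by curves (resp.\ curves and arcs). The one place to be slightly careful is the relative torsor identification: once $\phi|_{\partial S} = \psi|_{\partial S}$, the difference map $S \to \SO(2)$ is identically $1$ on $\partial S$, so it descends to $S/\partial S$ and gives a class in $H^1(S,\partial S;\Z)$; this is exactly why the half-integer contributions cancel on arcs, as you anticipated.

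The paper itself does not prove this statement. It simply records it with a reference to \cite{RW} (Randal-Williams), where the same obstruction-theoretic argument appears. So your approach is not merely compatible with the paper's---it \emph{is} the argument the paper is implicitly invoking. There is nothing to compare; you have supplied what the paper outsources.
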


\para{Properties of winding number functions} Winding number functions satisfy some properties which turn out to tightly constrain their behavior.

\begin{lemma}[Cf. Lemma 2.4 of \cite{strata3}]\label{lemma:WNFprops}
Let $(S,\phi)$ be a framed surface, and let $\phi: \cS(S) \to \Z$ be the associated relative winding number function. Then $\phi$ satisfies the following properties:
\begin{enumerate}
    \item (Twist-linearity) Let $c \subset S$ be a simple closed curve, and let $a \in \cS^+(S)$ be either a curve or an arc. Let $T_c\in \Mod(S)$ denote the (right-handed) Dehn twist about $c$. Then
    \[
    \phi(T_c(a)) = \phi(a) + \pair{a,c}\phi(c),
    \]
    where $\pair{\cdot,\cdot}: H_1(S,\partial S; \Z) \times H_1(S;\Z) \to \Z$ denotes the relative intersection pairing.

    \item (Homological coherence) Let $S' \subset S$ denote a subsurface bounded by the simple closed curves $c_1, \dots, c_n$. Orient each $c_i$ so that $S'$ lies to the left. Then
    \[
    \sum_{i = 1}^n \phi(c_i) = \chi(S'),
    \]
    where $\chi(S')$ denotes the Euler characteristic of $S'$. 
\end{enumerate}
\end{lemma}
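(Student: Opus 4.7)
The plan is to prove the two properties essentially independently, in each case by reducing to an explicit local computation.

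For twist-linearity, the plan is to work with geometric representatives. Place $a$ and $c$ in general position, meeting transversely at a finite set of points with signs $\varepsilon_p \in \{\pm 1\}$ determined by the orientations. The Dehn twist image $T_c(a)$ admits a concrete representative obtained by traversing $a$ and, at each $p \in a \cap c$, inserting a small detour along a parallel copy of $c$ traversed in the direction prescribed by $\varepsilon_p$. After smoothing the corners at each insertion, the total angular variation of the forward-pointing tangent vector (measured against $\phi$) decomposes as the winding of $a$, plus a contribution of $\varepsilon_p\, \phi(c)$ from each insertion (using that reversing orientation negates the winding number), plus smoothing corrections at each pair of corners that cancel by construction. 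Summing $\sum_p \varepsilon_p = \langle a, c \rangle$ gives the stated identity, whether $a$ is a curve or an arc; the half-integer contributions that appear when $a$ is an arc enter only at the two endpoints on $\partial S$ and are unaffected by the Dehn twist.

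For homological coherence, the plan is a framed Poincaré--Hopf argument on $S'$. The framing $\phi$ restricts to a nonvanishing section $\xi_\phi$ of $TS|_{S'}$. Choose an auxiliary smooth vector field $\eta$ on $S'$ with only isolated, nondegenerate interior zeros, and which along each boundary component $c_i$ coincides with the unit forward tangent to $c_i$ in the orientation for which $S'$ lies to the left. By the version of Poincaré--Hopf for compact surfaces with boundary-tangent vector fields, the sum of indices of $\eta$ equals $\chi(S')$. On the other hand, since $\xi_\phi$ is globally nonvanishing on $S'$, the same index sum can be computed as the total winding of $\eta$ relative to $\xi_\phi$ along $\partial S'$. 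By the very definition of $\phi$, the winding of the forward tangent $\eta|_{c_i}$ relative to $\xi_\phi|_{c_i}$ is $\phi(c_i)$, so summing yields $\sum_i \phi(c_i) = \chi(S')$.

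I expect the main obstacle to be bookkeeping: keeping the sign conventions straight (right-handed Dehn twist, the orientation convention that $S'$ lies to the left of each $c_i$, the direction in which the tangent vector of the reversed curve rotates) and handling the half-integer shifts at the endpoints of arcs in item (1). The underlying geometric content is classical --- twist-linearity is the winding-number avatar of the Picard--Lefschetz formula, and homological coherence is a form of the Poincaré--Hopf / Gauss--Bonnet theorem for framed surfaces --- so the task is essentially to verify that these classical facts transcribe faithfully into the framework of relative winding number functions as in \cite[Section 2]{strata3}.
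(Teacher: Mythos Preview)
The paper does not supply its own proof of this lemma; it is stated with a citation to \cite[Lemma 2.4]{strata3} and then used as a black box throughout. So there is no ``paper's proof'' to compare against.

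That said, your sketch is correct and is the standard argument one would expect to find in \cite{strata3} or any treatment of winding number functions. For (1), representing $T_c(a)$ by inserting a parallel copy of $c$ at each transverse intersection point, oriented according to the local sign $\varepsilon_p$, and observing that the corner-smoothing contributions cancel in pairs, is exactly the right picture; the sum $\sum_p \varepsilon_p = \langle a,c\rangle$ then gives the formula. For (2), the Poincar\'e--Hopf argument comparing a boundary-tangent vector field against the framing section $\xi_\phi$ is the canonical proof of homological coherence. The caveats you raise (sign bookkeeping, half-integer endpoint contributions for arcs) are genuine but routine, and you have correctly identified them as the only places requiring care.
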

\begin{remark}\label{remark:reversible}
    In some sources, winding number functions are shown (or axiomatized) to possess a third property of {\em reversibility}, namely $\phi(\bar a) = - \phi(a)$, where $\bar a \in \cS(S)$ is the same curve as $a$ equipped with the opposite orientation. Note however that for $a$ a simple closed curve, this {\em follows} from homological coherence, as applied to an annulus bounded by two isotopic curves $a, \bar a$. Reversibility also holds for arcs, but we will not need that here.
\end{remark}

\para{$r$-spin structures} Topologically, an $r$-spin structure is a sort of ``mod-$r$ framing'' on a surface. They arise naturally in the context of algebraic geometry, although the connection with framings is initially obscure. Let $C$ be a Riemann surface and let $\omega_C$ be its canonical bundle. An {\em $r$-spin structure} is defined to be an $r^{th}$ root of $\omega_C$, i.e. a line bundle $L \in \Pic(C)$ (necessarily of degree $(2g-2)/r$) such that 
\[
L^{\otimes r} \cong \omega_C.
\]
In light of the fact that the real $2$-plane bundle underlying $\omega_C$ is the cotangent bundle $T^*C$, it is perhaps not surprising that $r$-spin structures should connect with the theory of framings. This is especially clear from the point of view of winding number functions.

\begin{proposition}
    Let $C$ be a Riemann surface, and let $L \in \Pic(C)$ be an $r$-spin structure. As above, let $\cS(C)$ denote the set of isotopy classes of oriented simple closed curves on $C$. Then $L$ determines a ``mod-$r$ winding number function''
    \[
    \phi_L: \cS(C) \to \Z/r\Z
    \]
    that satisfies the twist-linearity and homological coherence properties of \Cref{lemma:WNFprops}.
\end{proposition}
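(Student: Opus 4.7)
The plan is to promote a mod-$r$ winding number function from the integer-valued theory developed earlier in the section, using a meromorphic section of $L$ to produce an explicit vector field. Choose a nonzero meromorphic section $s$ of $L$, with finite zero-and-pole locus $Z \subset C$. On $C \setminus Z$ the section $s$ is nowhere vanishing, so $s^{\otimes r}$ is a nowhere vanishing holomorphic section of $L^{\otimes r} \cong \omega_C$. Identifying $\omega_C = T^*C$ with $TC$ via the Riemannian metric, this yields a nonvanishing smooth vector field $\xi_s$ on $C \setminus Z$, and hence, by the discussion at the start of the section, an integer-valued winding number function $\phi_s \colon \cS(C \setminus Z) \to \Z$ satisfying twist-linearity and homological coherence. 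For $c \in \cS(C)$ I would isotope a representative off of the finite set $Z$ and declare $\phi_L(c) := \phi_s(c) \pmod r$.

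The bulk of the work is then in checking well-definedness, namely that the answer depends neither on the choice of $s$ nor on the choice of representative of $c$ disjoint from $Z$. For independence of $s$, two nonzero meromorphic sections of $L$ differ by multiplication by a nonzero meromorphic function $f$, and the resulting vector fields differ by multiplication by $f^r$. The winding of $f^r$ along any loop disjoint from the zeros and poles of $f$ is $r$ times the winding of $f$, hence trivial mod $r$, and so $\phi_s$ and $\phi_{fs}$ agree modulo $r$ on overlapping domains. For independence of representative, an ambient isotopy between two disjoint-from-$Z$ representatives may cross points of $Z$, but crossing a zero of $s$ of order $k$, i.e. a zero of $\xi_s$ of index $rk$, changes $\phi_s$ by $\pm rk$, and an analogous analysis handles poles. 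The key point underlying both checks is that $\xi_s = s^{\otimes r}$ has all of its singularity indices divisible by $r$.

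Once well-definedness is in hand, both stated properties transfer routinely from the integer-valued setting. For twist-linearity, I would pick $s$ whose zero-pole locus avoids $c$, $a$, and $T_c(a)$, apply the integer twist-linearity of $\phi_s$ from \Cref{lemma:WNFprops}, and reduce mod $r$. For homological coherence on a subsurface $S' \subset C$, I would pick $s$ nonvanishing on $\partial S'$ and apply the integer statement to the punctured subsurface $S' \setminus Z$: Poincar\'e--Hopf contributes correction terms equal to the indices of $\xi_s$ at its singularities in $S'$, each divisible by $r$, so the congruence $\sum_i \phi_L(c_i) \equiv \chi(S') \pmod r$ survives. I expect the main obstacle to be the well-definedness argument of the second paragraph, in particular the careful bookkeeping of how $\phi_s$ changes under isotopies that cross the singularities of $\xi_s$; everything else is a transparent reduction to the already-established integer-valued theory.
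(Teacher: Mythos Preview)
The paper does not give a self-contained proof here; it simply cites \cite[Sections 2,3]{quintic}. Your construction---choose a meromorphic section of $L$, take its $r^{th}$ power to obtain a meromorphic section of $\omega_C$, dualize to a vector field on the complement of a finite set whose singularities all have index divisible by $r$, and reduce the resulting integer winding number function mod $r$---is the standard one, and is what is carried out in the cited reference. Your well-definedness checks and the transfer of twist-linearity and homological coherence are correct in outline.

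One small stylistic point: invoking the Riemannian metric to identify $\omega_C = T^*C$ with $TC$ is unnecessary. A nowhere-vanishing section of $\omega_C = (TC)^*$ already trivializes $TC$ as a complex line bundle (equivalently, the dual vector field $\xi$ is determined by $\omega(\xi) = 1$), which is all you need to define winding numbers. This avoids any worry about whether the metric identification respects the relevant structure. Also, in the homological coherence step, be a little careful with the sign conventions relating the winding number along a small circle $\delta_j$ to the index $I_j$ and to the Euler characteristic defect $\chi(S' \setminus Z) = \chi(S') - |Z \cap S'|$; the constants do cancel, leaving exactly a multiple-of-$r$ correction, but it is worth writing out once.
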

\begin{proof}
    See \cite[Sections 2,3]{quintic}.
\end{proof}

\subsection{$r$-spin mapping class groups and their generating sets} From this point forward, we will shift our perspective, prioritizing the role of mod-$r$ winding number functions and suppressing their origins as roots of the canonical bundle. As we have seen, a framing gives rise to a $\Z$-valued winding number function. Accordingly, we will allow $r = 0$ as well, in which case it is understood that the winding number function originates with a framing as opposed to a root of the canonical bundle. Passing to the level of winding number functions makes the action of the mapping class group on the set of $r$-spin structures transparent: given a winding number function $\phi: \cS(S) \to \Z/r\Z$ and a mapping class $f \in \Mod(S)$, define the winding number function $f \cdot \phi$ via
\begin{equation}\label{eq:wnfaction}
    (f \cdot \phi)(c) = \phi(f^{-1}(c))
\end{equation}
for any $c \in \cS(S)$. The same formula defines an action of $\Mod(S)$ on the set of {\em relative} winding number functions $\phi: \cS^+(S) \to \Z$.

\begin{definition}[$r$-spin mapping class group, framed mapping class group]
    Let $S$ be a surface and let $\phi: \cS(S) \to \Z/r\Z$ be a mod-$r$ winding number function on $S$ for some $r \ge 0$. The {\em $r$-spin mapping class group}, written
    \[
    \Mod(S)[\phi] \le \Mod(S),
    \]
    is defined as the stabilizer of $\phi: \cS(S) \to \Z/r\Z$ under the action \eqref{eq:wnfaction}, i.e. as the set of mapping classes that preserve all $\phi$-winding numbers. If $\phi: \cS^+(S) \to \Z$ is (the relative winding number function associated to) a relative framing, there is an analogous {\em framed mapping class group}, again defined as the stabilizer of $\phi$ and still written $\Mod(S)[\phi]$. 
\end{definition}

\begin{remark}
    Though it is not strictly relevant in this paper, let us emphasize the distinction between a {\em framed mapping class group} and an {\em $r$-spin mapping class group for $r = 0$}. By convention, a {\em framed mapping class group} always denotes the stabilizer of a framing up to {\em relative} isotopy, combinatorially encoded as the stabilizer of a {\em relative} winding number function. An $r$-spin mapping class group always denotes the stabilizer of a $\Z/r\Z$-valued ordinary winding number function (i.e. only simple closed curves, and not arcs, are assigned winding numbers). For $r= 0$, an $r$-spin mapping class group is the stabilizer of an isotopy class of framing where the isotopy is allowed to be nontrivial on the boundary. 
    In this paper, when working with surfaces with boundary (those that support any framing at all), we will exclusively work with framed mapping class groups. 
\end{remark}

\para{Admissible curves} Dehn twists are a particularly simple type of mapping class, and it is natural to wonder which Dehn twists appear in a given $r$-spin mapping class group. There is a simple characterization.
\begin{definition}[Admissible curve]
    Let $S$ be a surface, and let $\phi: \cS(S) \to \Z/r\Z$ be an $r$-spin structure for some $ r\ge 0$. A simple closed curve $a \subset S$ is said to be {\em admissible} if $a$ is nonseparating and if $\phi(a) = 0$ (necessarily for either choice of orientation; see \Cref{remark:reversible}). The same definition applies in the relative setting.
\end{definition}

\begin{lemma}\label{lemma:admissibletwists}
    Let $\phi$ be an $r$-spin structure for some $r \ge 0$ or a relative framing, and let $a \subset S$ be a nonseparating simple closed curve. Then $T_a \in \Mod(S)[\phi]$ if and only if $a$ is admissible.
\end{lemma}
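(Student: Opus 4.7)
The plan is to appeal directly to the twist-linearity property of Lemma~\ref{lemma:WNFprops}(1). By the action formula \eqref{eq:wnfaction}, the condition $T_a \in \Mod(S)[\phi]$ is equivalent to the identity $\phi(T_a(c)) = \phi(c)$ holding for every $c \in \cS(S)$ (or in $\cS^+(S)$ in the framed/relative setting). Both directions of the lemma then fall out of the twist-linearity formula
\[
\phi(T_a(c)) = \phi(c) + \langle c, a \rangle\, \phi(a).
\]

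For the ``if'' direction, suppose $a$ is admissible. Then $\phi(a) = 0$, so the correction term in the formula above vanishes identically in $c$, and $T_a$ lies in the stabilizer of $\phi$.

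For the converse, the strategy is to produce a single witness curve that detects $\phi(a)$. Since $a$ is nonseparating, its homology class $[a] \in H_1(S; \Z)$ is primitive, so by a standard surface-topology argument (dualize via Poincar\'e--Lefschetz, then realize the resulting primitive class by an embedded simple closed curve) there exists $b \in \cS(S)$ with $\langle b, a \rangle = 1$. Applying twist-linearity to this $b$ gives
\[
\phi(T_a(b)) - \phi(b) = \phi(a);
\]
the assumption $T_a \in \Mod(S)[\phi]$ forces the left-hand side to vanish, so $\phi(a) = 0$ and $a$ is admissible.

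There is no genuine obstacle here once Lemma~\ref{lemma:WNFprops} is in hand; the statement is essentially formal. The only point requiring a sentence of care is uniformity across the $r$-spin case (with $\Z/r\Z$-valued winding numbers) and the framed case (with $\tfrac{1}{2}\Z$-valued relative winding numbers on arcs). Since $a$ is a simple closed curve and we only need to pair it against another simple closed curve $b$, the relevant winding numbers are honest integers (respectively, elements of $\Z/r\Z$), and the same argument applies uniformly in both regimes.
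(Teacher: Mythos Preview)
Your proof is correct and follows essentially the same approach as the paper's own proof: both directions are immediate from twist-linearity, and the converse is witnessed by a curve $b$ with $\langle b, a \rangle = 1$. The only cosmetic difference is that the paper phrases the converse as a contrapositive (assume $a$ not admissible, exhibit $b$ with $\phi(T_a(b)) \ne \phi(b)$), whereas you argue directly.
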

\begin{proof}
    If $a$ is admissible, then the twist-linearity formula of \Cref{lemma:WNFprops} reduces to show that $\phi(T_a(c)) = \phi(c)$ for all $c \in \cS^{(+)}(S)$, and hence $T_a \in \Mod(S)[\phi]$. Conversely, if $a$ is not admissible, then there exists an oriented simple closed curve $b \subset S$ such that $\pair{a,b} = 1$; then
    \[
    \phi(T_a(b)) = \phi(b) + \phi(a) \ne \phi(b),
    \]
    showing that $T_a$ does not preserve $\phi$.
\end{proof}

\para{The framed change-of-coordinates principle} The classical {\em change-of-coordinates principle}, as discussed in \cite[Section 1.3]{FM}, lays out conditions under which a configuration of curves or subsurfaces exists, subject to given topological constraints (e.g. as an illustrative example, it ensures that if $a \subset S$ is any nonseparating simple closed curve, there exists a simple closed curve $b \subset S$ for which $i(a,b) = 1$). On a technical level, the change-of-coordinates principle is a reflection of various {\em transitivity} results for the action of the mapping class group on topological configurations.

The {\em framed change-of-coordinates principle} is an analogous principle when the surface $S$ is equipped with a framing or $r$-spin structure; it again follows from an understanding of transitivity properties of framed mapping class groups acting on simple closed curves etc. There is one obvious new obstruction to transitivity: if $\phi$ is an $r$-spin structure and $a,b \subset S$ are simple closed curves, then some $f \in \Mod(S)[\phi]$ exists for which $f(a) = b$ only if $\phi(a) = \phi(b)$. There is a second, more subtle {\em global} obstruction known as the ``Arf invariant'', which is relevant only in understanding orbits of ``large'' configurations (e.g. when the curves involved span $H_1(S;\Z)$). In this paper we will not need to invoke the framed change-of-coordinates principle in any setting where the Arf invariant is relevant, and so we formulate a version of the principle that sidesteps this by assuming the existence of a complementary subsurface of positive genus. See \cite[Proposition 2.15]{strata3} for full details. 

To give a precise formulation of the framed change-of-coordinates principle, we will restrict ourselves to a precise notion of ``configuration'' of simple closed curves. The notion of {\em $E$-arboreality} also defined below will be used momentarily in the definition of an {\em assemblage}.

\begin{definition}[Simple configuration, arboreal, of type $E$, filling]\label{def:simpleconfig}
    Let $S$ be a surface. A {\em simple configuration} on $S$ is a set $\cC = \{c_1, \dots, c_k\}$ of simple closed curves, subject to the condition that $i(c_i,c_j) \le 1$ for all pairs of curves $c_i, c_j \in \cC$ (here and throughout, $i(\cdot, \cdot)$ denotes the geometric intersection number).

    A simple configuration $\cC$ has an associated {\em intersection graph} $\Lambda_\cC$ with vertex set $\cC$ and with $c_i, c_j$ joined by an edge if and only if $i(c_i,c_j) = 1$. The {\em embedded type} of $\cC$ is the data of $\Lambda_\cC$ together with the homeomorphism type of the complement $S \setminus \cC$.
    
    A simple configuration is {\em arboreal} if $\Lambda_\cC$ is a tree, and is {\em $E$-arboreal} if moreover $\Lambda_\cC$ contains the $E_6$ Dynkin diagram as a full subgraph. $\cC$ is {\em spanning} if there is a deformation retraction of $S$ onto the union of the curves in $\cC$.
\end{definition}

As usual, a {\em $k$-chain} is a simple configuration $c_1, \dots, c_k$ for which $i(c_i, c_{i+1}) = 1$ and $i(c_i, c_j) = 0$ otherwise. 

\begin{proposition}[Framed change-of-coordinates principle]
    \label{prop:ccp}
    Let $(S, \phi)$ be a surface of genus $g \ge 2$ equipped with a (relative) mod-$r$ winding number function $\phi$ for some $r \ge 0$. Let $\cC' = \{c_1', \dots, c_k'\}$ be a simple configuration of curves on $S$, and suppose there is a subsurface $S' \subset S$ of genus $g(S') \ge 1$ such that $c_i' \in S \setminus S'$ for all $i$. For $1 \le i \le k$, let $w_i \in \Z/r\Z$ be given, and suppose there exists some (relative) mod-$r$ winding number function $\psi$ with $\psi(c_i') = w_i$ for all $i$. Then there is a simple configuration $\cC = \{c_1, \dots, c_k\}$ of the same embedded type as $\cC$ and with $\phi(c_i) = w_i$ for all $i$.
\end{proposition}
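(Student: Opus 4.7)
The plan is to deduce the existence of $\cC$ from a transitivity statement: if we can find $f \in \Mod(S)$ with $f\cdot \psi = \phi$, then setting $\cC := f(\cC')$ automatically yields a configuration of the same embedded type as $\cC'$ (since embedded type is a homeomorphism invariant), and the action formula \eqref{eq:wnfaction} gives
\[
\phi(f(c_i')) \;=\; (f \cdot \psi)(f(c_i')) \;=\; \psi(f^{-1}(f(c_i'))) \;=\; \psi(c_i') \;=\; w_i
\]
for each $i$, as required. The problem thus reduces to showing that $\phi$ and $\psi$ lie in the same $\Mod(S)$-orbit on the set of (relative) mod-$r$ winding number functions.

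For this, I would invoke the classification of these orbits for $g \ge 2$, after first aligning boundary data in the framed case via an isotopy supported in a collar of $\partial S$ disjoint from $\cC'$ (which does not affect the interior values $\psi(c_i')$). For $r$ odd the action of $\Mod(S)$ is transitive, and one is immediately done. For $r$ even (and in the framed case), orbits with fixed boundary data are parameterized by a single $\Z/2\Z$-valued Arf invariant, so the remaining step is to replace $\psi$ by a winding number function $\tilde\psi$ satisfying $\tilde\psi(c_i') = w_i$ for all $i$ and $\Arf(\tilde\psi) = \Arf(\phi)$.

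The Arf-invariant adjustment is where the positive-genus hypothesis on $S' \subset S \setminus \cC'$ plays its essential role. Choose simple closed curves $\alpha, \beta \subset S'$ with $i(\alpha,\beta) = 1$. By the twist-linearity formula of \Cref{lemma:WNFprops}, the winding number function $T_\alpha \cdot \psi$ differs from $\psi$ only on curves with nonzero algebraic intersection with $\alpha$; since $c_i' \subset S \setminus S'$ is disjoint from $\alpha$, we have $(T_\alpha \cdot \psi)(c_i') = \psi(c_i') = w_i$, and likewise for $T_\beta$ and for Dehn twists about boundary-parallel curves of $S'$. A standard computation with the Arf formula shows that a suitable such combination of twists in $S'$ toggles the Arf invariant, producing the desired $\tilde\psi$.

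Applying transitivity of $\Mod(S)$ on winding number functions with fixed boundary data and Arf invariant now produces $f \in \Mod(S)$ with $f\cdot \tilde\psi = \phi$, and the initial computation completes the argument. The main obstacle is the Arf-toggling step: one must verify that the positive-genus hypothesis on $S'$ is genuinely enough to move independently between Arf classes without perturbing the values on $\cC'$. Both of these are immediate consequences of twist-linearity and the standard classification of spin/framed mapping class group orbits, which is why the proposition can be recorded as a direct upgrade of the classical change-of-coordinates principle to the framed setting, as in \cite[Proposition 2.15]{strata3}.
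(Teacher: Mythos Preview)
The paper itself does not prove this; it simply defers to \cite[Proposition 2.15]{strata3}. Your overall strategy---produce $f \in \Mod(S)$ with $f\cdot\psi = \phi$ and set $\cC = f(\cC')$---is the right one, and the reduction to matching Arf invariants (after aligning boundary data) is correct.

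The gap is in the Arf-toggling step. You propose to pass from $\psi$ to $T_\alpha \cdot \psi$ for $\alpha \subset S'$, but $T_\alpha \cdot \psi$ is by definition in the same $\Mod(S)$-orbit as $\psi$, and the Arf invariant is precisely an orbit invariant---so no combination of mapping classes acting on $\psi$ can change it. What the positive-genus hypothesis on $S'$ actually buys is the freedom to modify $\psi$ \emph{as a function}, not via the $\Mod(S)$-action: two winding number functions differ by a class in $H^1(S;\Z/r\Z)$, and adding a class $\eta$ Poincar\'e dual to a cycle supported in $S'$ leaves each $\psi(c_i')$ untouched (since $c_i' \cap S' = \emptyset$) while, for suitable $\eta$ built from a symplectic pair in $S'$, toggling the Arf parity. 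With $\tilde\psi = \psi + \eta$ so chosen, your transitivity argument then goes through.
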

\begin{proof}
    See \cite[Proposition 2.15]{strata3}.
\end{proof}

\para{Assemblages} The identification in \Cref{theorem:main} of the monodromy of a linear system with an $r$-spin mapping class group is made possible by the existence of a flexible criterion for a set of mapping classes to generate some such $\Mod(S)[\phi]$. This is formulated in terms of a notion known an an {\em assemblage}, which we recall here.

The work of \cite{strata3} shows that the Dehn twists about curves in an $E$-arboreal spanning configuration (cf. \Cref{def:simpleconfig}) generate an associated framed mapping class group, but in fact, something much more general holds. By definition, the curves in a spanning configuration are constrained to pairwise intersect at most once. In practical applications, this would be cumbersome to verify, and moreover, it is unnecessarily restrictive. The notion of an {\em assemblage} relaxes the intersection condition, as follows. 

\begin{definition}[$h$-assemblage of type $E$, core]\label{def:assemblage}
    Let $S$ be a surface. A set $\cC = \{c_1, \dots, c_k, c_{k+1},\dots, c_\ell\}$ is said to be an {\em $h$-assemblage of type $E$} (or just an {\em assemblage}, if total precision is unnecessary) if the conditions below hold. Throughout, let  $S_j$ denote a regular neighborhood of $c_1, \dots, c_j$. 
    \begin{enumerate}
        \item $S_k$ is a subsurface of genus $h$, and the curves $c_1,\dots, c_k$ form an $E$-arboreal spanning configuration on $S_k$,
        \item For $k \le j \le \ell-1$, the intersection $c_{j+1} \cap S_j$ is a single arc (possibly, though not necessarily, with both endpoints lying on the same component of $\partial S_j$),
        \item There is a deformation retraction of $S$ onto $S_\ell$. 
    \end{enumerate}
    The subsurface $S_k$ is called the {\em core} of the assemblage.
\end{definition}

Thus in an assemblage, the curves $c_1, \dots, c_k$ lying in the core are restricted to satisfy $i(c_i,c_j) \le 1$, but further curves $c_j$ for $k+1 \le j \le \ell$ must only satisfy the far weaker condition of (2), that adding $c_j$ enlarges the supporting surface by performing a $1$-handle attachment; they are free to intersect prior curves $c_i$ for $i < j$ arbitrarily.

One useful property of an assemblage is that it determines a unique isotopy class of framing for which every curve in the assemblage is admissible. Note here the absence of the word ``relative'': an assemblage does {\em not} determine some preferred restriction of the framing to $\partial S$. 

\begin{definition}[Compatible framing]\label{def:compatible}
    Let $\cC = \{c_1, \dots, c_k, \dots, c_\ell\}$ be an assemblage on a surface $S$. A framing $\phi$ of $S$ is said to be {\em compatible with $\cC$} if every $c_i$ is admissible for $\phi$. 
\end{definition}

\begin{lemma}
    \label{lemma:compatibleframing}
    Let $\cC$ be an assemblage on a surface $S$. Then there is a unique isotopy class of framing $\phi$ compatible with $\cC$.
\end{lemma}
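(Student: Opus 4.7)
The plan is to induct along the filtration $S_k \subset S_{k+1} \subset \dots \subset S_\ell$ furnished by the assemblage. The key observation in both the base case and inductive step is that isotopy classes of framings on a surface with boundary form a torsor over $H^1(-;\Z)$, so the winding number function $c \mapsto \phi(c)$ transforms affinely under the action by $H^1$; the proof then reduces to a check of linear independence/basis properties of the classes $[c_i]$ at each stage.

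For the base case, I would argue that on the core $S_k$ the classes $[c_1], \dots, [c_k]$ form a $\Z$-basis of $H_1(S_k; \Z)$. Since $S_k$ deformation retracts onto the 1-complex $X = \bigcup c_i$ whose intersection graph is a tree (with $k-1$ vertices at the intersection points and $\sum_i \deg(c_i) = 2(k-1)$ arcs from the curves), a direct cellular computation gives $\chi(X) = -(k-1)$ and hence $b_1(S_k) = k$. A straightforward cellular cycle calculation, or equivalently a spanning-tree argument on $X$, shows that the $k$ classes $[c_i]$ are independent and therefore a basis. With this basis in hand, the linearized evaluation map $H^1(S_k; \Z) \to \Z^k$, $\alpha \mapsto (\alpha([c_1]), \dots, \alpha([c_k]))$, is an isomorphism, so the affine evaluation $\phi \mapsto (\phi(c_1), \dots, \phi(c_k))$ on the torsor of framings is bijective, producing a unique framing $\phi_k$ with all $\phi_k(c_i) = 0$.

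For the inductive step, suppose a unique framing $\phi_j$ compatible with $c_1, \dots, c_j$ has been constructed on $S_j$ (for $j \ge k$). By definition of an assemblage, $S_{j+1}$ is obtained from $S_j$ by attaching a $1$-handle along the arc $c_{j+1} \setminus \mathrm{int}(S_j)$, so that cellularly we have $H_1(S_{j+1}; \Z) = H_1(S_j; \Z) \oplus \Z[c_{j+1}]$ and dually $H^1(S_{j+1}, S_j; \Z) \cong \Z$, generated by the dual of $[c_{j+1}]$. The set of framings on $S_{j+1}$ extending $\phi_j$ forms a torsor over this relative $H^1$, and the constraint $\phi_{j+1}(c_{j+1}) = 0$ is a single $\Z$-linear condition which is solvable uniquely by the same torsor argument as before. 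Since $\phi_{j+1}$ restricts to $\phi_j$ on $S_j$, all previous $c_i$ automatically remain admissible (they remain nonseparating in $S_{j+1}$ because subsequent $1$-handle attachments never kill an $H_1$-class). Iterating until $j = \ell$ and using the deformation retraction $S \simeq S_\ell$ yields the unique compatible framing on $S$.

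The technical obstacle is confined to the base case: verifying that $[c_1], \dots, [c_k]$ genuinely form a basis of $H_1(S_k; \Z)$ rather than merely a generating set. This is slightly nontrivial because the intersection pairing on $H_1(S_k; \Z)$ is degenerate for a surface with boundary, so one cannot just invert the intersection matrix. However the cellular chain complex of $X = \bigcup c_i$ is sufficiently transparent that the linear independence follows by inspection once one chooses a spanning tree of $X$ whose non-tree edges are naturally indexed by the curves $c_i$ (one non-tree edge per curve, since each $c_i$ with degree $d_i$ in the intersection tree contributes $d_i$ cellular arcs, of which $d_i - 1$ lie in any natural spanning tree extending the intersection tree). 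The $E$-arboreality is not itself needed at this step — only arboreality and spanning — so the lemma in fact holds for any arboreal spanning assemblage.
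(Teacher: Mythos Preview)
Your argument is correct and uses the same inductive 1-handle extension idea as the paper, but organized differently. The paper exploits the fact that the core configuration $c_1,\dots,c_k$ is \emph{arboreal} to reorder the curves so that the ``single arc'' condition $c_{j+1}\cap S_j$ holds for \emph{all} $1\le j\le \ell-1$, not just $j\ge k$; this pushes the base case down to $S_1$ (an annulus, where it is trivial) and makes the entire proof a uniform string of 1-handle extensions, each handled by a cited lemma. You instead treat the core $S_k$ as a monolithic base case, which obliges you to show that $[c_1],\dots,[c_k]$ form a $\Z$-basis of $H_1(S_k)$ --- and note you really do need \emph{basis}, not just linear independence, since existence of the framing requires surjectivity of the evaluation map. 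Your spanning-tree sketch does establish this, but the cleanest way to see it is exactly the paper's reordering trick: ordering along the tree exhibits $X=\bigcup c_i$ as an iterated one-point union of circles, so $X\simeq\bigvee_{i=1}^k S^1$ with the $[c_i]$ as the evident basis. What your route buys is self-containment: you prove the 1-handle extension step directly via the $H^1(S_{j+1},S_j;\Z)$-torsor structure rather than citing it. What the paper's route buys is a one-line base case and a uniform induction.
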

\begin{proof}
    Since the curves $c_1, \dots, c_k$ in the core form an {\em arboreal} spanning configuration on $S_k$, we can assume they are ordered in such a way that $c_{j+1} \cap S_{j}$ is a single arc for {\em all} $1 \le j \le \ell-1$, not just for $j \ge k$ as stipulated in property (3) of \Cref{def:assemblage}. It is clear that there is a framing $\phi_1$ of $S_1$ for which $c_1$ is admissible. Now proceed by induction, supposing the existence of a framing $\phi_j$ on $S_j$ for which $c_1, \dots, c_j$ are admissible. $S_{j+1}$ is constructed from $S_j$ by attaching a 1-handle $I \times I$ along $\partial I \times I$, a neighborhood of $c_{j+1} \cap \partial S_j$. \cite[Lemma 3.20]{CImonodromy} then shows that there is a unique isotopy class of extension of $\phi_j$ to a framing $\phi_{j+1}$ of $S_{j+1}$ for which which the attaching curve $c_{j+1}$ is admissible.
    \end{proof}

\para{Generating the framed mapping class group via assemblages} The Dehn twists about the curves in an $h$-assemblage of type $E$ for $h \ge 5$ generate the associated framed mapping class group:
        \begin{theorem}[Theorem B.II of \cite{strata3}]\label{theorem:assemblages}
    Let $S$ be a surface of genus $g \ge 5$ with $n \ge 1$ boundary components. Let $\phi$ be a relative framing of $S$, and let $\cC = \{c_1, \dots, c_\ell\}$ be an $h$-assemblage of type $E$ for some $h \ge 5$ such that $\phi(c_i) = 0$ for all $c_i \in \cC$. Then 
    \[
    \Mod(S)[\phi] = \pair{T_{c_i} \mid c_i \in \cC}.
    \]
    \end{theorem}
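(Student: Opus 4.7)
The easy containment $\langle T_{c_i} \rangle \le \Mod(S)[\phi]$ is immediate from \Cref{lemma:admissibletwists} applied to each admissible curve $c_i$. For the harder reverse containment, my plan is induction on the tail length $\ell - k$, with the base case being when the configuration equals its core.

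For the base case ($\ell = k$), the hypothesis is that $\cC$ is an $E$-arboreal spanning configuration on a surface of genus $h \ge 5$ and every curve is admissible for $\phi$. The goal is to show $\langle T_{c_i} \rangle = \Mod(S_k)[\phi]$. First, I would pick out a subchain $c_{i_1}, \dots, c_{i_{2h+1}}$ of the arboreal graph realizing a Humphries-type configuration (the $E$-arboreal hypothesis guarantees a long enough ``spine'' in the tree, with the $E_6$ subconfiguration providing a controlled branch). By classical mapping class group theory, twists about such a configuration together with auxiliary curves obtained via lantern/chain/braid relations generate $\Mod(S_k)$. To upgrade this to $\Mod(S_k)[\phi]$, I would use \Cref{prop:ccp} (together with the assumption $h \ge 5$, which gives enough room for the positive-genus complementary subsurface required there) to replace each missing Humphries generator by an admissible curve expressible as $f(c_i)$ for some $f \in \langle T_{c_j} \rangle$, which is automatic in $\Mod(S)[\phi]$. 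The $E_6$ branch is used here precisely to handle the Arf-type global obstruction: the presence of $E_6$ ensures the generated subgroup is not trapped inside a proper finite-index overgroup of $\Mod(S)[\phi]$.

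For the inductive step, assume $\langle T_{c_1}, \dots, T_{c_j} \rangle = \Mod(S_j)[\phi|_{S_j}]$ for some $j \ge k$. The curve $c_{j+1}$ enters $S_j$ along a single arc, so $S_{j+1}$ is obtained from $S_j$ by a $1$-handle attachment. I would exploit the framed Birman exact sequence
\[
1 \to K \to \Mod(S_{j+1})[\phi|_{S_{j+1}}] \to \Mod(S_j)[\phi|_{S_j}] \to 1
\]
(after appropriately ``capping'' to match boundary conventions), where $K$ is generated by point-pushing/handle-pushing classes together with a twist about a curve encircling the new handle. The inductive hypothesis handles the quotient, and the kernel $K$ is generated by words in $T_{c_{j+1}}$ and the prior twists, via the standard expression of a point-push as a product of two Dehn twists about nearby admissible curves; the required auxiliary admissible curves are produced from $\cC$ via \Cref{prop:ccp} and the uniqueness of the compatible framing (\Cref{lemma:compatibleframing}).

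The main obstacle is the base case: constructing a Humphries-style generating set from the $E$-arboreal configuration while simultaneously ensuring that the inevitable change-of-coordinates step lands inside the group already generated by $\{T_{c_i}\}$, not merely inside the framed mapping class group. This is a delicate ``bootstrapping'' issue because \Cref{prop:ccp} produces curves via some abstract element of $\Mod(S)[\phi]$, whereas one needs them via an element of the subgroup under construction. The standard resolution is to first establish a weaker intermediate statement — that the $T_{c_i}$ generate a subgroup acting transitively on admissible curves of each relevant topological type — and then bootstrap, using the $E_6$ subconfiguration as a fixed reference to rule out the Arf invariant obstruction. The inductive step, by contrast, is mostly bookkeeping once the base case is in hand.
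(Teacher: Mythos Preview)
The paper does not prove this statement; it is quoted verbatim as Theorem~B.II of \cite{strata3} and used as a black box. There is therefore no proof in the paper to compare your proposal against.

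As for the content of your sketch: the overall architecture you describe --- establish the result for the $E$-arboreal core, then propagate along $1$-handle attachments --- is indeed the strategy of \cite{strata3}. But your base case is too optimistic. An $E$-arboreal spanning configuration on a genus-$h$ surface need not contain a $(2h{+}1)$-chain (trees can be highly branched), so you cannot simply ``pick out'' a Humphries subconfiguration. And the bootstrapping problem you flag at the end is not a side issue but the entire difficulty: \Cref{prop:ccp} as stated only produces curves via abstract elements of $\Mod(S)[\phi]$, which is the group you are trying to show you have already generated. The actual argument in \cite{strata3} resolves this by first proving an internal transitivity result (the ``admissible subgroup'' acts transitively on admissible curves of each topological type) using only relations among Dehn twists already known to lie in $\langle T_{c_i}\rangle$, and the $E_6$ hypothesis enters precisely to kill the Arf obstruction at this step. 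Your sketch names the obstacle correctly but does not indicate how to overcome it.
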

\begin{remark}
    There is a subtlety here, which, while not relevant in the body of the paper, is worth pointing out. As discussed in \Cref{lemma:compatibleframing}, the assemblage only determines an isotopy class of framing, not a {\em relative} isotopy class of such (where isotopies are required to restrict to identity on the boundary). Nevertheless, the admissible twists in the assemblage generate the framed mapping class group, i.e. the stabilizer of the framing up to relative isotopy. In other words, if $\phi_1, \phi_2$ are isotopic framings (where the isotopy need not restrict to identity on the boundary), then the framed mapping class groups $\Mod(S)[\phi_1] = \Mod(S)[\phi_2]$ are {\em equal} even if the relative winding number functions ${\phi_1}, {\phi_2}: \cS^+(S) \to \Z$ are distinct (although in such a situation, necessarily the restrictions of ${\phi_1},{\phi_2}$ to the set $\cS(S)$ of simple closed {\em curves} must coincide).
\end{remark}

\subsection{Functoriality} Let $S \into S'$ be an inclusion of surfaces. Let $\phi'$ be a framing of $S'$; then $\phi'$ restricts to a framing $\phi'|_S$ of $S$, encoded combinatorially as a restriction of the winding number function to the subset $\cS(S) \le \cS(S')$. Conversely, if $S \subset S'$ is a subsurface, $\phi$ is a framing of $S$, and $\phi'$ is a framing of $S'$, then we say that $(S,\phi) \into (S', \phi')$ is a {\em framed inclusion} if $\phi'|_S = \phi$. Note that these definitions are sensible at the level of (non-relative) isotopy. 

A full discussion of when a framed inclusion $(S,\phi) \into (S',\phi')$ induces an inclusion of framed mapping class groups would require establishing a theory of ``partitioned framed surfaces'' {\em a la} Putman's theory of the Torelli group \cite{putmancutpaste}. We will not need to pursue this here. We will, however, need the following more elementary observations. 

First, observe that if $\phi'$ is an $r'$-spin structure on $S$ for some $r' \ge 0$, then for any divisor $r$ of $r'$, there is a unique reduction $\phi' \pmod{r}$ to an $r$-spin structure. At the level of winding number functions, this is given by reduction mod $r$. If $r' >0$ and $S = C$ is a Riemann surface, then at the level of line bundles, if $L \in \Pic(C)$ is the $r'$-spin structure, then $L^{\otimes r'/r}$ is the reduction to an $r$-spin structure. Conversely, we say that an $r'$-spin structure $\phi'$ is a {\em $\Z/r'\Z$-refinement} of an $r$-spin structure $\phi$ if $\phi' \pmod{r} = \phi$.

More substantially, we will need to understand functoriality in the following special case of {\em capping boundary components}. This was developed in our previous paper.

\begin{lemma}[Lemma 3.21 of \cite{CImonodromy}]\label{lemma:framedontospin}
    Let $S \into \bar S$ be an inclusion of surfaces, given by capping off boundary components $d_1, \dots, d_N$ of $S$ with closed disks (note that $\bar S$ may be closed). Let $\phi$ be a framing on $S$, and define
    \[
    r = \gcd(\phi(d_1)+1, \dots, \phi(d_N) +1),
    \]
    where $d_1, \dots, d_N$ are the boundary components of $S$ capped off in $\bar S$, oriented with $S$ to the left. Then the inclusion $S \into \bar S$ induces a surjection
    \[
    \Mod(S)[\phi] \onto \Mod(\bar S)[\bar \phi],
    \]
    where $\bar \phi$ is the $r$-spin structure on $\bar S$ obtained by reducing $\phi$ mod $r$.
\end{lemma}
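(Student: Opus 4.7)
The plan is to establish surjectivity by a lift-and-correct strategy. The capping homomorphism $\Mod(S) \onto \Mod(\bar S)$ is surjective, with kernel generated by the Dehn twists $T_{d_1}, \dots, T_{d_N}$ about the capped boundaries. So given $\bar f \in \Mod(\bar S)[\bar\phi]$, I would start with any lift $f_0 \in \Mod(S)$---assuming for simplicity that $\bar f$ fixes each capping disk $D_i$ setwise, so that $f_0$ can be chosen to fix each $d_i$ pointwise---and then modify $f_0$ by a product $\prod_i T_{d_i}^{k_i}$ to produce a lift preserving the relative framing $\phi$.

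Well-definedness of $\bar \phi$ as an $r$-spin structure on $\bar S$ is immediate from homological coherence: each capping disk $D_i$ has $\chi(D_i) = 1$, so any $r$-spin structure on $\bar S$ must satisfy $\bar\phi(\partial D_i) \equiv 1 \pmod{r}$, equivalently $\bar\phi(d_i) \equiv -1 \pmod{r}$ when $d_i$ is oriented with $S$ to the left. This matches the reduction $\phi(d_i) = n_i \pmod{r}$ exactly because $r \mid n_i + 1$ by the choice of $r$. Well-definedness of the homomorphism $\Mod(S)[\phi] \to \Mod(\bar S)[\bar\phi]$ follows since descent preserves mod-$r$ winding numbers on simple closed curves; the content of the lemma is the surjectivity.

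Consider the framing difference $\delta := f_0 \cdot \phi - \phi$. Since $f_0$ restricts to the identity on $\partial S$, both framings share the same boundary data, so $\delta$ defines a class in $H^1(S, \partial S; \Z)$. The hypothesis $\bar f \cdot \bar \phi = \bar \phi$ forces $\delta(c) \equiv 0 \pmod{r}$ for every simple closed curve $c \subset S$, and $f_0$ fixing each $d_j$ pointwise gives $\delta([d_j]) = 0$ for each $j$. By the twist-linearity formula (\Cref{lemma:WNFprops}), replacing $f_0$ by $f_0 \cdot \prod_i T_{d_i}^{k_i}$ shifts $\delta$ by precisely $-\sum_i k_i n_i \omega_i$, where $\omega_i \in H^1(S, \partial S; \Z)$ is the Lefschetz dual of $[d_i] \in H_1(S;\Z)$ (so that $\omega_i(c) = \pair{c, d_i}$ on simple closed curves). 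The problem thus reduces to solving $\delta = \sum_i k_i n_i \omega_i$ over $\Z$.

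The main obstacle is this final $\Z$-lattice calculation. Both $\delta$ and each $n_i\omega_i$ lie in the annihilator $A \subset H^1(S, \partial S; \Z)$ of the subgroup $\langle[d_j]\rangle_j \subset H_1(S;\Z)$. The argument combines the $r$-divisibility of $\delta$ with the congruence $n_i \equiv -1 \pmod{r}$: modulo $r$, the $\Z$-span of the $n_i\omega_i$'s equals the span of the $\omega_i$'s inside $A/rA$, and the structure of $A$ (as described by the long exact sequence of the pair $(S,\partial S)$, which identifies $A$ with $H^1(\bar S;\Z)$ up to Lefschetz duality) then allows one to realize $\delta$ as an integer combination $\sum_i k_i n_i \omega_i$. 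A secondary subtlety is verifying that the same correction cancels $\delta$ on arcs as well as on closed curves; this follows from the relative form of twist-linearity, which uses the same coefficient $n_i$ for arcs as for closed curves, so that the cohomological identity bootstrapping closed-curve cancellation to arc cancellation is automatic. Once $\delta = 0$ on all of $\cS^+(S)$, the corrected lift preserves $\phi$ as a relative framing by \Cref{prop:wn}, producing the desired element of $\Mod(S)[\phi]$.
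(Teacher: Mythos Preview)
The paper itself does not prove this lemma; it is quoted directly from \cite{CImonodromy}. So there is no proof in the paper to compare against, but your attempt has a substantive gap worth discussing.

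Your opening claim that the kernel of the capping map $\Mod(S) \to \Mod(\bar S)$ is generated by $T_{d_1},\dots,T_{d_N}$ is false. Capping a boundary component by a disk is a two-step process: first replace the boundary by a marked point, then forget the marked point. The first step has kernel $\langle T_{d_i}\rangle$, but the second has kernel the point-pushing (Birman) subgroup. So the full kernel of $\Mod(S)\to\Mod(\bar S)$ is an extension of a surface braid group by $\langle T_{d_i}\rangle$---much larger than what you allow yourself. Your parenthetical ``assuming for simplicity that $\bar f$ fixes each capping disk $D_i$ setwise'' does not save you: any $\bar f$ can be isotoped to such a representative, but the choice of representative is exactly the Birman ambiguity you are throwing away.

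This is fatal for the lift-and-correct strategy as written. Concretely, take $N=1$ and $\bar S$ closed. Then $[d_1]=0$ in $H_1(S;\Z)$, so your class $\omega_1 = PD[d_1]$ vanishes and the twist $T_{d_1}$ acts trivially on winding numbers of \emph{closed} curves. Your correction term $\sum k_i n_i\omega_i$ is therefore identically zero on $\cS(S)$, so you would need $\delta|_{\cS(S)}=0$ for every lift $f_0$---but this fails already for $\bar f = T_{\bar a}T_{\bar b}^{-1}$ with $\bar a,\bar b$ both $\bar\phi$-admissible and lifts $a,b\subset S$ with $\phi(a)=0$, $\phi(b)=r$. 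The missing idea is that one can change the \emph{lift of the curve} $\bar b$ itself: pushing $b$ across a capping disk $D_i$ produces a new lift $b'$ with $\phi(b') = \phi(b)\pm(n_i+1)$, and since $r=\gcd(n_i+1)$ one can arrange $\phi(b')=0$. This is exactly the point-pushing freedom your setup discards. A cleaner route (and the one implicit in the literature) is to use that $\Mod(\bar S)[\bar\phi]$ is generated by admissible twists, and lift each generator individually by choosing a representative curve in $S$ with $\phi$-winding number exactly zero.
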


\subsection{Non-containment implies maximality} The proof of \Cref{theorem:main} identifies the monodromy group $\Gamma_L$ with the $r$-spin mapping class group $\Mod(E)[\phi_M]$ associated to the maximal root of the adjoint line bundle in a somewhat indirect way. Using the method of assemblages and the capping lemma (\Cref{lemma:framedontospin}), the main thrust of the argument shows that $\Gamma_L$ contains some $r'$-spin mapping class group, for some $r \mid r'$. To show that $r' = r$, we show in \Cref{prop:charlie} that $\Gamma_L$ exerts some control over the structure of the Picard group of $X$, so that if $\Gamma_L$ were contained in some $r'$-spin mapping class group for $r'$ a strict multiple of $r$, then the adjoint line bundle would have an $(r')^{th}$ root. This leaves the purely group-theoretic problem of promoting this ``non-containment'' statement to the stronger assertion $\Gamma_L = \Mod(E)[\phi_M]$, which we handle in this section, as \Cref{lemma:nonconmax}. This requires the preliminary \Cref{lemma:rspinchar}. 

    \begin{lemma}
        \label{lemma:rspinchar}
        Let $\phi$ be an $r$-spin structure on a closed surface $S$, and let $\Gamma \le \Mod(S)$ be a subgroup. Suppose that for all $\phi$-admissible curves $c \subset S$ and all $f \in \Gamma$, the equality
        \[
        \phi(f(c)) = \phi(c) \pmod r
        \]
        holds. Then $\Gamma \le \Mod(S)[\phi]$.
    \end{lemma}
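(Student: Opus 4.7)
The plan is to recast the hypothesis cohomologically: identify the discrepancy $f \cdot \phi - \phi$ with a class $\alpha \in H^1(S; \Z/r\Z)$, and argue that the hypothesis forces $\alpha = 0$ by showing admissible curves provide enough homology to span $H_1(S;\Z/r\Z)$. For any $f \in \Gamma$, the pullback $\phi' := f \cdot \phi$ (defined by $\phi'(c) = \phi(f^{-1}(c))$) is itself an $r$-spin structure, since $f$ is an orientation-preserving diffeomorphism preserving the twist-linearity and homological coherence axioms of \Cref{lemma:WNFprops}. The set of $r$-spin structures on the closed surface $S$ is a torsor over $H^1(S; \Z/r\Z)$, with the action on winding number functions given by $\phi \mapsto \phi + \alpha([\,\cdot\,])$, and so there is a unique $\alpha \in H^1(S; \Z/r\Z)$ with
\[
\phi'(c) - \phi(c) \equiv \alpha([c]) \pmod r
\]
for every oriented simple closed curve $c$, where $[c] \in H_1(S; \Z/r\Z)$ denotes the mod-$r$ homology class. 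Since $\Gamma$ is a subgroup, $f^{-1} \in \Gamma$, and applying the hypothesis to $f^{-1}$ at admissible $c$ yields $\phi'(c) = \phi(c) = 0$; thus $\alpha$ vanishes on the mod-$r$ homology classes of all admissible curves.

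The remaining task is to show that these classes generate $H_1(S;\Z/r\Z)$, for then $\alpha = 0$, equivalently $\phi' = \phi$, i.e., $f \in \Mod(S)[\phi]$. To produce admissible curves with a spanning set of homology classes, the plan is to combine the framed change-of-coordinates principle (\Cref{prop:ccp}) with band-sum constructions: starting from a geometric symplectic basis $(A_i, B_i)_{i=1}^g$ of nonseparating simple closed curves whose $\Z$-homology classes form a symplectic basis, one modifies each by band-summing with an auxiliary separating curve $a$ contained in a disjoint positive-genus subsurface. Since $[a] = 0$, the band sum preserves the homology class, while homological coherence gives $\phi(a) = 1 - 2g'$ as a function of the bounding genus $g'$, producing a controlled adjustment to the $\phi$-value.

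The main obstacle is that this direct modification can fail when $r$ is even: the band-sum shift is always even, so for curves with odd $\phi$-value a single separating band-sum cannot achieve $\phi = 0 \pmod r$. In particular, for $r = 2$ and $\phi$ with nontrivial Arf invariant, $\phi$ descends to a quadratic refinement of the mod-$2$ intersection form, and no admissible curve lies in the homology class of a $\phi$-value-$1$ basis element. In such cases the admissible curves still span $H_1(S;\Z/r\Z)$, but via a more subtle construction: rather than realizing each symplectic basis element individually, one produces admissible curves (for instance by band-summing several nonseparating auxiliaries along carefully arranged paths) whose classes collectively span. An elementary combinatorial argument---modeled on the fact that the zero-quadric of a nondegenerate mod-$2$ quadratic form on $(\Z/2\Z)^{2g}$, $g \ge 2$, additively generates the whole space---ensures that for $g$ sufficiently large (which is automatic in the setting of \Cref{theorem:main}) the admissible classes indeed span, completing the argument.
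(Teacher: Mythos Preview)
Your cohomological reformulation is correct and represents a genuinely different route from the paper's. You correctly observe that $f \cdot \phi - \phi$ defines a class $\alpha \in H^1(S;\Z/r\Z)$ via the torsor structure on $r$-spin structures, and that the hypothesis forces $\alpha$ to vanish on the homology classes of admissible curves. The paper instead argues by induction on an integer lift of $k = |\phi(c)|$: given nonseparating $c$ with $\phi(c) = k$, it uses the framed change-of-coordinates principle to produce a pair of pants $P$ with boundary $a,b,c$ satisfying $\phi(a)=0$ and $\phi(b)=1-k$, applies the inductive hypothesis to $a$ and $b$, and then reads off $\phi(f(c))=k$ from homological coherence on $f(P)$. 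Separating curves are handled immediately by homological coherence. This is shorter and entirely avoids any spanning statement.

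The genuine gap in your argument is the spanning claim for even $r$, which you assert but do not prove. You correctly identify the parity obstruction---band-summing with separating curves shifts $\phi$ by an even amount, so a class $x$ with $q(x)=1$ (for $q$ the mod-$2$ reduction of $\phi$) is \emph{never} the class of an admissible curve---and you invoke the fact that the zero quadric of $q$ additively spans $(\Z/2\Z)^{2g}$ for $g\ge 2$. But this is only a mod-$2$ statement; you do not explain how to promote it to spanning in $H_1(S;\Z/r\Z)$. Concretely, two separate issues remain: (i) writing a primitive $x$ with $q(x)=1$ as a $\Z$-combination of \emph{primitive} classes $y_i$ with $q(y_i)=0$, and (ii) showing each such $y_i$ is realized by a curve with $\phi=0$ in $\Z/r\Z$, not merely $\phi\equiv 0\pmod 2$. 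Your band-sum constructions address (ii) when $\phi$ is already even, but the passage ``band-summing several nonseparating auxiliaries along carefully arranged paths'' does not constitute a proof of (i). These steps can be carried out (for $g\ge 2$ or $g\ge 3$), but they require real work your sketch omits. The paper's pair-of-pants induction sidesteps the issue entirely by propagating $\phi$-invariance one winding-number value at a time rather than through a homological spanning argument.
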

    \begin{proof}
        By definition, $\Gamma \le \Mod(S)[\phi]$ if and only if $\phi(f(c)) = \phi(c) \pmod r$ for {\em all} simple closed curves, not just admissibles. We will show that the weaker condition implies the stronger by induction on (an integer lift of) the value $k = \abs{\phi(c)}$; the base case $k = 0$ holds by hypothesis.

        Let $c \subset S$ satisfying $\phi(c) = k \pmod r$ be given, and suppose that $\phi(f(d)) = \phi(d)$ for all simple closed curves $d \subset S$ such that $\abs{\phi(d)} \le k-1$ (for some integer lift of the residue class $\phi(d)$). Firstly, if $c$ is separating, then $\phi(c)$ is already specified by homological coherence (\Cref{lemma:WNFprops}.2), and so $\phi(f(c)) = \phi(c)$ holds automatically. If $c$ is non-separating, then by the framed change-of-coordinates principle (\Cref{prop:ccp}), there is a pair of pants $P$ with boundary components $a,b,c$, where $\phi(a) = 0, \phi(b) = 1-k$, and $\phi(c) = k$ (with $a,b,c$ oriented so that $P$ lies to the right). By induction, $\phi(f(a)) = 0$ and $\phi(f(b)) = 1-k$; applying homological coherence to $f(P)$ shows that $\phi(f(c)) = k$ as desired.
    \end{proof}

    \begin{lemma}
        \label{lemma:nonconmax}
        Let $S$ be a closed surface of genus $g \ge 3$, let $\Gamma \le \Mod(S)$ be given, and suppose there are containments
        \[
        \Mod(S)[\phi'] \le \Gamma \le \Mod(S)[\phi],
        \]
        where $\phi$ is an $r$-spin structure and $\phi'$ is an $r'$-spin structure, for $r,r' > 0$ and $r \mid r'$. Suppose that moreover, $\Gamma$ is not contained in any $\Mod(S)[\phi'']$ for any $r''$-spin structure for any proper multiple $r''$ of $r$. Then $\Gamma = \Mod(S)[\phi]$.
    \end{lemma}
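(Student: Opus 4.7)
The plan is to prove $\Mod(S)[\phi] \le \Gamma$, which combined with the given $\Gamma \le \Mod(S)[\phi]$ yields $\Gamma = \Mod(S)[\phi]$. I would achieve this by showing $T_c \in \Gamma$ for every $\phi$-admissible nonseparating simple closed curve $c$; since such twists generate $\Mod(S)[\phi]$ in genus $g \ge 3$ (a closed-surface counterpart of \Cref{theorem:assemblages}), this suffices. Setting $s := r'/r$ and identifying $r\Z/r'\Z \cong \Z/s\Z$ via $rk \leftrightarrow k$, the central object of study is
\[
V := \{\phi'(c) : c \text{ is }\phi\text{-admissible nonseparating and } T_c \in \Gamma\} \subseteq \Z/s\Z,
\]
which contains $0$ since $\phi'$-admissible twists lie in $\Mod(S)[\phi'] \le \Gamma$. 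Transitivity of the $\Mod(S)[\phi']$-action on $\phi$-admissible nonseparating curves of fixed $\phi'$-value (via \Cref{prop:ccp}, applicable since $g \ge 3$) shows that $v \in V$ implies $T_c \in \Gamma$ for \emph{every} such $c$ with $\phi'(c) = v$, so the problem reduces to establishing $V = \Z/s\Z$.

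I would argue $V = \Z/s\Z$ in two steps. First, $V$ is a subgroup: closure under negation follows from orientation reversal, and for addition, given $v_1, v_2 \in V$, \Cref{prop:ccp} produces $\phi$-admissible curves $c_1, c_2$ intersecting once with $\pair{c_2, c_1} = 1$ and $\phi'(c_i) = v_i$; the transitivity noted above gives $T_{c_1}, T_{c_2} \in \Gamma$. Then $c_3 := T_{c_1}(c_2)$ is $\phi$-admissible with $\phi'(c_3) = v_1 + v_2$ by the twist-linearity formula of \Cref{lemma:WNFprops}, and $T_{c_3} = T_{c_1} T_{c_2} T_{c_1}^{-1} \in \Gamma$, so $v_1 + v_2 \in V$. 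Second, writing $V = e\Z/s\Z$ and $\psi := \phi' \bmod re$ (an $re$-spin structure refining $\phi$), I would apply \Cref{lemma:rspinchar} to show $\Gamma \le \Mod(S)[\psi]$. A $\psi$-admissible $c$ has $\phi'(c) \in re\Z/r'\Z$, which corresponds to $V$ under the identification, so $T_c \in \Gamma$; for $g \in \Gamma$, $T_{g(c)} = g T_c g^{-1} \in \Gamma$ and $g(c)$ is $\phi$-admissible, giving $\phi'(g(c)) \in V$. Hence $\phi'(g(c)) - \phi'(c) \in V - V = V$, i.e., $\psi(g(c)) = \psi(c)$, and \Cref{lemma:rspinchar} yields $\Gamma \le \Mod(S)[\psi]$. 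The non-containment hypothesis then forces $re = r$, so $e = 1$ and $V = \Z/s\Z$, completing the proof.

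The most technically demanding step is the application of \Cref{prop:ccp} within the subgroup argument: it requires a complementary subsurface of positive genus to the two-curve configuration (which follows from $g \ge 3$) and the existence of some $r'$-spin structure realizing the prescribed values $v_1, v_2$ on a model pair (automatic because the pair spans a proper subsurface of $S$). With that ingredient in hand, the remainder is a compact interplay between the subgroup property of $V$, the admissibility characterization of \Cref{lemma:rspinchar}, and the non-containment hypothesis, which together force $V = \Z/s\Z$ and hence every $\phi$-admissible Dehn twist into $\Gamma$.
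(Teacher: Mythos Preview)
Your proof is correct and follows essentially the same approach as the paper's: both reduce to showing that the set of $\phi'$-values attained by $\phi$-admissible curves whose twist lies in $\Gamma$ forms a subgroup, then apply \Cref{lemma:rspinchar} and the non-containment hypothesis to force this subgroup to be everything. Your framing in terms of the value set $V \subseteq \Z/s\Z$ is a mild repackaging of the paper's orbit $\Gamma a$, and the subgroup-closure step via twist-linearity on a pair of once-intersecting curves is identical. Two minor citation points: the transitivity of $\Mod(S)[\phi']$ on nonseparating curves of fixed $\phi'$-value is not quite \Cref{prop:ccp} as stated (which is an existence statement), and the paper instead invokes \cite[Lemma 5.8]{strata2}; likewise, the fact that admissible twists generate $\Mod(S)[\phi]$ for closed $S$ of genus $g \ge 3$ is \cite[Proposition 5.1]{strata2} rather than a direct corollary of \Cref{theorem:assemblages}.
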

    \begin{proof}
            Let $\cT_\phi \le \Mod(S)[\phi]$ denote the {\em admissible subgroup} of $\Mod(S)[\phi]$, the subgroup generated by all admissible twists $T_a$ for $a \subset S$ nonseparating and satisfying $\phi(a) = 0 \pmod{r}$; define $\cT_{\phi'}$ and $\cT_{\phi''}$ similarly. Since $g \ge 3$, \cite[Proposition 5.1]{strata2} shows that there is an equality $\cT_\phi = \Mod(S)[\phi]$ (and similarly for $\phi'$ and $\phi''$). 
            
            To show $\Gamma = \Mod(S)[\phi]$, it therefore suffices to show that $\cT_\phi \le \Gamma$, i.e. that $T_a \in \Gamma$ for any $a \subset S$ nonseparating and satisfying $\phi(a) = 0 \pmod r$. Since $\phi'$ is a $\Z/r'\Z$-refinement of $\phi$, any such $a$ satisfies $\phi'(a) = kr \pmod{r'}$ for some $k$. Consider the orbit $\Gamma a$ of some $a$ such that $\phi'(a) = 0$. Since $\Mod(S)[\phi'] \le \Gamma$ and by \cite[Lemma 5.8]{strata2}, $\Mod(S)[\phi']$ acts transitively on the set of nonseparating curves on $S$ of given $\phi'$-winding number, it follows that 
            \[
            \Gamma a = \{c \subset S\mid \phi'(c) = k_i r \pmod{r'}, \mbox{ $c$ nonseparating}\}
            \]
            for certain integers $k_i$, i.e. that $\Gamma a$ is a union of orbits of curves under $\Mod(S)[\phi']$.

            Let $\cC_k$ denote one such orbit, the set of simple closed curves
            \[
            \cC_k = \{c \subset S\mid \phi'(c) = kr \pmod{r'}, \mbox{ $c$ nonseparating}\}.
            \]
            We claim that if $\Gamma a$ contains $\cC_{k_1}$ and $\cC_{k_2}$, then it also contains $\cC_{k_1+k_2}$. By the framed change-of-coordinates principle (\Cref{prop:ccp}), there is $c_1 \in \cC_{k_1}$ and $c_2 \in \cC_{k_2}$ such that $i(c_1, c_2) = 1$. Since $\phi'(a) = 0$ by assumption and $\Mod(S)[\phi'] \le \Gamma$ (and in particular, $T_a \in \Gamma$), it follows that $T_c \in \Gamma$ for any $c \in \Gamma a$. Applying this to $c_1$ as above, we find that $T_{c_1}(c_2) \in \Gamma a$. By twist-linearity (\Cref{lemma:WNFprops}.1), $\phi'(T_{c_1}(c_2)) = \phi'(c_1) + \phi'(c_2) = k_1r+k_2r$, so that $\Gamma a$ contains one member of $\cC_{k_1+k_2}$ and hence the entire set, as claimed.

            Let $r'' = \gcd\{k_i r\}$, and let $\phi''$ be the $r''$-spin structure obtained by reducing $\phi'$ mod $r''$. We have therefore shown that $\Gamma a$ is the set of $\phi''$-admissible curves. Thus the condition
            \[
            \phi''(f(c)) = \phi''(c) \pmod{r''}
            \]
            holds for all $\phi''$-admissible curves $c$ and all $f \in \Gamma$. By \Cref{lemma:rspinchar}, $\Gamma \le \Mod(S)[\phi'']$. Since $\Gamma$ is not contained in any subgroup of the form $\Mod(S)[\phi'']$ for any $r''$ a proper multiple of $r$, we conclude that $r'' = r$ and hence $\phi'' = \phi$. Thus $\Gamma a$ is the set of all $\phi$-admissible curves, and so $\cT_\phi \le \Gamma$, implying $\Gamma = \Mod(S)[\phi]$ as claimed.
    \end{proof}

\section{Sections of (sums of) line bundles}\label{S:smoothing}
 The goal of this section is to discuss some of the basic topological features of (families of) smooth sections of line bundles on algebraic surfaces, particularly those that arise by amalgamating sections in two different line bundles.
We begin in \Cref{SS:groupoid} with a definition of monodromy in the language of groupoids, which will be useful for dealing with different basepoints. \Cref{SS:rspin} specializes to monodromy in the context of a linear system on a smooth algebraic surface, showing how the adjunction formula implies this is contained in an $r$-spin mapping class group. In \Cref{SS:PL}, we recall the basic elements of Picard-Lefschetz theory and the theory of vanishing cycles.
In \Cref{SS:CDE}, we discuss the sections of $\abs{L_1 \otimes L_2}$ that arise by smoothing the singular section $C \cup D$, and in \Cref{SS:stabilizer}, we discuss the various stabilizer subgroups of the monodromy that are associated to such a section. Finally in \Cref{SS:smoothing} we give the technical details of smoothing in families, and in \Cref{SS:basepoints}, we discuss the ramifications of this construction for the problem of choosing a global basepoint.

\subsection{The monodromy groupoid}\label{SS:groupoid} We begin with a slightly idiosyncratic treatment of the monodromy of a surface bundle, giving a basepoint-independent formulation via groupoids. Our later analysis will involve various local computations, necessarily at distinct basepoints, and this language will allow us to compare these local computations in a precise and meaningful way.

\begin{definition}
    Let $p: E \to B$ be a fiber bundle in the smooth category, with fibers diffeomorphic to some closed surface $\Sigma_g$ endowed with a preferred orientation. Let $\cF(E/B)$ be the category (indeed, groupoid) with object set $B$, and with morphism sets $\Hom_{\cF(E/B)}(b, b')$ given by the set of isotopy classes of orientation-preserving diffeomorphisms $f: F_b \to F_{b'}$ between the fibers.
\end{definition}

    Let $\gamma: I \to B$ be a path connecting $b$ to $b'$. Then the pullback bundle $\gamma^*(E)$ over the contractible base $I$ is trivial, inducing a well-defined isotopy class of {\em parallel transport diffeomorphism}
    \[
    \tau_\gamma: F_b \to F_{b'}.
    \]
    It is not hard to see that if $\gamma': I \to B$ is homotopic to $\gamma$ rel endpoints, then $\tau_{\gamma'}$ is isotopic to $\tau_{\gamma}$. We therefore obtain a functor
    \[
    \rho: \Pi(B) \to \cF(E/B)
    \]
    from the fundamental groupoid of $B$ to $\cF(E/B)$, acting via the identity on objects, and sending $[\gamma] \in \Hom_{\Pi(B)}(b,b')$ to the isotopy class of $\tau_\gamma$. 

    \begin{definition}[Monodromy functor/groupoid]
    Let $p: E \to B$ be a $\Sigma_g$ bundle. The {\em monodromy functor} is the functor $\rho: \Pi(B) \to \cF(E/B)$ described above, and the {\em monodromy groupoid} $\cM(E/B)$ is the image, the groupoid with object set $B$ and with $\Hom_{\cM(E/B)}(b,b')$ the set of isotopy classes of parallel transport maps $\tau_{\gamma}: F_b \to F_{b'}$, indexed by $\gamma \in \Hom_{\Pi(B)}(b,b')$.    
    \end{definition}
    With this notion in place, the monodromy {\em representation} of $p: E \to B$ with respect to a chosen basepoint $b \in B$ can be defined as the restriction of $\rho$ to the automorphism set $\pi_1(B,b) = \Hom_{\Pi(B)}(b,b)$, taking values in $\Hom_{\cF(E/B)}(F_b,F_b) = \Mod(F_b)$. As is customary, we write
    \[
    \rho_b: \pi_1(B,b) \to \Mod(F_b).
    \]

    \subsection{Topological monodromy of a linear system and $r$-spin structures}\label{SS:rspin}
    We now specialize to the setting of interest. Let $X$ be a smooth projective surface, let $L \in \Pic(X)$ be a line bundle, and let $\abs{L}$ be the complete linear system, with $U_L \subset \abs{L}$ the open locus of smooth sections. Define the tautological family $\cC_L \to U_L$ by
    \[
    \cC_L = \{(C,x) \mid C \in U_L,\ x \in C\} \subset U_L \times X.
    \]
    
    This is a $\cC^\infty$ fiber bundle, giving rise to a monodromy functor
    \[
    \rho_L: \Pi(U_L) \to \cF(\cC_L/U_L).
    \]
    Choosing a basepoint $E \in U_L$, we obtain a monodromy representation
    \[
    \rho_{L,E}: \pi_1(U_L,E) \to \Mod(E);
    \]
    the image
    \[
    \Gamma_L := \im(\rho_{L,E}) \le \Mod(E)
    \]
    is the {\em topological monodromy group} (suppressing the dependence on $E$ will not impose any complications).

    \para{The adjunction formula} The algebro-geometric origin of this bundle imposes a nontrivial constraint on $\Gamma_L$. Recall that the adjunction formula states that for a smooth section $C \in U_L$,
    \[
    \omega_C = (\omega_X \otimes L)|_C,
    \]
    where $\Omega_C, \omega_X$ denote the canonical bundles of $C, X$, respectively. This has the following consequence for the topological monodromy group $\Gamma_L$. For a proof (written in the setting of $X = \CP^2$ but applicable in full generality), see \cite{quintic}.
    \begin{lemma}\label{lemma:rspincontain}
        Let $X$ be a smooth projective algebraic surface, let $L \in \Pic(X)$ be a line bundle, let $C \in U_L$ be a smooth section, and let $\Gamma_L \le \Mod(C)$ be the topological monodromy group. Suppose the adjoint line bundle $\omega_X \otimes L$ admits an $r^{th}$ root $M \in \Pic(X)$. Then 
        \[
        \Gamma_L \le \Mod(C)[\phi_{M}],
        \]
        where $\Mod(C)[\phi_{M}] \le \Mod(C)$ is the $r$-spin mapping class group associated to the distinguished root $M|_C$ of $\omega_C$.
    \end{lemma}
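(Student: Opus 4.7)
The plan is to show that the $r$-spin structure $\phi_M$ on the fiber $C$ is really the restriction to $C$ of a coherent piece of data defined over the whole family $\cC_L \to U_L$, which then forces it to be invariant under parallel transport along any path in $U_L$. The essential point is that $M$ is globally defined on $X$, not merely on an individual fiber, so the roots $M|_C$ of $\omega_C$ assemble into a genuine family of $r$-spin structures over the base.

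Concretely, I would first pull $M$ back along the projection $\cC_L \into U_L \times X \to X$ to obtain a line bundle $\cM$ on the total space $\cC_L$. For each $C \in U_L$ the restriction $\cM|_C = M|_C$ is an $r$-th root of $\omega_C$, combining the hypothesis $M^{\otimes r} \cong \omega_X \otimes L$ with the adjunction formula $\omega_C = (\omega_X \otimes L)|_C$. Thus $\cM$ encodes a family of algebraic $r$-spin structures, one per fiber, and correspondingly a family of topological mod-$r$ winding number functions $\phi_{M|_C} : \cS(C) \to \Z/r\Z$. Given a path $\gamma: I \to U_L$ from $b$ to $b'$, pulling the entire setup back to $I$ trivializes the surface bundle smoothly as $F_b \times I$ (in a trivialization realizing the parallel transport $\tau_\gamma : F_b \to F_{b'}$ as the endpoint identification), and the pulled-back line bundle $\gamma^* \cM$ becomes a continuously varying family of $r$-spin structures on $F_b$ indexed by $I$.

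The set of isotopy classes of $r$-spin structures on a fixed topological surface is a torsor over the finite group $H^1(F_b;\Z/r\Z)$, and in particular discrete, so a continuous family of such over the connected base $I$ is isotopically constant. Consequently $\tau_\gamma$ pulls back $\phi_M|_{F_{b'}}$ to $\phi_M|_{F_b}$ as isotopy classes; restricting to loops $\gamma$ based at $E$ gives $\rho_{L,E}(\pi_1(U_L,E)) \le \Mod(E)[\phi_M]$, as desired. The only non-bookkeeping step is the passage from the algebraic family $\cM$ on $\cC_L$ to a continuous family of isotopy classes of topological $r$-spin structures together with discreteness of that set; this reflects the standard fact that within a smooth family of Riemann surfaces the scheme of $r$-th roots of the relative dualizing sheaf is an étale cover of the base, which is globally split by the presence of $M$.
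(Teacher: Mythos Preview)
Your argument is correct and is essentially the standard one: the paper does not give its own proof but refers to \cite{quintic}, and the argument there (and the one sketched in the introduction here, ``following immediately once one understands that the set of $r$-spin structures carries an action by the mapping class group'') is exactly the family-of-roots-plus-discreteness argument you outline.
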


    \begin{remark}
        \Cref{lemma:rspincontain} is valid even in the setting when the group $\Pic_0(X)$ of line bundles of degree $0$ has nontrivial $r$-torsion, in which case the root $M$ of $\omega_X \otimes L$ is not unique. In such a situation, the conclusion is that $\Gamma_L$ must be contained in the {\em intersection} of the $r$-spin mapping class groups associated to each such root. However, in this paper we consider only simply connected $X$, in which case $\Pic_0(X)$ is trivial. See \cite{ishanpi1} for a further exploration of topological monodromy in the non-simply connected case. 
    \end{remark}
    
    \subsection{Nodal degenerations, vanishing cycles and Picard-Lefschetz theory}\label{SS:PL}
    Here we briefly recall these notions and how they relate to one another.
\begin{definition}[Nodal degeneration]
   Let $L$ be a line bundle on a smooth projective surface $X$. Let $f \in H^0(X, L)$, with zero locus $Z(f) = C$, assumed to be smooth. 
   
    Let $\disk \subset \C$ denote the unit disk, and let $\varphi: \disk \to H^0(X, L) $ determine a family, with $C_t$ the zero locus of $\varphi(t)$. 
We say that $\varphi$ is a {\em nodal degeneration} of $C$ if it satisfies the following conditions:
   
   \begin{enumerate}
       \item  $C_1 = C$.
       \item  The curves $C_t$ are smooth for $t \neq 0$.
       \item  The curve $C_0$ has a single node at some point $p \in C_0$.
       \item $\frac{\partial }{\partial  \bar t} (\varphi) (p)|_{t =0} =0$ and $\frac{\partial}{\partial t} (\varphi) (p)|_{t =0}  \neq 0 .$
   \end{enumerate}
\end{definition}
A nodal degeneration as above determines a family of smooth curves over $\disk \setminus \{0\}$ arising from pulling back the universal family along $\varphi$. Topologically, near the nodal point, the curves $C_t$ have embedded {\em cylinders} of radius shrinking to $0$ with $t$. The core curve $\alpha$ of such a cylinder is a simple closed curve called the {\em vanishing cycle} of the nodal degeneration. If $C_0$ is irreducible, $\alpha$ is nonseparating. 
The {\em Picard-Lefschetz formula} states that the monodromy associated to this family is given by a right-handed Dehn twist $T_\alpha$ about the vanishing cycle. This leads to the fundamental connection between vanishing cycles and admissible simple closed curves.

\begin{lemma}
    Let $\alpha \subset C$ be a nonseparating vanishing cycle. Then $\alpha$ is admissible for the $r$-spin structure $\phi_{M}$ associated to the maximal root $M$ of the adjoint line bundle $\omega_X \otimes L$.
\end{lemma}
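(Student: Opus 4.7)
The plan is to combine the Picard-Lefschetz formula with the adjunction-based containment \Cref{lemma:rspincontain} and the admissibility criterion \Cref{lemma:admissibletwists}.

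First, I would observe that the nodal degeneration $\varphi: \disk \to H^0(X,L)$, restricted to $\disk \setminus \{0\}$, lies entirely in the smooth locus $U_L$ of the linear system, and so determines a loop $\gamma$ in $U_L$ (say, $\gamma(\theta) = \varphi(e^{2\pi i \theta})$) based at $C \in U_L$. By the Picard-Lefschetz formula recalled immediately above, the image of this loop under the monodromy representation $\rho_{L,C}: \pi_1(U_L, C) \to \Mod(C)$ is the right-handed Dehn twist $T_\alpha$ about the vanishing cycle. Consequently $T_\alpha \in \Gamma_L$.

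Next, I would invoke \Cref{lemma:rspincontain}: since the adjoint line bundle $\omega_X \otimes L$ admits the root $M \in \Pic(X)$ of order $r$, we have the containment
\[
\Gamma_L \le \Mod(C)[\phi_M].
\]
In particular $T_\alpha \in \Mod(C)[\phi_M]$, i.e., the Dehn twist about $\alpha$ preserves the winding number function $\phi_M$.

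Finally, since $\alpha$ is assumed nonseparating, \Cref{lemma:admissibletwists} applies and gives that $T_\alpha \in \Mod(C)[\phi_M]$ if and only if $\alpha$ is admissible, i.e., $\phi_M(\alpha) = 0 \pmod r$. This is exactly the required admissibility, completing the proof. There is no real obstacle here: the entire argument is a direct chaining together of results already established earlier in the section, and the only input specific to vanishing cycles is the Picard-Lefschetz identification of the local monodromy with $T_\alpha$.
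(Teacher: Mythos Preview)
Your proof is correct and follows exactly the same approach as the paper's own proof: apply the Picard-Lefschetz formula to get $T_\alpha \in \Gamma_L$, invoke \Cref{lemma:rspincontain} for the containment $\Gamma_L \le \Mod(C)[\phi_M]$, and conclude admissibility via \Cref{lemma:admissibletwists}.
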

\begin{proof}
    By the Picard-Lefschetz formula, $T_\alpha \in \Gamma_L$. By \Cref{lemma:rspincontain}, $\Gamma_L \le \Mod(C)[\phi_{M}]$, and by \Cref{lemma:admissibletwists}, $\alpha$ must be admissible for $\phi_{M}$.
\end{proof}

\begin{remark} We emphasize that the vanishing cycle of a nodal degeneration is only well-defined {\em locally}, or said differently, depends globally on a choice of nodal degeneration connecting a given smooth curve $C_1$ to a nodal curve $C_0$. Different choices of degeneration from $C_1$ to $C_0$ can lead to different curves $\alpha, \alpha'$ on $C_1$ appearing as the vanishing cycle.
\end{remark}

    \subsection{The topological model: $\tilde C, \tilde D$, and $E$}\label{SS:CDE} In the remainder of this section, we specialize our working environment to the case where $L = L_1 \otimes L_2$. Suppose $C \in U_{L_1}$ and $D \in U_{L_2}$ are smooth sections intersecting transversely. Then $C \cup D$ is a reducible element of $\abs{L_1 \otimes L_2}$ with $d = C\cdot D$ nodal singularities. Let $E \in \abs{L_1 \otimes L_2}$ be a small generic perturbation of $C \cup D$. Then $E$ is smooth, and topologically decomposes as 
    \[
    E \cong \tilde C \cup \tilde D.
    \]
    Here, $\tilde C$ is the surface with $d$ boundary components $\Delta_1, \dots, \Delta_d$ obtained by deleting neighborhoods of the points of $C \cap D$, and $\tilde D$ is obtained from $D$ similarly. Alternatively, $\tilde C$ can be viewed as the real oriented blowup of $C$ at the points of $C \cap D$, and similarly for $\tilde D$. The subsurfaces $\tilde C$ and $\tilde D$ are joined together along their common boundary components to form $E$. See \Cref{fig:CDE} for a cartoon depiction of $C,D, \tilde C, \tilde D, E$, and their relationship, and see \Cref{def:smoothing} for the precise meaning of ``small generic perturbation''.

        \begin{figure}[h]
\centering
		\labellist
        \small
        \pinlabel $C$ at 70 172
        \pinlabel $D$ at 220 172
        \pinlabel $C$ at 460 220
        \pinlabel $D$ at 420 245
        \pinlabel $\tilde C$ at 70 20
        \pinlabel $\tilde D$ at 200 20
        \pinlabel $E$ at 460 70
        \pinlabel $\Delta_1$ at 135 122
        \pinlabel $\Delta_2$ at 135 95
        \pinlabel $\Delta_3$ at 135 67
        \pinlabel $\Delta_4$ at 135 37
        \pinlabel $\Delta_5$ at 135 9
		\endlabellist
\includegraphics[width=\textwidth]{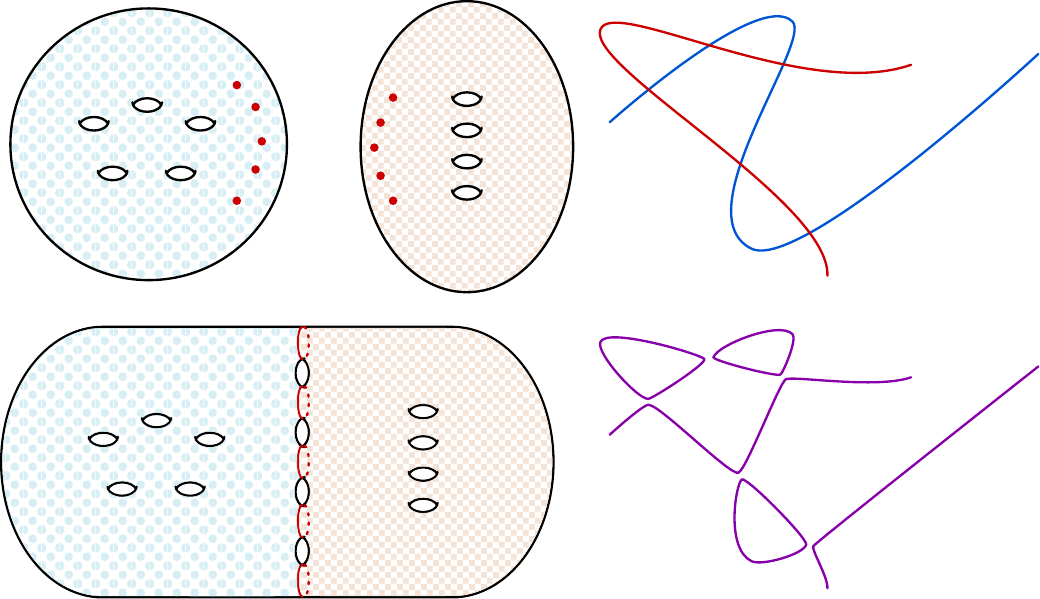}
\caption{Smoothing transversely-intersecting $C \in U_{L_1}$ and $D \in U_{L_2}$ (shown in color versions in blue and red, respectively) to a section $E = \tilde C \cup \tilde D$ of $\abs{L_1 \otimes L_2}$. }
\label{fig:CDE}
\end{figure}
    
    \subsection{The (principal) stabilizer}\label{SS:stabilizer}
    Our analysis of the monodromy of the linear system $\abs{L_1 \otimes L_2}$ will make extensive use of sections $E = \tilde C \cup \tilde D$ obtained by joining together sections of $L_1$ and $L_2$. Accordingly, the monodromy will have a distinguished subgroup of elements that topologically preserve this decomposition.
    
    \begin{definition}[Stabilizer, principal stabilizer, restriction maps]\label{def:stab}
        The {\em stabilizer} of $\tilde C$, written $\Gamma_0[\tilde C]$, is the subgroup of $\Gamma_{L} \le \Mod(E)$ that preserves $\partial \tilde C = \Delta_1\cup \dots \cup \Delta_d$ as an oriented multicurve, and hence preserves $\tilde C \subset E$. Since the individual $\Delta_i$ are not necessarily fixed, there is not a restriction map to the mapping class group of $\tilde C$ (as this group fixes the boundary pointwise by definition), but there is a good substitute. Let $(C,\bx)$ be the marked surface obtained from $\tilde C$ by capping off each $\Delta_i$ with a marked disk. Then there is a restriction map 
        \[
        \bar \res: \Gamma_0[\tilde C] \to \Mod(C, \bx). 
        \]

        The {\em principal stabilizer},\footnote{As a mnemonic, the terminology and notation here are chosen to imitate the standard language in the theory of congruence subgroups of $\SL_2(\Z)$.} written $\Gamma[\tilde C]$, is the subgroup of $\Gamma_0[\tilde C]$ that restricts to the identity on $\tilde D$. In particular, each $\Delta_i \in \partial \tilde C$ is individually fixed, and so there is an injective restriction homomorphism 
        \[
        \res: \Gamma[\tilde C] \to \Mod(\tilde C). 
        \]
    \end{definition}

    \begin{remark}\label{remark:CDsymmetric}
        There are of course analogous definitions of $\Gamma_{(0)}[\tilde D]$ for the other subsurface $\tilde D \subset E$. In fact, the definition of $\Gamma_0[\tilde C]$ is formulated completely symmetrically in terms of $\tilde C, \tilde D$, so that $\Gamma_0[\tilde C] = \Gamma_0[\tilde D]$. The principal stabilizers, however, are distinct subgroups.
    \end{remark}

The following lemma and its corollary are straightforward consequences of the definitions.
    \begin{lemma}\label{lemma:principalnormal}
        $\Gamma[\tilde C]$ is a normal subgroup of $\Gamma_0[\tilde C]$. 
    \end{lemma}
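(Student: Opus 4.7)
The plan is to verify normality directly from the definitions, the key subtlety being that the elements of $\Gamma_0[\tilde C]$ and $\Gamma[\tilde C]$ are mapping classes (isotopy classes of diffeomorphisms), not actual diffeomorphisms, so I need to work with carefully chosen representatives. Fix $f \in \Gamma_0[\tilde C]$ and $g \in \Gamma[\tilde C]$; the goal is to show $fgf^{-1} \in \Gamma[\tilde C]$.

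First I would note that $\Gamma[\tilde C] \le \Gamma_0[\tilde C]$ is a subgroup and $\Gamma_0[\tilde C]$ is a group, so $fgf^{-1} \in \Gamma_0[\tilde C]$; this takes care of the requirement that $fgf^{-1}$ preserve $\partial \tilde C$ as an oriented multicurve. The content of the lemma is then the additional condition that $fgf^{-1}$ restrict to the identity on $\tilde D$. To see this, choose a representative diffeomorphism $\tilde f$ of $f$ which preserves $\tilde D$ setwise (possible since $f \in \Gamma_0[\tilde C] = \Gamma_0[\tilde D]$ by \Cref{remark:CDsymmetric}, so $\tilde D$ is preserved up to isotopy and one may adjust by an isotopy of $E$), and choose a representative $\tilde g$ of $g$ that is literally the identity on $\tilde D$ (possible by definition of $\Gamma[\tilde C]$).

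Then for any $x \in \tilde D$, we have $\tilde f^{-1}(x) \in \tilde D$, hence $\tilde g(\tilde f^{-1}(x)) = \tilde f^{-1}(x)$, hence $\tilde f \tilde g \tilde f^{-1}(x) = x$. Thus the representative $\tilde f \tilde g \tilde f^{-1}$ is identity on $\tilde D$, so its isotopy class $fgf^{-1}$ restricts to the identity on $\tilde D$, giving $fgf^{-1} \in \Gamma[\tilde C]$ as required. The only step that requires any care is the choice of representative of $f$ that preserves $\tilde D$ setwise (not merely up to isotopy); this is a standard application of the isotopy extension theorem applied to the multicurve $\partial \tilde C = \partial \tilde D$. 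I do not expect any serious obstacles.
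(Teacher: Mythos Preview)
Your argument is correct and is precisely the ``straightforward consequence of the definitions'' that the paper alludes to without writing out. The paper gives no explicit proof, and your unpacking---choosing a representative of $f$ that preserves $\tilde D$ setwise and a representative of $g$ that is literally the identity on $\tilde D$, then conjugating---is exactly what is intended.
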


    \begin{corollary}
        \label{lemma:againVC}
        Let $a \subset \tilde C$ be a vanishing cycle for $L_1 \otimes L_2$, and let $\beta \in \Gamma_0[\tilde C]$ be arbitrary. Then $\beta(a) \subset \tilde C$ is again a vanishing cycle for $L_1 \otimes L_2$.  
    \end{corollary}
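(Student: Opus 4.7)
The approach is to unwind the definitions: vanishing cycles are obtained by parallel transport from a nodal degeneration to the basepoint $E$, and any monodromy element $\beta \in \Gamma_L$ arises from a loop $\sigma \subset U_L$ at $E$; pre-concatenating $\sigma$ with a path witnessing $a$ as a vanishing cycle produces a new path exhibiting $\beta(a)$ as a vanishing cycle. The containment $\beta(a) \subset \tilde C$ is then purely a consequence of $\beta \in \Gamma_0[\tilde C]$.

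First I would verify the containment $\beta(a) \subset \tilde C$: by definition (\Cref{def:stab}) any $\beta \in \Gamma_0[\tilde C]$ preserves the oriented multicurve $\partial \tilde C$ and therefore preserves the subsurface $\tilde C$ setwise, so $\beta(a) \subset \tilde C$ holds as isotopy classes. This step uses only the definition of $\Gamma_0[\tilde C]$ and does not yet invoke Picard–Lefschetz theory.

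Next I would produce the required nodal degeneration. Because $a$ is a vanishing cycle for $L = L_1 \otimes L_2$, there exists a nodal degeneration $\varphi : \disk \to H^0(X,L)$ and a path $\eta: [0,1] \to U_L$ connecting a smooth fiber $C_{t_0}$ (for small $t_0 \neq 0$) to $E$ such that parallel transport along $\eta$ carries the locally defined vanishing cycle $\alpha \subset C_{t_0}$ to $a \subset E$. Since $\beta \in \Gamma_L$, choose a loop $\sigma : [0,1] \to U_L$ based at $E$ with $\rho_{L,E}([\sigma]) = \beta$, and form the concatenation $\eta' := \eta \cdot \sigma$, a path from $C_{t_0}$ to $E$ in $U_L$. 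By functoriality of the monodromy groupoid (the composition law in $\cM(\cC_L/U_L)$ from \Cref{SS:groupoid}), parallel transport along $\eta'$ agrees, as an isotopy class of diffeomorphism $F_{C_{t_0}} \to F_E$, with $\beta \circ \tau_\eta$. Applied to $\alpha$, this sends $\alpha$ to $\beta(a)$. Thus $\beta(a)$ arises from the same nodal degeneration $\varphi$ via the path $\eta'$, and is therefore itself a vanishing cycle for $L_1 \otimes L_2$.

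There is no substantial obstacle here; the only thing one must be careful about is the order of composition in the groupoid $\cM(\cC_L/U_L)$, to ensure that concatenating with $\sigma$ on the $E$-side (rather than on the $C_{t_0}$-side) is what yields $\beta(a)$ rather than some other curve in the orbit. Combining both steps gives $\beta(a) \subset \tilde C$ and $\beta(a)$ a vanishing cycle for $L_1 \otimes L_2$, as claimed.
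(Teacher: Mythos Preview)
Your proposal is correct and is exactly the ``straightforward consequence of the definitions'' the paper alludes to (the paper gives no explicit proof). You have carefully unpacked the two ingredients---that $\Gamma_0[\tilde C]$ preserves $\tilde C$ setwise, and that the monodromy orbit of a vanishing cycle consists of vanishing cycles via concatenation of paths in $U_L$---and the care you take with the composition order in the monodromy groupoid is appropriate.
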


    \subsection{Loci of smoothings}\label{SS:smoothing}
     In this section we establish some technical constructions and results about families of curves obtained by smoothing $C \cup D$, for $C \in U_{L_1}$ and $D \in U_{L_2}$ intersecting transversely. Throughout this discussion, $X$ is tacitly endowed with a Riemannian metric, and all vector spaces are endowed with a norm $\norm{\cdot}$. For $x \in X$, we write $\bar B(\epsilon, x) \subset X$ for the closed $\epsilon$-ball in $X$ centered at $x$ (relative to the chosen metric on $X$), and $B(\epsilon, x) \subset \bar B(\epsilon, x)$ denotes its interior. The following gives a precise construction of how to smooth a reducible curve of the form $C \cup D$: the transversality condition \eqref{eq:transverse} quantifies the intuition that the perturbation should be small enough that the smoothed curve does not stray too far from the original $C \cup D$.

    \begin{definition}[Smoothing, $\tilde U_{L_1\otimes L_2}(C, D, \epsilon, \eta)$]\label{def:smoothing}
        Let $C \in U_{L_1}$ and $D \in U_{L_2}$ be transversely-intersecting smooth curves, given as the vanishing loci of $f \in H^0(X;L_1)$ and $g \in H^0(X;L_2)$, respectively. A {\em smoothing} of $C \cup D$ is a section of $L_1 \otimes L_2$ of the form $fg+h$, for $h \in H^0(X;L_1\otimes L_2)$, such that $Z(fg+h)$ is smooth, and that moreover satisfies the following condition: there is $\epsilon > 0$ such that the closed $\epsilon$-balls $\bar B(\epsilon, x_i) \subset X$ are disjoint as $x_i$ ranges over the points of $C \cap D$, and the intersection
        \begin{equation}
            \label{eq:transverse}
            Z(fg+th) \pitchfork \partial \bar B(\epsilon, x_i)
        \end{equation}
        is {\em transverse} for all $x_i \in C \cap D$ and all $t \in [0,1]$.

        For $C \in U_{L_1}$ and $D \in U_{L_2}$ intersecting transversely, define $\tilde U_{L_1\otimes L_2}(C, D, \epsilon, \eta) \subset U_{L_1\otimes L_2}$ as the locus of all smoothings $fg+h$ for the indicated $\epsilon$ and for $\norm{h} < \eta$. Note that $\tilde U_{L_1\otimes L_2}(C, D, \epsilon, \eta) \subset U_{L_1\otimes L_2}$ is the complement of the discriminant hypersurface in $H^0(X;L_1\otimes L_2)$ in the $\eta$-ball around $fg$; in particular, it is connected. To streamline notation, if the particular $\epsilon, \eta$ are not important, they will be suppressed, and we will write $\tilde U_{L_1 \otimes L_2}(C,D)$.
    \end{definition} 

    We will frequently consider {\em families} of smoothings, i.e. smoothings of $C_t \cup D$, for fixed $D \in U_{L_2}$, and $C_t \in U_{L_1}$ varying through smooth curves in $|L_1|$ transverse to $D$. The following definition establishes these ``universal smoothings''.

    \begin{definition}[$U_{L_1}(D)$ and $\tilde U_{L_1}(D)$]\label{def:globalsmoothing}
        Fix $D \in U_{L_2}$ defined as $D = Z(g)$ for $g \in H^0(X;L_2)$. Define
        \[
        U_{L_1}(D) = \{ C \in U_{L_1} \mid C \pitchfork D \mbox{ is transverse}\}
        \]
        as the locus of smooth curves in $\abs{L_1}$ transverse to $D$. In order to define $\tilde U_{L_1}(D) \subset U_{L_1 \otimes L_2}$ a locus of smoothings of $C \cup D$ for $C \in U_{L_1}(D)$, we must allow the parameters $\epsilon$ and $\eta$ in the definition of a smoothing to vary with $C \in U_{L_1}(D)$. Accordingly, let 
        \[
        \epsilon: U_{L_1}(D) \to \R_{>0}
        \]
        be a continuous function such that for any $C \in U_{L_1}(D)$, the balls $\bar B(\epsilon(C), x_i(C)) \subset X$ are disjoint, where $x_i(C)$ ranges over the points of intersection $C \cap D$. Define
        \[
        \pi^{-1}(U_{L_1}(D)) \subset H^0(X;L_1)\setminus\{0\}
        \]
        as the preimage of $U_{L_1}(D) \subset \abs{L_1} = \P H^0(X;L_1)$ under the projectivization map, and note that $\epsilon$ lifts to a function on $\pi^{-1}(U_{L_1}(D))$. Let
        \[
        \eta: \pi^{-1}(U_{L_1}(D)) \to \R_{>0}
        \]
        be a continuous function such that the intersections
        \[
        Z(fg+th) \pitchfork \partial \bar B(\epsilon(f),x_i(f))
        \]
        are transverse for all $x_i(f) \in Z(f) \cap D$, all $t \in [0,1]$, and all $h \in H^0(X;L_1\otimes L_2)$ such that $\norm{h} < \eta(f)$. With these established, define
        \begin{align*}
            \tilde U_{L_1}(D) &= \{(f,h) \mid f \in \pi^{-1}(U_{L_1}(D)), Z(fg+h) \in \tilde U_{L_1 \otimes L_2}(Z(f),D,\epsilon(f), \eta(f))\}\\ &\subset \pi^{-1}(U_{L_1}(D)) \times H^0(X;L_1\otimes L_2);
        \end{align*}
        we will frequently identify $\tilde U_{L_1}(D)$ with its image in $U_{L_1 \otimes L_2}$ under the map $(f,h) \mapsto Z(fg+h)$. Since $U_{L_1}(D)$ and $\tilde U_{L_1 \otimes L_2}(C,D, \epsilon, \eta)$ are connected, so is $\tilde U_{L_1}(D)$.
    \end{definition} 

Again note that there are analogous constructions with the roles of $C$ and $D$ exchanged.
     
\subsection{Changing basepoints}\label{SS:basepoints}
    To study the monodromy of $\cC_L/U_L$, we will amalgamate various local monodromy computations, necessarily carried out at different basepoints.
    In practice, it will not be possible to specify a completely explicit path $\gamma$ connecting chosen basepoints $b, b'$. {\em A priori} this leads to a large ambiguity in the monodromy group: given $\alpha \in \pi_1(U_L,E)$, only the {\em conjugacy class} determined by $\rho_{L,E}(\alpha)$ in $\im(\rho_{L,E'}) \le \Mod(E')$ can be specified unambiguously, and if the monodromy group is not already understood, this is not terribly helpful. 
    
    Our solution will be to describe certain (connected, but not simply connected) {\em loci} $W \subset U_L$ for which some information about the ``$W$-local monodromy group'' 
    \[
    \rho_{L,W,E}: \pi_1(W,E) \to \Mod(E)
    \]
    can be obtained. If $E, E'$ are basepoints that are both contained in $W$, then by choosing a path $\gamma$ connecting $E$ and $E'$ in $W$, the ambiguity {\em shrinks} to the potentially more manageable conjugacy class within $\im(\rho_{L,W,E}) \le \Mod(E)$. In favorable circumstances, the $W$-local monodromy group can be understood completely, thereby resolving the ambiguity and leaving us free to choose basepoints freely within $W$.

    The loci $W$ we will use are loci of smoothings as discussed above in \Cref{SS:smoothing}, both the local version with both $C, D$ fixed (as in \Cref{def:smoothing}), and the more global version with $D$ fixed but $C$ varying (as in \Cref{def:globalsmoothing}). We have the following descriptions of the local monodromy groups.

 \begin{lemma}
        \label{lemma:TDelta}
        Let $L_1,L_2$ be very ample line bundles on $X$, and let $C$ and $D$ be smooth sections of $L_1, L_2$ respectively, intersecting transversely at $d$ points. Let $\tilde U_{L_1 \otimes L_2}(C,D,\epsilon, \eta)$ be the space of smoothings as in \Cref{def:smoothing}, and let $E$ be a basepoint. Then for $\epsilon, \eta$ both sufficiently small, the local monodromy
        \[
        \rho_{C,D}: \pi_1(\tilde U_{L_1 \otimes L_2}(C,D,\epsilon, \eta), E) \to \Mod(E)
        \]
        is the subgroup generated by the Dehn twists about the boundary components $\Delta_1, \dots, \Delta_d$ separating the subsurfaces $\tilde C, \tilde D$ of $E$, as in \Cref{SS:CDE}. In particular, each $\Delta_i$ is a vanishing cycle.
    \end{lemma}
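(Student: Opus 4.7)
The plan is to realize $\tilde U_{L_1 \otimes L_2}(C, D, \epsilon, \eta)$ as (the complement of a normal crossings configuration in) a small ball in $H^0(X; L_1 \otimes L_2)$ around $fg$, and to compute its fundamental group via an explicit fibration over $(\C^*)^d$ whose monodromy around the $d$ coordinate hyperplanes is identified with the Dehn twists $T_{\Delta_i}$ by Picard-Lefschetz. First I would set up local coordinates: writing $C \cap D = \{x_1, \dots, x_d\}$, transversality provides charts $(u_i, v_i)$ on $B(\epsilon, x_i)$ in which $f = u_i$ and $g = v_i$ up to units, so $fg$ looks like the standard node $u_i v_i$ near each $x_i$.

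Next I would study the evaluation map
\[
\Phi: H^0(X; L_1 \otimes L_2) \to \bigoplus_{i=1}^d (L_1 \otimes L_2)_{x_i} \cong \C^d, \qquad h \mapsto (h(x_1), \dots, h(x_d)),
\]
which is surjective since $L_1 \otimes L_2$ is very ample, hence separates the $d$ points. Applying the Morse lemma to $u_i v_i + h$ in each chart, for $\|h\|$ sufficiently small (depending on $\epsilon$), the restriction of $fg + h$ to $B(\epsilon, x_i)$ is smooth precisely when $h(x_i) \ne 0$, and the transversality condition \eqref{eq:transverse} is automatic. By choosing $\eta$ small enough relative to $\epsilon$, I can ensure that no new singularities of $fg + h$ appear outside $\bigcup_i B(\epsilon, x_i)$. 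Hence the local discriminant through $fg$ is exactly $\Phi^{-1}$ of the coordinate hyperplane arrangement in $\C^d$, and $\tilde U_{L_1 \otimes L_2}(C, D, \epsilon, \eta)$ is the preimage of a punctured polydisk $(\Delta_r^*)^d$. The linear submersion $\Phi$ restricts to a locally trivial fiber bundle with contractible (affine) connected fibers over this polydisk, so
\[
\pi_1\bigl(\tilde U_{L_1 \otimes L_2}(C, D, \epsilon, \eta), E\bigr) \cong \pi_1\bigl((\Delta_r^*)^d\bigr) \cong \Z^d,
\]
generated by loops around the $d$ coordinate hyperplanes.

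The final step is to identify the $i$-th generator with $T_{\Delta_i}$. Looping $h(x_i)$ once around $0$ while fixing $h(x_j)$ for $j \ne i$ is a nodal degeneration in which only the node at $x_i$ reforms; by the Picard-Lefschetz formula (\Cref{SS:PL}) the monodromy is a right-handed Dehn twist about the vanishing cycle. The cylinder collapsing to the reformed node can be chosen inside $\partial \bar B(\epsilon, x_i)$, which by the decomposition of \Cref{SS:CDE} is exactly the separating curve $\Delta_i \subset E$. The main obstacle is the compatibility in the previous paragraph: showing that for $\eta \ll \epsilon$ the evaluation map $\Phi$ really does cut out the full local discriminant (no other components sneak in) and that fibers of $\Phi|_{B(\eta)}$ lying over $(\C^*)^d$ consist entirely of sections satisfying the transversality condition \eqref{eq:transverse}. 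This is a quantitative application of the Morse lemma with parameters; once it is in place, the identification of each generator with a vanishing cycle equal to $\Delta_i$ is essentially built into the construction of $E = \tilde C \cup \tilde D$ as a smoothing of $C \cup D$.
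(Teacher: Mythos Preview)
Your approach is correct and is the natural argument. The paper itself does not give a self-contained proof of this lemma: it simply cites the authors' earlier paper \cite[Section~6.2, Lemma~6.7]{CImonodromy}, remarking that the argument there (written for complete intersection surfaces with line bundles $\cO(d_{n-1}), \cO(1), \cO(d_{n-1}+1)$) carries over verbatim to arbitrary very ample $L_1, L_2, L_1 \otimes L_2$. Your sketch---evaluation at the $d$ nodes giving a submersion onto $\C^d$, identification of the local discriminant with the coordinate hyperplane arrangement via the parametrized Morse lemma, and then Picard--Lefschetz for each coordinate loop---is almost certainly what appears in that reference, and in any case is the standard way to prove such a statement.
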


    \begin{proof}
        This was established in \cite[Section 6.2]{CImonodromy} - see in particular \cite[Lemma 6.7]{CImonodromy}. The results there are formulated in the setting of complete intersection surfaces, but hold {\em mutatis mutandis} replacing the line bundles $\cO(d_{n-1}), \cO(1)$, and $\cO(d_{n-1}+1)$ discussed there with arbitrary very ample line bundles $L_1, L_2$, and $L_1 \otimes L_2$.
    \end{proof}

Now fix $D \in U_{L_2}$, and let $\tilde U_{L_1}(D) \subset U_{L_1 \otimes L_2}$ be the associated universal smoothing. 

\begin{lemma}
    \label{lemma:ptsprt}
    Let $\gamma \in \Pi(\tilde U_{L_1}(D))$ be a path connecting $E = \tilde C \cup \tilde D$ to $E' =\tilde C' \cup \tilde D$. Then the associated parallel transport $\tau_\gamma: E \to E'$ restricts to
    \[
    \qquad (\tau_\gamma)\mid_{\tilde C}: \tilde C \to \tilde C'\qquad \mbox{and} \qquad (\tau_\gamma) \mid_{\tilde D}: \tilde D \to \tilde D ,
    \]
    well-defined at the level of isotopy classes of subsurfaces of $E$ and $E'$. In particular, $\tau_\gamma$ induces a bijection between the sets of vanishing cycles supported on $\tilde C \subset E$ and on $\tilde C' \subset E'$.
\end{lemma}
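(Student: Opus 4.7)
The plan is to show that the parallel transport $\tau_\gamma$ admits a representative diffeomorphism respecting the decomposition $E_s = \tilde C_s \cup \tilde D_s$ at every $s \in [0,1]$. Once this is done, the restrictions to $\tilde C$ and $\tilde D$ are well-defined as isotopy classes (any two parallel transports differ by an isotopy of the fiber, which can be arranged to preserve the decomposition), and the bijection of vanishing cycles follows formally: a vanishing cycle $a \subset \tilde C$ arises from a nodal degeneration based at $E$, which transports along $\gamma$ to a nodal degeneration based at $E'$ with vanishing cycle $\tau_\gamma(a)$, lying in $\tilde C'$ by the decomposition-preserving property.

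The crucial step is therefore to build a continuous family of subsurface decompositions $E_s = \tilde C_s \cup \tilde D_s$ along $\gamma$. Writing $\gamma(s) = (f_s, h_s) \in \tilde U_{L_1}(D)$ with $C_s = Z(f_s)$ and $E_s = Z(f_s g + h_s)$, let $N_s = \bigcup_i \bar B(\epsilon(f_s), x_i(f_s))$. Inside each ball of $N_s$, the transversality condition \eqref{eq:transverse} from \Cref{def:smoothing,def:globalsmoothing} applied to the entire straight-line homotopy $Z(f_s g + t h_s)$ for $t \in [0,1]$ identifies $E_s \cap \bar B(\epsilon(f_s), x_i(f_s))$ as a smooth annulus obtained by smoothing the node of $(C_s \cup D) \cap \bar B$, decomposing cleanly as the union of two sub-annuli -- one associated to the $C$-branch, one to the $D$-branch -- glued along the vanishing cycle $\Delta_i$. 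Outside $N_s$, the same homotopy combined with the implicit function theorem identifies $E_s \setminus N_s$ with a smooth deformation of the disjoint manifold $(C_s \cup D) \setminus N_s$. Gluing produces a decomposition $E_s = \tilde C_s \cup \tilde D_s$ varying continuously in $s$; since $D$ is held fixed, the $\tilde D_s$ are canonically isotopic to the original $\tilde D$ throughout, while $\tilde C_s$ interpolates between $\tilde C$ at $s = 0$ and $\tilde C'$ at $s = 1$.

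With the continuous family of decompositions in hand, I would complete the proof by choosing an Ehresmann connection on the restricted tautological family over a neighborhood of $\gamma$ whose horizontal lift of $\dot\gamma$ is everywhere tangent to the smooth subbundles $\{\tilde C_s\}$ and $\{\tilde D_s\}$ of the total space; such a connection exists because the subsurfaces assemble into smooth subbundles by the previous step. The flow of this horizontal lift yields a parallel transport representative $\tau_\gamma: E \to E'$ restricting to diffeomorphisms $\tilde C \to \tilde C'$ and $\tilde D \to \tilde D$, and any other choice of connection yields a representative related to this one by a fiber isotopy that can likewise be chosen to preserve the decomposition.

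The hardest step is the outside-$N_s$ part of the decomposition: one must rule out the possibility that the perturbation $h_s$, however small, ``merges'' the $C$-side and the $D$-side of $(C_s \cup D) \setminus N_s$ into a single connected component of $E_s \setminus N_s$. The transversality condition \eqref{eq:transverse}, applied uniformly across the straight-line homotopy $f_s g + t h_s$ (and not merely at the endpoints), is precisely what prevents this: it forces the level set to sweep out a product cobordism between $(C_s \cup D) \cap \partial \bar B$ and $E_s \cap \partial \bar B$, ruling out any topological merging along the way.
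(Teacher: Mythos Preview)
Your argument is correct, and it rests on the same ingredient as the paper's: the transversality condition \eqref{eq:transverse} with the spheres $\partial\bar B(\epsilon,x_i)$, which holds not just at $t=1$ but along the whole straight-line homotopy $t\mapsto Z(f_sg+th_s)$.

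The paper's own proof takes a shorter route, however. Rather than building the full subsurface decomposition $E_s=\tilde C_s\cup\tilde D_s$ at every $s$ and then constructing a compatible Ehresmann connection, the paper simply observes that the intersections $E_s\cap\partial\bar B(\epsilon,x_i)$ are Hopf links in $S^3$, varying as a locally trivial family over $\tilde U_{L_1}(D)$. The two components of each Hopf link are parallel copies of the boundary curve $\Delta_i$, and the straight-line homotopy labels one as coming from the $C$-branch and the other from $D$. Hence parallel transport along $\gamma$ carries the multicurve $\Delta_1\cup\cdots\cup\Delta_d\subset E$ to the multicurve $\Delta'_1\cup\cdots\cup\Delta'_d\subset E'$, and since a subsurface is determined up to isotopy by its boundary together with the choice of side, this already forces $\tau_\gamma(\tilde C)=\tilde C'$ and $\tau_\gamma(\tilde D)=\tilde D$ as isotopy classes.

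In particular, your ``hardest step''---ruling out that the $C$-side and $D$-side of $E_s\setminus N_s$ merge---is simply not needed in the paper's framing: once the boundary multicurve is tracked, the subsurfaces it bounds are automatically tracked at the level of isotopy classes, with no need to control the topology of $E_s\setminus N_s$ pointwise. Your approach buys a concrete diffeomorphism-level representative of $\tau_\gamma$ respecting the decomposition, which is pleasant but more than the lemma actually asserts.
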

\begin{proof}
    By the constructions of \Cref{def:smoothing,def:globalsmoothing}, for every $fg+h \in \tilde U_{L_1}(D)$, the intersection of $Z(fg+h)$ with $\partial \bar B(\epsilon, x_i(f))$ is transverse, and so forms a locally trivial family over $\tilde U_{L_1}(D)$. Each such intersection is a Hopf link in $\partial \bar B(\epsilon, x_i(f)) \cong S^3$; in $E = \tilde C \cup \tilde D$, the two components form parallel copies of the boundary curves $\Delta_i$ in $E$. Thus this set of curves is preserved under parallel transport within $\tilde U_{L_1}(D)$: if $\Delta_1 \cup \dots \cup \Delta_d$ is the boundary of $\tilde C \subset E$ and $\Delta'_1 \cup \dots \cup \Delta'_d$ is the boundary of $\tilde C' \subset E'$, and if $\gamma$ is a path connecting $E = \tilde C \cup \tilde D$ to $E' = \tilde C' \cup \tilde D$, then 
    \[
    \tau_\gamma(\Delta_1 \cup \dots \cup \Delta_d) = \Delta'_1 \cup \dots \cup \Delta'_d,
    \]
    which easily implies that $\tau_\gamma$ identifies $\tilde C$ to $\tilde C'$ and $\tilde D \subset E$ to $\tilde D \subset E'$ as isotopy classes of subsurfaces of $E, E'$. 
\end{proof}

\begin{corollary}
    \label{lemma:localmonodromy}
    Let $L_1,L_2$ be very ample line bundles on $X$. Let $D \in U_{L_2}$ be given, let $\tilde U_{L_1}(D) \subset U_{L_1\otimes L_2}$ be the universal smoothing constructed in \Cref{def:globalsmoothing}, and let $E \cong \tilde C \cup \tilde D$ be a basepoint. Then the image of the local monodromy
    \[
    \rho_D: \pi_1(\tilde U_{L_1}(D),E) \to \Mod(E)
    \]
    is contained in the stabilizer $\Gamma_0[\tilde C] = \Gamma_0[\tilde D]$ (recalling \Cref{remark:CDsymmetric}). 
\end{corollary}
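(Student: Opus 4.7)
The proof is a direct deduction from \Cref{lemma:ptsprt}, which already does the substantive work of identifying the parallel transport data. My plan is the following.

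Let $\alpha \in \pi_1(\tilde U_{L_1}(D), E)$ be an arbitrary loop, viewed as a morphism in $\Pi(\tilde U_{L_1}(D))$ from $E$ to itself. Applying \Cref{lemma:ptsprt} with $E' = E$ (and $\tilde C' = \tilde C$), the parallel transport map $\tau_\alpha: E \to E$ restricts to a self-diffeomorphism of $\tilde C$ (and of $\tilde D$) well-defined up to isotopy. In particular, the isotopy class of $\tau_\alpha$ sends $\tilde C \subset E$ to itself as an isotopy class of subsurface.

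From this it follows that $\tau_\alpha$ preserves the multicurve $\partial \tilde C = \Delta_1 \cup \dots \cup \Delta_d$ setwise. To conclude $\rho_D(\alpha) \in \Gamma_0[\tilde C]$ in the sense of \Cref{def:stab}, I must verify that this preservation respects orientations. But $\tau_\alpha$ is orientation-preserving as a diffeomorphism of $E$ (the monodromy functor $\rho_L$ of \Cref{SS:rspin} takes values in orientation-preserving mapping classes), and it carries $\tilde C$ to $\tilde C$. Since the boundary orientation of each $\Delta_i$ is determined by the orientations of $E$ and of the adjacent subsurface $\tilde C$, this orientation is automatically preserved. Thus $\rho_D(\alpha)$ preserves $\partial \tilde C$ as an oriented multicurve, so $\rho_D(\alpha) \in \Gamma_0[\tilde C]$.

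The equality $\Gamma_0[\tilde C] = \Gamma_0[\tilde D]$ was recorded in \Cref{remark:CDsymmetric}, so the conclusion follows. No real obstacle is present here: the genuine content lies in \Cref{lemma:ptsprt}, whose proof uses the transversality condition \eqref{eq:transverse} built into \Cref{def:smoothing,def:globalsmoothing} to ensure that the link curves $Z(fg+h) \cap \partial \bar B(\epsilon, x_i(f))$ vary in a locally trivial family over $\tilde U_{L_1}(D)$.
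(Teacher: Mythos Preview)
Your proof is correct and follows exactly the approach the paper intends: the corollary is stated without proof in the paper, as an immediate consequence of \Cref{lemma:ptsprt}, and your argument spells out precisely that deduction. The one small point you leave implicit is that $\rho_D(\alpha) \in \Gamma_L$ in the first place (since $\Gamma_0[\tilde C]$ is defined as a subgroup of $\Gamma_L$), but this is automatic from the inclusion $\tilde U_{L_1}(D) \subset U_{L_1\otimes L_2}$ of \Cref{def:globalsmoothing}.
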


With these results in hand, we can establish the basepoint convention that will be in effect for the remainder of the paper.

    \begin{convention}\label{conv}
        Fix a smooth curve $D \in \abs{L_2}$. Then the basepoint $E \in U_{L_1 \otimes L_2}$ is chosen to be any point in $\tilde U_{L_1}(D)$. Such $E$ has a topological description as $E = \tilde C \cup \tilde D$. By \Cref{lemma:ptsprt}, statements about vanishing cycles on the subsurfaces $\tilde C, \tilde D$ are valid independently of the chosen basepoint.
    \end{convention}

\section{(Simple) braid groups}\label{S:braids}

Fixing a section $C \in U_{L_1}$ and letting $D \in U_{L_2}(C)$ vary through smooth sections transverse to $C$, one obtains a family of configurations of $d$ points on $C$. Smoothing these out as in \Cref{S:smoothing}, this creates a family of smooth sections of $\abs{L_1 \otimes L_2}$ whose monodromy is valued in the stabilizer $\Gamma_0[\tilde C]$. This subgroup will play a prominent role in our arguments; in this section, we recall some of the basic language and theory of surface braid groups, which will allow us to formulate our results.

\subsection{Configuration spaces and surface braid groups}
Let $C$ be a closed surface.\footnote{There is a certain amount of tension in the notation in this section. On the one hand, our ultimate application of all of these results is to the Riemann surfaces $C,\tilde C \subset X$, etc. as discussed in \Cref{S:smoothing}. On the other hand, in certain places, it is clearer to explicitly keep track of boundary components and punctures, at which point we will switch to ``general'' notation, writing $\Sigma_{g,n}^d$ for the surface of genus $g$ with $n$ punctures and $d$ boundary components. We hope the reader is not too disoriented by this switch.} The configuration space of unordered $d$-tuples of points in $C$ will be denoted $\Conf_d(C)$, and the space of {\em ordered} $d$-tuples will be denoted $\PConf_d(C)$. The covering $\PConf_d(C) \to \Conf_d(C)$ is Galois with deck group $S_d$ the symmetric group on $d$ letters. The fundamental group of $\Conf_d(C)$ is the {\em surface braid group}, written
\[
\Br_d(C) := \pi_1(\Conf_d(C));
\]
likewise define the {\em pure surface braid group} via
\[
\PBr_d(C) := \pi_1(\PConf_d(C)).
\]

\para{Basic elements: simple half-twists, point-pushes} Here we recall two basic classes of elements in a surface braid group. Let $C$ be a surface endowed with a set $\bx = \{x_1, \dots, x_d\}$ of $d$ distinct points. Let $\sigma \subset C$ be an arc with endpoints at distinct elements $x_i, x_j$ of $\bx$. The {\em simple half twist} about $\sigma$ is the surface braid obtained by exchanging $x_i$ and $x_j$ by following $\sigma$, with each of $x_i, x_j$ displaced slightly to the right (see \Cref{fig:braids}). By abuse of notation, this will often be denoted by $\sigma$ as well. Note that the square $\sigma^2$ is a pure braid, and that as a mapping class, it is given by the Dehn twist $T_{\sigma^+}$, where $\sigma^+$ is the simple closed curve given as the boundary of an $\epsilon$-neighborhood of $\sigma$. 
Next, let $\ul{a}$ be an oriented simple closed curve based at some $x_k \in \bx$. Then the {\em point-push} of $x_k$ along $\ul{a}$ is the surface braid obtained by pushing $x_k$ along $\ul{a}$. It is again a pure braid.

    \begin{figure}[h]
\centering
		\labellist
        \tiny
        \pinlabel $\sigma$ at 70 80
        \pinlabel $\ul{a}$ at 70 40
        \pinlabel $\sigma^+$ at 160 40
		\endlabellist
\includegraphics{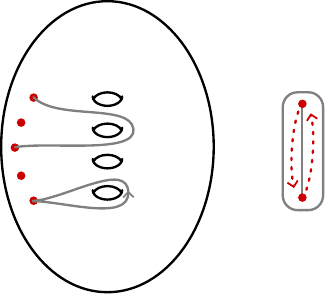}
\caption{At left: two basic examples of surface braids: a simple half-twist about an arc, and a point-push. At right, an illustration of the ``follow to the right'' convention used in defining simple half-twists, and the simple closed curve $\sigma^+$ associated to an arc $\sigma$.}
\label{fig:braids}
\end{figure}

\para{The Fadell-Neuwirth fibration} Basic structural information about pure configuration spaces is provided by the {\em Fadell-Neuwirth fibration}, introduced in \cite{FN}. 

\begin{proposition}[Fadell-Neuwirth, cf. \cite{FN}]\label{prop:FN}
    Let $\Sigma_{g,n}$ be a surface of genus $g\ge 0$ with $n \ge 0$ punctures. Let $p: \PConf_d(\Sigma_{g,n}) \to \PConf_{k}(\Sigma_{g,n})$ denote the projection onto the first $k$ components of an ordered configuration of $d$ points in $\Sigma_{g,n}$. Then $p$ realizes $\PConf_d(\Sigma_{g,n})$ as the total space of a fiber bundle over $\PConf_{k}(\Sigma_{g,n})$, with fiber $\PConf_{d-k}(\Sigma_{g,n+k})$.

    In particular, so long as $g + n \ge 1$, the long exact sequence of homotopy groups degenerates to a short exact sequence of fundamental groups
    \[
    1 \to \PBr_{d-k}(\Sigma_{g,n+k}) \to \PBr_{d}(\Sigma_{g,n}) \to \PBr_{k}(\Sigma_{g,n}) \to 1.
    \]
    There is an analogous short exact sequence when the punctured surfaces $\Sigma_{g,m}$ are replaced with surfaces with boundary $\Sigma_g^m$.
\end{proposition}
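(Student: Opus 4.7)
The plan is to handle this classical statement in three stages: identify the fiber, check local triviality, and then analyze the long exact sequence, with the surface-with-boundary case following by deformation retraction. Identification of the fiber is immediate: the preimage of $(x_1,\dots,x_k) \in \PConf_k(\Sigma_{g,n})$ consists of ordered $(d-k)$-tuples in $\Sigma_{g,n}$ that are pairwise distinct and distinct from each $x_i$, which is exactly $\PConf_{d-k}(\Sigma_{g,n}\setminus\{x_1,\dots,x_k\}) = \PConf_{d-k}(\Sigma_{g,n+k})$.

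For local triviality, fix $(x_1,\dots,x_k)$ and choose pairwise disjoint closed coordinate disks $\bar D_i \ni x_i$, together with a neighborhood $W$ of $(x_1,\dots,x_k)$ small enough that each $(y_1,\dots,y_k) \in W$ satisfies $y_i \in \bar D_i$. Using cutoff functions supported on the $\bar D_i$, one constructs a smoothly varying family of diffeomorphisms $\Phi_y \colon \Sigma_{g,n} \to \Sigma_{g,n}$, supported in $\bigsqcup \bar D_i$, with $\Phi_y(x_i) = y_i$; then
\[
\bigl((y_1,\dots,y_k),(z_1,\dots,z_{d-k})\bigr) \longmapsto \bigl(y_1,\dots,y_k,\Phi_y(z_1),\dots,\Phi_y(z_{d-k})\bigr)
\]
gives a local trivialization of $p$ over $W$ with fiber $\PConf_{d-k}(\Sigma_{g,n}\setminus\{x_1,\dots,x_k\})$ (because each $z_j$ is distinct from every $x_i$, and $\Phi_y$ is a diffeomorphism, so the images $\Phi_y(z_j)$ are distinct from each other and from the $y_i$). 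This bump-function construction is essentially the content of \cite{FN} and is the only real technical obstacle in the proof; the rest is formal.

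With $p$ established as a fiber bundle, the associated long exact sequence reads in low degrees as
\[
\pi_2(\PConf_k(\Sigma_{g,n})) \to \PBr_{d-k}(\Sigma_{g,n+k}) \to \PBr_d(\Sigma_{g,n}) \to \PBr_k(\Sigma_{g,n}) \to \pi_0(\PConf_{d-k}(\Sigma_{g,n+k})),
\]
and the right-hand term is trivial since ordered configuration spaces on connected surfaces are connected. To extract the claimed short exact sequence it suffices to prove $\pi_2(\PConf_k(\Sigma_{g,n})) = 0$, and I would prove the stronger statement that $\PConf_k(\Sigma_{g,n})$ is aspherical whenever $g+n \ge 1$ by induction on $k$. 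The base case $k=1$ is the classical fact that $\Sigma_{g,n}$ has contractible universal cover (either the Euclidean plane or the hyperbolic plane) under the hypothesis $g+n \ge 1$. The inductive step uses the already-established $k=1$ case of the fibration, $\PConf_k(\Sigma_{g,n}) \to \Sigma_{g,n}$ with fiber $\PConf_{k-1}(\Sigma_{g,n+1})$: both base and fiber are aspherical by the inductive hypothesis (noting $g + (n+1) \ge 1$), so exactness in the long exact sequence forces the total space to be aspherical as well. The boundary version follows because $\Sigma_g^m$ deformation retracts to a subsurface whose homotopy type is that of $\Sigma_{g,m}$, inducing homotopy equivalences on each ordered configuration space and hence an isomorphism of short exact sequences.
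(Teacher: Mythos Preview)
Your proof is correct and follows essentially the same approach as the paper: establish the fiber bundle structure (the paper simply cites \cite{FN} where you sketch the bump-function local trivialization), prove asphericity of $\PConf_k(\Sigma_{g,n})$ for $g+n\ge 1$ by induction on $k$ via the long exact sequence of the fibration, and deduce the boundary version from the homotopy equivalence $\PConf_k(\Sigma_{g,n}) \simeq \PConf_k(\Sigma_g^n)$. Your write-up is more detailed than the paper's terse sketch, but the logical skeleton is identical.
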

\begin{proof}
    The existence of the fiber bundle is established in \cite{FN}. The short exact sequence follows from the fact that $\pi_i(\PBr_k(\Sigma_{g,n})) = 0$ for $i \ge 2$ so long as $g + n \ge 1$; this fact is established by induction on $k$ using the long exact sequence in homotopy. The final claim follows from the fact that the spaces $\PConf_k(\Sigma_{g,n}) \simeq \PConf_k(\Sigma_g^n)$ are homotopy equivalent.
\end{proof}

\para{Configurations of tangent vectors/disks} We will have occasion to consider a variant of these notions. Let $\tilde\Conf_d(C)$ be the configuration space of $d$-tuples of {\em unit tangent vectors} $\{v_1, \dots, v_d\}$ in $C$ (tacitly relative to some Riemannian metric on $C$), such that $v_i \in T_{x_i} C$ and the basepoints $\{x_1, \dots, x_d\}$ are pairwise distinct. Note that there is an ordered variant of this construction, written $\tilde \PConf_d(C)$. The corresponding fundamental groups are written
\[
\tilde \Br_d(C) := \pi_1(\tilde \Conf_d(C)) \quad \mbox{and} \quad \tilde \PBr_d(C) := \pi_1(\tilde \PConf_d(C)).
\]
At the level of fundamental groups, these are just central extensions of ordinary surface braid groups.
\begin{lemma}
    \label{lemma:brextension}
    There are short exact sequences
    \[
    1 \to \Z^d \to \tilde \Br_d(C) \to \Br_d(C) \to 1
    \]
    and
    \[
    1 \to \Z^d \to \tilde \PBr_d(C) \to \PBr_d(C) \to 1.
    \]
\end{lemma}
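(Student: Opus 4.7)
The plan is to realize each desired $\Z^d$ as the fundamental group of the fiber of a forgetful fibration and to invoke asphericity of the base to promote the long exact sequence of the fibration to the short exact sequence in the lemma.

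First I would set up the relevant fibrations. Forgetting the tangent vector and keeping only the basepoint defines natural maps
\[
\tilde\PConf_d(C) \to \PConf_d(C) \qquad \text{and} \qquad \tilde\Conf_d(C) \to \Conf_d(C).
\]
Each is a locally trivial fiber bundle whose fiber over a configuration $\{x_1,\dots,x_d\}$ is the product of unit tangent circles $U(T_{x_1}C) \times \cdots \times U(T_{x_d}C) \cong (S^1)^d$.

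Second, I would read off the short exact sequence of fundamental groups from the long exact sequence of homotopy groups. For the pure case this reads
\[
\pi_2(\PConf_d(C)) \to \pi_1((S^1)^d) = \Z^d \to \tilde\PBr_d(C) \to \PBr_d(C) \to \pi_0((S^1)^d) = 1,
\]
with an identical statement in the unordered setting. The short exact sequence claimed in the lemma then follows provided the outer $\pi_2$ vanishes; this simultaneously produces the $\Z^d$ kernel and shows it injects into $\tilde\PBr_d(C)$ (resp.\ $\tilde\Br_d(C)$).

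Third, I would verify that $\PConf_d(C)$ and $\Conf_d(C)$ are aspherical, so that $\pi_2=0$. Iterating the Fadell--Neuwirth fibration of \Cref{prop:FN} exhibits $\PConf_d(C)$ as a tower of fibrations whose successive bases and fibers are surfaces of genus $\ge 1$ with some number of punctures, each of which is a $K(\pi,1)$. Therefore $\PConf_d(C)$ is aspherical, and the unordered analogue follows since $\Conf_d(C) = \PConf_d(C)/S_d$ is the quotient of an aspherical space by a free action of the finite group $S_d$. There is no real obstacle in the argument: the only substantive ingredient beyond unwinding definitions is the standard asphericity of configuration spaces of positive-genus surfaces, which is a direct consequence of the Fadell--Neuwirth tower already recorded in the paper.
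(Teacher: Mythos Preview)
Your proposal is correct and follows essentially the same approach as the paper: both recognize the forgetful maps as $(S^1)^d$-bundles and use asphericity of the configuration space base to collapse the long exact sequence in homotopy to the desired short exact sequence. The only cosmetic difference is that the paper deduces the pure case from the unordered one via pullback along the covering $\PConf_d(C)\to\Conf_d(C)$, whereas you go the other direction, deducing asphericity of $\Conf_d(C)$ from that of $\PConf_d(C)$ via the free $S_d$-action.
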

\begin{proof}
   The projection $\tilde \Conf_d(C) \to \Conf_d(C)$ realizes $\tilde \Conf_d(C)$ as the total space of a $(S^1)^d$-bundle over $\Conf_d(C)$. As $\pi_k(\Conf_d(C)) = 0$ for $k \ge 1$, the long exact sequence in homotopy reduces to the first of the above short exact sequences of fundamental groups. The bundle $\tilde \PConf_d(C) \to \PConf_d(C)$ is the pullback of $\tilde \Conf_d(C) \to \Conf_d(C)$ along the covering map $\PConf_d(C) \to \Conf_d(C)$; the second short exact sequence follows.
\end{proof}

Now suppose that $\tilde C$ is a surface with $d$ boundary components $\Delta_1, \dots, \Delta_d$, and let $C$ be the closed surface obtained by filling in each $\Delta_i$. Define
\[
\Br(\tilde C) := \tilde \Br_d(C) \quad \mbox{and} \quad \PBr(\tilde C) := \tilde \PBr_d(C). 
\]
While initially defined in terms of configuration spaces of tangent vectors on $C$, we will also require a second construction of $\PBr(\tilde C)$, as a subgroup of the mapping class group of $\Mod(\tilde C)$.
\begin{lemma}
    \label{lemma:birman}
    Let $p: \Mod(\tilde C) \to \Mod(C)$ be the map induced by the inclusion $\tilde C \into C$. Then there is an isomorphism
    \[
    \PBr(\tilde C) \cong \ker(\Mod(\tilde C) \to \Mod(C)).
    \]
\end{lemma}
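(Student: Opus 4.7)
The statement is a framed refinement of the classical Birman exact sequence; the cleanest route is through a long exact sequence in homotopy for a fibration of diffeomorphism groups. Concretely, I would consider the evaluation map
\[
\mathrm{ev}: \Diff^+(C) \to \tilde\PConf_d(C), \qquad f \mapsto (df(v_1), \dots, df(v_d)),
\]
where $v_1, \dots, v_d$ are fixed unit tangent vectors at $d$ chosen basepoints on $C$. The map $\mathrm{ev}$ is a Serre fibration: $\Diff^+(C)$ acts transitively on $\tilde\PConf_d(C)$, and local triviality follows from the isotopy extension theorem. Its fiber is the subgroup $\Diff^+(C; \{v_i\}) \le \Diff^+(C)$ of diffeomorphisms fixing each tangent vector pointwise.

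The key geometric input is the homotopy equivalence $\Diff^+(C; \{v_i\}) \simeq \Diff^+(\tilde C, \partial \tilde C)$. This comes from a standard tubular neighborhood argument: each $v_i$ canonically identifies a small closed disk around $x_i$ with the closed unit disk in $T_{x_i}C$, and excising these disks from $C$ produces (a model for) $\tilde C$. A diffeomorphism fixing the $v_i$ can then be standardized on these disks by an equivariant deformation, and conversely every element of $\Diff^+(\tilde C, \partial \tilde C)$ extends by the identity to give an element fixing the $v_i$. Under this equivalence, $\pi_0$ of the fiber becomes $\Mod(\tilde C)$, and the induced map $\pi_0(\text{fiber}) \to \pi_0(\Diff^+(C)) = \Mod(C)$ is exactly the homomorphism $p$ induced by the inclusion $\tilde C \hookrightarrow C$.

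The resulting long exact sequence in homotopy reads
\[
\pi_1(\Diff^+(C)) \to \pi_1(\tilde\PConf_d(C)) \to \Mod(\tilde C) \xrightarrow{p} \Mod(C) \to \pi_0(\tilde \PConf_d(C)).
\]
The rightmost term vanishes since $\tilde\PConf_d(C)$ is path-connected. Since the curves we care about have genus $\ge 2$, the Earle--Eells theorem gives $\pi_1(\Diff^+(C)) = 1$. Hence the sequence collapses to
\[
1 \to \tilde\PBr_d(C) \to \Mod(\tilde C) \xrightarrow{p} \Mod(C) \to 1,
\]
which, in view of the definition $\PBr(\tilde C) := \tilde\PBr_d(C)$, identifies $\ker(p)$ with $\PBr(\tilde C)$ as claimed. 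The only non-routine step is the fiber identification in paragraph two; everything else is formal. (An alternative elementary approach is to combine the classical Birman exact sequence with the $\Z^d$-kernel from capping boundary components, then identify the resulting central extension with that of \Cref{lemma:brextension} using explicit generators (point-pushes and boundary twists on one side, point-pushes and framing rotations on the other), but the fibration argument is both shorter and more conceptual.)
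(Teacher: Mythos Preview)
Your argument is correct and is precisely the standard fibration proof of the (framed) Birman exact sequence; the paper's own proof consists only of a citation to Farb--Margalit \cite[Theorem 9.1 and Section 4.2.5]{FM}, where this argument is carried out. One minor remark: your appeal to Earle--Eells to kill $\pi_1(\Diff^+(C))$ requires $g(C)\ge 2$, which is not hypothesized in the lemma as stated but is certainly satisfied in the paper's applications.
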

\begin{proof}
    This is an instance of the Birman exact sequence. See \cite[Theorem 9.1 and Section 4.2.5]{FM}. 
\end{proof}

\para{Boundary-adjacent curves and spheres} In several places, we will have need of the following technical notion.

\begin{definition}[Boundary-adjacent curve, boundary-adjacent sphere]\label{def:boundaryadj}
    Let $\tilde C$ be a compact surface with nonempty boundary. A subsurface $B \subset \tilde C$ is a {\em $k$-holed boundary-adjacent sphere} if $B \cong \Sigma_0^k$ as an abstract surface, and if $k-1$ of the boundary components of $B$ are also boundary components of $\tilde C$. The unique boundary component of such $B$ that is not a boundary component of $\tilde C$ is called a {\em boundary-adjacent curve}, and is said to {\em enclose} the remaining boundary components of $B$.
    A boundary-adjacent sphere (or its associated boundary-adjacent curve) is {\em maximal} if every boundary component of $\tilde C$ is a boundary component of $B$.
\end{definition}

    \begin{figure}[h]
\centering
		\labellist
        \small
		\endlabellist
\includegraphics{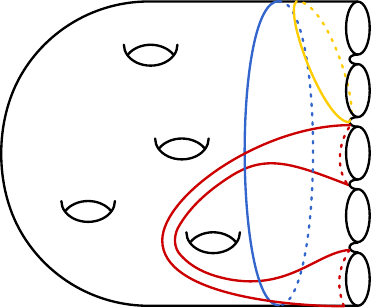}
\caption{Three boundary-adjacent curves/spheres, one of which is maximal.}
\label{fig:bdry}
\end{figure}

Our first use of boundary-adjacent spheres is in constructing a particular set of generators for certain surface braid groups; this will be used in \Cref{S:mainlemma}.

\begin{lemma}
    \label{lemma:bespokepbr}
    Let $\Sigma_g^n$ be a compact surface with boundary components $\Delta_1, \dots, \Delta_n$, and let $\PConf_d(\Sigma_g^n)$ be the configuration space of ordered $d$-tuples of points in $\Sigma_g^n$. Let $\bx = (x_1, \dots, x_d) \in \PConf_d(\Sigma_g^n)$ be a basepoint, and let $B' \subset B \subset \Sigma_g^n$ be maximal boundary-adjacent spheres such that $x_i \in B \setminus B'$ for all $x_i \in \bx$. Then $\PBr_d(\Sigma_g^n) = \pi_1(\PConf_d(\Sigma_g^n), \bx)$ has a generating set consisting of two classes of elements: point-push maps based at some $x_i \in \bx$ supported on $\Sigma_g^n \setminus B'$, and elements supported on $B$.
\end{lemma}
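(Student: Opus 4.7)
The plan is to induct on $d$, using the Fadell--Neuwirth fibration to peel off one point at a time and van Kampen on a suitable cover of the (punctured) fiber to produce the two prescribed classes of generators.

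For the base case $d=1$, we have $\PBr_1(\Sigma_g^n) = \pi_1(\Sigma_g^n, x_1)$. Since $B' \subset B$ are maximal boundary-adjacent spheres, $B \setminus B'$ is (homotopy equivalent to) an annulus, so the cover $\Sigma_g^n = (\Sigma_g^n \setminus B') \cup \hat B$ (where $\hat B$ is a slight open thickening of $B$) has path-connected pairwise and triple intersections, with $x_1$ in the overlap. Van Kampen then expresses $\pi_1(\Sigma_g^n, x_1)$ as the amalgamated free product of $\pi_1(\Sigma_g^n \setminus B', x_1)$ and $\pi_1(B, x_1)$, which is exactly the generating statement in the case $d=1$.

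For the inductive step, fix $d \ge 2$ and apply \Cref{prop:FN} to the Fadell--Neuwirth projection forgetting $x_d$, yielding
\[
1 \to \pi_1(\Sigma_g^n \setminus \{x_1,\dots,x_{d-1}\}, x_d) \to \PBr_d(\Sigma_g^n) \to \PBr_{d-1}(\Sigma_g^n) \to 1.
\]
By the inductive hypothesis, $\PBr_{d-1}(\Sigma_g^n)$ (based at $(x_1,\dots,x_{d-1})$) is generated by point-pushes of $x_j$ ($j<d$) along loops $\gamma \subset \Sigma_g^n \setminus B'$ and by elements supported on $B$. An element supported on $B$ fixes $x_d$ pointwise and extends to $\PBr_d$ as the same element (still supported on $B$). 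A point-push of $x_j$ along $\gamma$ can be represented by a generic $\gamma'$ in $\Sigma_g^n \setminus B'$ avoiding $x_d$ (dimension count: a $1$-complex can be perturbed off a point in a $2$-manifold), and the corresponding pure braid in $\PBr_d$ is again a point-push of $x_j$ supported on $\Sigma_g^n \setminus B'$. Thus all generators of the quotient lift to the proposed generating set.

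It remains to handle the kernel $K = \pi_1(\Sigma_g^n \setminus \{x_1,\dots,x_{d-1}\}, x_d)$, viewed as point-pushes of $x_d$ fixing the other punctures. Apply van Kampen to the cover
\[
\Sigma_g^n \setminus \{x_1,\dots,x_{d-1}\} \;=\; \bigl(\Sigma_g^n \setminus (B' \cup \{x_1,\dots,x_{d-1}\})\bigr) \;\cup\; \bigl(\hat B \setminus \{x_1,\dots,x_{d-1}\}\bigr).
\]
The intersection deformation retracts onto the annulus $B \setminus B'$ punctured at $x_1,\dots,x_{d-1}$, which is path-connected and contains $x_d$. Van Kampen then shows $K$ is generated by $\pi_1(\Sigma_g^n \setminus (B' \cup \{x_1,\dots,x_{d-1}\}), x_d)$---point-pushes of $x_d$ along loops in $\Sigma_g^n \setminus B'$---and by $\pi_1(\hat B \setminus \{x_1,\dots,x_{d-1}\}, x_d) \cong \pi_1(B \setminus \{x_1,\dots,x_{d-1}\}, x_d)$---point-pushes of $x_d$ in $B$, which are supported on $B$. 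Combining with the lifts from the previous paragraph closes the induction.

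The main technical obstacle is the bookkeeping around the inductive lift: although the point-push generators of $\PBr_{d-1}$ are indexed by loops in $\Sigma_g^n \setminus B'$, the genuine braid element requires a representative avoiding $x_d$, and the point $x_d$ itself lies in the overlap region $B \setminus B' \subset \Sigma_g^n \setminus B'$. This is why the van Kampen argument must be run with punctures removed from \emph{both} pieces of the cover, and why the dimension-counting perturbation step is needed to maintain membership in the prescribed generating classes during the lift.
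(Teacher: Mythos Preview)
Your argument is correct and follows essentially the same strategy as the paper: induction on $d$ via the Fadell--Neuwirth short exact sequence. The only organizational difference is the direction of the projection---the paper projects onto the \emph{first} coordinate (so the kernel is $\PBr_{d-1}(\Sigma_g^{n+1})$ and the inductive hypothesis is applied there), whereas you forget the \emph{last} coordinate (so the quotient is $\PBr_{d-1}(\Sigma_g^n)$ and the inductive hypothesis is applied there). Your version has the mild advantage that the ambient surface stays fixed throughout the induction, sparing you from checking that $B' \subset B$ remain maximal boundary-adjacent spheres in $\Sigma_g^{n+1}$; in exchange you do a little extra van Kampen bookkeeping on the punctured fiber. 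Either way the content is the same.
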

\begin{proof}
    We proceed by induction on $d$. In the base case $d = 1$ (where $\PConf_1(\Sigma_g^n) = \Sigma_g^n$ itself), the result is clear. In general, consider the Fadell-Neuwirth exact sequence
    \[
    1 \to \pi_1(\PConf_{d-1}(\Sigma_g^{n+1})) \to \PBr_d(\Sigma_g^n) \to \pi_1(\Sigma_g^n) \to 1
    \]
    associated to the map $\PConf_d(\Sigma_g^n) \to \Sigma_g^n$ given by projection onto the first coordinate. The fiber over $x_1 \in \Sigma_g^n$ has the homotopy type of $\PConf_{d-1}(\Sigma_g^{n+1})$, with the additional boundary component obtained by deleting a neighborhood of $x_1$. By induction, $\pi_1(\PConf_{d-1}(\Sigma_g^{n+1}))$ has a generating set of the required form. Let $a_1, \dots, a_{2g+n-1} \in \pi_1(C, x_1)$ be a set of generators. These can be lifted to elements of $\pi_1(\PConf_d(\Sigma_g^n), \bx)$ supported away from $B'$, completing a set of generators of the required type.
\end{proof}

\subsection{The simple braid group}\label{SS:SBr} The surface braid group has a subgroup of ``simple'' braids (introduced by Shimada \cite{shimada}), which will play an important role in our arguments. This is defined as follows. As above, $C$ denotes a closed surface. The inclusion
\[
\iota: \PConf_d(C) \into C^d
\]
gives rise to the homomorphism $\lambda: \PBr_d(C) \to H_1(C;\Z)$ via the following composition:
\[
\xymatrix{
\PBr_d(C) \ar[r]^{\iota_*} \ar@/_1pc/[rrr]_{\lambda}  & \pi_1(C^d) \ar[r] & H_1(C;\Z)^{\oplus d} \ar[r]^\sigma & H_1(C;\Z),
}
\]
with $\sigma: H_1(C;\Z)^{\oplus d} \to H_1(C;\Z)$ the map that sums the entries in the factors. As the target is abelian, $\lambda$ factors through $H_1(\PBr_d(C);\Z)$. Moreover, $\lambda$ is valued in the subgroup 
\[
\Hom(H_1(\PBr_d(C);\Z),H_1(C;\Z))^{S_d}
\]
of $S_d$-invariant homomorphisms, and so extends to a homomorphism
\[
\lambda: \Br_d(C) \to H_1(C;\Z).
\]
Topologically, $\lambda$ sends a surface braid $\beta$ on $C$ to the homology class on $C$ that it traces out. We note in passing that when $C$ is a Riemann surface, $\lambda$ can be interpreted as the {\em Abel-Jacobi map} that takes a divisor of $d$ points on $C$ to the associated line bundle (recall the space $\Pic_d(C)$ of line bundles of degree $d$ has $\pi_1(\Pic_d(C)) \cong H_1(C;\Z)$). 

\begin{definition}[Simple braid group]
    The {\em simple braid group} $\SBr_d(C)$ is the kernel of $\lambda$:
    \[
    \SBr_d(C) := \ker(\lambda: \Br_d(C) \to H_1(C;\Z)).
    \]
    We likewise define the {\em pure simple braid group}
    \[
    \PSBr_d(C) := \ker(\lambda: \PBr_d(C) \to H_1(C;\Z)).
    \]
    In the setting of a surface $\tilde C$ with boundary, the simple braid group is defined similarly:
    \[
    \mathrm{(P)SBr}(\tilde C) := \ker\left(\mathrm{(P)Br}(\tilde C) \to \mathrm{(P)Br}_d(C) \xrightarrow{\lambda} H_1(C;\Z)\right).
    \]
\end{definition}

We record the following simple but important observation: {\em twists about boundary-adjacent curves are pure simple braids}.
\begin{lemma}
    \label{lemma:boundaryadjacentsimple}
    Let $c \subset \tilde C$ be a boundary-adjacent curve. Then $T_c \in \PSBr(\tilde C)$.
\end{lemma}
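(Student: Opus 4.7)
The plan is to check the two defining properties of $\PSBr(\tilde C)$ in sequence: first that $T_c$ lies in the pure braid subgroup $\PBr(\tilde C) \le \Mod(\tilde C)$, and then that its image under the homological map $\lambda$ is trivial.

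For the first step, I would invoke \Cref{lemma:birman}, which identifies $\PBr(\tilde C)$ with the kernel of the capping homomorphism $\Mod(\tilde C) \to \Mod(C)$. The crucial geometric observation is that by \Cref{def:boundaryadj}, $c$ cobounds a $k$-holed sphere $B$ together with $k-1$ boundary components of $\tilde C$. When these boundary components are capped by disks to form $C$, the subsurface $B$ is enlarged to a disk in $C$ bounded by $c$. Hence $c$ is nullhomotopic in $C$, so the Dehn twist $T_c$ becomes trivial after capping. This places $T_c \in \PBr(\tilde C)$.

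For the second step, I need to trace through the construction of $\lambda$. Under the Birman identification, $T_c \in \PBr(\tilde C) \cong \tilde\PBr_d(C)$ corresponds to the motion of the $d$ tangent vectors based at the capping points $x_1, \dots, x_d$ as $T_c$ is realized by an isotopy of $C$. Since $T_c$ is supported in a neighborhood of $c$, and $c$ bounds a disk $D \subset C$ containing exactly those $x_i$ that lie in $B$, this isotopy moves only the $k-1$ points inside $D$ and fixes the others. After forgetting tangent directions via the central extension of \Cref{lemma:brextension}, the image in $\PBr_d(C)$ is a pure braid whose trajectories all lie in the disk $D$. Since $D$ is contractible, each trajectory is nullhomologous in $C$, so the total homology class $\lambda(T_c) \in H_1(C;\Z)$ vanishes.

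Neither step should pose any real difficulty; the only subtle point is making sure the capping picture is set up correctly, so that ``$c$ bounds a disk in the closed surface $C$'' is unambiguous. Once that is in place, the result is essentially immediate from the definitions.
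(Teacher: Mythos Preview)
Your proposal is correct and follows essentially the same approach as the paper: first show $T_c$ lies in the kernel of the capping map via \Cref{lemma:birman} (since $c$ becomes inessential in $C$), then check $\lambda(T_c)=0$. The paper's own proof is terser---it simply asserts ``it is easy to see that $\lambda(T_c)=0$''---while you spell out the disk-support argument, but the underlying reasoning is the same.
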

\begin{proof}
    First, we observe that under the capping map $\tilde C \into C$, the curve $c$ becomes inessential, so that $T_c \in \ker(\Mod(\tilde C) \to \Mod(C)) \cong \PBr(\tilde C)$, applying \Cref{lemma:birman}. It is easy to see that $\lambda(T_c) = 0$, so that $T_c$ is moreover a simple braid, as claimed. 
\end{proof}

\subsection{Transitivity results}
While the simple braid group has infinite index in the full surface braid group, it nevertheless acts transitively on various topological structures on the surface. We record some results of this type here for later use.

\begin{lemma}
    \label{lemma:sbdisktrans}
    Let $\tilde C$ be a surface with boundary components $\Delta_1, \dots, \Delta_d$. For some $k < d$, let $B, B' \subset S$ be $k$-holed boundary-adjacent spheres. Then there exists $\beta \in \SBr(\tilde C)$ such that $\beta B = B'$. 
\end{lemma}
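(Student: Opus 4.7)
The plan is to first show that the \emph{full} braid group $\Br(\tilde C)$ acts transitively on the set of isotopy classes of $k$-holed boundary-adjacent spheres, and then to correct the obtained element by an element of the stabilizer of $B$ whose $\lambda$-class cancels that of the transitive element.

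For the transitivity step, I would first observe that a $k$-holed boundary-adjacent sphere $B \subset \tilde C$ is classified up to isotopy by the $(k-1)$-element subset $I_B \subset \{\Delta_1, \dots, \Delta_d\}$ consisting of the boundary components of $\tilde C$ that are also boundary components of $B$. Indeed, capping $\tilde C$ to obtain $C$ turns $B$ into an embedded disk containing exactly the marked points $\{x_i : \Delta_i \in I_B\}$, and a standard argument (two disjoint disks enclosing the same finite set of marked points must cobound an annulus disjoint from the marked points) shows that such disks are unique up to isotopy in $C$ rel marked points. Now the map $\Br(\tilde C) \to \Br_d(C) \to S_d$ is surjective, $S_d$ acts transitively on $(k-1)$-element subsets, and the mapping class group action on configurations of unordered points is compatible with this reduction, so there is $\beta_0 \in \Br(\tilde C)$ with $\beta_0 B = B'$.

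The next step is to show that the stabilizer $\Stab_{\Br(\tilde C)}(B)$ surjects onto $H_1(C;\Z)$ under $\lambda$. Here the hypothesis $k < d$ is crucial: since $B$ only encloses $k-1$ of the $d$ marked points, there is at least one marked point $x_j$ contained in $C \setminus \bar B$, where $\bar B$ denotes the disk in $C$ corresponding to $B$. For any loop $\gamma \subset C \setminus (\bar B \cup \bx)$ based at $x_j$, the associated point-push braid is supported in $\tilde C \setminus B$ (so it stabilizes $B$ setwise) and satisfies $\lambda(\text{push}_\gamma(x_j)) = [\gamma] \in H_1(C;\Z)$. Finally, the composition $H_1(C \setminus (\bar B \cup \bx); \Z) \to H_1(C \setminus \bar B; \Z) \to H_1(C;\Z)$ is surjective: removing a disk only fails to hit the class of its boundary, which is null-homologous, and removing further points is even more surjective on $H_1$. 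Hence any element of $H_1(C;\Z)$ is the $\lambda$-class of some $\gamma \in \Stab_{\Br(\tilde C)}(B)$.

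Combining, given $\beta_0 \in \Br(\tilde C)$ with $\beta_0 B = B'$, choose $\gamma \in \Stab_{\Br(\tilde C)}(B)$ with $\lambda(\gamma) = -\lambda(\beta_0)$; then $\beta := \beta_0 \gamma$ satisfies $\beta B = \beta_0(\gamma B) = \beta_0 B = B'$ and $\lambda(\beta) = \lambda(\beta_0) + \lambda(\gamma) = 0$, so $\beta \in \SBr(\tilde C)$ as required. The main technical point is the isotopy uniqueness claim underlying Step 1 (which reduces transitivity in $\Br(\tilde C)$ to transitivity in $S_d$); everything else follows from the already-established Fadell--Neuwirth-type machinery and the geometric interpretation of $\lambda$ as the Abel--Jacobi-type map on braids.
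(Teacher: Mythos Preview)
Your overall strategy matches the paper's exactly: obtain transitivity for the full braid group $\Br(\tilde C)$, then correct by an element of the stabilizer using that $\lambda|_{\Stab(B)}$ is surjective. Steps 2 and 3 are correct and essentially identical to the paper's argument.

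There is, however, a genuine gap in Step 1. The claim that a $k$-holed boundary-adjacent sphere is determined up to isotopy by the set of boundary components it encloses is \emph{false} as soon as $C$ has positive genus and $k \ge 3$. Your parenthetical argument only handles disks whose boundaries can be made disjoint, but in general they cannot. Concretely: on a torus with three marked points $p_1,p_2,p_3$, disks enclosing $\{p_1,p_2\}$ correspond to arcs from $p_1$ to $p_2$ in $T^2\setminus\{p_3\}$, and there are infinitely many isotopy classes of such arcs (e.g.\ twist a fixed arc by powers of $T_\gamma$ for a nonseparating $\gamma$ crossing it once). So transitivity cannot be reduced to transitivity of $S_d$ on subsets. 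The paper instead invokes the change-of-coordinates principle directly: any two disks in $C$ are ambiently isotopic (ignoring marked points), and one can push the remaining marked points out of the way along the isotopy, yielding an element of $\Br_d(C)$ taking $\bar B$ to $\bar B'$; a preliminary sequence of half-twists arranges the enclosed marked-point sets to agree. You correctly identified the uniqueness claim as ``the main technical point''---it just happens to be the one place where the argument breaks.
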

\begin{proof}
        By the usual change-of-coordinates principle \cite[Section 1.3]{FM}, there is $\beta' \in \Br(\tilde C)$ such that $\beta' B = B'$. We must find some such $\beta$ for which moreover $\lambda(\beta) = 0$. Let $G_B \le \Br(\tilde C)$ denote the stabilizer of $B$. To construct $\beta$ satisfying both constraints, it suffices to show that the restriction
    \[
    \lambda: G_B \to H_1(C; \Z)
    \]
    remains a surjection. This is easily shown. There are generators for $H_1(C;\Z)$ disjoint from $B$; convert these to based loops based at some $\Delta_i$ not enclosed by $B$. The corresponding point-push maps furnish a set of elements of $G_B$ surjecting onto $H_1(C;\Z)$ under $\lambda$.  
\end{proof}

A very similar argument (this time taking care to ensure that all braids involved are pure, when necessary) establishes the other transitivity result we will use.
\begin{lemma}
    \label{lemma:sbrarctrans}
    Let $\tilde C$ be a surface with $d \ge 3$ boundary components. Let $\alpha, \alpha'$ be simple arcs on $\tilde C$ such that $\alpha$ connects distinct components of $\partial \tilde C$ and the same holds for $\alpha'$. Then there exists $\beta \in \SBr(\tilde C)$ such that $\beta \alpha = \alpha'$. If the components of $\partial \tilde C$ associated to $\alpha$ are the same as those for $\alpha'$, then moreover there is $\beta \in \PSBr(\tilde C)$ such that $\beta \alpha = \alpha'$.
\end{lemma}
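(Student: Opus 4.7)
The plan is to follow exactly the pattern of Lemma~\ref{lemma:sbdisktrans}. First, I invoke the classical change-of-coordinates principle for simple arcs connecting distinct boundary components to produce some $\beta' \in \Br(\tilde C)$ satisfying $\beta'\alpha = \alpha'$; in the pure case, where the ordered pair of endpoint components of $\alpha$ agrees with that of $\alpha'$, the pure variant of the principle yields $\beta' \in \PBr(\tilde C)$. Let $G_\alpha \le \Br(\tilde C)$ denote the stabilizer of the isotopy class of $\alpha$. The problem reduces, exactly as in Lemma~\ref{lemma:sbdisktrans}, to showing that $\lambda$ restricts to a surjection $G_\alpha \twoheadrightarrow H_1(C;\Z)$: for then one picks $\gamma \in G_\alpha$ with $\lambda(\gamma) = -\lambda(\beta')$ and sets $\beta := \beta'\gamma$, which lies in $\SBr(\tilde C)$ and sends $\alpha$ to $\alpha'$. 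Since the generators of $G_\alpha$ that I will exhibit are in fact pure braids, the same argument handles the pure case simultaneously.

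To prove surjectivity of $\lambda|_{G_\alpha}$, I use the hypothesis $d \ge 3$: there exists a boundary component $\Delta_k$ of $\tilde C$ that is not an endpoint of $\alpha$. Cutting $\tilde C$ along $\alpha$ yields a surface $\tilde C^\circ$ which remains connected, because $\alpha$ joins \emph{distinct} boundary components, so the cut merges the two endpoint components into a single new boundary component rather than disconnecting the surface. A routine Euler characteristic calculation shows that $\tilde C^\circ$ has the same genus as $\tilde C$, hence as $C$. Consequently the composition $\tilde C^\circ \hookrightarrow \tilde C \hookrightarrow C$ is surjective on $H_1(\,\cdot\,;\Z)$, so every class in $H_1(C;\Z)$ can be represented by a loop $\gamma$ lying in $\tilde C^\circ$, which I may take to be based arbitrarily close to $\Delta_k$.

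For each such $\gamma$, the point-push of $\Delta_k$ along $\gamma$ is a pure braid supported in an annular neighborhood of $\gamma \cup \Delta_k$ contained in $\tilde C^\circ$, and in particular disjoint from $\alpha$; this braid therefore lies in $G_\alpha \cap \PBr(\tilde C)$, and under the Abel--Jacobi description of $\lambda$ its image is precisely $[\gamma] \in H_1(C;\Z)$. Letting $\gamma$ range over (lifts of) a generating set for $H_1(C;\Z)$ produces the required surjectivity of $\lambda$ on $G_\alpha \cap \PBr(\tilde C)$, and a fortiori on $G_\alpha$, thus handling both the pure and non-pure cases in one stroke. I do not anticipate any genuine obstacle: the argument is essentially a word-for-word adaptation of the proof of Lemma~\ref{lemma:sbdisktrans}, with arcs playing the role of boundary-adjacent spheres and the hypothesis $d \ge 3$ supplying the needed ``extra'' boundary component $\Delta_k$ on which to hang the point-push braids.
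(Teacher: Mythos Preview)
Your proposal is correct and follows essentially the same approach as the paper. Indeed, the paper's own proof of this lemma is a single sentence stating that ``a very similar argument'' to Lemma~\ref{lemma:sbdisktrans} works, ``this time taking care to ensure that all braids involved are pure, when necessary''; you have simply supplied the details of that similar argument, using the hypothesis $d \ge 3$ to locate an extra boundary component on which to base point-pushes disjoint from $\alpha$.
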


\section{Construction of monodromy elements}\label{S:monodromy}

In this section, we show that line bundles satisfying the hypotheses of \Cref{theorem:main} have monodromy groups rich enough to contain certain useful subgroups. We continue to work in the environment of \Cref{SS:CDE}, so that the line bundle $L = L_1 \otimes L_2$ decomposes into two summands each satisfying certain ampleness conditions, and our basepoint $E \in \abs{L_1 \otimes L_2}$ is obtained as a smoothing $E = \tilde C \cup \tilde D$ of smooth sections $C \in U_{L_1}$ and $D \in U_{L_2}$
intersecting transversely. 

\subsection{Simple braids}
The first class of monodromy subgroup we construct is a subgroup of the stabilizer $\Gamma_0[\tilde C]$, which moreover restricts on $\tilde C$ to the simple braid group discussed in the previous section. A similar construction was carried out in \cite{CImonodromy}. The key point is that when $L_2$ admits a Lefschetz pencil $p: X \dashrightarrow \CP^1$ (realized as a family $D_t \in \abs{L_2}$), and $C \subset X$ is a smooth curve disjoint from the base locus, then $p$ realizes $C$ as a branched cover of $\CP^1$, and moreover, the regular fibers of $p|_C$ form a family of configurations of $d = C \cdot D_t$ distinct points on $C$. After smoothing, this yields a subgroup of $\Gamma_0[\tilde C]$ realized on $C$ as surface braids. We first give a condition under which this setup can be realized.

\begin{definition}[Maximally generic pencil]\label{def:generic}
    A pencil $D_t$ of curves in $X$ is {\em maximally generic rel $C$} if the following conditions hold:
    \begin{enumerate}
        \item $C$ is disjoint from the base locus of $D_t$,
        \item $C$ intersects every singular fiber $D_{t_s}$ transversely and in the smooth locus,
        \item Each $D_t$ has at most one simple tangency with $C$.
        \item Let $D_{t_i}$ be a curve that intersects $C$ at $p$ with a simple tangency. Then there is a local coordinate $s$ for $\CP^1$ centered at $t_i$ and a local coordinate $z$ for $C$ centered at $p$ such that the equation of $D_t$ restricted to $p$ is $z^2 - s$.
    \end{enumerate}
\end{definition}

\begin{lemma}\label{lemma:maxgenexists}
    Let $L$ be a very ample line bundle on $X$, and let $C \subset X$ be any smooth curve. Let $D_0 \in \abs{L}$ be a smooth section intersecting $C$ transversely. Then there is a pencil $D_t$ of curves in $\abs{L}$, with $D_0 = D$, that is maximally generic rel $C$.
\end{lemma}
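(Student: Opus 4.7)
I will parametrize pencils through $D_0 = Z(f_0)$ by the projective space $P := \P(H^0(X;L)/\mathbb{C}\langle f_0\rangle)$; each $[f_\infty] \in P$ gives the pencil $\{Z(af_0 + bf_\infty) : [a:b] \in \mathbb{P}^1\}$ and an associated rational map $\phi : X \dashrightarrow \mathbb{P}^1$. The strategy is to show that each of the four conditions in \Cref{def:generic} cuts out a non-empty Zariski-open subset of $P$; the lemma then follows by intersecting these open loci.

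For condition (1), the base locus of the pencil is $D_0 \cap D_\infty$, so it suffices to choose $D_\infty$ avoiding the finite set $C \cap D_0$. Since $L$ is very ample, $H^0(X;L)$ separates points, so for each $p \in C \cap D_0$ the hyperplane $\{f_\infty : f_\infty(p) = 0\} \subset H^0(X;L)$ is proper; the complement of their union in $P$ is non-empty and Zariski-open. Once (1) holds, $\phi$ is defined on a neighborhood of $C$ and restricts to a morphism $\phi_C := \phi|_C : C \to \mathbb{P}^1$ whose critical points are precisely the tangencies of $C$ with the fibers $D_t$. A simple tangency is exactly a non-degenerate (Morse) critical point, and the local normal form in condition (4) is then the holomorphic Morse lemma: in suitable local coordinates $\phi_C = z^2$, and hence $D_t \cap C$ is cut out near $p$ by $z^2 - s = 0$. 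Thus conditions (2)--(4) translate into the requirement that $\phi_C$ be a Morse function with pairwise distinct critical values whose critical points moreover lie off the nodal loci of the singular fibers of the pencil.

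What remains is to verify openness of these conditions on $P$. This will be a standard incidence-variety dimension count. For each $p \in C$, the failure of any of these conditions imposes a linear constraint of codimension $\ge 2$ on $f_\infty \in H^0(X;L)$, the required independence of equations coming from very ampleness of $L$ (which ensures surjectivity of the $1$-jet map $H^0(X;L) \to J^1_p L$). Sweeping over the $1$-dimensional curve $C$ yields a codimension $\ge 1$ bad locus in $H^0(X;L)$, descending to a proper closed subset of $P$. The most delicate ingredient will be the non-degenerate Hessian condition underlying (4), which involves $2$-jets; however, this constraint is imposed only at the finitely many critical points of $\phi_C$ rather than along all of $C$, and a parallel dimension count (again with very ampleness providing enough sections) still yields a proper closed bad locus. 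Intersecting the four resulting Zariski-open loci in $P$ yields the desired maximally generic pencil.
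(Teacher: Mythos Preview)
Your outline is correct and is essentially the argument the paper defers to: the paper does not give a self-contained proof but cites \cite[Lemma 6.9]{CImonodromy}, observing that the only hypothesis used there is that $L|_C$ is very ample, which follows from very ampleness of $L$ on $X$.

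One point deserves care. Your dimension count asserts that, at each $p \in C$, the bad conditions impose independent linear constraints on $f_\infty$ ``coming from very ampleness of $L$ (which ensures surjectivity of the $1$-jet map).'' But the Morse condition (your condition (4)) involves the $2$-jet of $f_\infty|_C$, and the distinct-critical-values condition involves $1$-jets at two distinct points simultaneously; very ampleness alone does not give surjectivity onto these targets (it fails at inflection points of the embedding $C \hookrightarrow \P^N$, respectively at pairs $(p,q)$ lying on a bitangent). So the fibers of your incidence varieties may drop codimension over a closed locus in $C$ (resp.\ $C \times C$). The argument still goes through because this bad locus is itself of low dimension (finitely many flexes; a curve of bitangent pairs), so the total incidence variety still has dimension $< \dim P$; but this step should be made explicit rather than folded into ``very ampleness provides enough sections.'' A cleaner packaging, which is presumably what underlies the cited lemma, is via the dual variety: the embedding $C \hookrightarrow \P^N$ has dual hypersurface $C^* \subset (\P^N)^*$, and conditions (3)--(4) amount to asking that the line $\overline{[D_0][D_\infty]}$ meet $C^*$ transversely and only at smooth points of $C^*$. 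Since $D_0 \pitchfork C$ gives $[D_0] \notin C^*$, and $\mathrm{Sing}(C^*)$ has codimension $\ge 2$ in $(\P^N)^*$ in characteristic zero, a generic line through $[D_0]$ does the job. Your treatment of conditions (1) and (2) is fine as stated.
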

\begin{proof}
    See \cite[Lemma 6.9]{CImonodromy}, being careful to note that the roles of $C,D$ have been interchanged. The result there is stated for complete intersection curves $C, D$ on a smooth complete intersection surface, but the proof only uses that the restriction of $L$ to $C$ is very ample, which holds here by hypothesis.
\end{proof}

We apply this in our setting as follows. As usual, let $C \in U_{L_1}$ and $D \in U_{L_2}$ be smooth sections intersecting transversely, and let $E = \tilde C \cup \tilde D$ be a smoothing of the section $C \cup D \in \abs{L_1 \otimes L_2}$. Let $\tilde U_{L_2}(C) \subset U_{L_1 \otimes L_2}$ be the space of smoothings as in \Cref{def:globalsmoothing}, and recall from \Cref{lemma:localmonodromy} that the local monodromy $\rho_C: \pi_1(\tilde U_{L_2}(C)) \to \Mod(E)$ is contained in $\Gamma_0[\tilde C]$. Recall also from \Cref{SS:stabilizer} that there is a restriction map $\bar \res: \Gamma_0[\tilde C] \to \Mod(C, \bx)$, where one collapses the boundary components of $\tilde C$ to marked points and tracks the resulting mapping class. 

\begin{lemma}\label{lemma:simpleinmono}
    The image of the composition 
    \[
    \bar \res \circ \rho_C: \pi_1(\tilde U_{L_2}(C)) \to \Gamma_0[\tilde C] \to \Mod(C, \bx)
    \]
     contains the simple braid group $\SBr_d(C) \le \Mod(C, \bx)$, and the same statement holds exchanging the roles of $C$ and $D$.
\end{lemma}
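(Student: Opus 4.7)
The strategy is to realize the simple braid group on $C$ via the braid monodromy of a well-chosen pencil of $L_2$-sections meeting $C$ transversely.

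I would begin by applying \Cref{lemma:maxgenexists} to obtain a pencil $\{D_t\}_{t \in \CP^1} \subset \abs{L_2}$ that is maximally generic rel $C$ and contains $D$ as a member. Let $\Sigma \subset \CP^1$ be the finite set of parameters for which $D_t$ is either singular or tangent to $C$. For $t \notin \Sigma$, the reducible curve $C \cup D_t$ is amenable to smoothing in the sense of \Cref{def:smoothing}, and by choosing the parameter functions $\epsilon$ and $\eta$ of \Cref{def:globalsmoothing} continuously in $t$, I would assemble a continuous map $\phi: \CP^1 \setminus \Sigma \to \tilde U_{L_2}(C)$. Composing with $\bar\res \circ \rho_C$ then produces a homomorphism
\[
\Psi: \pi_1(\CP^1 \setminus \Sigma) \to \Mod(C, \bx),
\]
and because $t \mapsto C \cap D_t$ is a continuously varying configuration of $d$ points on $C$, the homomorphism $\Psi$ factors through $\Br_d(C)$; denote its image by $\Gamma$.

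Next I would analyze the local monodromies around each $t_i \in \Sigma$. At a tangency, condition (iv) of \Cref{def:generic} supplies the local normal form $z^2 = s$, from which a small loop maps to a standard simple half-twist about an arc $\alpha_i \subset C$ joining the two colliding intersection points. At a singular fiber of the pencil, condition (ii) of \Cref{def:generic} guarantees $C$ meets $D_{t_i}$ transversely in the smooth locus, so no strands collide and the local monodromy lies in $\PBr_d(C)$. To check $\Gamma \le \SBr_d(C)$, I would invoke the Abel-Jacobi map $\mu: \Sym^d(C) \to \Pic_d(C)$: since all divisors $C \cap D_t$ are linearly equivalent, they lie in the single fiber $\mu^{-1}(L_2|_C)$, so $\lambda \circ \Psi$ factors through $\pi_1$ of this fiber and hence vanishes.

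The main obstacle is the reverse containment $\SBr_d(C) \le \Gamma$. Since $C$ is connected, the associated degree-$d$ branched cover $f: C \to \CP^1$ sending $p$ to the unique $t$ with $p \in D_t$ is connected, and the tangencies are its only ramification points. Hence the image of $\Gamma$ in $S_d$ is transitive, and since it contains a simple transposition (arising from any tangency half-twist), it equals all of $S_d$. Together with the presence of a simple half-twist and the pure braid contributions from the singular fibers, a conjugation argument then produces simple half-twists about a generating family of arcs on $C$ as well as enough pure simple braid elements to cover $\PSBr_d(C)$, yielding $\Gamma = \SBr_d(C)$. This final generating-set argument follows the same template as the analogous result in \cite[Section 6.2]{CImonodromy}, adapted here to the setting of arbitrary very ample $L_2$; I expect it to be the main technical step.
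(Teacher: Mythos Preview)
Your proposal is correct and follows essentially the same approach as the paper. The paper's proof simply cites \cite[Proposition 7.1]{CImonodromy} and observes that the argument there only uses the existence of a maximally generic pencil (supplied here by \Cref{lemma:maxgenexists}), whereas you have sketched out the actual content of that argument: realize the braid monodromy via intersections $C \cap D_t$ along the pencil, identify tangency contributions as half-twists and singular-fiber contributions as pure braids, use linear equivalence of the divisors $C \cap D_t$ to land in $\SBr_d(C)$, and then run a generation argument. One small correction: the relevant reference in \cite{CImonodromy} is Proposition 7.1 rather than Section 6.2 (the latter treats the local smoothing monodromy underlying \Cref{lemma:TDelta}).
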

\begin{proof}
    See \cite[Proposition 7.1]{CImonodromy}. The result there is stated in the setting of $X$ a smooth complete intersection surface, but in the proof, only the hypothesis that $D$ extends to a maximally generic pencil $D_t$ rel $C$ is used (again $C$ and $D$ are swapped). The result in general therefore follows from \Cref{lemma:maxgenexists}.
\end{proof}

\begin{remark}
    It is much less clear what the intersection $\SBr(\tilde C) \cap \Gamma[\tilde C]$ of the simple braid group with the {\em principal} stabilizer should be. This determination, achieved in \Cref{lemma:SBrSES}, is really the key technical step in the entire argument.
\end{remark}

\subsection{Monodromy of plane curve singularities}\label{SS:planecurvesing}
The second class of monodromy subgroup we construct comes from importing the monodromy of some isolated plane curve singularities. We will need two such results: one for the $E_6$ singularity, with local expression $x^3+y^4=0$, and the other for the $A_7$ singularity, with local expression $y^2 + x^8$, or equivalently $y(y+x^4)$. If $C,D$ are smooth curves that intersect with multiplicity $4$ at some point $p$, then $C\cup D$ has an $A_7$ singularity at $p$.

\para{Versal deformation spaces, Morsifications, and configurations of vanishing cycles} We first briefly recall the notion of a {\em versal deformation space} and some related concepts, following \cite{AGZV} but eliding some details that will not play a major role. Let $f: \C^2 \to \C$ be the germ of a holomorphic function with an isolated singularity at $0$. Then there is a {\em versal deformation space} $V_f$ realized as an $\epsilon$-ball about $0$ in a certain vector space $\C^\mu$. The space $\C^\mu$ can be realized concretely as the quotient $\C[[x,y]]/J_f$ of the completed local ring at $0 \in \C^2$ by the {\em Jacobian ideal} $J_f = (\partial_x f, \partial_y f)$. Lifting from $\C^\mu \cong \C[[x,y]]/J_f$ to (some convergent representative in) $\C[[x,y]]$, we view elements of $\C^\mu$ as function germs: $g \in \C^\mu$ induces the perturbation $f+g$ of $f$. 

A {\em Morsification} of $f$ is a perturbation $f + g$ for some $g \in \C^\mu$ such that every critical point of $f+g$ near zero is nondegenerate and the associated critical values are distinct. There is an associated family over $\disk \subset \C$, with $z \in \disk$ corresponding to the germ of $f + g + \epsilon z$ (for some suitably small $\epsilon$). Since the critical points of $f+g$ are nondegenerate, the singular members of this family correspond to nodal degenerations of the plane curve germs. Classical results of singularity theory then assert that for the singularities $A_7$ and $E_6$ under study here, it is possible to choose a Morsification and a system of nodal degenerations for which the vanishing cycles intersect in the pattern of their namesake Dynkin diagram. We record this as follows; these can be easily proved using the theory of {\em divides} - see \cite{acampo} or \cite[Section 3]{nickpablo}. The rather bespoke-looking assertion (4) in \Cref{lemma:morsifyA7} will be essential in the arguments of the following section.

\begin{lemma}\label{lemma:morsifyA7}
    Let $f: \C^2 \to \C$ be given by the expression $f(x,y) = y(y+x^4)$, and let $V_{A_7} \subset \C^7$ be a versal deformation space for $f$. Then there is a disk $\disk \subset V_{A_7}$ parameterizing deformations $f+ g_t$ for $t \in \disk$, such that the following assertions hold:
    \begin{enumerate}
        \item There are seven distinct points $x_1, \dots, x_7 \in \disk$ such that the vanishing set $Z(f+g_t)$ has a nodal singularity near $0 \in \C^2$ for $t= x_1, \dots, x_7$, 
        \item Every other $Z(f+g_t)$ for $t \ne x_1,\dots, x_7$ is smooth near $0$; this is true in particular for $t = 1$,
        \item There are paths $\alpha_1, \dots, \alpha_7 \subset \disk$, with $\alpha_i$ connecting $1$ to $x_i$ and avoiding all $x_j$ in its interior, such that the associated vanishing cycles $a_1,\dots, a_7 \subset Z(f+g_1)$ form a simple configuration in the sense of \Cref{def:simpleconfig} whose intersection graph is the $A_7$ Dynkin diagram (i.e. a chain of length $7$),
        \item Let $C_{loc} = Z(y+x^4+\epsilon)$ and $D_{loc} = Z(y) \subset \C^2$ be affine plane curves (taking $\epsilon > 0$ sufficiently small), and let $E_{loc} = \tilde C_{loc} \cup \tilde D_{loc}$ be a smoothing as in \Cref{def:smoothing}. Then $a_1, a_3, a_5, a_7$ correspond to the boundary components $\Delta_1, \dots, \Delta_4$ separating $\tilde C_{loc}$ and $\tilde D_{loc}$, and $a_2, a_4, a_6$ each intersect $\tilde C_{loc}$ in a single arc connecting distinct boundary components. 
    \end{enumerate}
\end{lemma}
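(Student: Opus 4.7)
The plan is to invoke A'Campo's theory of divides \cite{acampo,nickpablo} to construct a real Morsification of $f = y(y+x^4)$, which has an $A_7$ singularity at the origin (Milnor number $7$), and then analyze how its vanishing cycles interact with the tacnode smoothing $E_{loc} = \tilde C_{loc} \cup \tilde D_{loc}$. A standard divide for $A_7$ consists of two immersed real arcs meeting transversely at seven points arranged in a chain; A'Campo's theorem produces a corresponding Morsification $\{f+g_t\}$ together with seven critical values $x_1, \dots, x_7 \in \disk$, vanishing paths $\alpha_1, \dots, \alpha_7$, and vanishing cycles $a_1, \dots, a_7$ whose intersection graph is the $A_7$ Dynkin diagram. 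Combined with standard genericity for Morsifications, this yields parts (1), (2), and (3).

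For (4), the key structural fact is that the versal deformation space of $A_7$ contains the one-parameter subfamily $f_s(x,y) = y(y+x^4+s)$, whose fiber for small $s \neq 0$ is a curve with exactly four nodes (the transverse intersections $\{y = 0\} \cap \{y+x^4+s=0\}$) and whose central fiber $s=0$ recovers the $A_7$. I would arrange the Morsification and paths so that the smooth fiber at $t = 1$ is a smoothing $E_{loc}$ of $C_{loc} \cup D_{loc} = Z(f_\epsilon)$, and so that the vanishing paths $\alpha_1, \alpha_3, \alpha_5, \alpha_7$ first travel toward the four-node stratum, resolving each of the four nodes individually before arriving at their respective nodal fibers. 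Under this choice, each of $a_1, a_3, a_5, a_7$ is the vanishing cycle of a standard local nodal smoothing, hence precisely one of the four boundary circles $\Delta_1, \dots, \Delta_4$ separating $\tilde C_{loc}$ and $\tilde D_{loc}$ in $E_{loc}$.

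The remaining paths $\alpha_2, \alpha_4, \alpha_6$ continue past the four-node locus into strata of the versal deformation space where pairs of the four nodes collide to form higher $A$-singularities en route to the $A_7$. Each pairwise collision ($A_1 + A_1 \to A_3$) contributes a vanishing cycle that, on $E_{loc}$, encircles the two merging nodes and therefore decomposes as a single arc on $\tilde C_{loc}$ connecting the two corresponding boundary circles $\Delta_j, \Delta_{j+1}$, together with a complementary arc on $\tilde D_{loc}$. The even-indexed cycles $a_2, a_4, a_6$ each meet exactly two odd-indexed cycles in the $A_7$ chain, and so must correspond to the three consecutive adjacencies $(\Delta_1, \Delta_2), (\Delta_2, \Delta_3), (\Delta_3, \Delta_4)$, yielding exactly the arc structure claimed in (4).

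The main technical obstacle is verifying that the chain ordering of vanishing cycles in A'Campo's divide matches the geometric ordering of the four boundary circles $\Delta_1, \dots, \Delta_4$ along $C_{loc}$ in $E_{loc}$. This reduces to tracking, inside the divide construction, the cyclic order of double points along each of the two immersed arcs, and observing that both orderings are dictated by the linear order of roots of $x^4 + s = 0$ as $s \to 0$. Once this matching is pinned down, the assertions of (4) follow by direct inspection of the local topological picture.
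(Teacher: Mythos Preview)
Your approach is essentially the same as the paper's: the paper does not give a proof but simply points to A'Campo's theory of divides (citing \cite{acampo} and \cite[Section 3]{nickpablo}), which is exactly what you invoke. Your sketch is in fact considerably more detailed than what appears in the paper.

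One small comment on your treatment of (4). The language of ``routing vanishing paths through the four-node stratum'' is slightly awkward, since that stratum does not lie in the disk $\disk$ itself. The cleaner way to see (4), entirely within the divide formalism, is to observe that the A'Campo divide for $f(x,y)=y(y+x^4)$ consists of the two real branches $\{y=0\}$ and $\{y=-q(x)\}$ (for $q$ a degree-$4$ real polynomial with four simple real roots) meeting transversely at four points. A'Campo's recipe then produces seven vanishing cycles: four associated to the double points and three associated to the bounded complementary regions, arranged in the chain $d_1\!-\!R_1\!-\!d_2\!-\!R_2\!-\!d_3\!-\!R_3\!-\!d_4$. The double-point cycles $d_i$ are by construction the local vanishing cycles of the four nodes of $C_{loc}\cup D_{loc}$, hence the $\Delta_i$; and each region cycle $R_j$ crosses each of the two branches exactly once, so restricts to a single arc on each of $\tilde C_{loc}$ and $\tilde D_{loc}$. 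This is the same conclusion you reach via the adjacency $A_1^{\times 4}\to A_7$, just phrased intrinsically in the divide.
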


\begin{lemma}\label{lemma:morsifyE6}
    Let $f: \C^2 \to \C$ be given by the expression $f(x,y) = x^3+y^4$, and let $V_{E_6} \subset \C^6$ be a versal deformation space for $f$. Then there is a disk $\disk \subset V_{E_6}$ parameterizing deformations $f+g_t$ for $t \in \disk$, such that the following assertions hold:
    \begin{enumerate}
        \item There are six distinct points $x_1, \dots, x_6 \in \disk$ such that the vanishing set $Z(f+g_t)$ has a nodal singularity near $0 \in \C^2$ for $t= x_1, \dots, x_6$, 
        \item Every other $Z(f+g_t)$ for $t \ne x_1,\dots, x_6$ is smooth near $0$; this is true in particular for $t = 1$,
        \item There are paths $\beta_1, \dots, \beta_6 \subset \disk$, with $\beta_i$ connecting $1$ to $x_i$ and avoiding all $x_j$ in its interior, such that the associated vanishing cycles $b_1,\dots, b_6 \subset Z(f+g_1)$ form a simple configuration in the sense of \Cref{def:simpleconfig} whose intersection graph is the $E_6$ Dynkin diagram.
    \end{enumerate}
\end{lemma}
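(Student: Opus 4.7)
The plan is to invoke A'Campo's theory of divides, as suggested in the excerpt. Recall that for any isolated plane curve singularity $f$ of Milnor number $\mu$, a \emph{real Morsification} is a real-polynomial perturbation $f+g$ such that all critical points near the origin are real nondegenerate and the associated critical values are real and pairwise distinct. Associated to such a Morsification is its \emph{real divide} $\Gamma = Z(f+g) \cap B_\epsilon \cap \R^2$, a collection of real arcs in a Milnor ball. A'Campo's theorem asserts that the Milnor fiber deformation retracts onto a ribbon graph whose structure determines a distinguished basis of vanishing cycles $b_1,\dots,b_\mu$ in bijection with the bounded complementary regions of $\Gamma$, and that the intersection graph of this basis equals the adjacency graph of those regions (two cycles intersect once if the corresponding regions share a boundary arc, and are disjoint otherwise).

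The main content of the proof is to exhibit a real Morsification of $f = x^3+y^4$ whose divide has bounded complementary regions with adjacencies matching the $E_6$ Dynkin diagram. This is classical: one concrete choice is a divide given by three real analytic arcs in $B_\epsilon$ arranged in general position to produce exactly six bounded regions with $E_6$ adjacency pattern; see \cite{acampo} and the discussion of simple singularities in \cite{AGZV}. Call the resulting perturbation $f+g_1$, viewed as a point of the versal deformation space $V_{E_6}$. By construction, $f+g_1$ has exactly $\mu = 6$ real nondegenerate critical points in $B_\epsilon$ with distinct real critical values.

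The remainder of the argument is standard Picard-Lefschetz theory. Choose a real analytic one-parameter family $f+g_t$ parametrized by a disk $\disk \subset V_{E_6}$ with $g_0 = 0$ and $g_1$ as constructed above; after a generic perturbation of $\disk$ if necessary, the restriction of the local discriminant hypersurface to $\disk$ consists of exactly six distinct points $x_1, \dots, x_6 \in \disk$, at each of which $Z(f+g_t)$ acquires a single nodal singularity near $0$. The smooth fiber at $t = 1$ is (a representative of) the Milnor fiber. Choosing paths $\beta_i$ in $\disk$ from $1$ to $x_i$ realizing the A'Campo distinguished vanishing path system yields vanishing cycles $b_1,\dots,b_6 \subset Z(f+g_1)$ whose pairwise geometric intersection numbers are $0$ or $1$ as dictated by $E_6$. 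In particular, the $b_i$ form a simple configuration in the sense of \Cref{def:simpleconfig} with intersection graph $E_6$.

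The main obstacle is the explicit construction of the divide, but for the simple singularities $A_n, D_n, E_6, E_7, E_8$ these are classical and well-documented (see \cite{acampo} or \cite[Section 3]{nickpablo}), so the proof reduces to invoking this standard machinery.
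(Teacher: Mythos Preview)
Your proposal is correct and follows essentially the same approach as the paper, which does not give a detailed proof but simply refers to the theory of divides via \cite{acampo} and \cite[Section 3]{nickpablo}. One minor quibble: in A'Campo's dictionary the vanishing cycles correspond to the double points \emph{and} the bounded regions of the divide (not just the bounded regions), with the intersection pattern read off accordingly; but since you ultimately defer to the classical references for the explicit $E_6$ divide, this does not affect the validity of the argument.
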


\para{From local to global} We now show how to embed these local results into the linear systems under study. We first formulate a general criterion (\Cref{lemma:embedE6}), then apply this to the $A_7$ and $E_6$ singularities in turn.

\begin{lemma}
    \label{lemma:embedE6}
    Let $L$ be a line bundle on $X$, and suppose that $L$ is $k$-jet ample for some $k \ge 1$. Let $\varphi(x,y) \in 
    \C[x,y]$ be a polynomial of degree $n\le k-2$ defining an isolated singularity at $0$, and suppose that there is a basis for $\C[[x,y]]/(\partial_x f, \partial_y f)$ lifting to polynomials $g_1, \dots, g_\mu \in \C[x,y]$ of degree $\le k$. 
    Let $V_{\varphi} \subset \C^\mu$ denote a versal deformation space of $\varphi$ and let $p \in X$ be given. Then there is a section $f_0 \in H^0(X;L)$ such that $Z(f_0)$ has the singularity type of $\varphi$ at $p$ and is smooth everywhere else, and there is an embedding
    \[
    i_p:V_{\varphi} \into H^0(X;L)
    \]
    with the following properties:
    \begin{enumerate}
        \item There is a small closed ball $\bar B \subset X$ centered at $p$ such that every $Z(f_0 +i_p(g))$ is smooth outside of $\bar B$, transverse to $\partial \bar B$, and the family of topological surfaces $Z(f_0 +i_p(g)) \setminus \bar B$ is trivial,
        \item $i_p$ is locally equisingular in the sense that the singularity type of $Z(f_0 + i_p(g)) \cap \bar B$ is determined by the corresponding germ $g \in V_{\varphi}$,
        \item If $D \subset X$ is any curve not passing through $p$, then $i_p$ can be constructed so that $Z(f_0+ i_p(g))$ is transverse to $D$ for every $g \in V_{\varphi}$, and there are disjoint balls $B_1, \dots, B_d \subset X$ centered at the points of $Z(f_0) \cap D$ such that 
        \[
        Z(f_0 + i_p(g)) \cap D \subset \bigcup_{i=1}^d B_i
        \]
        for all $g \in V_{\varphi}$.
    \end{enumerate}
\end{lemma}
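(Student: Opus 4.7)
The plan is to lift the polynomial data $(\varphi, g_1,\dots,g_\mu)$ from a neighborhood of $p$ to global sections of $L$ using $k$-jet ampleness, assemble the pair $(f_0, i_p)$, and then verify the enumerated properties by combining Bertini, openness of transversality, and the finite-determinacy of $\varphi$.

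Fix local coordinates $(x,y)$ centered at $p$ together with a local trivialization of $L$. Via these choices, $k$-jet ampleness produces a surjection $J_p^k : H^0(X;L) \twoheadrightarrow \C[x,y]_{\le k}$ of sections onto polynomials of degree $\le k$. Choose preimages $\tilde\varphi \in H^0(X;L)$ of $\varphi$ and $\tilde g_i \in H^0(X;L)$ of each $g_i$; since the $g_i$ are linearly independent in $\C[x,y]_{\le k}$, so are the $\tilde g_i$ in $H^0(X;L)$. Define
\[
i_p: \C^\mu \hookrightarrow H^0(X;L), \qquad (t_1,\dots,t_\mu) \longmapsto \sum_{i=1}^\mu t_i \tilde g_i,
\]
and let $V_\varphi \subset \C^\mu$ be a small ball around $0$, to be shrunk further below. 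To produce $f_0$, first extract from the degree hypothesis on the Milnor basis the inclusion $(x,y)^{k+1} \subset J_\varphi$ (any $h \in (x,y)^{k+1}$ expressed through the $g_i$ modulo $J_\varphi$ must have vanishing coefficients by comparison of lowest-order terms), whence by Mather's theorem $\varphi$ is $k$-determined: every holomorphic germ at $p$ with $k$-jet equal to $\varphi + \sum t_i g_i$ is analytically equivalent to $\varphi + \sum t_i g_i$. Now a Bertini-type argument within the affine sub-linear-system $\tilde\varphi + \ker(J_p^k) \subset |L|$, using the richness of sections supplied by $k$-jet ampleness to confine the base locus to $\{p\}$, yields $f_0$ whose zero locus is smooth on $X \setminus \{p\}$ and carries an analytic singularity equivalent to $\varphi$ at $p$.

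Property (1) then follows by openness and Ehresmann: smoothness of $Z(f_0)$ on $X \setminus \{p\}$ and transversality to $\partial \bar B$ for a small closed ball $\bar B$ around $p$ are open conditions on the space of sections, and by shrinking $V_\varphi$ they persist uniformly; Ehresmann's theorem then trivializes the resulting family of smooth surfaces-with-boundary $Z(f_0 + i_p(g)) \setminus B$ over the contractible base $V_\varphi$. Property (2) is immediate from finite-determinacy: the $k$-jet of $f_0 + i_p(g)$ at $p$ equals $\varphi + \sum t_i g_i$, and the higher-order terms lie in $J_\varphi$ and hence correspond to trivial deformations, so the analytic singularity at $p$ realizes the versal model indexed by $g \in V_\varphi$. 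For (3), after a preliminary adjustment of $\tilde\varphi$ within $\ker(J_p^k)$ — not disturbing the singularity at $p$ — to secure $Z(f_0) \pitchfork D$, choose disjoint balls $B_1,\dots,B_d$ around the finite set $Z(f_0) \cap D$, and shrink $V_\varphi$ once more so that $Z(f_0 + i_p(g)) \cap D \subset \bigcup_i B_i$ throughout $V_\varphi$ by continuity of zero loci on the compact curve $D$.

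The principal technical obstacle is the Bertini step producing $f_0$: one must verify that the sub-linear-system $\tilde\varphi + \ker(J_p^k)$ has base locus exactly $\{p\}$, equivalently that $H^0(X;L \otimes m_p^{k+1})$ surjects onto $L_q$ for every $q \ne p$. This is a multi-point jet-ampleness statement at the pair $\{p,q\}$ that can be arranged either directly from the hypothesis or, in the applications to \Cref{theorem:main}, by exploiting the fact that the operative line bundle is strictly more ample than the bare lemma hypothesis requires. The $k$-determinacy assertion $(x,y)^{k+1} \subset J_\varphi$ is, by contrast, a clean consequence of the degree bound on the Milnor basis, after which the standard machinery of isolated-singularity theory furnishes the remainder.
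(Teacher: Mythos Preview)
Your overall architecture matches the paper's: trivialize $L$ near $p$, use jet ampleness to lift $\varphi$ and the $g_i$ to global sections, take $i_p$ to be the resulting linear embedding, and then shrink $V_\varphi$ so that the open conditions in (1) and (3) hold uniformly, with triviality of the family in (1) coming from contractibility of the base. The substantive divergence is in how you manufacture $f_0$ smooth away from $p$ (and transverse to $D$).

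The paper does this by iterated two-point perturbation rather than Bertini: if $Z(f_0)$ is singular at some $q \ne p$, it uses jet ampleness at the pair $\{p,q\}$ with weights $(n+1,2)$ to find a section vanishing to high enough order at $p$ that the singularity type is unchanged, but with prescribed $1$-jet at $q$, and adds a small multiple to kill the singularity at $q$. This is exactly where the hypothesis $n \le k-2$ enters, since one needs $(n+1)+2 \le k+1$. Your Bertini step instead asks that $\ker(J_p^k) = H^0(X; L \otimes m_p^{k+1})$ be basepoint-free off $p$, which is a two-point condition with weights $(k+1,1)$ and is \emph{not} supplied by $k$-jet ampleness; you flag this honestly, but ``directly from the hypothesis'' does not go through. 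The clean repair is to run Bertini on $\tilde\varphi + \ker(J_p^n)$ rather than $\ker(J_p^k)$: then basepoint-freeness off $p$ needs only weights $(n+1,1)$, which holds since $n+2 \le k+1$. (You still lift the $g_i$ through $k$-jets; only the $f_0$ step changes.)

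One smaller point: your route to $k$-determinacy via ``comparison of lowest-order terms'' to conclude $(x,y)^{k+1} \subset J_\varphi$ is not airtight as written, since elements of $J_\varphi$ can have low-order terms that cancel against the $g_i$. For the monomial Jacobian ideals arising in the applications ($A_7$, $E_6$) the inclusion is immediate, and in general one can argue via the associated graded, but this step deserves a sentence more care. The paper, by contrast, asserts equisingularity of the identification somewhat briskly; making determinacy explicit, as you do, is an improvement.
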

\begin{proof}
Let $\hat\cO_p$ denote the completion of the local ring of functions at $p$ and let $x,y$ be local parameters. Fix a trivialization of $L$ in a neighborhood of $p$. With respect to this trivialization, we may think of sections of $L$ as functions in $\hat \cO_p$, and in particular compute their $k$-jets for all $k$. 

Since $L$ is $k$-jet ample and so {\em a fortiori} $n$-jet ample, there is $f_0 \in H^0(X;L)$ with $n$-jet at $p$ given by $\varphi(x,y)$. If $Z(f_0)$ has an isolated singularity at some other $q \in X$, then since $L$ is $n+2$-jet ample, there is $g \in H^0(X;L)$ that vanishes to order $n$ at $p$ and has any desired $1$-jet at $q$. Then for well-chosen such $g$ and $\epsilon > 0$ suitably small, $Z(f_0 + \epsilon g)$ is smooth near $q$, has no new singular points, and still has the singularity type of $\varphi$ at $p$. Repeating as necessary, we can assume that $f_0 \in H^0(X;L)$ has the singularity type of $\varphi$ at $p$ and is smooth everywhere else. By analogous arguments, if $Z(f_0)$ is not transverse to $D$ at some $q \in X$, there is a perturbation of $f_0$ to a nearby curve with the singularity type of $\varphi$ at $p$, smooth everywhere else, and transverse to $D$ at $q$. After a finite number of such perturbations, we can moreover assume that $Z(f_0)$ is transverse to $D$.

Let $J_{f_0} \subseteq \hat \cO_p$ be the ideal generated by $\partial_x f_0, \partial_y f_0$, so that $\hat \cO_p/J_{f_0} \cong \C^\mu$ and a versal deformation space $V_{\varphi}$ can be realized as a small ball about $0$ as in the above discussion. By hypothesis, the vector space $\cO_p/J_{f_0}$ is spanned by the images of polynomials $\bar g_1, \dots, \bar g_\mu$ of degree at most $k$. Since $L$ is $k$-jet ample, there are $g_1, \dots, g_\mu \in H^0(X; L)$ whose images form a basis for $\hat \cO_p/J_{f_0}$. Letting $V \le H^0(X;L)$ be the span of $g_1, \dots, g_\mu$, one can identify the versal deformation space $V_{\varphi}$ with a sufficiently small $\epsilon$-ball in $V$; this identification is clearly equisingular. The smoothness and transversality properties in (1) and (3) are all open conditions satisfied by $f_0$, and so hold for all $f_0 + i_p(g)$ so long as $\epsilon$ is chosen to be sufficiently small. The family of topological surfaces $Z(f_0+i_p(g)) \setminus \bar B$ is defined over the contractible set $V_{\varphi}$, and hence is trivial.
\end{proof}

\begin{lemma}
    \label{lemma:A7VCs}
    Let $L = L_1 \otimes L_2$ be given, with $L_1$ $6$-jet ample and $L_2$ very ample. Let $E = \tilde C \cup \tilde D \in \tilde U_{L_1}(D)$ be a basepoint, obtained by smoothing $C \in U_{L_1}$ and $D \in U_{L_2}$ intersecting transversely. Then there are loops $\alpha_1,\dots, \alpha_7 \in \pi_1(U_{L_1 \otimes L_2},E)$ for which the monodromy of each $\rho_L(\alpha_i) = T_{a_i}$ is the Dehn twist about a vanishing cycle $a_i \subset E$, such that $a_1, \dots, a_7$ form the $A_7$ configuration, $a_1, a_3, a_5, a_7$ form distinct boundary components of $\tilde C$, and $a_2, a_4, a_6$ each intersect $\tilde C$ in a single arc connecting distinct boundary components.
\end{lemma}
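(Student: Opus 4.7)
The plan is to realize the local $A_7$ Morsification of \Cref{lemma:morsifyA7} as a family of sections of $L_1 \otimes L_2$, arranged so that the ``split'' configuration of that lemma---where $C_{\mathrm{loc}} = Z(y+x^4+\varepsilon)$ and $D_{\mathrm{loc}} = Z(y)$ meet at four simple transverse points---occurs globally along the intersection of a suitable $C_0 \in \abs{L_1}$ with $D$ at a chosen point $p$. The jet-ampleness hypotheses are exactly tuned for this: $L_1$ being $6$-jet ample lets us prescribe the $6$-jet of $f_C$ at $p$, and $L_1 \otimes L_2$ is $7$-jet ample by \Cref{remark:jetample}, which matches the hypothesis $n \le k-2$ of \Cref{lemma:embedE6} for $\varphi = y(y+x^4)$ of degree $n=5$ with Milnor basis $\{1, x, x^2, \ldots, x^6\}$ of degree $\le 6$.

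\textbf{Local/global setup.} Choose $p \in D$ and local coordinates with $D = Z(y)$. By $6$-jet ampleness of $L_1$, pick $f_C \in H^0(X; L_1)$ whose $6$-jet at $p$ is $y + x^4$; after a generic perturbation by sections with vanishing $6$-jet at $p$ (available by the same $6$-jet ampleness), we may arrange that $C_0 := Z(f_C)$ is smooth, meets $D$ transversely at $d-4$ points $q_1, \ldots, q_{d-4}$ outside a small ball $\bar B$ around $p$, and that $f_0 := f_C\, g$ defines an $A_7$ singularity at $p$ (among sections with this $6$-jet, $A_7$ is the unique simple singularity of Milnor number $7$ compatible with the leading germ $y(y+x^4)$, and hence the generic outcome).

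\textbf{Building the deformation and the basepoint.} We adapt \Cref{lemma:embedE6} so as to \emph{preserve}, rather than perturb away, the $A_1$ nodes at $q_1, \ldots, q_{d-4}$. Using $7$-jet ampleness, pick sections $g_1, \ldots, g_7 \in H^0(X; L_1 \otimes L_2)$ whose $6$-jets at $p$ lift a monomial basis of $\C[[x,y]]/J_\varphi$ and which simultaneously vanish at every $q_i$ (a finite-codimension linear condition easily met). Their span embeds a versal deformation $V_{A_7} \into H^0(X; L_1 \otimes L_2)$ of the $A_7$ germ at $p$, fixing the nodes at the $q_i$. Separately, a $(d-4)$-parameter family $W \subset H^0(X; L_1 \otimes L_2)$ of standard local smoothings of the $A_1$ nodes (as in \Cref{def:smoothing}) gives a combined family $V_{A_7} \oplus W$ of deformations of $f_0$ whose smooth members lie in $U_{L_1 \otimes L_2}$. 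By \Cref{lemma:morsifyA7}, a disk $\disk \subset V_{A_7}$ parameterizes a Morsification with critical values $x_1, \ldots, x_7$ and a distinguished split point $v_{\mathrm{sp}}$ at which the local germ at $p$ factors as $y(y+x^4+\varepsilon)$. Fix $w_0 \in W$ smoothing all the $A_1$ nodes, and set $E := Z(f_0 + v_{\mathrm{sp}} + w_0)$: at $v_{\mathrm{sp}}$ the local germ is reducible, so globally $E$ is a smoothing of $C_0' \cup D$ for some $C_0' \in \abs{L_1}$ near $C_0$ meeting $D$ transversely near $p$, hence $E \in \tilde U_{L_1}(D)$, and by \Cref{conv} we take $E$ as our basepoint.

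\textbf{Conclusion and main obstacle.} The seven Morsification loops $\alpha_i \in \pi_1(U_{L_1 \otimes L_2}, E)$ (obtained from the vanishing paths of \Cref{lemma:morsifyA7}, held constant in the $W$-direction) have monodromy $T_{a_i}$ by Picard--Lefschetz (\Cref{SS:PL}), with $a_i \subset E$ forming an $A_7$ configuration. All these cycles are supported in $\bar B \cap E$, so their intersection with $\tilde C, \tilde D \subset E$ is determined by the local model, and \Cref{lemma:morsifyA7}(4) delivers precisely the claimed pattern: $a_1, a_3, a_5, a_7$ are four of the boundary components of $\tilde C \subset E$ (those bounding the four transverse intersections of $C_0'$ with $D$ near $p$), while $a_2, a_4, a_6$ each meet $\tilde C$ in a single arc joining distinct boundary components. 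The main technical obstacle is adapting \Cref{lemma:embedE6}: its standard proof perturbs $f_0$ to remove all singularities except the one at $p$, which would destroy the product structure $f_C \, g$ and hence the alignment with $\tilde C \cup \tilde D$. We circumvent this by constructing the versal deformation from sections vanishing at the $q_i$, treating the $A_1$ smoothings as an independent family; the $7$-jet ampleness of $L_1 \otimes L_2$ provides exactly the linear-algebraic freedom needed to impose the $q_i$-vanishing conditions on the Milnor basis.
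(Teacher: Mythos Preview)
Your approach is essentially the paper's: choose $p\in D$, realize a section of $L_1$ with an order-$4$ tangency to $D$ at $p$, use $7$-jet ampleness of $L_1\otimes L_2$ to embed a versal $A_7$ deformation, and read off the seven vanishing cycles from \Cref{lemma:morsifyA7}(4) inside the ball $\bar B$. The differences are organizational. The paper smooths the $d-4$ transverse nodes \emph{first}, obtaining $F_0=f_0g_D+h_0$ with a single $A_7$ singularity, and then applies \Cref{lemma:embedE6} directly to $F_0$; it uses the low-degree basis $\{1,x,x^2,x^3,y,xy,x^2y\}$ (equivalent to yours modulo $J_\varphi$). It then asserts that the smooth point of the Morsification disk can be taken in $\tilde U_{L_1}(D)$, and transports back to the given basepoint $E$ via \Cref{lemma:ptsprt}. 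Your ``main obstacle'' is therefore not an obstacle at all: the perturbation $h_0$ that kills the $A_1$ nodes does not destroy the alignment with $\tilde C\cup\tilde D$, because $F_0$ is still of the form $f_0g_D+(\text{small})$ and the versal family around it includes genuine smoothings of nearby transverse $C'\cup D$.

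There is one real gap in your argument. You assert that because the local germ of $f_0+v_{\mathrm{sp}}$ at $p$ factors as $y(y+x^4+\varepsilon)$, ``globally $E$ is a smoothing of $C_0'\cup D$ for some $C_0'\in|L_1|$.'' Local reducibility of a germ does not imply the global section of $L_1\otimes L_2$ is divisible by $g_D$, or even close to something divisible by $g_D$ in the quantitative sense required by \Cref{def:smoothing}. Your $v_{\mathrm{sp}}$ is a linear combination of the $g_i\in H^0(X;L_1\otimes L_2)$, and you have imposed no condition forcing this combination to lie in $g_D\cdot H^0(X;L_1)$. Moreover, even granting that $E$ is close to some $(f_C+\delta)g_D$, the four nodes of $Z(f_C+\delta)\cap D$ near $p$ collide as $\delta\to 0$, so the function $\eta(f_C+\delta)$ of \Cref{def:globalsmoothing} shrinks, and it is not automatic that $\|v_{\mathrm{sp}}+w_0-\delta g_D\|<\eta(f_C+\delta)$. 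This is fixable---for instance by choosing the section realizing the split direction $\varepsilon y$ to literally be $\varepsilon'\cdot g_D$ for a section $\varepsilon'\in H^0(X;L_1)$ nonvanishing at $p$, so that $f_0+v_{\mathrm{sp}}=(f_C+\varepsilon')g_D$ on the nose---but as written the inference does not go through.
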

\begin{proof}
    Choose a point $p \in D \subset X$. Since $L_1$ is $6$-jet ample and hence $4$-jet ample, there is a section $f_0 \in H^0(X;L_1)$ such that $Z(f_0)$ is smooth at $p$ and intersects $D$ there with multiplicity $4$. As in the proof of \Cref{lemma:embedE6}, the $6$-jet ampleness of $L_1$ implies the existence of such an $f_0$ which is moreover smooth and transverse to $D$ away from $p$. 

    Since $L_1$ is $6$-jet ample and $L_2$ is very ample, i.e. $1$-jet ample, $L_1 \otimes L_2$ is $7$-jet ample \cite[Lemma 2.2]{BSjetample}. Thus $Z(f_0) \cup D$ can be perturbed so as to resolve each transverse intersection of $D$ with $Z(f_0)$, preserving the local structure of an order-4 tangency at $p$ (locally given as $y(y+x^4)$). Let the associated section of $L_1 \otimes L_2$ be denoted by $F_0$:
    \begin{equation}
        \label{eqn:F0}
        F_0 := f_0 g_D + h_0,
    \end{equation}
    where $g_D \in H^0(X;L_2)$ defines $D$. 

    Setting $\varphi(x,y) = y^2 + yx^4$, there is a basis for $\C[[x,y]]/(\partial_x \varphi, \partial_y \varphi)$ given by $1,x,x^2,x^3,y,xy,x^2y$, polynomials of degree at most $3$. Thus \Cref{lemma:embedE6} applies for the $7$-jet ample line bundle $L = L_1 \otimes L_2$, and moreover one can take $f_0$ as in the statement of \Cref{lemma:embedE6} to be $F_0$ as in \eqref{eqn:F0}. Let $i_p: V_{A_7} \into H^0(X;L_1 \otimes L_2)$ be the embedding of the versal deformation space, and let $\disk \subset V_{A_7}$ be the disk given by \Cref{lemma:morsifyA7}. We can assume that $1 \in \disk$ corresponds to a point $E' \in \tilde U_{L_1}(D)$ under $i_p$ given by smoothing $C' \cup D$ to $E' = \tilde C' \cup \tilde D$, for some $C' \in U_{L_1}$.

    Let $\bar B \subset X$ be a small ball centered at $p$ as in \Cref{lemma:embedE6}. The based loops $\alpha'_1, \dots, \alpha'_7 \subset \disk$ of \Cref{lemma:morsifyA7} then induce nodal degenerations of $E'$ for which the vanishing cycles $a'_1,\dots, a'_7$ are contained in the subsurface $E' \cap \bar B$. By \Cref{lemma:morsifyA7}, the local model for $E' \cap \bar B$ is given by smoothing the transversely-intersecting curves $C_{loc}$ and $D_{loc}$. Thus the topological properties of $a'_1,\dots, a'_7 \subset E'$ claimed in the statement of the lemma are satisfied, since they hold on the local model by \Cref{lemma:morsifyA7}.

    It remains only to consider the effect of changing basepoint, from $E' \in \tilde U_{L_1}(D)$ near the $A_7$ singularity, back to the original (and arbitrary) $E \in \tilde U_{L_1}(D)$. For this, we recall the results of \Cref{SS:basepoints}. Let $\gamma \subset \tilde U_{L_1}(D)$ be a path connecting $E'$ to $E$, and for $i = 1, \dots, 7$, define $a_i := \tau_\gamma(a_i')$. By \Cref{lemma:ptsprt}, $\tau_\gamma$ restricts to an identification $\tau_\gamma: \tilde C' \to \tilde C$. In particular, this shows that $a_1, a_3, a_5, a_7$ are boundary components of $\tilde C$, since $a_1', a_3', a_5', a_7'$ are boundary components of $\tilde C'$. Since $a'_2, a'_4, a'_6$ each intersect $\tilde C'$ in a single arc, it follows that $a_2, a_4, a_6$ each intersect $\tilde C$ in a single arc, and since $a'_1, \dots, a'_7$ form the $A_7$ configuration, the same is true of $a_1, \dots, a_7$.
\end{proof}

\begin{lemma}
    \label{lemma:E6VCs}
    Let $L = L_1 \otimes L_2$ be given, with $L_1$ $6$-jet ample and $L_2$ very ample. Let $E \cong \tilde C \cup \tilde D \in \tilde U_{L_1}(D)$ be a basepoint. Then there are loops $\tilde \beta_1,\dots, \tilde \beta_6 \in \pi_1(\tilde U_{L_1}(D),E)$ for which the monodromy of each $\rho_L(\tilde \beta_i) = T_{b_i}$ is the Dehn twist about a vanishing cycle $b_i \subset \tilde C$, such that $b_1, \dots, b_6$ form the $E_6$ configuration.
\end{lemma}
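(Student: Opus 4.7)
The plan is to mirror the proof of \Cref{lemma:A7VCs}, this time implanting the $E_6$ singularity $\varphi(x,y) = x^3 + y^4$ into a section of $L_1$ alone at a point $p \in X \setminus D$, rather than at a point of tangency between sections of $L_1$ and $L_2$. First I would check that \Cref{lemma:embedE6} applies to $(L_1, \varphi)$ with $k = 6$: indeed $\deg \varphi = 4 \le 6 - 2$, and the Jacobian ideal $J_\varphi = (3x^2, 4y^3)$ gives $\C[[x,y]]/J_\varphi \cong \C^6$ with basis $\{1,x,y,xy,y^2,xy^2\}$, consisting of polynomials of degree $\le 3 \le 6$. Choosing any $p \in X \setminus D$ and applying \Cref{lemma:embedE6} with $D$ in the role of the transverse curve, we obtain a section $f_0 \in H^0(X;L_1)$ with an $E_6$ singularity at $p$, smooth elsewhere, transverse to $D$, along with an embedding $i_p: V_{E_6} \into H^0(X;L_1)$ such that every $Z(f_0 + i_p(g))$ remains transverse to $D$. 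Then \Cref{lemma:morsifyE6} supplies a disk $\disk \subset V_{E_6}$ with special points $x_1, \dots, x_6$, paths $\beta_i$ from $1$ to $x_i$ avoiding the other $x_j$, and an $E_6$ configuration of vanishing cycles $b_1', \dots, b_6'$ on the smooth section $C' := Z(f_0 + i_p(g_1))$.

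Since $C' \in U_{L_1}(D)$ by construction, we may form a smoothing $E' = \tilde C' \cup \tilde D \in \tilde U_{L_1}(D)$. By choosing a consistent small perturbation $h$ as in \Cref{def:globalsmoothing}, each path $\beta_i$ (which stays inside $\pi^{-1}(U_{L_1}(D))$ away from $x_i$) lifts to a path in $\tilde U_{L_1}(D)$, and closing up about $x_i$ yields based loops $\tilde \beta_i' \in \pi_1(\tilde U_{L_1}(D), E')$. The main technical point, which I expect to be the principal obstacle, is verifying $\rho_L(\tilde \beta_i') = T_{b_i'}$ with $b_i' \subset \tilde C'$. For this I would argue locally: the node of the degeneration of $Z(f_0 + i_p(g_t))$ as $t \to x_i$ occurs at some $q_i \in X$ near $p$, and by construction $q_i$ lies in a neighborhood disjoint from $D$. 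On a small ball $U$ about $q_i$ with $U \cap D = \emptyset$, the section $g_D \in H^0(X;L_2)$ defining $D$ is invertible, so $Z(f_t g_D + h)|_U$ agrees up to the rescaling by $g_D$ with a small perturbation of the family $Z(f_t)|_U$. Hence the nodal degeneration at $q_i$ persists in the smoothed family, its vanishing cycle is identified with $b_i' \subset \tilde C'$, and Picard-Lefschetz (\Cref{SS:PL}) gives $\rho_L(\tilde \beta_i') = T_{b_i'}$.

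Finally, to conclude at the prescribed basepoint $E$, I would choose a path $\gamma \subset \tilde U_{L_1}(D)$ from $E'$ to $E$, set $\tilde \beta_i := \gamma^{-1} \cdot \tilde \beta_i' \cdot \gamma \in \pi_1(\tilde U_{L_1}(D), E)$, and define $b_i := \tau_\gamma(b_i')$. By \Cref{lemma:ptsprt}, the parallel transport $\tau_\gamma$ identifies $\tilde C'$ with $\tilde C$ as isotopy classes of subsurfaces, so $b_i \subset \tilde C$. Since $\tau_\gamma$ is an orientation-preserving homeomorphism, intersection patterns are preserved and $b_1, \dots, b_6$ form an $E_6$ configuration on $\tilde C$, completing the proof.
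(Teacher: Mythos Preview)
Your overall strategy matches the paper's exactly: implant the $E_6$ singularity into a section of $L_1$ at a point $p\notin D$ via \Cref{lemma:embedE6}, invoke \Cref{lemma:morsifyE6}, lift to $\tilde U_{L_1}(D)$, and transport to the given basepoint via \Cref{lemma:ptsprt}. The verification that \Cref{lemma:embedE6} applies with $k=6$ is correct.

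The one place where your argument is thinner than the paper's is the step ``Picard--Lefschetz gives $\rho_L(\tilde\beta_i') = T_{b_i'}$.'' Your local computation near $q_i$ shows that a nodal degeneration persists in the smoothed family and identifies its vanishing cycle, but Picard--Lefschetz only yields $\rho_L(\tilde\beta_i') = T_{b_i'}$ \emph{globally} once you know the lifted loop is a genuine meridian about a single nodal point of the discriminant in $\abs{L_1\otimes L_2}$. For that you must also argue that $Z(f_t g_D + h)$ is smooth everywhere \emph{outside} the ball $U$, for all $t$ on the loop; this uses properties (1) and (3) of \Cref{lemma:embedE6} (triviality outside $\bar B$ and transversality to $D$), which you invoke for the construction but not for the monodromy computation. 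The paper takes a slightly different route here: it allows the smoothing perturbation $h_t$ to vary along the loop, shows via property (3) that the monodromy lies in the \emph{principal} stabilizer $\Gamma[\tilde C_1]$ (trivial on $\tilde D$), observes that under $\bar\res$ it is supported on the Milnor fiber and equals $T_{b_i}$, and then explicitly corrects any residual boundary twists $T_{\Delta_j}$ by composing with loops in $\tilde U_{L_1\otimes L_2}(C_1,D)$ using \Cref{lemma:TDelta}. Your fixed-$h$ approach can avoid this correction step, but you should say explicitly why no extra discriminant points are enclosed.
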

\begin{proof}
 For the polynomial $\varphi(x,y) = x^3 + y^4$ defining the $E_6$ singularity, the vector space $\C[[x,y]]/(\partial_x \varphi, \partial_y \varphi)$ is spanned by monomials of the form $x^ay^b$, with $0 \le a \le 1$ and $0 \le b \le 2$. As the maximal degree of such a monomial is $3$, and $\varphi$ itself has degree $4$, \Cref{lemma:embedE6} is applicable to the $6$-jet ample line bundle $L_1$. Accordingly, choose $p \in X$ not lying on $D$, and let $i_p: V_{E_6} \into H^0(X; L_1)$ be the embedding given by \Cref{lemma:embedE6}, so that every curve in the image is transverse to $D$. Let $C_1$ be a smooth curve defined as $C_1 = Z(f_1)$ with $f_1 = f_0 + i_p(g_1)$ for $f_0$ as in \Cref{lemma:embedE6}, and take $E = \tilde C_1 \cup \tilde D$ to be a smoothing of $C_1 \cup D$ defined by the equation $f_1g_D + h_1$, where  $D \in U_{L_2}$ is defined as $D = Z(g_D)$ for $g_D \in H^0(X;L_2)$ and $h_1 \in H^0(X;L_1\otimes L_2)$ is a suitable smoothing perturbation as in \Cref{def:smoothing}.

Let $\disk \subset V_{E_6}$ be the disk of \Cref{lemma:morsifyE6}, with six points $x_1, \dots, x_6 \in \disk$ corresponding to nodal deformations and the associated loops $\beta_1, \dots, \beta_6 \in \pi_1(\disk \setminus \{x_1,\dots,x_6\},1)$; we can assume that $i_p(1) = g_1$ such that $Z(f_0+i_p(g_1))$ is our chosen $C_1$. We construct based loops $\tilde\beta_1, \dots, \tilde\beta_6$ in $(\tilde U_{L_1}(D),E)$ as follows: first lift the loop $\beta_i(t)$ to the loop in $H^0(X;L_1 \otimes L_2)$ given by $(f_0+i_p(\beta(t))g_D$ (as a family of curves, this is just the union of $D$ with the curves in $U_{L_1}$ parameterized by $\beta_i(t)$). Now perturb this to a loop $\tilde \beta_i(t) \subset \tilde U_{L_1}(D)$ defined by the formula
\[
\tilde \beta(t) = Z((f_0 + i_p(\beta(t)))g_D + h_t),
\]
where $h_t \in H^0(X;L_1 \otimes L_2)$ defines a smoothing of $Z(f_0+i_p(\beta(t))) \cup D$ and $h_0=h_1$ is as above. Such a family $h_t$ exists, since for any fixed $t$, the set of disallowed smoothing parameters (those for which $Z((f_0 + i_p(\beta(t)))g_D + h_t)$ fails to be smooth) forms an analytic hypersurface in $H^0(X; L_1 \otimes L_2)$ which can be avoided by an arbitrarily small perturbation.

Since $\tilde \beta_i \subset \tilde U_{L_1}(D)$, the monodromy image $\rho_D(\tilde \beta_i)$ is contained in $\Gamma_0[\tilde C_1]$ by \Cref{lemma:localmonodromy}, but in fact $\rho_D(\tilde \beta_i) \le \Gamma[\tilde C_1]$ is contained in the {\em principal} stabilizer. To see this, note that by property (3) of \Cref{lemma:embedE6}, the points of intersection $y_1(g), \dots, y_d(g)$ of $Z(f_0+i_p(g)) \cap D$ are contained in fixed disjoint balls. After smoothing, the monodromy induced on $\tilde D$ is given by pushing the boundary components $\Delta_1, \dots, \Delta_d$ along the braid induced by the points of intersection; since this braid is trivial here, the monodromy on $\tilde D$ is trivial, i.e. $\rho_D(\tilde \beta_i) \le \Gamma[\tilde C_1]$, as claimed.

Recall from \Cref{SS:stabilizer} that the restriction $\res: \Gamma[\tilde C_1] \to \Mod(\tilde C_1)$ is injective, and that the kernel of $\bar \res: \Gamma[\tilde C_1] \to \Mod(C_1, \bx)$ is generated by the twists $T_{\Delta_1}, \dots, T_{\Delta_d}$. Since each $T_{\Delta_i} \in \Gamma[\tilde C_1]$ by \Cref{lemma:TDelta}, it suffices to consider the image $\bar \res \circ \rho_D(\tilde \beta_i) \in \Mod(C_1,\bx)$. By properties (1) and (3) of \Cref{lemma:embedE6}, these mapping classes are supported on the subsurface $S \subset C_1$ of genus $3$ containing the vanishing cycles $b_1, \dots, b_6$. Thus, possibly after altering $\tilde \beta_i$ by some loop in some $\tilde U_{L_1 \otimes L_2}(C_1,D)$ to remove unwanted boundary twists $T_{\Delta_i}$, the monodromy of $\tilde \beta_i$ is the Dehn twist $T_{b_i}$ about some vanishing cycle $b_i \in \tilde C_1$, and $b_1, \dots, b_6$ form the $E_6$ configuration.
\end{proof}

As a corollary of these results, we can construct vanishing cycles on $E = \tilde C \cup \tilde D$ which intersect each half of $E$ in a single arc, and which have any desired intersection with one half, say $\tilde D$.

\begin{lemma}\label{corollary:Dsidecontrol}
    Let $E = \tilde C \cup \tilde D$  be a basepoint as in \Cref{conv}, and let $\alpha \subset \tilde D$ be any simple arc connecting distinct boundary components of $\tilde D$. Then there is a vanishing cycle $a \subset E$ such that $a \cap \tilde D = \alpha$. 
\end{lemma}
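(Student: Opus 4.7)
The plan is to start from one concrete vanishing cycle whose intersection with $\tilde D$ already has the right qualitative shape, and then to transport that intersection to the prescribed arc $\alpha$ using a subgroup of the monodromy that acts by simple braids on the $\tilde D$-side. The seed is furnished by \Cref{lemma:A7VCs}: among the $A_7$-configuration of vanishing cycles $a_1,\dots,a_7$ it produces, each of $a_2,a_4,a_6$ meets $\tilde C$ in a single arc connecting distinct boundary components. Because every such $a_i$ is a simple closed curve in $E=\tilde C\cup\tilde D$ crossing $\partial\tilde C=\partial\tilde D$ exactly twice, its complementary half automatically meets $\tilde D$ in a single arc joining the same pair of boundary components. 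Setting $a_0:=a_2$ and $\alpha_0:=a_0\cap\tilde D$ yields a vanishing cycle whose $\tilde D$-intersection is a simple arc connecting distinct boundary components of $\tilde D$.

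The transport step uses the symmetric ($C\leftrightarrow D$) version of \Cref{lemma:simpleinmono}: the local monodromy $\rho_C:\pi_1(\tilde U_{L_2}(C),E)\to\Gamma_0[\tilde D]$, composed with the capping restriction $\bar\res:\Gamma_0[\tilde D]\to\Mod(D,\bx)$, has image containing $\SBr_d(D)$. By \Cref{lemma:sbrarctrans} with $\tilde D$ in place of $\tilde C$, the group $\SBr(\tilde D)$ acts transitively on simple arcs in $\tilde D$ joining distinct boundary components; the kernel of $\SBr(\tilde D)\twoheadrightarrow\SBr_d(D)$ is generated by the boundary twists $T_{\Delta_i}$, which act trivially on the isotopy class of an arc in $D$ with endpoints at the marked points $\bx$, so $\SBr_d(D)$ itself already acts transitively on such arcs. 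Lifting back to the monodromy yields some $\beta\in\Gamma_0[\tilde D]$ whose $\bar\res$-image carries the isotopy class of $\alpha_0$ to that of $\alpha$.

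Since $\beta$ preserves $\tilde D$ setwise, $\beta(a_0)\cap\tilde D=\beta(\alpha_0)$ is isotopic to $\alpha$ in $\tilde D$; any residual boundary-wrapping discrepancy (reflecting the indeterminacy between $\SBr(\tilde D)$ and $\SBr_d(D)$) can be absorbed by composing $\beta$ with appropriate powers of the boundary twists $T_{\Delta_i}$, which lie in the monodromy by \Cref{lemma:TDelta}. Setting $a:=\beta(a_0)$ then gives $a\cap\tilde D=\alpha$. The curve $a$ is still a vanishing cycle because the set of vanishing cycles is $\Gamma_L$-invariant: conjugating a nodal-degeneration loop realizing $T_{a_0}$ by any loop realizing $\beta$ produces a nodal-degeneration loop with monodromy $T_{\beta(a_0)}=T_a$.

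The delicate step is the transport: it requires carefully reconciling three different ``levels'' at which arcs and their automorphism groups live --- arcs on $\tilde D$ (rel endpoints versus up to boundary sliding), arcs on $D$ with endpoints at $\bx$, and the central extension $\SBr(\tilde D)\twoheadrightarrow\SBr_d(D)$. The availability of every boundary twist in the monodromy via \Cref{lemma:TDelta} is what permits a clean passage between these descriptions and yields the exact equality $a\cap\tilde D=\alpha$.
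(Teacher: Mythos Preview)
Your proof is correct and follows essentially the same approach as the paper's: take $a_2$ from the $A_7$ configuration of \Cref{lemma:A7VCs} as a seed, transport its $\tilde D$-arc to $\alpha$ using the simple braid group via \Cref{lemma:sbrarctrans} and \Cref{lemma:simpleinmono} (with $C,D$ swapped), and conclude by monodromy-invariance of vanishing cycles. You are in fact more explicit than the paper about the passage between $\SBr(\tilde D)$ and $\SBr_d(D)$ and the resulting boundary-twist indeterminacy, correctly noting that \Cref{lemma:TDelta} lets you absorb it; the paper's proof elides this point.
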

\begin{proof}
    Let $a_1, \dots, a_7$ be the vanishing cycles provided by \Cref{lemma:A7VCs}; in particular, $a_2$ intersects $\tilde D$ in a single arc, say $\alpha_2 \subset \tilde D$. By \Cref{lemma:sbrarctrans}, there is $\beta \in \SBr(\tilde D)$ such that $\beta(\alpha_2) = \alpha$. By \Cref{lemma:simpleinmono}, there is $\tilde \beta \in \Gamma_0[\tilde D]$ such that $\bar \res (\tilde \beta) = \beta$, and by \Cref{lemma:againVC}, it follows that $a:= \tilde \beta(a_2)$ is a vanishing cycle with $a \cap \tilde D = \alpha$ as required.
\end{proof}

\section{Building a core}\label{S:core}

In this section, we leverage the work of the previous section to construct a ``core'' subsurface of the fiber on which the monodromy group is quite large. This is necessary to invoke the theory of assemblage generating sets for $r$-spin mapping class groups in the proof of \Cref{theorem:main}, and some of the specific results here (in the guise of \Cref{corollary:allBs}) will be used in the proof of the Main Lemma in the next section.

        \begin{figure}[h]
\centering
		\labellist
        \pinlabel $\tilde{C}$ at 100 10
        \pinlabel $\tilde{D}$ at 300 10
        \small
        \pinlabel $S_0$ at 160 140
        \tiny
        \pinlabel $a_1$ at 232 196
        \pinlabel $a_2$ at 234 180
        \pinlabel $a_3$ at 232 167
        \pinlabel $a_4$ at 234 154
        \pinlabel $a_5$ at 230 137
        \pinlabel $a_6$ at 235 122
        \pinlabel $a_7$ at 230 105
        \pinlabel $b_6$ at 200 145
        \pinlabel $b_1$ at 85 185
		\endlabellist
\includegraphics{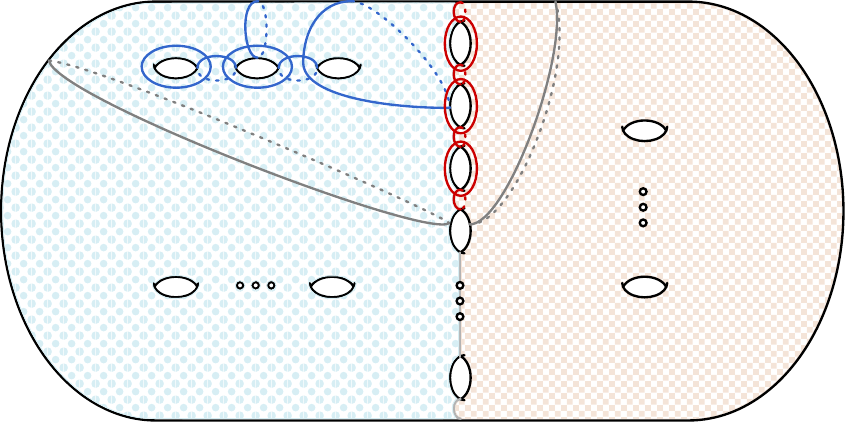}
\caption{The subsurface $S_0 \subset E$ and the vanishing cycles $a_1, \dots, a_7, b_1, \dots, b_6$.}
\label{fig:core}
\end{figure}

\begin{lemma}\label{lemma:core}
   There exists a subsurface $S_0 \subset E$ homeomorphic to $\Sigma_6^2$, and vanishing cycles $a_1, \dots, a_7, b_1, \dots, b_6$ as shown in \Cref{fig:core}, such that 
    \[
    \Mod(S_0)[\phi] \le \Gamma,
    \]
    where $\phi$ is the framing of $S_0$ determined by the condition that all $a_i, b_i$ be admissible. 
\end{lemma}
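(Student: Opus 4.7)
The plan is to realize the thirteen curves $\{a_i, b_j\}$ as an $E$-arboreal spanning configuration on a subsurface $S_0 \cong \Sigma_6^2 \subset E$, and then apply \Cref{theorem:assemblages} with $S_0$ playing the role of the ambient surface and with the entire configuration serving as its own core (so $h = 6$ and $k = \ell$; no handle-attaching curves are needed). The resulting framed mapping class group $\Mod(S_0)[\phi]$ is then generated by Dehn twists about the $a_i$ and $b_j$, each of which is a vanishing cycle for $L_1 \otimes L_2$ and hence lies in $\Gamma$ by the Picard--Lefschetz formula.

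The construction of the configuration proceeds in two stages. First, \Cref{lemma:A7VCs} supplies $a_1,\dots,a_7$ with the prescribed form: $a_1,a_3,a_5,a_7$ are boundary circles of $\tilde C$ and $a_2,a_4,a_6$ each meet $\tilde C$ in a single arc. Second, \Cref{lemma:E6VCs} supplies $b_1',\dots,b_6' \subset \tilde C$ forming an $E_6$ subconfiguration, initially supported on an essentially arbitrary genus-$3$ subsurface of $\tilde C$. The monodromy elements of $\Gamma_0[\tilde C]$ provided by \Cref{lemma:simpleinmono} (which, modulo boundary twists, realize the entire simple braid group on $C$) supply enough flexibility to adjust the $b_i'$ so that, after relabeling, the extreme vertex $b_6$ of the $E_6$ diagram crosses the arc $a_j \cap \tilde C$ transversely in a single point for some $j \in \{2,4,6\}$, while the remaining $b_i$ stay disjoint from every $a_k$ and the internal $E_6$ pattern is preserved. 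By \Cref{lemma:againVC}, the transported $b_i$ remain vanishing cycles, so the twists $T_{b_i}$ lie in $\Gamma$.

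With the configuration in place, the intersection graph is a tree obtained by attaching the $A_7$ chain to the $E_6$ diagram through the single edge $b_6$--$a_j$, and in particular contains $E_6$ as a full subgraph. The union of the thirteen curves has Euler characteristic $-12$ (one vertex and two edges of the curve-CW-decomposition per edge of the intersection graph), which matches $\chi(\Sigma_6^2)$; a check of the complementary boundary-component count identifies the regular neighborhood as $S_0 \cong \Sigma_6^2$, so the configuration is an $E$-arboreal spanning configuration on $S_0$. Then \Cref{lemma:compatibleframing} produces the unique framing $\phi$ compatible with this assemblage, and \Cref{theorem:assemblages} (applicable since $g(S_0) = 6$ and the core genus $h = 6$ both satisfy the $\geq 5$ hypothesis) identifies $\Mod(S_0)[\phi]$ with $\langle T_{a_i}, T_{b_j} \rangle \le \Gamma$. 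The principal obstacle is the adjustment step in the second paragraph: exhibiting a single monodromy element in $\Gamma_0[\tilde C]$ that moves the initial $E_6$ configuration so as to introduce exactly one transverse crossing with the $A_7$ chain while creating no spurious intersections and keeping the internal $E_6$ pattern intact.
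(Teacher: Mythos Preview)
Your outline follows the paper's proof almost exactly: obtain the $A_7$ chain from \Cref{lemma:A7VCs}, obtain the $E_6$ configuration in $\tilde C$ from \Cref{lemma:E6VCs}, move the latter into position using the simple-braid monodromy of \Cref{lemma:simpleinmono}, and then invoke \Cref{theorem:assemblages}. The topology check ($\chi = -12$, two boundary components) is fine.

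The gap is precisely the one you flag at the end, and the paper resolves it by a two-step maneuver that you should incorporate. Rather than seeking one braid in $\SBr(\tilde C)$ that produces the final picture in one go, the paper first chooses a five-holed boundary-adjacent sphere $B \subset \tilde C$ bounded by $a_1,a_3,a_5,a_7$ and containing the arcs $a_2\cap\tilde C,\ a_4\cap\tilde C,\ a_6\cap\tilde C$, and a second five-holed boundary-adjacent sphere $B'$ (same boundary components) chosen so that $b_1'',\dots,b_5''$ lie outside $B'$ while $b_6''\cap B'$ is a single arc separating $\{a_1,a_3,a_5,a_7\}$ into two pairs. By \Cref{lemma:sbdisktrans} there is $\beta\in\SBr(\tilde C)$ with $\beta B' = B$; lifting via \Cref{lemma:simpleinmono} and applying \Cref{lemma:againVC} gives vanishing cycles $b_i:=\tilde\beta(b_i'')$, $i\le 5$, disjoint from $B$ and hence from all $a_k$. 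At this point $\tilde\beta(b_6'')$ meets $B$ in a single arc, and the four-stranded braid group on $B$ (a subgroup of $\SBr(\tilde C)$) acts transitively on such arcs, so a further $\beta'$ supported on $B$ arranges $i(b_6,a_4)=1$ and $i(b_6,a_2)=i(b_6,a_6)=0$. Since $\beta'$ is supported on $B$ and $b_1,\dots,b_5$ lie outside $B$, the internal $E_6$ pattern is preserved. This is the missing mechanism; once you add it, your argument is complete and coincides with the paper's.
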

\begin{proof}
    To show the containment $\Mod(S_0)[\phi] \le \Gamma$, we will appeal to the framed mapping class group generation criterion, \Cref{theorem:assemblages}. To do this, we will construct vanishing cycles $a_1,\dots,a_7,b_1,\dots, b_6$ in the configuration shown in \Cref{fig:core}, which visibly form a $6$-assemblage of type $E$. This will be accomplished by combining the vanishing cycles in singularities of type $A_7$ and $E_6$, using the action of the simple braid group to control intersections between the two sets of vanishing cycles.

    Take $E = \tilde C \cup \tilde D$ as a basepoint, as in \Cref{conv}.
    By \Cref{lemma:A7VCs}, there are nodal degenerations based at $E$ with vanishing cycles $a_1, \dots, a_7$ in the $A_7$ configuration, with $a_1, a_3, a_5, a_7$ forming distinct boundary components of $\tilde C$ and $a_2, a_4, a_6$ each intersecting $\tilde C$ in a single arc connecting distinct boundary components.
    Separately, by \Cref{lemma:E6VCs}, there are nodal degenerations based at some $E' = \tilde C' \cup \tilde D$ with vanishing cycles $b_1', \dots, b_6' \subset \tilde C'$ forming the $E_6$ configuration. Let $\gamma \subset \tilde U_{L_1}(D)$ be a path connecting $E'$ and $E$. By \Cref{lemma:ptsprt}, under parallel transport along $\gamma$, the vanishing cycles $b_1', \dots, b_6'$ are transported to vanishing cycles $b''_1, \dots, b''_6 \subset \tilde C \subset E$. 

    Let $B \subset \tilde C$ be the five-holed boundary-adjacent sphere with boundary components $a_1, a_3, a_5, a_7$ and that contains the intersections of $a_1, a_3, a_5$ with $\tilde C$. Let $B' \subset \tilde C$ be a five-holed boundary-adjacent sphere with boundary components $a_1, a_3, a_5, a_7$ disjoint from $b''_1, \dots, b''_5$, and such that $b''_6 \cap B'$ is a single arc separating the four remaining boundary components into two sets of two.
    
    By \Cref{lemma:sbdisktrans}, there is $\beta \in \SBr(\tilde C)$ such that $\beta B' = B$, and by \Cref{lemma:simpleinmono}, this lifts to $\tilde \beta \in \Gamma_0[\tilde C]$ such that $\bar \res(\tilde \beta) = \beta$. Define
    \[
    b_1 := \tilde \beta(b''_1), \dots, b_5 := \tilde \beta (b''_5).
    \]
    By \Cref{lemma:againVC}, each $b_i$ is a vanishing cycle, and since $B'$ is disjoint from $b''_1,\dots, b''_5$, it follows that $B$ is disjoint from $b_1, \dots, b_5$; in particular, the vanishing cycles $a_1, \dots, a_7$ are disjoint from $b_1, \dots, b_5$.

    By this same logic, $\beta(b''_6)$ is a vanishing cycle that intersects $B$ in a single arc separating $a_1, a_3, a_5, a_7$ into two sets of two. Since the four-stranded braid group acts transitively on such arcs, the same is true of the simple braid group $\SBr(\tilde C)$, and so there is $\beta' \in \SBr(\tilde C)$ supported on $B$ such that 
    \[
    i(\beta' \beta(b''_6), a_2) = i(\beta' \beta(b''_6), a_6) = 0 \quad \mbox{and} \quad i(\beta' \beta(b''_6), a_4) = 1.
    \]
    Define
    \[
    b_6:= \beta' \beta(b''_6),
    \]
    and note that arguing as before, $b_6$ is a vanishing cycle. Then the vanishing cycles  $a_1, \dots, a_7, b_1, \dots, b_6$ form a $6$-assemblage of type $E$, and so by \Cref{theorem:assemblages}, the associated Dehn twists generate a framed mapping class group $\Mod(S_0)[\phi]$. Since each $c_i$ is a vanishing cycle, each associated $T_{c_i} \in \Gamma$, as was to be shown.
\end{proof}

        \begin{figure}[h]
\centering
		\labellist
        \small
        \pinlabel $B'$ at 195 120
        \pinlabel $S_0$ at 130 90
        \pinlabel $x$ at 372 100
        \pinlabel $y$ at 425 50
        \pinlabel $z$ at 372 50
        \pinlabel $w$ at 425 100
		\endlabellist
\includegraphics[width=\textwidth]{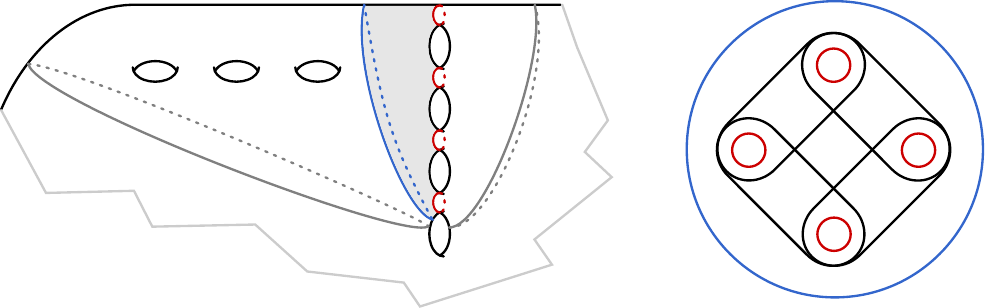}
\caption{At left, the five-holed boundary-adjacent sphere $B'$, as embedded in $S_0\subset E$, with $S_0 \subset E$ a subsurface as in \Cref{fig:core}. At right, an abstract view of a five-holed sphere depicting the curves $x,y,z,w$.}
\label{fig:5hole}
\end{figure}

We will make use of the following result in the next section.

\begin{corollary}\label{corollary:allBs}
   Let $B \subset \tilde C$ be a five-holed boundary-adjacent sphere. Then
    \[
    [\Mod(B),\Mod(B)] \le \Gamma[\tilde C],
    \]
    and also
    \[
    T_x T_y T_z^{-1} T_w^{-1} \in \Gamma[\tilde C],
    \]
    where $x,y,z,w$ are as shown in \Cref{fig:5hole}.
\end{corollary}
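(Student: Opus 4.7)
The plan is to first establish the corollary for the specific five-holed boundary-adjacent sphere $B' \subset S_0$ highlighted in \Cref{fig:5hole} (namely the one arising in the proof of \Cref{lemma:core}), and then transfer to an arbitrary $B \subset \tilde C$ by conjugating via an element of the simple braid group.

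For the containment $[\Mod(B'),\Mod(B')] \le \Gamma[\tilde C]$ when $B = B'$, the aim is to show $[\Mod(B'),\Mod(B')] \le \Mod(S_0)[\phi]$, whereupon \Cref{lemma:core} finishes the argument. The set of relative framings of $B'$ with boundary values prescribed by $\phi|_{\partial B'}$ forms a torsor over $H^1(B',\partial B';\Z) \cong \Z^4$. Because every element of $\Mod(B')$ fixes $\partial B'$ pointwise, it acts trivially on $H_1(B')$ (generated by the boundary components), and hence, via Poincar\'e--Lefschetz duality, trivially on $H^1(B',\partial B')$. Consequently the natural crossed homomorphism $\Mod(B') \to H^1(B',\partial B';\Z)$ encoding the action on the torsor of relative framings is genuinely a homomorphism, factoring through the abelianization. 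Any commutator $[f,g]$ therefore preserves $\phi|_{B'}$; as $[f,g]$ acts as the identity outside $B'$, it preserves $\phi$ on all of $S_0$. Thus $[\Mod(B'),\Mod(B')] \le \Mod(S_0)[\phi] \le \Gamma$, and since these mapping classes are supported on $B' \subset \tilde C$ and fix $\tilde D$ pointwise, the containment lands inside $\Gamma[\tilde C]$.

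For the element $T_xT_yT_z^{-1}T_w^{-1}$, the plan is to exhibit it as a commutator in $\Mod(B')$. Inspection of \Cref{fig:5hole} reveals an orientation-preserving symmetry of the abstract five-holed sphere taking the ordered pair $(x,y)$ to $(z,w)$; realizing such a symmetry by a mapping class $g \in \Mod(B')$, one obtains $gT_xT_yg^{-1} = T_zT_w$, which rearranges to
\[
T_xT_yT_z^{-1}T_w^{-1} = [T_xT_y,\,g] \in [\Mod(B'),\Mod(B')].
\]
By the previous paragraph, this element lies in $\Gamma[\tilde C]$.

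Finally, to pass to an arbitrary five-holed boundary-adjacent sphere $B \subset \tilde C$: by \Cref{lemma:sbdisktrans} there is $\beta \in \SBr(\tilde C)$ with $\beta B' = B$, and by \Cref{lemma:simpleinmono} this $\beta$ lifts to some $\tilde\beta \in \Gamma_0[\tilde C] \le \Gamma$ with $\bar\res(\tilde\beta) = \beta$. Conjugation by $\tilde\beta$ identifies $\Mod(B')$ (extended by identity) with $\Mod(B)$ as subgroups of $\Mod(E)$, and $\Gamma[\tilde C]$ is normal in $\Gamma_0[\tilde C]$ by \Cref{lemma:principalnormal}. Hence $[\Mod(B),\Mod(B)] = \tilde\beta[\Mod(B'),\Mod(B')]\tilde\beta^{-1} \le \Gamma[\tilde C]$, and the element $T_xT_yT_z^{-1}T_w^{-1}$ for $B$ is the $\tilde\beta$-conjugate of its $B'$-counterpart, hence also lies in $\Gamma[\tilde C]$. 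The only genuinely geometric step in the plan is producing the symmetry $g$ of the third paragraph; everything else is formal.
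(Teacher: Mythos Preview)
Your argument for $[\Mod(B'),\Mod(B')] \le \Gamma[\tilde C]$ is correct and in fact more conceptual than the paper's: where the paper verifies the claim on explicit generators $T_{T_{c'}(c)}T_c^{-1}$ via twist-linearity, you observe directly that the $\Mod(B')$-action on relative framings factors through the abelianization. The transfer from $B'$ to an arbitrary $B$ via \Cref{lemma:sbdisktrans}, \Cref{lemma:simpleinmono}, and \Cref{lemma:principalnormal} is identical to the paper's.

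The treatment of $T_xT_yT_z^{-1}T_w^{-1}$, however, has a genuine gap. The symmetry $g$ you invoke would have to permute boundary components of $B'$: from the description in the paper (``each of the pairs $x,y$ and $z,w$ together enclose the same four boundary components''), the curves $x$ and $z$ enclose \emph{different} pairs of $\Delta_i$'s. In a planar surface the isotopy class of a simple closed curve is determined by the partition of boundary components it induces, so no element of $\Mod(B')$---which by convention fixes $\partial B'$ pointwise---can carry $x$ to $z$. Worse, $T_xT_yT_z^{-1}T_w^{-1}$ is \emph{not} in $[\Mod(B'),\Mod(B')]$ at all: capping off $\partial_0$ and one $\Delta_i$ yields a homomorphism $\Mod(B') \to \Mod(\Sigma_0^3) \cong \Z^3$ under which this element has nonzero image. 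So no variant of the commutator approach can succeed.

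The paper instead handles this element by a direct winding-number computation. Since each $\Delta_i$ is admissible for $\phi$ (they appear among the vanishing cycles $a_1,a_3,a_5,a_7$), homological coherence gives $\phi(x)=\phi(y)=\phi(z)=\phi(w)=1$; combined with $[x]+[y]=[z]+[w]$ in homology, twist-linearity shows that $T_xT_y$ and $T_zT_w$ have identical effect on $\phi$. You will need some such explicit check for this second element.
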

\begin{proof}

    Let $S_0 \subset E$ be a subsurface as in \Cref{lemma:core}, so that $\Mod(S_0)[\phi] \le \Gamma$. Let $B' \subset S_0 \cap \tilde C$ be a five-holed boundary-adjacent sphere. We claim that $[\Mod(B'), \Mod(B')]$ and $T_x T_y T_z^{-1}T_w^{-1}$ as shown in \Cref{fig:5hole} are contained in $\Gamma[\tilde C]$. Each of these clearly is contained in $\Mod(\tilde C)$, so it remains to show they are contained in $\Gamma$. By \Cref{lemma:core}, it suffices to show that they preserve the framing $\phi$ on the subsurface $S_0$. 

    This will be a consequence of the basic properties of winding number functions. The group $\Mod(B')$ is generated by Dehn twists $T_c$, for boundary-adjacent curves $c$ enclosing some pair $\Delta_i, \Delta_j$ of boundary components of $B'$ (such $\Delta_i, \Delta_j$ are also boundary components of $\tilde C$). At the level of homology, when suitably oriented, $[c] = [\Delta_i]+ [\Delta_j]$, and by homological coherence (\Cref{lemma:WNFprops}.2), $\phi(c) = \phi(\Delta_i) + \phi(\Delta_j) + 1$. 
    
    The commutator subgroup $[\Mod(B'), \Mod(B')]$ is generated by elements of the form $[T_{c'}, T_{c}]$, or equivalently by elements $T_{T_{c'}(c)} T_c^{-1}$. But the curve $T_{c'}(c)$ {\em also} encloses the same $\Delta_i, \Delta_j$ as $c$. In particular, both its homology class and its winding number equal those of $c$. We conclude that $T_{T_{c'}(c)} T_c^{-1} \in \Mod(S_0)[\phi]$, as was to be shown. Similarly, with reference to \Cref{fig:5hole}, one sees that each of the pairs $x,y$ and $z,w$ together enclose the same four boundary components, and so by the above discussion have the same effect on winding numbers; we conclude $T_xT_y T_z^{-1} T_w^{-1} \in \Mod(S_0)[\phi]$ as well.

    Now let $B \subset \tilde C$ be any other five-holed boundary-adjacent sphere. By \Cref{lemma:sbdisktrans}, there is $\beta \in \SBr(\tilde C)$ such that $\beta B' = B$. By \Cref{lemma:simpleinmono}, there is $\tilde \beta \in \Gamma_0[\tilde C]$ such that $\bar \res (\tilde \beta) = \beta$, and by \Cref{lemma:principalnormal}, $\Gamma[\tilde C]$ is normalized by $\Gamma_0[\tilde C]$. We conclude that 
    \[
    [\Mod(B),\Mod(B)] = \tilde \beta [\Mod(B'), \Mod(B')] \tilde \beta^{-1} \le \Gamma[\tilde C],
    \]
    and a similar argument addresses the second kind of elements.
\end{proof}

\section{The main lemma}\label{S:mainlemma}

The goal of this section is to establish \Cref{lemma:main}. This will allow us to construct vanishing cycles with maximal control over the intersection with both halves $\tilde C, \tilde D$ of $E$, so long as this intersection is a single arc on each half. Once this has been established, it will quickly lead to the proof of \Cref{theorem:main} in \Cref{S:mainproof}.

\subsection{Statement of main results}
Here we state \Cref{lemma:main} and the key technical result \Cref{lemma:SBrSES} which implies it. We then give a proof of \Cref{lemma:main} contingent on \Cref{lemma:SBrSES}.

\begin{lemma}\label{lemma:main}
    Let $X$ be a smooth algebraic surface with $\pi_1(X) = 1$, let $L_1, L_2$ be line bundles on $X$ such that $L_1$ is $6$-jet ample and $L_2$ is very ample, let $D \in U_{L_2}$ be chosen, and let $E = \tilde C \cup \tilde D \in \tilde U_{L_1}(D) \subset U_{L_1 \otimes L_2}$ be a basepoint as in \Cref{conv}.
    
    Let $a \subset E$ be a simple closed curve such that $a \cap \tilde C$ and $a \cap \tilde D$ are both arcs with endpoints on boundary components $\Delta_i \ne \Delta_j$. Let $c \subset \tilde C$ be a boundary-adjacent curve (\Cref{def:boundaryadj}) enclosing $\Delta_i$ and some third boundary component $\Delta_k \ne \Delta_i, \Delta_j$, and such that $i(a,c) = 1$. Then there is $\ell \in \Z$ such that $T_c^\ell(a)$ is a vanishing cycle.
\end{lemma}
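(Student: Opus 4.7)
The plan is to leverage the freedom to choose the $\tilde C$-side arc of a vanishing cycle while keeping the $\tilde D$-side arc fixed. By Corollary \ref{corollary:Dsidecontrol}, there is a vanishing cycle $a_0 \subset E$ with $a_0 \cap \tilde D = a \cap \tilde D$. Set $\alpha := a \cap \tilde C$ and $\alpha_0 := a_0 \cap \tilde C$; these are arcs in $\tilde C$ from $\Delta_i$ to $\Delta_j$. It suffices to find $f \in \Gamma[\tilde C]$ and $\ell \in \Z$ such that $f(\alpha_0)$ is isotopic to $T_c^\ell(\alpha)$ in $\tilde C$ rel $\partial \tilde C$: indeed, since $f$ fixes $\tilde D$ pointwise by definition of $\Gamma[\tilde C]$, and $T_c$ is supported on $\tilde C$ and so also fixes $\tilde D$, the simple closed curves $f(a_0)$ and $T_c^\ell(a)$ agree on $\tilde D$ with $a \cap \tilde D$ and on $\tilde C$ by assumption, hence are equal. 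Corollary \ref{lemma:againVC} then ensures $f(a_0)$ is a vanishing cycle.

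The problem thus reduces to showing that $\Gamma[\tilde C]$ acts on the set of arcs in $\tilde C$ from $\Delta_i$ to $\Delta_j$ transitively modulo $\langle T_c \rangle$. By Lemma \ref{lemma:sbrarctrans}, the larger group $\PSBr(\tilde C)$ already acts transitively on these arcs, so there is $\beta \in \PSBr(\tilde C)$ with $\beta(\alpha_0) = \alpha$. What remains is to show that for some $\ell \in \Z$, one has $T_c^\ell \beta \in \bar\res(\Gamma[\tilde C])$, i.e., that the class of $\beta$ in $\PSBr(\tilde C) / \bar\res(\Gamma[\tilde C])$ is generated by the class of $T_c$. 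This is the content of the key technical result \Cref{lemma:SBrSES}, which identifies $\bar\res(\Gamma[\tilde C])$ as the kernel in $\PSBr(\tilde C)$ of a homomorphism recording the concomitant effect on configurations of intersection points on $D$. The hypothesis $\pi_1(X) = 1$ enters essentially here, ensuring that this obstruction takes values in a group small enough that the cokernel is generated by $T_c$ — with the hypothesis $i(a,c) = 1$ and the fact that $c$ encloses $\Delta_i$ but not $\Delta_j$ guaranteeing that $T_c$ acts faithfully on the relevant arc invariant.

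The main obstacle is establishing \Cref{lemma:SBrSES} itself. This requires combining the surface-braid description of the monodromy coming from the symmetric version of Lemma \ref{lemma:simpleinmono} (where $C$ is held fixed and $D$ is varied through $U_{L_2}(C)$, producing monodromy elements in $\Gamma_0[\tilde D] = \Gamma_0[\tilde C]$ whose $\tilde D$-side effect is a simple braid on $D$) with the concrete monodromy elements of $\Gamma[\tilde C]$ supplied by Corollary \ref{corollary:allBs}, and with a delicate surface-topology argument exploiting the vanishing of $\pi_1(X)$ to control when a simple braid on $\tilde C$ lifts to a loop of smoothings whose effect on $\tilde D$ is trivial. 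Once this structural identification is in place, the reduction above produces the required $f$ and $\ell$, completing the proof.
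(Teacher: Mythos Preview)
Your overall strategy matches the paper's: produce a vanishing cycle $a'$ with the same $\tilde D$-arc via \Cref{corollary:Dsidecontrol}, find $\beta \in \PSBr(\tilde C)$ taking $\alpha' := a' \cap \tilde C$ to $\alpha := a \cap \tilde C$ via \Cref{lemma:sbrarctrans}, and then try to correct $\beta$ so that it lies in $\Gamma[\tilde C]$ (after which \Cref{lemma:againVC} finishes). The invocation of \Cref{lemma:SBrSES} as the structural input is also right.

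There is, however, a genuine gap in your reduction. You assert that it suffices to show $T_c^\ell \beta \in \PSBr(\tilde C) \cap \Gamma[\tilde C]$ for some $\ell$, i.e.\ that the class of $\beta$ in $A = \PSBr(\tilde C)/(\PSBr(\tilde C) \cap \Gamma[\tilde C])$ lies in the cyclic subgroup generated by $T_c$. But \Cref{lemma:SBrSES} says $A$ is (an index-two subgroup of) $\Z^d$ with $d \ge 6$, and $\psi(T_c) = e_i + e_k$ spans only a rank-one subgroup. So for a generic $\beta$ this is simply false, and the argument as written does not close.

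What the paper actually does is exploit the \emph{stabilizer of the arc $\alpha$} inside $\PSBr(\tilde C)$: twists about boundary-adjacent curves in $\tilde C$ that are disjoint from $a$ fix $\alpha$ but have nontrivial image in $A$. Concretely, one multiplies $\beta$ on the left by suitable powers of twists $T_{c_m}$ (enclosing $\Delta_k$ and $\Delta_m$ for $m \notin \{i,j,k\}$, chosen disjoint from $a$), then by a power of $T_{c_4}T_{c_5}T_{c_{45}}^{-1}$, and finally by a power of $T_{c_{ij}}$ (the boundary-adjacent curve enclosing $\Delta_i,\Delta_j$ disjoint from $a$), to drive $\psi$ down to a multiple of $e_i + e_k = \psi(T_c)$. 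Only after this reduction does a power of $T_c$ suffice. Your proposal collapses this entire correction procedure into the single generator $T_c$, which cannot work; you need to supply the missing ``kill the other $e_m$-coordinates using arc-stabilizing twists'' step.
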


        \begin{figure}[h]
\centering
		\labellist
        \pinlabel $\tilde{C}$ at 100 20
        \pinlabel $\tilde{D}$ at 300 20
        \small
        \pinlabel $c$ at 190 180
        \pinlabel $a$ at 260 170
		\endlabellist
\includegraphics[scale=0.66]{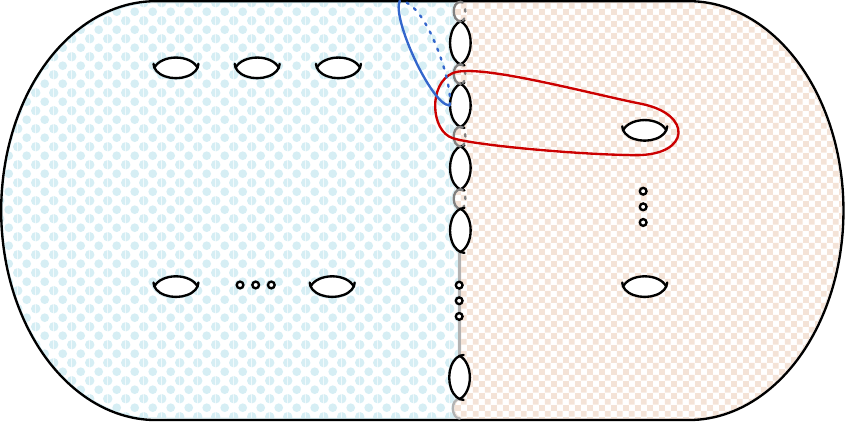}
\caption{The configuration of \Cref{lemma:main}.}
\end{figure}

\Cref{lemma:main} is a relatively straightforward consequence of the following result.

\begin{lemma}
    \label{lemma:SBrSES}
    Let $d \ge 3$, let $\Z\pair{e_1, \dots, e_d}$ be the free abelian group of rank $d$, and let $A$ be the subgroup of index two spanned by elements of the form $e_i + e_j$. Then there is a homomorphism $\psi: \PSBr(\tilde C) \to A$ satisfying $\psi(T_{c_{ij}}) = e_i + e_j$, where $c_{ij}$ is any boundary-adjacent curve enclosing $\Delta_i$ and $\Delta_j$, and the following sequence is exact:
    \[
    1 \to \PSBr(\tilde C) \cap \Gamma[\tilde C] \to \PSBr(\tilde C) \xrightarrow{\psi} A \to 1.
    \]
\end{lemma}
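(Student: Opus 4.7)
I begin with the observation that each boundary twist $T_{\Delta_k}$ lies in $\PSBr(\tilde C) \cap \Gamma[\tilde C]$: it is central in $\PBr(\tilde C)$ and lies in $\ker\lambda$ (so is a pure simple braid), and by \Cref{lemma:TDelta} it arises as a monodromy element realizable as a Dehn twist supported on the $\tilde C$-side of $\Delta_k$, hence trivial on $\tilde D$. Consequently any $\psi$ as in the statement must vanish on the central $\Z^d = \langle T_{\Delta_1}, \dots, T_{\Delta_d}\rangle$, so $\psi$ factors through the quotient $\PSBr_d(C) = \PSBr(\tilde C)/\Z^d$.

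To define $\psi$ on $\PSBr_d(C)$, I use the ambient embedding $\tilde C \subset X$ and the hypothesis $\pi_1(X) = 1$ essentially. For $\beta \in \PSBr_d(C)$ the $k$-th strand traces a loop $\gamma_k \subset C \setminus \bx \subset X$; since $X$ is simply connected I can pick a $2$-chain $\Sigma_k \subset X$ with $\partial \Sigma_k = \gamma_k$ transverse to $D$, and I set $\psi_k(\beta) := \Sigma_k \cdot D \in \Z$. The pure simple braid relation $\sum_k[\gamma_k] = 0$ in $H_1(C;\Z)$ lets me close $\sum_k \gamma_k$ by a $2$-chain $\Omega$ supported on $C$; the resulting closed cycle $\sum \Sigma_k - \Omega$ represents a class in $H_2(X;\Z)$ whose intersection with $D$ collectively constrains the $\Sigma_k\cdot D$. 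Careful bookkeeping (using $[C]\cdot[D] = d$ and the decomposition of $H_2(X;\Z)\cdot[D]$) organizes the residual indeterminacy so the tuple $(\psi_k(\beta))$ descends to a well-defined element of $A$, the even-coordinate-sum sublattice of $\Z^d$. The computation $\psi(T_{c_{ij}}) = e_i + e_j$ is then direct from the local model: strands $i$ and $j$ each trace a small loop around the other point, bounded by a disk meeting $D$ transversely at that point. Surjectivity follows at once, since the $e_i + e_j$ generate $A$.

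The inclusion $\PSBr(\tilde C) \cap \Gamma[\tilde C] \subseteq \ker\psi$ is the easy direction: if $\beta$ is the monodromy of a loop in $U_{L_1 \otimes L_2}$ restricting to the identity on $\tilde D$, then the intersection points $x_k(t) = C_t \cap D$ stay in arbitrarily small disks on $D$ throughout the loop, so each $\gamma_k$ bounds a $2$-chain $\Sigma_k$ confined to a neighborhood of $\tilde C$ disjoint from $D$, yielding $\psi_k(\beta) = 0$.

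The reverse inclusion $\ker\psi \subseteq \PSBr(\tilde C) \cap \Gamma[\tilde C]$ is the principal technical difficulty. Given $\beta \in \ker \psi$, apply \Cref{lemma:simpleinmono} to produce $\tilde\beta \in \Gamma_0[\tilde C]$ with $\bar\res(\tilde\beta) = \beta$ in $\Mod(C,\bx)$. The restriction of $\tilde\beta$ to $(D,\bx)$ is automatically a simple braid, since the divisors $C_t \cap D$ all lie in the fixed linear system $|L_2|_D|$, so the symmetric version of \Cref{lemma:simpleinmono} furnishes $\tilde\gamma \in \Gamma_0[\tilde C]$ that cancels this restriction on $(D,\bx)$ and can be chosen (using a loop inside $\tilde U_{L_2}(C)$ with trivial induced motion of the intersection points on $C$) to act trivially on $(C,\bx)$. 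The product $\tilde\beta\tilde\gamma$ then caps to the identity on both sides; the remaining discrepancy is a product of boundary twists on the $\tilde D$-side whose exponents are detected precisely by $\psi(\beta)$ via the Step~2 construction. The vanishing $\psi(\beta) = 0$ allows these residual twists to be cancelled by $T_{\Delta_k}^{\pm 1} \in \Gamma[\tilde C]$, producing the required element of $\Gamma[\tilde C]$ restricting to $\beta$ on $\tilde C$. The key technical obstacle is the Step~2 organization of the ambient indeterminacies into the lattice $A$, which is precisely where the hypothesis $\pi_1(X) = 1$ plays its essential role.
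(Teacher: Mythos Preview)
Your geometric construction of $\psi$ is not well-defined as a map to $A \subset \Z^d$. Each $\Sigma_k$ is determined only up to closed $2$-cycles in $X$, so $\psi_k(\beta) = \Sigma_k \cdot D$ is well-defined only modulo the nonzero subgroup $\{[Z]\cdot[D] : Z \in H_2(X;\Z)\}$ of $\Z$; the ``careful bookkeeping'' you allude to cannot remove this. More tellingly, the paper's \Cref{lemma:leftcol} shows that $\PSBr(\tilde C)\cap\Gamma[\tilde C]$ already surjects onto $\pi_1(C^d)_0$, so the desired $\psi$ must vanish on braids whose strands $\gamma_k$ are \emph{arbitrary} loops on $C$ --- any invariant built solely from the $\gamma_k$ viewed in $X$ is therefore the wrong object.

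Your argument for the hard inclusion $\ker\psi \subseteq \Gamma[\tilde C]$ has a separate fatal gap. The lift $\tilde\beta \in \Gamma_0[\tilde C]$ produced by \Cref{lemma:simpleinmono} comes from a loop in $\tilde U_{L_2}(C)$, along which $D_t$ varies through smooth sections of $L_2$. The restriction of $\tilde\beta$ to $(D,\bx)$ therefore carries the monodromy of this family in $\Mod(D)$, which is nontrivial: the proof of \Cref{lemma:simpleinmono} runs through a Lefschetz pencil in $\abs{L_2}$ and so produces genuine Dehn twists on $D$. Hence $\tilde\beta|_{(D,\bx)}$ need not be a braid at all, and your cancellation step cannot begin. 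The paper's route is entirely different: it defines $\psi$ \emph{a posteriori} as the quotient map to $A' := \PSBr(\tilde C)/(\PSBr(\tilde C)\cap\Gamma[\tilde C])$, so the kernel is correct by construction, and then identifies $A'$ by passing to the subgroup $K = \ker(\PBr(\tilde C)\to\pi_1(C^d))$. The essential inputs are \Cref{lemma:leftcol} (where $\pi_1(X)=1$ actually enters, via \Cref{lemma:VCsgenerate}) and the core calculation \Cref{corollary:allBs}, which supplies $[\Mod(B),\Mod(B)]\le\Gamma[\tilde C]$ to force $A'$ abelian and the relation $T_xT_yT_z^{-1}T_w^{-1}\in\Gamma[\tilde C]$ to force $A'$ to be a quotient of $A$.
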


\begin{proof}[Proof of \Cref{lemma:main}, assuming \Cref{lemma:SBrSES}]
    We first observe that since $L_1$ is $6$-jet ample, 
    \[
    d := C \cdot D \ge 6.
    \]
    By \Cref{corollary:Dsidecontrol}, there is a vanishing cycle $a'$ with $a' \cap \tilde D = a \cap \tilde D$. By \Cref{lemma:sbrarctrans}, there is $\beta_1 \in \PSBr(\tilde C) \le \Mod(\tilde C)$ such that $\beta_1 (a' \cap \tilde C) = a \cap \tilde C$, and hence $\beta_1 a' = a$. Set 
    \[
    \psi(\beta_1) = \sum_{i = 1}^d k_i e_i.
    \]
    For notational simplicity, assume without loss of generality that $a \cap \tilde C$ connects the boundary components $\Delta_1$ and $\Delta_2$, and that $c$ encloses $\Delta_2$ and $\Delta_3$. For $4 \le i \le d$, let $c_i \subset \tilde C$ be a curve enclosing $\Delta_3$ and $\Delta_i$ that is disjoint from $a$. Define
    \[
    \beta_2 = T_{c_4}^{-k_4} \dots T_{c_d}^{-k_d} \beta_1,
    \]
    and note that
    \[
    \psi(\beta_2) = k_1' e_1 + k_2'e_2 + k_3'e_3
    \]
    for some integers $k_i'$ such that $k_1' + k_2' + k_3' \equiv 0 \pmod 2$, and that
    \[
    \beta_2 a' = \beta_1 a' = a.
    \]
    Let $c_{45} \subset \tilde C$ be a boundary-adjacent curve enclosing $\Delta_4$ and $\Delta_5$ and disjoint from $a$. Next, define
    \[
    \beta_3 = (T_{c_4}T_{c_5}T_{c_{45}}^{-1})^{(k_2'- k_1'-k_3')/2}\beta_2,
    \]
    (the exponent is an integer) and note that
    \[
    \psi(\beta_3) = k_1' e_1 + k_2' e_2 + (k_2'-k_1') e_3,
    \]
    and that $\beta_3 a' = a$. Finally, let $c_{12} \subset \tilde C$ be the unique boundary-adjacent curve enclosing $\Delta_1$ and $\Delta_2$ and disjoint from $a$, and recall that $c$ is boundary adjacent, enclosing $\Delta_2$ and $\Delta_3$. Then define
    \[
    \beta = T_c^{k_1'-k_2'} T_{c_{12}}^{-k_1'}\beta_3.
    \]
    Since $\beta_3 a' = a$, it follows that $\beta a' = T_c^{k_1'-k_2'}a$.
    By construction, $\psi(\beta) = 0$, so that by \Cref{lemma:SBrSES}, $\beta \in \Gamma[\tilde C]$, and so $\beta a' = T_c^{\ell} a$ is a vanishing cycle for $\ell= k_1'-k_2'$, as claimed.
\end{proof}

\subsection{First steps: organizational plan, and an exact sequence}
The proof of \Cref{lemma:SBrSES} relies on a number of intermediate results. These are organized into the remaining five subsections. The major stepping-stone to \Cref{lemma:SBrSES} is \Cref{lemma:insteadK}, or perhaps the commutative diagram appearing in the proof thereof. This organizes the relationships between the various groups being considered here. With that in mind, the objective of the next two subsections is to show that each of the columns that will appear in this diagram is exact. The first of these is relatively straightforward.

Recalling the discussion of \Cref{S:braids}, define $\tilde \iota_*: \PBr(\tilde C) \to \pi_1(C^d)$ as the composition
\[
\PBr(\tilde C) = \tilde{\PBr_d}(C) \to \PBr_d(C) \xrightarrow{\iota_*} \pi_1(C^d),
\]
and then define the subgroups
\begin{equation}\label{eq:K}
    K:= \ker \tilde \iota_*
\end{equation}
and
\[
\pi_1(C^d)_0 := \ker \left( \pi_1(C^d) \to H_1(C;\Z)^{\oplus d} \xrightarrow{\sigma} H_1(C;\Z)\right).
\]

 \begin{lemma}\label{lemma:middlecol}
            There is an exact sequence $1 \to K \to \PSBr(\tilde C) \xrightarrow{\tilde \iota_*} \pi_1(C^d)_0 \to 1$.
             \end{lemma}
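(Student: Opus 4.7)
The plan is to observe that both $K$ and $\PSBr(\tilde C)$ are visibly kernels/preimages under the single map $\tilde \iota_*$, making the exactness at $K$ and in the middle essentially formal; the one substantive step is to check surjectivity onto $\pi_1(C^d)_0$, which in turn will follow from surjectivity of $\iota_*: \PBr_d(C) \to \pi_1(C^d)$ constructed by exhibiting enough point-push braids.

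First, I would unpack the definition of $\lambda$. By construction, $\lambda: \PBr(\tilde C) \to H_1(C;\Z)$ is the composition
\[
\PBr(\tilde C) \xrightarrow{\tilde \iota_*} \pi_1(C^d) \xrightarrow{\mathrm{ab}} H_1(C;\Z)^{\oplus d} \xrightarrow{\sigma} H_1(C;\Z),
\]
and $\pi_1(C^d)_0$ is by definition the kernel of $\sigma \circ \mathrm{ab}$. Consequently $\PSBr(\tilde C) = \ker \lambda = \tilde \iota_*^{-1}(\pi_1(C^d)_0)$. This gives ``for free'' that $\tilde \iota_*$ restricted to $\PSBr(\tilde C)$ takes values in $\pi_1(C^d)_0$, that its kernel is $K \subseteq \PSBr(\tilde C)$, and reduces the claim to showing that this restricted map is surjective.

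For surjectivity, it suffices to show that $\tilde \iota_*: \PBr(\tilde C) \to \pi_1(C^d)$ is surjective in its own right, since any preimage of an element of $\pi_1(C^d)_0$ automatically lies in $\PSBr(\tilde C)$. Using \Cref{lemma:brextension}, $\PBr(\tilde C) = \tilde{\PBr}_d(C)$ surjects onto $\PBr_d(C)$ with central kernel $\Z^d$, so it is enough to prove that $\iota_*: \PBr_d(C) \to \pi_1(C^d) = \prod_{i=1}^d \pi_1(C, x_i)$ is surjective. The right-hand side is generated by elements of the form $(1, \ldots, 1, \gamma, 1, \ldots, 1)$ with $\gamma \in \pi_1(C, x_i)$ in the $i$-th slot. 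Representing such $\gamma$ by a loop in $C$ based at $x_i$ that, by a general-position perturbation, avoids the remaining basepoints $x_j$ ($j \ne i$), the point-push of $x_i$ along $\gamma$ while keeping the other basepoints fixed defines a loop in $\PConf_d(C)$ whose image under $\iota_*$ is exactly $(1, \ldots, \gamma, \ldots, 1)$. These point-pushes therefore cover a generating set of $\pi_1(C^d)$, proving surjectivity.

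The main (and essentially only) obstacle is the surjectivity claim, and as sketched above this is standard once one writes down the correct point-push lifts; the rest of the argument is purely a diagram chase built around the factorization of $\lambda$ through $\tilde \iota_*$.
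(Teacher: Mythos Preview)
Your proof is correct and follows essentially the same approach as the paper's: both arguments reduce to the formal observation that $\lambda$ factors through $\tilde\iota_*$ (giving $K \le \PSBr(\tilde C)$ and exactness at the first two terms automatically), leaving only surjectivity of $\iota_*: \PBr_d(C) \to \pi_1(C^d)$ to check. The only cosmetic difference is that the paper deduces this surjectivity from Seifert--van Kampen applied to the inclusion of the hypersurface complement $\PConf_d(C) \hookrightarrow C^d$, whereas you exhibit explicit point-push lifts; both are standard and equally short.
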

             
      \begin{proof} 
      Since $\PConf_d(C)$ is obtained from $C^d$ by deletion of the fat diagonal hypersurface, it follows (e.g. by the Seifert-van Kampen theorem) that the induced map on fundamental group $\iota_*: \PBr_d(C) \to \pi_1(C^d)$ is surjective. As $\lambda: \PBr_d(C) \to H_1(C;\Z)$ factors through the maps $\pi_1(C^d) \to H_1(C;\Z)^{\oplus d} \xrightarrow{\sigma} H_1(C;\Z)$ defining $\pi_1(C^d)_0$, it follows that $\ker(\lambda) = \PSBr_d(C)$ surjects onto $\pi_1(C^d)_0$, and {\em a fortiori} also $\PSBr(\tilde C)$ surjects onto $\pi_1(C^d)_0$ as claimed. It remains only to see that $K \le \PSBr(\tilde C)$. This holds because $\lambda: \PBr(\tilde C) \to H_1(C;\Z)$ factors through $\iota_*: \PBr_d(C) \to \pi_1(C^d)$.
      \end{proof}

\subsection{Exact sequence two: braids in monodromy}
Our next major objective is \Cref{lemma:leftcol}, which shows that there is a similar surjection even when restricting to the intersection $\PSBr(\tilde C) \cap \Gamma[\tilde C]$. The key construction of suitable monodromy elements is given in \Cref{lemma:pi1cd0img}, and then \Cref{lemma:VCsgenerate} relates the simple-connectivity hypothesis $\pi_1(X) = 1$ to the structure of the set of vanishing cycles as elements of the fundamental group of the fiber.

To proceed, we must make a few comments about basepoints. 
               Let $\bx:= (x_1, \dots, x_d) \in \PConf_d(C) \subset C^d$ be a basepoint. Then there is an isomorphism
               \[
               \pi_1(C^d,\bx) \cong \prod_{i = 1}^d \pi_1(C, x_i). 
               \]
               Via this isomorphism, we view a pure braid $\beta \in \pi_1(\PConf_d(C), \bx)$ as giving rise to a $d$-tuple of elements $(\beta_1, \dots, \beta_d) \in \prod_{i=1}^d \pi_1(C, x_i)$ under $\iota_*$; the key point is that each $\beta_i$ is based at the corresponding basepoint $x_i$.

        \begin{figure}[h]
\centering
		\labellist
        \small
        \pinlabel $\tau$ at 142 300
        \pinlabel $\Delta_i$ at 172 300
        \pinlabel $\Delta_j$ at 172 240
        \pinlabel $\tau'$ at 142 246
        \pinlabel $a$ at 110 305
        \pinlabel $T_a(\sigma)$ at 342 250
        \pinlabel $\ul{a}_\tau$ at 346 125
        \pinlabel $\ul{a}_{\tau'}$ at 293 135
		\endlabellist
\includegraphics[width=\textwidth]{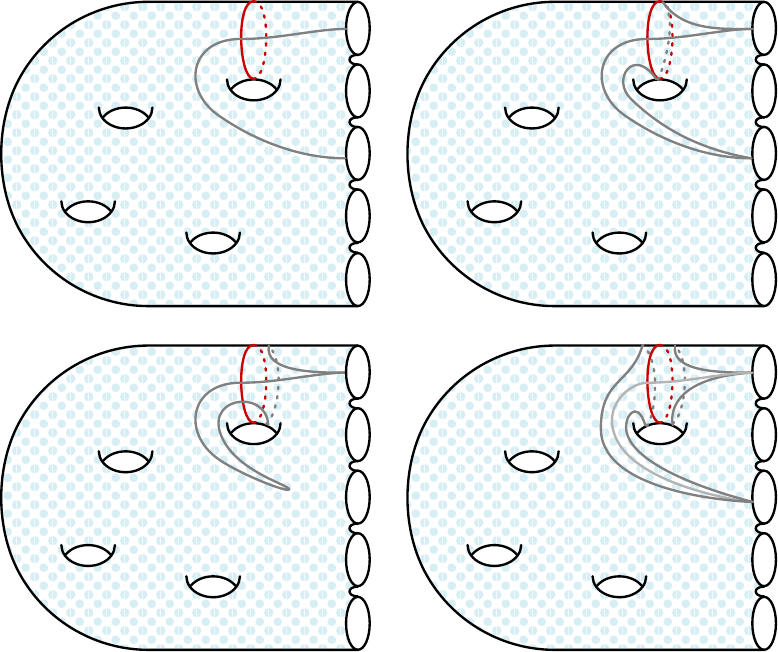}
\caption{Constructing a pair of point-push maps by commuting a simple half-twist and the Dehn twist about a vanishing cycle, as in \Cref{lemma:pi1cd0img}. The bottom-left panel shows how $[T_a,\sigma]$ is isotopic to a pair of point-push maps based at $\Delta_i$ and $\Delta_j$; for clarity, only one of the two ($\ul{a}_\tau$) is shown.}
\label{fig:tethertwist}
\end{figure}

\begin{lemma}
    \label{lemma:pi1cd0img}
  For any pair of indices $i \ne j$ and any based loop $\ul{a} \in \pi_1(C,x_i)$ in the conjugacy class of some vanishing cycle $a \subset C$, there is a based loop $\ul{a}' \in \pi_1(C,x_j)$ also in the conjugacy class of $a$, such that the element 
                \[
                (1, \dots, \ul{a}, \dots, \ul{a}'^{-1}, \dots, 1) \in \prod_{i = 1}^d \pi_1(C, x_i)
                \]
                (with all components other than $i$ and $j$ trivial) is contained in the image of $\Gamma[\tilde C]\cap \PSBr(\tilde C)$ under $\tilde \iota_*$.
\end{lemma}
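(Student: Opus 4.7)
The plan is to exhibit the desired element as $\tilde\iota_*([T_a,\tilde\sigma_\tau])$, where $T_a$ is the Dehn twist about the vanishing cycle $a \subset \tilde C$ and $\tilde\sigma_\tau \in \Gamma_0[\tilde C]$ is a monodromy lift of a simple half-twist $\sigma_\tau$ about a simple arc $\tau$ from $x_i$ to $x_j$ meeting $a$ transversely in one point. The underlying mechanism is the braid-group identity $[T_a,\sigma_\tau] = \sigma_{T_a(\tau)}\sigma_\tau^{-1}$, a pure braid that point-pushes $x_i,x_j$ along loops freely homotopic to $a$.

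For the lift: $\sigma_\tau^2 = T_{\sigma^+}$ with $\sigma^+$ boundary-adjacent, so $\lambda(\sigma_\tau^2) = 0$ by \Cref{lemma:boundaryadjacentsimple}, and torsion-freeness of $H_1(C;\Z)$ gives $\lambda(\sigma_\tau) = 0$, whence $\sigma_\tau \in \SBr_d(C)$; \Cref{lemma:simpleinmono} produces a lift $\tilde\sigma_\tau \in \Gamma_0[\tilde C]$. By Picard--Lefschetz, $T_a \in \Gamma_L$, and since $a$ is disjoint from $\tilde D$, $T_a \in \Gamma[\tilde C]$. By \Cref{lemma:principalnormal}, $[T_a,\tilde\sigma_\tau] \in \Gamma[\tilde C]$. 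The image $\bar\res([T_a,\tilde\sigma_\tau]) = [T_a,\sigma_\tau]$ is a pure braid in $\PBr_d(C)$: it projects trivially both to $S_d$ (both half-twists induce the same transposition) and to $\Mod(C)$ (half-twists die after forgetting marked points). Hence $[T_a,\tilde\sigma_\tau]$ restricts to $\PBr(\tilde C)$, and as a commutator it has trivial $\lambda$, placing it in $\PSBr(\tilde C)$.

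Tracing the strand motions of $\sigma_{T_a(\tau)}\sigma_\tau^{-1}$ shows that $\tilde\iota_*([T_a,\tilde\sigma_\tau]) = (1,\dots,\ul{a}_0,\dots,\ul{a}_0'^{-1},\dots,1)$ for specific $\ul{a}_0,\ul{a}_0'$ in the conjugacy class of $a$, depending on $\tau$. To match the prescribed $\ul{a} = g\ul{a}_0 g^{-1}$, I conjugate by a monodromy lift of a pure simple braid $\beta \in \PSBr_d(C)$ whose $i$-th strand represents $g$ and whose remaining strands are chosen so that $\lambda(\beta) = 0$; such $\beta$ exists because each $\pi_1(C,x_k) \twoheadrightarrow H_1(C;\Z)$, and lifts via \Cref{lemma:simpleinmono}. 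Since $\Gamma[\tilde C]$ is normal in $\Gamma_0[\tilde C]$ by \Cref{lemma:principalnormal} and $\PSBr(\tilde C)$ is defined intrinsically from $\Mod(\tilde C)$, the conjugate remains in $\Gamma[\tilde C] \cap \PSBr(\tilde C)$; on $\pi_1(C^d)$ the conjugation acts componentwise by the strands of $\beta$, sending strand $i$ to $g\ul{a}_0 g^{-1} = \ul{a}$ while leaving components with trivial entries unchanged, thereby yielding an element of the required form. The main obstacle is verifying that the conjugation machinery truly respects $\Gamma[\tilde C] \cap \PSBr(\tilde C)$, for which one invokes normality together with the intrinsic description of $\PSBr$ as a characteristic subgroup.
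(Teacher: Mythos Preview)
Your core mechanism is the same as the paper's: both construct the element as $[T_a,\tilde\sigma]$ for $\tilde\sigma$ a monodromy lift (via \Cref{lemma:simpleinmono}) of a half-twist along an arc crossing $a$ once, then invoke \Cref{lemma:principalnormal} for membership in $\Gamma[\tilde C]$. The difference is in how the prescribed $\ul a$ is achieved. The paper observes directly that every element of the conjugacy class of $a$ is of the form $\ul a_\tau$ for some simple \emph{tether} $\tau$ from $x_i$ to $a$ (a point-push argument), and then extends $\tau$ past $a$ to reach $x_j$; thus the arc $\sigma$ is chosen at the outset so that $[T_a,\sigma]$ already has $i$-th component equal to $\ul a$. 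This sidesteps your conjugation step entirely.

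Two imprecisions in your version are worth fixing. First, ``as a commutator it has trivial $\lambda$'' does not literally apply, since $T_a\notin\Br_d(C)$; the correct argument is that $\lambda$ is equivariant for the $\Mod(C,\bx)$-action on $H_1(C;\Z)$, so $\lambda(T_a\sigma_\tau T_a^{-1})=(T_a)_*\lambda(\sigma_\tau)=0$. Second, calling $\PSBr(\tilde C)$ a ``characteristic subgroup'' is not the right justification. What actually works is: the conjugate $\tilde\beta[T_a,\tilde\sigma_\tau]\tilde\beta^{-1}$ lies in $\Gamma[\tilde C]$ by normality, hence has a well-defined image under $\bar\res$, and that image equals $\beta[T_a,\sigma_\tau]\beta^{-1}\in\PSBr_d(C)$; since the kernel of $\Mod(\tilde C)\to\Mod(C,\bx)$ is generated by boundary twists (all of which lie in $\PSBr(\tilde C)$), the $\res$-image lands in $\PSBr(\tilde C)$. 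With these clarifications your argument is sound.
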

\begin{proof}
    We first discuss a construction called {\em tethering}. Let $(C,x)$ be a based surface, and let $a \subset C$ be an unbased simple closed curve endowed with a choice of orientation. Let $\tau$ be a simple arc on $C$ with one endpoint at $x$, the other at some point of $c$, and such that the interior is disjoint from $a$. The {\em tethering of $a$ by $\tau$} is the element $\ul{a}_\tau \in \pi_1(C, x)$ given by following $\tau$ from $x$ to $a$, then around $a$, then back along $\tau$.

    Conversely, let $\ul{a} \in \pi_1(C,x)$ be {\em any} element in the conjugacy class of $a$. We claim that $\ul{a} = \ul{a}_\tau$ for some tether $\tau$. To see this, note that certainly $\ul{a}$ is {\em conjugate} to some element of the form $\ul{a}_{\tau'}$, say $\ul{a} = g \ul{a}_{\tau'} g^{-1}$, for some $g \in \pi_1(C,x)$. But then $\ul{a}$ is realized as the image of $\ul{a}_{\tau'}$ under $g$ when viewing $g$ as a point-pushing diffeomorphism of $(C,x)$, showing that $\ul{a} = \ul{a}_{g(\tau')}$.

    Following this discussion, we show how to construct elements of the form $(1, \dots, \ul{a}_{\tau}, \dots, \ul{a}_{\tau'}^{-1}, \dots, 1)$ for an arbitrary tether $\tau$. Given $\tau$ a tether connecting $x_i$ to $a$, let $\tau'$ be a tether continuing from $\tau \cap a$ to $x_j$, that does not cross $a \cup \tau$. Let the union of $\tau$ and $\tau'$ be $\sigma$; by abuse of notation, we let the half-twist along this arc, an element of $\SBr_d(C)$, also be denoted $\sigma$. As shown in \Cref{fig:tethertwist}, the commutator $[T_a, \sigma] = T_a(\sigma) \sigma^{-1}$ is isotopic to a pair of point-pushes:
    \[
    \tilde \iota_*([T_a,\sigma]) = (1, \dots, \ul{a}_{\tau}, \dots, \ul{a}_{\tau'}^{-1}, \dots, 1).
    \]
    On the other hand, $T_a \in \Gamma[\tilde C]$, and by \Cref{lemma:simpleinmono}, there is an element $\tilde \sigma \in \Gamma_0[\tilde C]$ such that $\bar \res (\tilde \sigma) = \sigma$. By \Cref{lemma:principalnormal}, $[T_a,\tilde \sigma] \in \Gamma[\tilde C]$; this completes the argument.
\end{proof}

\begin{lemma}
    \label{lemma:VCsgenerate}
    Let $X$ be a smooth projective algebraic surface, and let $L$ be a very ample line bundle on $X$; let $C$ be a smooth section. Then there is an isomorphism
    \[
    \pi_1(X) \cong \pi_1(C)/\ng{a \subset C \mbox{ vanishing cycle}},
    \]
    describing $\pi_1(X)$ as the quotient of $\pi_1(C)$ by the subgroup generated by the conjugacy classes of all vanishing cycles in $\abs{L}$. In particular, if $\pi_1(X) = 1$, then $\pi_1(C)$ is generated by the conjugacy classes of vanishing cycles in $\abs{L}$.
\end{lemma}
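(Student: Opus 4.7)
The plan is to combine the Lefschetz hyperplane theorem with a classical van Kampen analysis of a Lefschetz pencil in $\abs{L}$. First I would establish the easy direction, that every vanishing cycle is nullhomotopic in $X$. Given a nodal degeneration $\varphi: \disk \to H^0(X;L)$ with vanishing cycle $a \subset C$, the associated \emph{vanishing thimble}---the union over $t \in [0,1]$ of the vanishing cycles in the intermediate fibers $C_t$, shrinking to the nodal point as $t \to 0$---is a smoothly embedded $2$-disk in $X$ bounded by $a$. Hence $a$ dies under $i_*: \pi_1(C) \to \pi_1(X)$. Since $L$ is very ample, the Lefschetz hyperplane theorem applied to the pair $(X,C)$ (with $\dim X = 2$) makes $i_*$ surjective, yielding a surjection
\[
q: \pi_1(C)/\ng{a \subset C \text{ a vanishing cycle}} \twoheadrightarrow \pi_1(X).
\]

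For the reverse direction I would fix a generic Lefschetz pencil $\{C_t\}_{t \in \CP^1} \subset \abs{L}$ with reduced finite base locus $B$ and only nodal singular members; such a pencil exists because $L$ is very ample. Blowing up $B$ yields a Lefschetz fibration $f: \tilde{X} \to \CP^1$, with $\pi_1(\tilde X) = \pi_1(X)$ since blowing up a finite set does not alter the fundamental group. Let $t_1,\dots,t_n \in \CP^1$ be the critical values, $U = \CP^1 \setminus \{t_1,\dots,t_n\}$, and fix $t_0 \in U$ so that $C = f^{-1}(t_0)$ is a chosen smooth fiber. By Ehresmann's theorem, $f$ restricts to a smooth fiber bundle over $U$ with fiber $C$. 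Since $U$ is an open Riemann surface, $\pi_2(U) = 0$, so the long exact homotopy sequence degenerates to
\[
1 \to \pi_1(C) \to \pi_1(f^{-1}(U)) \to \pi_1(U) \to 1.
\]

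The heart of the argument is to recover $\pi_1(\tilde X)$ by reattaching neighborhoods of the singular fibers. Choose small disks $D_i \subset \CP^1$ centered at $t_i$ and set $V_i = f^{-1}(D_i)$. Standard local analysis of a Lefschetz singularity shows that $V_i$ deformation retracts onto the nodal fiber $C_{t_i}$, which is topologically obtained by collapsing the vanishing cycle $a_i \subset C$ to a point; hence $\pi_1(V_i) = \pi_1(C)/\ng{a_i}$, and under this identification the inclusion of the smooth fiber induces the natural quotient. Moreover, the meridian $\gamma_i \in \pi_1(U)$ around $t_i$ bounds a disk in $D_i \subset V_i$ and so dies there. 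Iterated van Kampen applied to the decomposition $\tilde X = f^{-1}(U) \cup V_1 \cup \dots \cup V_n$ therefore introduces exactly the relations $\gamma_i = 1$ and $a_i = 1$ for each $i$. Since $\CP^1$ is simply connected, the meridians $\gamma_1, \dots, \gamma_n$ normally generate $\pi_1(U)$, so killing all of them collapses the extension down to the image of $\pi_1(C)$; then killing the $a_i$ yields
\[
\pi_1(X) = \pi_1(\tilde X) = \pi_1(C)/\ng{a_1,\dots,a_n}.
\]
As each $a_i$ is itself a vanishing cycle in $\abs{L}$, this quotient surjects onto $\pi_1(C)/\ng{\text{all vanishing cycles}}$, which further surjects onto $\pi_1(X)$ via $q$. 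The composition is the identity, forcing both surjections to be isomorphisms.

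The main obstacle will be the bookkeeping in the iterated van Kampen step: one must verify that attaching each $V_i$ introduces no hidden relations beyond $\gamma_i = 1$ and $a_i = 1$, in particular confirming that the monodromy extension $\pi_1(C) \rtimes \pi_1(U)$ interacts cleanly with the kernel of $\pi_1(W_i) \to \pi_1(V_i)$, where $W_i = V_i \setminus C_{t_i}$. This is the classical content of Lefschetz's original theorem on pencils and can be extracted from standard Zariski--van Kampen style references.
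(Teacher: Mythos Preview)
Your proposal is correct and follows essentially the same approach as the paper's proof: both observe that vanishing cycles are nullhomotopic in $X$, pass to a Lefschetz pencil, blow up the base locus, and apply Seifert--van Kampen to the decomposition into the smooth locus and neighborhoods of singular fibers. Your write-up is in fact a careful fleshing-out of what the paper only sketches, and your final squeeze argument (quotient by finitely many pencil vanishing cycles surjects onto quotient by all vanishing cycles, which surjects onto $\pi_1(X)$, with composite the identity) is a clean way to close the loop.
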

\begin{proof}
This is a basic structural result in the theory of Lefschetz fibrations; we sketch a proof. The key point is that by definition, every vanishing cycle $a \subset C$ is freely homotopic to a point (the singular point of the associated nodal fiber) in $X$, so that the inclusion map $\iota_*: \pi_1(C) \to \pi_1(X)$ factors through the quotient $\pi_1(C)/\ng{a \subset C \mbox{ vanishing cycle}}$. An appeal to the Seifert-van Kampen theorem (after realizing $X$ as the total space of a Lefschetz pencil, blowing up the base locus, and deleting neighborhoods of the singular fibers) then shows that $\iota_*$ is a surjection and that $\ker(\iota_*)$ is precisely the normal subgroup generated by the conjugacy classes of vanishing cycles.
\end{proof}

        \begin{lemma}\label{lemma:leftcol}
            For $d \ge 3$, there is a surjection
            \[
            \PSBr(\tilde C)\cap \Gamma[\tilde C] \onto \pi_1(C^d)_0,
            \]
            and hence an exact sequence $1 \to K\cap \Gamma[\tilde C] \to \PSBr(\tilde C)\cap \Gamma[\tilde C] \to \pi_1(C^d)_0 \to 1$.
             \end{lemma}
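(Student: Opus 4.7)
The plan is to begin by observing that the containment $\tilde \iota_*\bigl(\PSBr(\tilde C) \cap \Gamma[\tilde C]\bigr) \subseteq \pi_1(C^d)_0$ is automatic, since by definition $\PSBr(\tilde C) = \ker \lambda$ and $\lambda$ factors as $\PBr(\tilde C) \xrightarrow{\tilde\iota_*} \pi_1(C^d) \to H_1(C;\Z)^{\oplus d} \xrightarrow{\sigma} H_1(C;\Z)$. Given this, the full exact sequence in the statement will follow once we establish that $\tilde\iota_*\colon \PSBr(\tilde C)\cap \Gamma[\tilde C] \to \pi_1(C^d)_0$ is surjective, since $K \le \PSBr(\tilde C)$ by Lemma \ref{lemma:middlecol}, so the kernel of this restriction is automatically $K \cap \Gamma[\tilde C]$.

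Let $H \le \pi_1(C^d)_0$ denote the image. The strategy is to show separately that $H$ maps onto $\ker(\sigma)$ and that $H$ contains $\prod_{i=1}^d [\pi_1(C,x_i),\pi_1(C,x_i)]$, the kernel of the natural map $\pi_1(C^d)_0 \to H_1(C;\Z)^{\oplus d}$. For the first containment, Lemma \ref{lemma:pi1cd0img} provides elements of $H$ mapping in $H_1(C;\Z)^{\oplus d}$ to $(0,\dots,[a],\dots,-[a],\dots,0)$ for any vanishing cycle $a$ and any pair of indices $i \ne j$. By Lemma \ref{lemma:VCsgenerate}, the hypothesis $\pi_1(X)=1$ implies that the classes of vanishing cycles generate $H_1(C;\Z)$, so these elements span $\ker(\sigma)$.

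For the commutator containment, I will use a coordinate-separation trick, which is the main place the hypothesis $d\ge 3$ enters. Given $g,h \in \pi_1(C,x_1)$, use Lemma \ref{lemma:VCsgenerate} to write $g = \prod_k \ul a_k^{\epsilon_k}$ and $h = \prod_\ell \ul b_\ell^{\delta_\ell}$ as words in based loops in conjugacy classes of vanishing cycles. Applying Lemma \ref{lemma:pi1cd0img} with target index $j=2$ for each factor of $g$, assemble $U \in \PSBr(\tilde C) \cap \Gamma[\tilde C]$ with $\tilde\iota_*(U) = (g, g^*, 1, 1,\dots,1)$ for some $g^* \in \pi_1(C,x_2)$; similarly with target index $j=3$ for the factors of $h$, assemble $V$ with $\tilde\iota_*(V) = (h, 1, h^*, 1, \dots,1)$. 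Since the factors of $\pi_1(C^d) \cong \prod_i \pi_1(C,x_i)$ commute, the commutator $[\tilde\iota_*(U),\tilde\iota_*(V)]$ evaluates coordinate-wise and equals $([g,h],1,1,\dots,1)$. By the symmetric argument for any coordinate $i$, $H$ contains $\prod_i [\pi_1(C,x_i),\pi_1(C,x_i)]$, and combined with surjectivity onto $\ker(\sigma)$ this gives $H = \pi_1(C^d)_0$. The main subtlety I anticipate is precisely this coordinate-separation step: one must verify that the ``disposal'' factors $g^*$ and $h^*$ produced by Lemma \ref{lemma:pi1cd0img} can be routed into disjoint auxiliary coordinates, which is exactly why $d \ge 3$ is needed (so that positions $2$ and $3$ are both available and distinct from position $1$).
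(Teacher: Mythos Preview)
Your proposal is correct and follows essentially the same approach as the paper's proof: both rely on \Cref{lemma:VCsgenerate} to know that vanishing cycles generate $\pi_1(C)$, on \Cref{lemma:pi1cd0img} to produce elements in the image with controlled coordinate and uncontrolled ``disposal'' coordinate, and on the same $d \ge 3$ coordinate-separation trick to kill the disposal factors when taking commutators. The only difference is organizational: the paper first matches the first $d-1$ coordinates exactly (dumping all disposal into coordinate $d$) and then handles commutators in that single coordinate, whereas you pass through the short exact sequence $1 \to \prod_i [\pi_1(C,x_i),\pi_1(C,x_i)] \to \pi_1(C^d)_0 \to \ker(\sigma) \to 1$ and treat the abelian quotient and the commutator kernel separately.
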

             
           \begin{proof}
    Let $g = (g_1, \dots, g_d) \in \pi_1(C^d)_0 \le \prod_{i=1}^d \pi_1(C, x_i)$ be arbitrary. Since $\pi_1(X) = 1$, it follows from \Cref{lemma:VCsgenerate} that $\pi_1(C)$ (for any basepoint) is generated by the conjugacy classes of the vanishing cycles on $C$ associated to the linear system $L_1$. Thus by \Cref{lemma:pi1cd0img}, for $i = 1, \dots, d-1$, there are elements $\alpha_1, \dots, \alpha_{d-1} \in \PSBr(\tilde C)\cap\Gamma[\tilde C]$ such that 
                 \[
                 \tilde \iota_*(\alpha_i) = (1, \dots, g_i, \dots, g_i'^{-1}),
                 \]
                 with $g_i$ appearing in the $i^{th}$ factor, $g_i' \in \pi_1(C,x_d)$ in the conjugacy class of $g_i$, and all other entries trivial. Then
                 \[
                 \tilde \iota_*(\alpha_1\dots \alpha_{d-1}) = (g_1, \dots, g_{d-1}, g_{d}'),
                 \]
                 with $g_d' \in \pi_1(C, x_d)$ satisfying $[g_d'] = [g_d] \in H_1(C;\Z)$.

                 Thus to finish the argument, it suffices to show that $\tilde \iota_*(\PSBr(\tilde C)\cap\Gamma[\tilde C])$ contains the subgroup 
                 \[
                 (1, \dots, 1, [\pi_1(C,x_d), \pi_1(C,x_d)]).
                 \]
                 Since $\pi_1(C,x_d)$ is generated by the conjugacy classes of vanishing cycles, $[\pi_1(C,x_d), \pi_1(C,x_d)]$ is generated by elements of the form $[\ul{a},\ul{b}]$ for $\ul{a}, \ul{b} \in \pi_1(C,x_d)$ freely homotopic to vanishing cycles. By \Cref{lemma:pi1cd0img}, since $d \ge 3$, there are elements $\alpha, \beta \in \PSBr(\tilde C)\cap\Gamma[\tilde C]$ such that
                 \[
                 \tilde \iota_*(\alpha) = (\ul{a}'^{-1}, 1 \dots, 1, \ul{a}) \quad \mbox{and} \quad \tilde \iota_*(\beta) = (1,\ul{b}'^{-1}, 1 \dots, 1, \ul{b});
                 \]
                 then
                 \[
                 \tilde \iota_*([\alpha, \beta]) = (1, \dots, 1, [\ul{a}, \ul{b}])
                 \]
                 as required.
           \end{proof}

\subsection{The key diagram} As discussed above, the commutative diagram appearing in the proof of \Cref{lemma:insteadK} below summarizes the structural relationships between everything needed to prove \Cref{lemma:SBrSES}. This will require one preparatory result.
           
        \begin{lemma}\label{lemma:psbrnormal}
            The intersection $\PSBr(\tilde C) \cap \Gamma[\tilde C]$ is normal in $\PSBr(\tilde C)$.
        \end{lemma}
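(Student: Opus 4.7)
Take $\beta \in \PSBr(\tilde C)$ and $\alpha \in \PSBr(\tilde C) \cap \Gamma[\tilde C]$; I need $\beta\alpha\beta^{-1} \in \Gamma[\tilde C]$, as $\PSBr(\tilde C)$ is itself a group so the other containment is free. Throughout, I silently use the two embeddings into $\Mod(E)$: the pure braid group $\PSBr(\tilde C) \le \Mod(\tilde C)$ extends by the identity on $\tilde D$, and $\Gamma[\tilde C] \le \Gamma_L$ sits inside $\Mod(E)$ by definition. The intersection is then formed inside $\Mod(E)$.

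First, I promote $\beta$ to an element of the monodromy. Let $\bar\beta \in \PSBr_d(C) \le \Mod(C,\bx)$ denote the image of $\beta$ under the capping map $\Mod(\tilde C) \onto \Mod(C,\bx)$. By \Cref{lemma:simpleinmono}, the composition $\bar\res \circ \rho_C$ surjects onto $\SBr_d(C) \supseteq \PSBr_d(C)$, so there is some $\tilde\beta \in \Gamma_0[\tilde C]$ with $\bar\res(\tilde\beta) = \bar\beta$. The restriction of $\tilde\beta$ to $\tilde C$, regarded as an element of $\Mod(\tilde C)$ by choosing any lift across the quotient that forgets pointwise-fixing of the boundary, agrees with $\beta$ modulo the subgroup of $\Mod(\tilde C)$ generated by the boundary twists $T_{\Delta_i}$ (this is exactly the kernel of $\Mod(\tilde C) \onto \Mod(C,\bx)$). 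Since each $T_{\Delta_i}$ lies in $\Gamma[\tilde C] \le \Gamma_0[\tilde C]$ by \Cref{lemma:TDelta}, I may multiply $\tilde\beta$ by an appropriate product of boundary twists while staying inside $\Gamma_0[\tilde C]$, so henceforth I assume $\tilde\beta|_{\tilde C} = \beta$ in $\Mod(\tilde C)$.

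Next, conjugate within the stabilizer. Since $\alpha \in \Gamma[\tilde C]$ and $\tilde\beta \in \Gamma_0[\tilde C]$, \Cref{lemma:principalnormal} yields
\[
\tilde\beta \,\alpha\, \tilde\beta^{-1} \in \Gamma[\tilde C].
\]
It remains to identify this element, viewed inside $\Mod(E)$, with $\beta\alpha\beta^{-1}$. On $\tilde D$, the element $\alpha$ acts trivially, so $\tilde\beta\alpha\tilde\beta^{-1}$ acts trivially on $\tilde D$ as well, matching the action of $\beta\alpha\beta^{-1}$ there (which is trivial since $\beta$ is supported on $\tilde C$). On $\tilde C$, applying the injection $\res: \Gamma[\tilde C] \into \Mod(\tilde C)$, the conjugate maps to $\tilde\beta|_{\tilde C} \cdot \res(\alpha) \cdot (\tilde\beta|_{\tilde C})^{-1} = \beta\,\res(\alpha)\,\beta^{-1}$, which is exactly the action of $\beta\alpha\beta^{-1}$. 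Thus $\beta\alpha\beta^{-1}$ and $\tilde\beta\alpha\tilde\beta^{-1}$ coincide as elements of $\Mod(E)$, so $\beta\alpha\beta^{-1} \in \Gamma[\tilde C]$, as desired.

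The main technical care is bookkeeping rather than difficulty: $\beta$ lives in $\Mod(\tilde C)$ and need not itself lie in the monodromy, while $\tilde\beta$ lives in $\Gamma_0[\tilde C]$ and may act nontrivially on $\tilde D$. The two pieces of ambiguity are reconciled by noting (a) boundary Dehn twists $T_{\Delta_i}$ are central in $\Mod(\tilde C)$ and lie in $\Gamma[\tilde C]$, so modifying $\tilde\beta$ by them is harmless, and (b) the action of $\tilde\beta$ on $\tilde D$ is killed by conjugating something ($\alpha$) that is already trivial on $\tilde D$.
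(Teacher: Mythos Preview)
Your proof is correct and follows essentially the same strategy as the paper's: lift $\beta \in \PSBr(\tilde C)$ to an element $\tilde\beta \in \Gamma_0[\tilde C]$ via \Cref{lemma:simpleinmono} and \Cref{lemma:TDelta}, then invoke \Cref{lemma:principalnormal} to conjugate. The paper packages this by constructing a subgroup $G'' \le \Gamma_0[\tilde C]$ whose restriction surjects onto $\PSBr(\tilde C)$, whereas you work element by element, but the content is the same. One small point of phrasing: since $\tilde\beta$ only fixes each $\Delta_i$ setwise, $\tilde\beta|_{\tilde C}$ naturally lives in the group with boundary fixed setwise rather than in $\Mod(\tilde C)$; your argument still goes through precisely because, as you note in (a), boundary twists are central in $\Mod(\tilde C)$, so any choice of lift yields the same conjugate.
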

        \begin{proof}
          By \Cref{lemma:simpleinmono}, there is a subgroup $G \le \Gamma_0[\tilde C]$ such that $\bar \res(G) = \SBr_d(C) \le \Mod(C, \bx)$. Let $G' \le G$ be the subgroup that fixes each boundary component $\Delta_i \in \partial \tilde C$. By \Cref{lemma:TDelta}, each $\Delta_i$ is a vanishing cycle; let $G'' \le \Gamma_0[\tilde C]$ be the subgroup generated by $G'$ and the twists $T_{\Delta_i}$ for $i = 1,\dots, d$. Since $G''$ preserves each $\Delta_i$, there is a restriction map $\res: G'' \to \Mod(\tilde C)$, and by construction, $\res(G'') = \PSBr(\tilde C)$.  By \Cref{lemma:principalnormal}, the principal stabilizer $\Gamma[\tilde C] \normal \Gamma_0[\tilde C]$ is normal in $\Gamma_0[\tilde C]$; the claim follows.
        \end{proof}
    
        Following \Cref{lemma:psbrnormal}, define 
        \[
        A' := \PSBr(\tilde C) / (\PSBr(\tilde C) \cap \Gamma[\tilde C]).
        \]

      \begin{lemma}\label{lemma:insteadK}
            There is an exact sequence $1 \to K \cap \Gamma [\tilde C] \to K \to A' \to 1$.
        \end{lemma}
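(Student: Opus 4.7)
The plan is to deduce the claimed exact sequence by a short diagram chase from \Cref{lemma:middlecol} and \Cref{lemma:leftcol}, which give the two horizontal rows of the commutative diagram implicitly set up in the statement. First I would record the (very minor) fact that $K \subseteq \PSBr(\tilde C)$: since $\lambda \colon \PBr(\tilde C) \to H_1(C;\Z)$ factors through $\tilde \iota_*$ (this was noted at the end of the proof of \Cref{lemma:middlecol}), any element of $K = \ker \tilde \iota_*$ lies in $\ker \lambda = \PSBr(\tilde C)$. By \Cref{lemma:psbrnormal}, $A' = \PSBr(\tilde C)/(\PSBr(\tilde C) \cap \Gamma[\tilde C])$ is a well-defined group, and we may form the composition
\[
\varphi \colon K \hookrightarrow \PSBr(\tilde C) \twoheadrightarrow A'.
\]

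The kernel of $\varphi$ is immediate: since $K$ is already contained in $\PSBr(\tilde C)$, one has
\[
\ker \varphi = K \cap \bigl(\PSBr(\tilde C) \cap \Gamma[\tilde C]\bigr) = K \cap \Gamma[\tilde C],
\]
matching the leftmost term of the desired exact sequence. In particular $K \cap \Gamma[\tilde C]$ is normal in $K$.

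For surjectivity of $\varphi$, let $[g] \in A'$ be represented by some $g \in \PSBr(\tilde C)$, and consider its image $\tilde \iota_*(g) \in \pi_1(C^d)_0$. By \Cref{lemma:leftcol}, the restriction of $\tilde \iota_*$ to $\PSBr(\tilde C) \cap \Gamma[\tilde C]$ is surjective onto $\pi_1(C^d)_0$, so there exists $g' \in \PSBr(\tilde C) \cap \Gamma[\tilde C]$ with $\tilde \iota_*(g') = \tilde \iota_*(g)$. Then $g (g')^{-1} \in \ker \tilde \iota_* = K$, and since $g' \in \Gamma[\tilde C]$, the elements $g$ and $g(g')^{-1}$ determine the same coset in $A'$. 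Hence $[g] = \varphi(g(g')^{-1})$ is in the image of $\varphi$, which completes the proof.

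I do not anticipate any real obstacle: the content of the lemma is entirely formal once the two short exact sequences of \Cref{lemma:middlecol} and \Cref{lemma:leftcol} are in hand, the key point being that the two surjections onto $\pi_1(C^d)_0$ agree. All the genuinely new input---in particular the use of $\pi_1(X) = 1$ via \Cref{lemma:VCsgenerate}---was already consumed in the proof of \Cref{lemma:leftcol}.
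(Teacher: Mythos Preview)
Your proof is correct and is essentially the same argument as the paper's: the paper arranges \Cref{lemma:middlecol}, \Cref{lemma:leftcol}, and the definition of $A'$ into a commutative diagram with exact rows and columns and then invokes a diagram chase, whereas you have written out that diagram chase explicitly. The content and the inputs used are identical.
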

        \begin{proof}
                The sequence in question appears as the top row of the following commutative diagram:
                    \[
    \xymatrix{
                    & 1 \ar[d]                                  &   1 \ar[d]                                &                                   &   \\
        1 \ar[r]    & K \cap \Gamma[\tilde C] \ar[r] \ar[d]     & K \ar[r] \ar[d]      & A' \ar@{=}[d] \ar[r]  & 1 \\
        1 \ar[r]    & \PSBr(\tilde C) \cap \Gamma[\tilde C] \ar[r] \ar[d]            & \PSBr(\tilde C) \ar[r] \ar[d]             & A'  \ar[r]            & 1 \\
                    & \pi_1(C^d)_0 \ar@{=}[r] \ar[d]            & \pi_1(C^d)_0 \ar[d]                       &                                   &   \\
                    & 1                                         & 1                                         &                                   &
    }
    \]
        The remaining rows and columns are all exact: the middle row is exact by definition, the left column is exact by \Cref{lemma:leftcol}, and the middle is exact by \Cref{lemma:middlecol}. Exactness of the top row now follows from a diagram chase.
        \end{proof}

\subsection{A generating set for $\PSBr(\tilde C)$} The content of \Cref{lemma:SBrSES} can at this point be summarized by the assertion that the quotient $A'$ of $\PSBr(\tilde C)$ is abelian, and that there is a specific formula for the quotient map. To establish this, we will work with a particularly convenient generating set for $\PSBr(\tilde C)$.

Recall from \Cref{def:boundaryadj} that a subsurface $B \subset \tilde C$ is a {\em boundary-adjacent sphere} if $B \cong \Sigma_0^k$ is a sphere with $k$ boundary components, exactly one of which is not a boundary component of $\tilde C$. 

\begin{lemma}
    \label{lemma:psbrgenset}
    Let $d \ge 3$, and let $\sigma \subset \tilde C$ denote an arc connecting $\Delta_1$ to $\Delta_2$ (and, abusively, the associated half-twist). Then $\PSBr(\tilde C)$ admits a generating set consisting of three types of elements: (1) elements $\tau$ such that $\tau$ and $\sigma$ are both supported on some boundary-adjacent sphere $B \subset \tilde C$, (2) elements $\tau'$ whose support is disjoint from $\sigma$, and (3) $\PSBr(\tilde C) \cap \Gamma[\tilde C]$.
\end{lemma}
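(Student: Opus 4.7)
The plan is to reduce to a generation question for the subgroup $K = \ker(\tilde \iota_*)$ inside $\PBr(\tilde C)$. By \Cref{lemma:middlecol} and \Cref{lemma:leftcol}, one has $\PSBr(\tilde C) = K \cdot (\PSBr(\tilde C) \cap \Gamma[\tilde C])$, since $\PSBr(\tilde C) \cap \Gamma[\tilde C]$ surjects onto $\pi_1(C^d)_0$. Type (3) already constitutes $\PSBr(\tilde C) \cap \Gamma[\tilde C]$, so it suffices to exhibit a generating set of $K$ whose elements are each products of elements of types (1), (2), and (3).

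I would identify such a generating set using \Cref{lemma:brextension} together with the standard Fadell--Neuwirth description of the kernel of $\iota_*\colon \PBr_d(C) \to \pi_1(C^d)$: this kernel is generated by meridians of the fat diagonal, which in mapping class group terms are the Dehn twists $T_{c_{ij}}$ about boundary-adjacent curves $c_{ij}$ enclosing $\Delta_i$ and $\Delta_j$. Concretely, $K$ is then generated by the $T_{\Delta_i}$ for $i = 1, \ldots, d$ together with the $T_{c_{ij}}$ as $\{i,j\}$ and the isotopy class of $c_{ij}$ vary. The boundary twists classify easily: $T_{\Delta_1}$ and $T_{\Delta_2}$ are supported on $\Delta_1, \Delta_2$ respectively, both inside any sufficiently small $3$-holed boundary-adjacent sphere $B_\sigma$ containing $\sigma$, and hence are of type (1); $T_{\Delta_i}$ for $i \geq 3$ is supported on a boundary component disjoint from $\sigma$, so is of type (2). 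For the twists $T_{c_{ij}}$, I would pick canonical representatives: for $\{i,j\} = \{1,2\}$ take $c_{12} \subset B_\sigma$, giving type (1); for any other pair, route the $3$-holed sphere bounded by $c_{ij}$ to avoid $\sigma$, giving type (2).

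The main obstacle will be handling the remaining isotopy classes of $c_{ij}$, and here the key input is the simple-connectivity of $X$. Given any other isotopy class $c'_{ij}$, write $c'_{ij} = P_\gamma(c_{ij})$ for a point-push $P_\gamma$ of $\Delta_i$ along a loop $\gamma \in \pi_1(C, x_i)$. I would realize this conjugation through a type (3) element by the following move: using $\pi_1(X) = 1$, \Cref{lemma:VCsgenerate} decomposes $\gamma$ into a product of conjugates of vanishing cycles $\ul{a}$; for each such factor \Cref{lemma:pi1cd0img} produces an element $\tilde P_{\ul a} \in \PSBr(\tilde C) \cap \Gamma[\tilde C]$ effecting the combined point-push $(1, \ldots, \ul{a}, \ldots, \ul{a}'^{-1}, \ldots, 1)$, with the compensating motion placed on some strand $k \neq i, j$. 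The technical point is that $\Delta_k$ lies in the exterior of $c_{ij}$ and the inclusion of this exterior into $C$ is surjective on $H_1$, so the compensator $\ul{a}'$ may be arranged to also lie in the exterior of $c_{ij}$, ensuring its support is disjoint from $c_{ij}$ and thus commutes with $T_{c_{ij}}$. Multiplying over the vanishing-cycle decomposition of $\gamma$ yields a type (3) element $\tilde P_\gamma$ with $\tilde P_\gamma(c_{ij}) = c'_{ij}$, so that $T_{c'_{ij}} = \tilde P_\gamma \, T_{c_{ij}} \, \tilde P_\gamma^{-1}$ lies in the subgroup generated by $T_{c_{ij}}$ and type (3), completing the argument.
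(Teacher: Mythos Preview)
Your opening reduction to $K$ via $\PSBr(\tilde C) = K \cdot (\PSBr(\tilde C) \cap \Gamma[\tilde C])$ is valid, and the identification of the generators of $K$ as boundary twists together with all meridians $T_{c_{ij}}$ is correct (the paper makes the same computation, but only later, inside the proof of \Cref{lemma:SBrSES}). The handling of boundary twists and of one canonical meridian per index pair is also fine. The gap is in the last step.

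The point-push $P_\gamma$ taking $c_{ij}$ to $c'_{ij}$ must push $x_i$ through the \emph{marked} surface, so $\gamma \in \pi_1(C \setminus \{x_m : m \ne i\}, x_i)$, not $\pi_1(C, x_i)$ as you write. \Cref{lemma:VCsgenerate} only generates the latter. The kernel of the forgetful map is normally generated by small loops of $x_i$ around the other $x_m$, and point-pushing $x_i$ along such a loop is itself a meridian $T_{c_{im}}$ of a generally non-canonical isotopy class---exactly what you are trying to absorb, so the argument becomes circular. Separately, the claim that the compensator $\ul a'$ can be placed in the exterior of $c_{ij}$ needs more than $H_1$-surjectivity: the loop $\ul a_{\tau'}$ produced in the proof of \Cref{lemma:pi1cd0img} contains the vanishing cycle $a$ itself, and the tether $\tau'$ must simultaneously avoid $a \cup \tau$; since $\tau$ necessarily crosses $c_{ij}$ (it runs from $x_i$ inside the disk to $a$ outside), routing $\tau'$ entirely in the exterior while meeting these constraints is a nontrivial topological claim you have not verified. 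The paper avoids all of this by a different decomposition: it uses the Fadell--Neuwirth sequence forgetting all strands except the endpoints of $\sigma$, handles the quotient $\PBr_2(C)$ entirely by type (3) via \Cref{lemma:leftcol}, and handles the kernel $\PSBr_{d-2}(C^{\circ\circ})$ by types (1) and (2) via \Cref{lemma:bespokepbr} and a rewriting trick---never needing to realize a prescribed meridian as a conjugate of a canonical one.
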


\begin{proof}
    The boundary twists $T_{\Delta_i}$ that generate the kernel of the map $\PSBr(\tilde C) \to \PSBr_d(C)$ are of the third type, so it suffices to exhibit a generating set for $\PSBr_d(C)$ of the required form. For this, we consider the forgetful map $\PConf_d(C) \to \PConf_2(C)$, forgetting all but the endpoints of $\sigma$. By \Cref{prop:FN}, there is a Fadell-Neuwirth short exact sequence
    \begin{equation}\label{eq:FN}
        1 \to \PBr_{d-2}(C^{\circ\circ}) \to \PBr_d(C) \to \PBr_2(C) \to 1,
    \end{equation}
    where $C^{\circ\circ}$ is the surface obtained from $C$ by deleting two distinct points.

    Let $\PSBr_{d-2}(C^{\circ\circ})$ denote the intersection $\PBr_{d-2}(C^{\circ\circ}) \cap \PSBr_d(C)$. We claim that \eqref{eq:FN} restricts to give the short exact sequence
    \[
    1 \to \PSBr_{d-2}(C^{\circ\circ}) \to \PSBr_d(C) \to \PBr_2(C) \to 1.
    \]
    To see this, note that since $d \ge 3$, any element of $\PBr_2(C)$ can be lifted to an element of $\PSBr_d(C)$, using the movement of the third point to enforce the condition that the total homology class be zero.

    Thus $\PSBr_d(C)$ is generated by two types of elements: (A) elements mapping onto a generating set of $\PBr_2(C)$, and (B) generators for $\PSBr_{d-2}(C^{\circ\circ})$. We claim that both of these types can be expressed using the desired generators of types (1),(2),(3). 

    We first consider elements of type (A). The Fadell-Neuwirth sequence for $\PConf_2(C) \to C$ realizes $\PBr_2(C)$ as an extension
    \[
    1 \to \pi_1(C\setminus{x_1}, x_2) \to \PBr_2(C) \to \pi_1(C,x_1) \to 1,
    \]
    implying that $\PBr_2(C)$ is generated by any set of elements surjecting onto a generating set for $\pi_1(C^2, (x_1,x_2))$. Applying \Cref{lemma:leftcol}, it follows that there is a surjection 
    \[
    \PSBr(\tilde C) \cap \Gamma[\tilde C] \to \pi_1(C^d)_0 \to \pi_1(C^2, (x_1, x_2))
    \]
    (the latter map being the projection onto the first two coordinates) as required.

    To describe a set of generators for $\PSBr_{d-2}(C^{\circ\circ})$, we begin with the problem of generating $\PBr_{d-2}(C^{\circ\circ})$. Following \Cref{lemma:bespokepbr}, $\PBr_{d-2}(C^{\circ\circ})$ is generated by two classes of elements: (i) point-push maps $a_i^j$ based at the points $x_j$ for $3 \le j \le d$, along paths $a_i^j$ disjoint from $\sigma$ that satisfy $[a_i^j] \ne 0$ as elements of $H_1(C;\Z)$, and (ii) $\Mod(B)$ for some boundary-adjacent sphere $B$ containing $\sigma$ and all the boundary components of $\tilde C$. Elements of the first class are of type (2), and elements of the second are of type (1). 

    The subgroup $\PSBr_{d-2}(C^{\circ\circ}) \le \PBr_{d-2}(C^{\circ\circ})$ is given as the kernel of $\lambda: \PBr_d(C) \to H_1(C;\Z)$, restricted to $\PBr_{d-2}(C^{\circ\circ}) \le \PBr_d(C)$. By the previous paragraph, an element $\tau \in \PSBr_{d-2}(C^{\circ\circ})$ can be expressed as
    \[
    \tau = a_1 b_1 \dots a_k b_k,
    \]
    with each $a_i$ of the first type $a_i^j$ and each $b_i \in \Mod(B)$. Since $\tau \in \PSBr_{d-2}(C^{\circ\circ}) \le \ker(\lambda)$ and $\lambda(b) = 0$ for any $b \in \Mod(b)$, it follows that $\sum_{i = 1}^k [a_i] = 0$ as elements of $H_1(C;\Z)$.

    We factor $\tau$ as above as follows:
     \[
     \tau = (a_1 b_1 a_1^{-1})(a_1 a_2) b_2 (a_1 a_2)^{-1} \dots (a_1 \dots a_k) b_k (a_1 \dots a_k)^{-1} a_1 \dots a_k.
     \]
     Note that if $B'$ is any boundary-adjacent sphere containing $\sigma$, and if $a_i^j$ is any point-push map of the type above (based at $x_j$ for $j \ge 3$ and disjoint from $\sigma$), then $a_i^j(B')$ is another boundary-adjacent sphere containing $\sigma$. In particular, if $b \in \Mod(B')$ is a generator of type (1), so is any conjugate $a_i^j b (a_i^{j})^{-1}$. Thus each element $(a_1 \dots a_i) b_i (a_1 \dots a_i)^{-1}$ in the above expression for $g$ is of type (1). The final term $a_1 \dots a_k$ belongs to $\PSBr_{d}(C)$ since $\sum_{i=1}^k [a_i] = 0$, and is composed of elements of support disjoint from $\sigma$, and hence is of type (2). 
\end{proof}
        
\subsection{Proving \Cref{lemma:SBrSES}}
\begin{proof}[Proof of \Cref{lemma:SBrSES}]
     Following \Cref{lemma:insteadK}, to determine the quotient $A' = \PSBr(\tilde C) / (\PSBr(\tilde C) \cap \Gamma[\tilde C])$, we can instead study the quotient $K/(K \cap \Gamma[\tilde C])$. We proceed in three steps.

    \para{Step 1: Generators for $\mathbf{K}$}
    Recall from \eqref{eq:K} that $K$ is the kernel of the composition 
    \[
    \PBr(\tilde C) \to \PBr_d(C) \xrightarrow{\iota_*} \pi_1(C^d).
    \]
    By definition, $\PBr(\tilde C):= \tilde \PBr_d(C)$. Recall from \Cref{lemma:brextension} that the kernel of $\tilde \PBr_d(C) \to \PBr_d(C)$ is $\Z^d$, with the $i^{th}$ factor generated by the element $\delta_i$ that rotates the $i^{th}$ tangent vector in place, fixing all other tangent vectors. 
    Under the embedding $\PBr(\tilde C) \le \Mod(\tilde C)$ of the Birman exact sequence (\Cref{lemma:birman}), $\delta_i$ corresponds to the twist $T_{\Delta_i}$. 

    Thus $K$ is generated by the twists $\{T_{\Delta_i}\}$ along with lifts of a set of generators for the kernel of $\iota_*: \PBr_d(C) \to \pi_1(C^d)$. To determine such a set of generators, observe that $\Conf_d(C) \subset C^d$ is the complement of the fat diagonal hypersurface \[
    \Delta = \bigcup_{1 \le i < j \le d} \Delta_{ij},
    \]
    with the component $\Delta_{ij} \subset C^d$ defined by the equation $z_i = z_j$. By the Seifert-van Kampen theorem, if $H \subset X$ is a hypersurface in a smooth variety $X$, then the kernel of the inclusion map $\pi_1(X \setminus H) \to \pi_1(X)$ is generated by the conjugacy classes of the {\em meridians} (fibers of the unit normal bundle of the smooth locus of $H$ inside $X$). 

    Let $\sigma_{ij}$ denote a meridian around the component $\Delta_{ij}$. As a loop in $\PConf_d(C)$, this is given by the $j^{th}$ point orbiting once around the $i^{th}$ (or vice versa). Lifted into $\PBr(\tilde C)$, this can be represented by the Dehn twist about some curve $c_{ij}$ bounding $\Delta_i$ and $\Delta_j$. We say that $\Delta_i$ and $\Delta_j$ are the {\em endpoints} of the meridian $\sigma_{ij}$.

    To summarize, we have shown that $K$ is generated by the following set of elements:
    \[
    K = \pair{\mbox{Meridians }\sigma_{ij}, \mbox{boundary twists } T_{\Delta_1}, \dots, T_{\Delta_d}}.
    \]
    We emphasize that we take here {\em all} meridians, not just some finite subcollection. As a final observation before the next step, note that by \Cref{lemma:TDelta}, $T_{\Delta_i} \in \Gamma[\tilde C]$ for $i = 1, \dots, d$. Thus to determine the quotient $K/(K \cap \Gamma[\tilde C])$, it suffices to understand the images of the meridians alone.

   \para{Step 2: The quotient is abelian} Let $\sigma \in K$ be a meridian. By considering the diagram in the proof of \Cref{lemma:insteadK}, we see that to show $A'$ is abelian, it suffices to show the {\em a priori} weaker claim that the image of $\sigma$ is {\em central} in the quotient $\PSBr(\tilde C)/ (\PSBr(\tilde C) \cap \Gamma[\tilde C])$. For this, we will show that $[\sigma, \tau] \in \PSBr(\tilde C) \cap \Gamma[\tilde C]$ for each of the generators $\tau$ of $\PSBr(\tilde C)$ found in \Cref{lemma:psbrgenset}. 

      If $\tau$ is of type (1), then $\tau$ is supported on some boundary-adjacent sphere $B$ also containing $\sigma$. Then the commutator $[\sigma,\tau]$ is contained in $[\Mod(B), \Mod(B)]\le \Gamma[\tilde C]$, the latter holding by \Cref{corollary:allBs}. If $\tau$ is of type (2), then the commutator $[\sigma,\tau] = 1$ is trivial in $\Mod(E)$. And if $\tau \in \PSBr(\tilde C) \cap \Gamma[\tilde C]$ is of type (3), then $[\sigma, \tau] \in \Gamma[\tilde C]$ by \Cref{lemma:principalnormal}.

   \para{Step 3: Determining the quotient} Let $\psi: K \to A'$ denote the quotient map. We next show that if $\sigma$ and $\sigma'$ are meridians with the same endpoints, then $\psi(\sigma) = \psi(\sigma')$. By \Cref{lemma:sbrarctrans}, there exists $\beta \in \PSBr(\tilde C)$ such that $\beta \sigma \beta^{-1} = \sigma'$. By the previous step, $\psi(\sigma) = \psi(\beta \sigma \beta^{-1}) = \psi(\sigma')$ as claimed.

   At this point, we have shown that $\psi: K \to A'$ factors through the map $\tilde \psi: K \to \Z\pair{e_{ij}}$, where $\Z\pair{e_{ij}} \cong \Z^{\binom{d}{2}}$ is the free abelian group spanned by symbols $e_{ij}$ for $1 \le i < j \le d$. To complete the argument, it suffices to show that $\psi$ moreover factors through the map $r: \Z\pair{e_{ij}} \to \Z\pair{e_i}$ given by $r(e_{ij}) = e_i + e_j$, where $\Z\pair{e_i} \cong \Z^d$ is the free abelian group spanned by symbols $e_i$ for $1 \le i \le d$. By linear algebra, $\ker(r)$ is generated by elements of the form $e_{ij} + e_{k\ell} - e_{ik} - e_{j\ell}$ for distinct $4$-tuples of indices $i,j,k, \ell$. By \Cref{corollary:allBs}, any element of the form $\sigma_{ij}\sigma_{k\ell}\sigma_{ik}^{-1}\sigma_{j\ell}^{-1}$ for meridians $\sigma_{xy}$ enclosing boundary components $\Delta_x, \Delta_y$, is contained in $\PSBr(\tilde C) \cap \Gamma[\tilde C]$, proving the claim.
\end{proof}

\section{Proof of \Cref{theorem:main}}\label{S:mainproof}

We now have almost all of the ingredients needed to prove \Cref{theorem:main}. The method of {\em assemblage generating sets} for framed mapping class groups (\Cref{theorem:assemblages}), in combination with the local monodromy calculations of \Cref{S:monodromy} and the Main Lemma (\Cref{lemma:main}), will show that the monodromy group $\Gamma_L$ contains {\em some} $r'$-spin mapping class group, {\em a priori} possibly for some {\em refinement} of the $r$-spin structure induced by an $r^{th}$ root of the adjoint line bundle. Before proceeding to give this argument in \Cref{SS:proof}, we show in \Cref{SS:rspinmono} that any containment $\Gamma_L \le \Mod(E)[\phi']$ of the monodromy in some $r'$-spin mapping class group is necessarily induced by an $(r')^{th}$ root of the adjoint line bundle, providing the last necessary piece of the argument. 

For convenience, we recall the hypotheses of \Cref{theorem:main}: $X$ denotes a smooth projective algebraic surface with $\pi_1(X) = 1$, and $L \in \Pic(X)$ is a line bundle admitting a decomposition of the form $L = L_1 \otimes L_2$, with $L_1$ $6$-jet ample and $L_2$ very ample. Recall also that $\abs{L} = \P H^0(X;L)$ denotes the total space of the linear system, and $U_L \subset \abs{L}$ is the open locus of smooth curves in $\abs{L}$.

\subsection{$r$-spin monodromy and the Picard group}\label{SS:rspinmono}

\begin{lemma}\label{lemma:relpic}
    Let $X,L$ be as in \Cref{theorem:main}. Let $\cC_L \to  U_L$ denote the universal family, and let $p : \cC_L \to X$ denote the projection onto the second factor. Then the pullback $p^*: \Pic(X) \to \Pic(\mathcal C_L | U_L)$ is an isomorphism.
\end{lemma}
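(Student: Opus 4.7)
The plan is to realize $\cC_L$ as an open subset of a projective bundle over $X$ and track the Picard group through this presentation. I will read $\Pic(\cC_L \mid U_L)$ as the relative Picard group $\Pic(\cC_L)/q^*\Pic(U_L)$, where $q: \cC_L \to U_L$ denotes the first projection.

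First I would extend the family to all of $\abs{L}$: set $\bar{\cC}_L = \{(C,x) \in \abs{L}\times X \mid x \in C\}$, with projections $\bar q : \bar{\cC}_L \to \abs{L}$ and $\bar p : \bar{\cC}_L \to X$ restricting to $q,p$ on $\cC_L$. Since $L$ is very ample, the evaluation map $H^0(X;L)\otimes \cO_X \onto L$ is surjective with kernel a vector bundle $\cK$, and $\bar p$ realizes $\bar{\cC}_L$ as the projective bundle $\P(\cK)$. The projective bundle formula then yields
\[
\Pic(\bar{\cC}_L) \cong \bar p^*\Pic(X) \oplus \Z \cdot [\cO_{\bar{\cC}_L}(1)],
\]
with $\cO_{\bar{\cC}_L}(1) = \bar q^*\cO_{\abs{L}}(1)$.

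Next, I would pass from $\bar{\cC}_L$ to the open subset $\cC_L$. Its complement is $\bar q^{-1}(\sD_L)$; since $L$ is very ample and $X$ smooth and irreducible, the discriminant $\sD_L$ is the (irreducible) dual variety of the projective embedding of $X$ by $L$, and hence $\bar q^{-1}(\sD_L)$ is an irreducible divisor of class $d \cdot [\cO_{\bar{\cC}_L}(1)]$, where $d = \deg \sD_L$. The excision sequence
\[
\Z \to \Pic(\bar{\cC}_L) \to \Pic(\cC_L) \to 0
\]
sends $1$ to $d\cdot [\cO(1)]$, giving $\Pic(\cC_L) \cong \Pic(X) \oplus \Z/d\Z$, with the torsion summand generated by $q^*\cO_{U_L}(1)$.

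To finish, note that $U_L = \abs{L}\setminus \sD_L$ has Picard group $\Z/d\Z$ generated by $\cO_{U_L}(1)$, so the subgroup $q^*\Pic(U_L) \le \Pic(\cC_L)$ is precisely the $\Z/d\Z$ summand identified above. Taking the quotient leaves $\Pic(X)$, realized as $p^*\Pic(X)$, so $p^*$ is the desired isomorphism. The only nontrivial input is the irreducibility of the discriminant, which is classical for the dual variety of a smooth irreducible projective variety; everything else is bookkeeping through the projective bundle formula and the two excision steps.
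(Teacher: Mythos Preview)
Your argument follows the same skeleton as the paper's---realize $\bar{\cC}_L$ as a projective bundle over $X$, apply the projective bundle formula, then excise the preimage of the discriminant---but it is more self-contained: the paper outsources surjectivity of $p^*$ to a theorem of Woolf and only uses the bundle/excision picture for injectivity, whereas you compute $\Pic(\cC_L)$ and $q^*\Pic(U_L)$ explicitly and read off the isomorphism directly. That is a genuine simplification.

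There is one small gap. You write that $\bar q^{-1}(\sD_L)$ is an irreducible divisor ``hence'' from the irreducibility of $\sD_L$, but irreducibility of the base alone does not give irreducibility of the preimage. What you also need is that the generic fibre of $\bar q$ over $\sD_L$---i.e.\ a generic singular member of $|L|$---is an irreducible curve. Under the ampleness hypotheses this holds (the locus of reducible or nonreduced curves has codimension $\ge 2$ in $|L|$, a fact the paper itself invokes when citing Woolf), so the fix is a one-line addition. This point is not cosmetic: if $\bar q^{-1}(\sD_L)$ had several components $Y_i$ with classes $(\alpha_i,d_i)\in\Pic(X)\oplus\Z$, then after quotienting by $q^*\Pic(U_L)$ you would obtain $\Pic(X)/\langle\alpha_i\rangle$ rather than $\Pic(X)$, so the excision step genuinely depends on it.
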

\begin{proof}
    The surjectivity of $p^*$ follows from \cite[Theorem 1]{woolf} (under our hypotheses, $L$ is sufficiently ample to conclude that the locus of reducible or non-reduced curves is of codimension $\ge 2$, as required for this result).
    
    It remains to show that $p^*$ is injective.
    Since $\pi_1 (X) = 0$, it follows that $\Pic(X)$ is torsion-free, the map $\Pic (X) \to H^2 (X, \Z)$ is injective, and $H^2 (X, \Z)$ is torsion free.
    It suffices to establish that the map $p^*: \Pic (X) \to \Pic (\mathcal {C}_L)$ is injective- this will show that $\Pic (\mathcal {C}_L) $ is torsion-free, and since $\Pic (U_L) $ is torsion, $\Pic(\mathcal{C}_L) = \Pic (\mathcal C_L | U_L).$
    
    Let 
    \[
    \bar {\mathcal{C}}_L := \{(C,x) \mid C \in \abs{L},\ x \in C\} \subset \abs{L} \times X
    \]
    denote the universal family of all (not just smooth) curves in $|L|$; then there is a factorization of $p$ as $\mathcal{C}_L \to \bar {\mathcal{C}}_L \to X$. The map $\bar{\mathcal{C}}_L \to X$ is a projective bundle, and by the projective bundle formula there is a short exact sequence  
    \[
    0 \to \Pic (X) \to \Pic (\bar{\mathcal{C}_L}) \to \Z \to 0,
    \]
    where the last map is induced by restricting to a fiber. 

    Let $Y = \bar{\mathcal{C}}_L \setminus \mathcal{C}_L$. We note that 
    \[
    \Pic (\mathcal{C}_L) \cong \Pic(\bar{\mathcal{C}}_L) / \Z Y.
    \]
    Thus it suffices to prove that any multiple of the divisor $Y$ is not in the image of $\Pic (X)$. But this immediately follows since $Y$ restricts to a nontrivial divisor on each fiber of the map $\mathcal{\bar C}_L \to X$.
\end{proof}

\begin{lemma}\label{prop:charlie}
    Suppose there is a containment $\Gamma_L \le \Mod (E) [\phi']$ for some $r'$-spin structure $\phi'$. Then there is a line bundle $L' \in \Pic (X)$ such that $(L')^{\otimes r'}  \cong \omega_X \otimes L$.
\end{lemma}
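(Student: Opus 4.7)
The plan is to construct $L' \in \Pic(X)$ by lifting the monodromy-invariant spin structure $\phi'$ to a family of $r'$-th roots of $\omega_{\mathcal{C}_L/U_L}$ over all of $U_L$, and then descending to $\Pic(X)$ via \Cref{lemma:relpic}. The key input is the correspondence between topological $r'$-spin structures on a Riemann surface $E$ and algebraic $r'$-th roots of $\omega_E$ (see \cite{quintic}): this bijection is equivariant for the mapping class group action, so a monodromy-invariant topological $r'$-spin structure corresponds to a monodromy-invariant choice of algebraic root.

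Concretely, let $\mathcal{R}^{r'}_L \to U_L$ denote the relative moduli space of $r'$-th roots of $\omega_{\mathcal{C}_L/U_L}$. This is a finite \'etale cover whose monodromy is the natural $\Gamma_L$-action on the set of $r'$-th roots of $\omega_E$, which by the bijection above agrees with the $\Gamma_L$-action on $r'$-spin structures. The hypothesis $\Gamma_L \le \Mod(E)[\phi']$ thus provides a fixed point, yielding an algebraic section $s \colon U_L \to \mathcal{R}^{r'}_L$. \'Etale-locally on $U_L$, the section lifts to an actual line bundle $\tilde{M}_\alpha$ on $\mathcal{C}_L|V_\alpha$ realizing the distinguished $r'$-th roots. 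By \Cref{lemma:relpic}, each $\tilde{M}_\alpha$ has the form $p^* L'_\alpha$ for a unique $L'_\alpha \in \Pic(X)$; the various $L'_\alpha$ must coincide because they all restrict to the same fixed root $M_E \in \Pic(E)$ on any common fiber, and the map $\Pic(X) \to \Pic(E)$ is injective (a consequence of \Cref{lemma:relpic} together with the torsion-freeness of $\Pic(X)$). This defines a global $L' \in \Pic(X)$.

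To verify $(L')^{\otimes r'} \cong \omega_X \otimes L$, observe that $p^*((L')^{\otimes r'})$ restricts on each smooth fiber $E'$ to $M_{E'}^{\otimes r'} = \omega_{E'} \cong (\omega_X \otimes L)|_{E'}$ by adjunction, so it agrees fiberwise with $p^*(\omega_X \otimes L)$. These two line bundles on $\mathcal{C}_L|U_L$ therefore differ by a pullback from $U_L$; since $\Pic(U_L)$ is torsion (as a quotient of $\Pic(|L|) \cong \Z$) while $\Pic(X)$ is torsion-free (noted in the proof of \Cref{lemma:relpic}, using $\pi_1(X) = 0$), the corresponding class in $\Pic(\mathcal{C}_L|U_L)$ must vanish under the isomorphism of \Cref{lemma:relpic}. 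Injectivity of $p^*$ then gives $(L')^{\otimes r'} \cong \omega_X \otimes L$ in $\Pic(X)$.

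The main obstacle is the middle step: extracting a coherent class in $\Pic(X)$ from the section $s$ of the moduli space, since a priori there is a Brauer-type gerbe obstruction to the existence of a globally-defined universal line bundle realizing $s$ on $\mathcal{C}_L|U_L$. The strategy above circumvents this obstruction by invoking \Cref{lemma:relpic} at the level of local lifts: patching takes place inside the rigid, torsion-free, finitely generated group $\Pic(X)$, where there is no room for genuine gluing ambiguity.
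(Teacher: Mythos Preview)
Your approach mirrors the paper's: both argue by producing an element of the relative Picard group $\Pic(\mathcal{C}_L \mid U_L)$ realizing the monodromy-invariant $r'$-spin structure, and then descending to $\Pic(X)$ via \Cref{lemma:relpic}. The paper is simply terser---it asserts the existence of such an $L_0' \in \Pic(\mathcal{C}_L \mid U_L)$ in one sentence, without engaging with the gerbe obstruction you (correctly) flag.

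Your workaround, however, has a gap at the step ``By \Cref{lemma:relpic}, each $\tilde M_\alpha$ has the form $p^* L'_\alpha$ for a unique $L'_\alpha \in \Pic(X)$.'' \Cref{lemma:relpic} asserts that $p^*: \Pic(X) \to \Pic(\mathcal{C}_L \mid U_L)$ is an isomorphism over the \emph{full} base $U_L$; your $\tilde M_\alpha$ lives over a small \'etale open $V_\alpha \subset U_L$, where the relative Picard group is far larger than $\Pic(X)$ (it contains, e.g., nonconstant sections of the relative Jacobian, or classes of divisors supported on local sections of $\mathcal{C}_L \to U_L$ that do not extend globally). So there is no reason \emph{a priori} that $\tilde M_\alpha$ comes from $\Pic(X)$, and the subsequent patching argument does not go through as written. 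A related issue: the injectivity of $\Pic(X) \to \Pic(E)$ you invoke is not a formal consequence of \Cref{lemma:relpic} together with torsion-freeness---restriction from $\Pic(\mathcal{C}_L \mid U_L)$ to a single fiber need not be injective in general.

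The strategy can likely be repaired: on overlaps, $\tilde M_\alpha \otimes \tilde M_\beta^{-1}$ is fiberwise trivial and $r'$-torsion, hence pulled back from $V_{\alpha\beta}$, and one obtains a \v{C}ech cocycle on $U_L$ with values in $\mu_{r'}$; what remains is to show the resulting obstruction class in $\mathrm{Br}(U_L)[r']$ vanishes. But this requires genuine additional input beyond what you have written, and is precisely the point the paper's proof glosses over.
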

\begin{proof}
    As in \Cref{lemma:relpic}, let  $\mathcal{C}_L \to  U_L$ denote the universal family and $p^*$ denote the induced map  $\Pic (X) \to \Pic (\mathcal C_L | U_L)$.
    Supposing $\Gamma_L \le \Mod (E) [\phi']$, then there is $L_0' \in \Pic (\mathcal C_L | U_L) $ such that $(L'_0)^{\otimes r'}   = p^*(\omega_X \otimes L)$ - this is the element of the relative Picard group that restricts to each fiber to give the presumed monodromy-invariant $r'$-spin structure. By \Cref{lemma:relpic}, $p^*$ is a bijection and so there is $L' \in \Pic (X)$ satisfying  $p^* (L') = L_0'$ and hence  $(L')^{\otimes r'}  \cong \omega_X \otimes L$.
\end{proof}

\subsection{The proof}\label{SS:proof}

        \begin{figure}[h]
\centering
		\labellist
        \small
        \pinlabel $S_0$ at 115 225
        \pinlabel $S_1$ at 350 215
        \pinlabel $S_2$ at 80 75
        \pinlabel $S_3$ at 350 75
        \pinlabel $\Delta'$ at 112 10
        \pinlabel $\Delta'$ at 380 10
        \pinlabel $\Delta''$ at 500 70
		\endlabellist
\includegraphics[width=\textwidth]{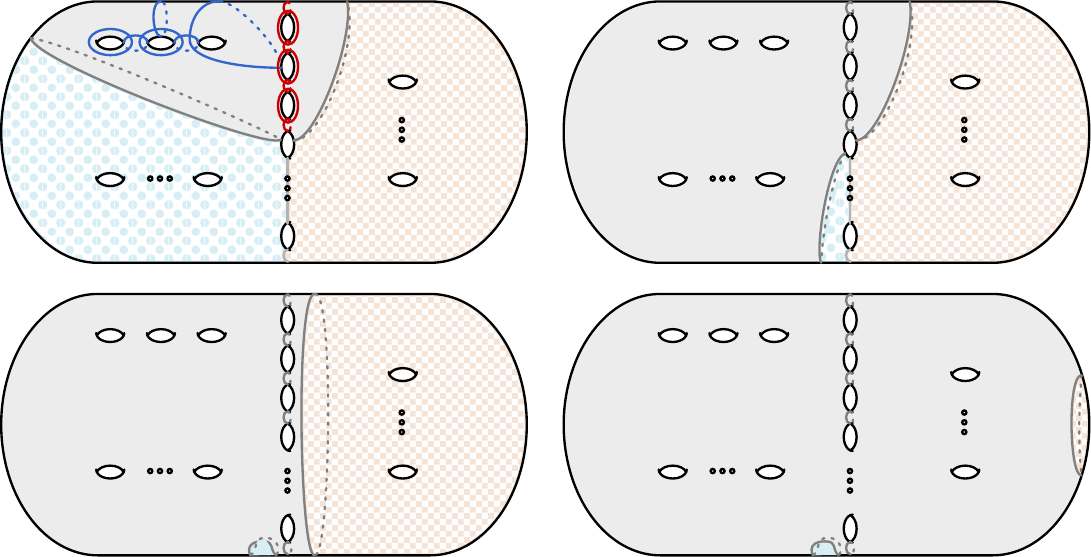}
\caption{The sequence $S_0 \subset S_1 \subset S_2 \subset S_3$ of framed subsurfaces of $E$ constructed via the method of assemblages. See \Cref{fig:0to1} for a description of the vanishing cycles used to enlarge $S_0$ to $S_1$, \Cref{fig:1to2} for the vanishing cycles used to enlarge from $S_1$ to $S_2$, and \Cref{fig:2to3} for the passage from $S_2$ to $S_3$.}
\label{fig:main1}
\end{figure}

\begin{proof}[Proof of \Cref{theorem:main}] 
    By \Cref{lemma:rspincontain}, the monodromy group $\Gamma_L = \im(\rho_L)$ is contained in the $r$-spin mapping class group $\Mod(E)[\phi_{M}]$ associated to the maximal $r^{th}$ root $M$ of the adjoint line bundle $L \otimes \omega_X$. We must establish the other containment.
    
    We will do this by an appeal to \Cref{theorem:assemblages}, constructing a $6$-assemblage of type $E$ consisting of vanishing cycles. To begin with, by \Cref{lemma:core}, there is a subsurface $S_0 \subset E$ homeomorphic to $\Sigma_6^2$ and vanishing cycles $\cC_0 = \{c_1, \dots, c_{13}\}$ supported on $S_0$ forming an $E$-arboreal spanning configuration on $S_0$. As shown in \Cref{fig:main1,fig:0to1,fig:1to2,fig:2to3} (and explained in detail in the captions), by repeated applications of \Cref{lemma:main}, this can then be successively extended to assemblages $\cC_0 \subset \cC_1 \subset \cC_2 \subset \cC_3$ of vanishing cycles on the chain of subsurfaces $S_0 \subset S_1 \subset S_2 \subset S_3 \subset E$. By \Cref{theorem:assemblages}, there is an equality
    \[
    \pair{T_{c_i} \mid c_i \in \cC_3} = \Mod(S_3)[\phi_3],
    \]
    where $\phi_3$ is the framing of $S_3$ compatible with the assemblage $\cC_3$ (\Cref{def:compatible}). Recall from \Cref{fig:1to2,fig:2to3} that $E \setminus S_3$ is the union of disks bounded by $\Delta'$ and $\Delta''$, and that $\phi_3(\Delta') = C \cdot (C+D+K_X)$ and $\phi_3(\Delta'') = D \cdot (C+D+K_X)$. Set $r' = \gcd\{C \cdot (C+D+K_X), D \cdot (C+D+K_X)\}$. \Cref{lemma:framedontospin} then shows that the inclusion $S_3 \into E$ induces a surjection
    \[
    f: \Mod(S_3)[\phi_3] \onto \Mod(E)[\bar \phi_3],
    \]
    where $\bar \phi_3$ is the $r'$-spin structure obtained by reducing $\phi_3$ mod $r'$. Observe that $r'$ is divisible by $r$, the order of the largest root of the adjoint line bundle $L\otimes \omega_X = \cO_X(C+D)\otimes \omega_X$.

    On the other hand, since each $c_i \in \cC_3$ is a vanishing cycle, it follows that $\im(f) \le \Gamma_L$, and hence
    \[
    \Mod(E)[\bar \phi_3] \le \Gamma_L \le \Mod(E)[\phi_{M}].
    \]
   We claim that $\Gamma_L$ is not contained in any proper subgroup of $\Mod(E)[\phi_M]$ of the form $\Mod(E)[\phi'']$ for any $r''$-spin structure $\phi''$: by \Cref{prop:charlie}, any such containment would imply the existence of a nontrivial root of $M$, contrary to hypothesis. Then by \Cref{lemma:nonconmax}, it follows that
    \[
    \Gamma_L = \Mod(E)[\phi_{M}]
    \]
    as claimed.
\end{proof}

        \begin{figure}[h!]
\centering
		\labellist
        \tiny
        \pinlabel $a_1'$ at 110 85
        \pinlabel $c$ at 125 100
        \pinlabel $a_2'$ at 370 85
		\endlabellist
\includegraphics[width=\textwidth]{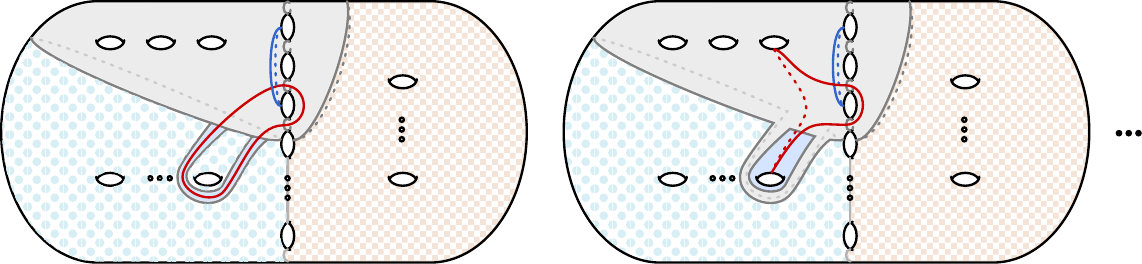}
\caption{Enlarging from $S_0$ to $S_1$. Let $a_1'$ be a simple closed curve with $a_1' \cap \tilde D$ contained in $S_0$, and such that $a_1' \cap \tilde C$ enters and exits $S_0$ exactly once. Let $c \subset S_0$ be boundary-adjacent, enclosing exactly one of the boundary components crossed by $a_1'$, and satisfying $i(a_1', c)=1$. By \Cref{lemma:main}, some $a_1:= T_c^{\ell}(a_1')$ is a vanishing cycle. Adding $a_1$ to the assemblage enlarges $S_0$ to a framed subsurface $S_{0,1}$. Then repeat, adding vanishing cycles $a_2,\dots,a_k$ to the assemblage, creating framed subsurfaces $S_{0,2} \subset \dots \subset S_{0,k} = S_1$.}
\label{fig:0to1}
\end{figure}
\newpage

        \begin{figure}[h!]
\centering
		\labellist
        \tiny
        \pinlabel $a_1'$ at 125 215
        \pinlabel $c$ at 125 240
        \pinlabel $\Delta_5$ at 400 210
        \pinlabel $\Delta'$ at 385 10
		\endlabellist
\includegraphics[width=\textwidth]{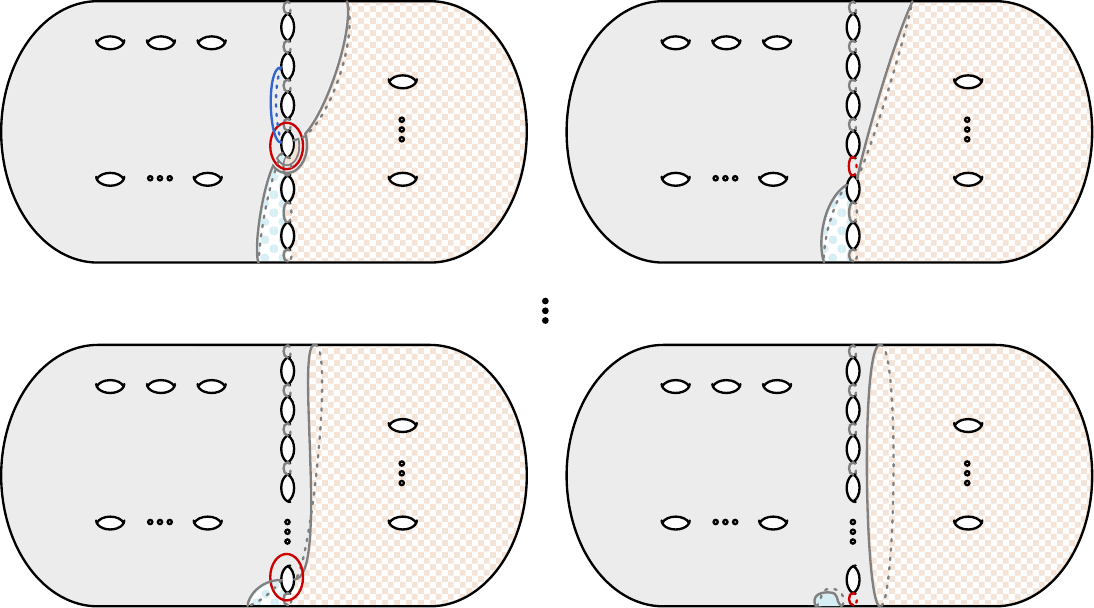}
\caption{Enlarging from $S_1$ to $S_2$. Choose $a_1'$ a simple closed curve joining the two boundary components of $S_1$, that also crosses the boundary components $\Delta_4$ and $\Delta_5$ of $\tilde C$. Note that $\Delta_4 \subset S_1$, and $\Delta_5$ lies outside. Choosing as before a boundary-adjacent curve $c \subset \tilde C$ enclosing $\Delta_3, \Delta_4$, by \Cref{lemma:main}, some $a_1:= T_c^\ell(a_1')$ is a vanishing cycle. Add it to the assemblage, enlarging $S_1$ to a framed subsurface $S_{1,1}$. Next, recall from \Cref{lemma:TDelta} that $\Delta_5$ is a vanishing cycle; by construction it enters and exits $S_{1,1}$ once, so it can be added to the assemblage, enlarging $S_{1,1}$ to a framed subsurface $S_{1,2}$. Repeat this process, adding vanishing cycles $a_2,\Delta_6, \dots, a_{d-4},\Delta_d$, creating framed subsurfaces $S_{1,3}, \dots, S_{1,2d-8} := S_2$. At the final stage, attaching $\Delta_d$ creates a boundary component $\Delta'$ of $S_2$ that is inessential in $E$. By homological coherence, since each $\Delta_1,\dots, \Delta_d$ is admissible, $\phi_2(\Delta') = \chi(C) - d -1$, where $\phi_2$ is the framing on $S_2$ specified by the assemblage. By the adjunction formula, $\chi(C) - d -1 = C \cdot (C+D+K_D)-1$.}
\label{fig:1to2}
\end{figure}
\newpage

        \begin{figure}[h!]
\centering
		\labellist
        \tiny
        \pinlabel $a_1'$ at 125 80
        \pinlabel $c$ at 123 110
		\endlabellist
\includegraphics{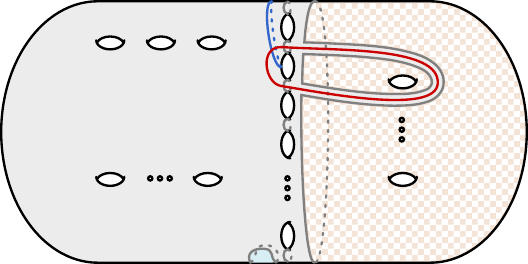}
\caption{Enlarging from $S_2$ to $S_3$. Choose a simple closed curve $a_1'$ entering and exiting $S_2$ once along its essential boundary component, such that $a_1'$ also enters and exits $\tilde C$ once, along distinct boundary components. As before, also choose $c \subset \tilde C$ a boundary-adjacent curve enclosing exactly one of the boundary components crossed by $a_1'$, satisfying $i(a_1',c) = 1$. Then by \Cref{lemma:main}, some $a_1:= T_c^\ell(a_1')$ is a vanishing cycle; add it to the assemblage, creating a new framed subsurface $S_{2,1}$. Repeat this process, adding vanishing cycles $a_2, \dots, a_k$ and corresponding framed subsurfaces $S_{2,2} \subset \dots \subset S_{2,k}:=S_3$. By construction, the complement of $S_3$ in $E$ consists of two disks: $\Delta'$ as discussed in \Cref{fig:1to2}, and $\Delta'' \subset \tilde D$. A similar calculation as before shows that $\phi_3(\Delta'') = \chi(D) - d -1 = D \cdot (C+D+K_X)-1$.}
\label{fig:2to3}
\end{figure}

\bibliographystyle{alpha}
\bibliography{bibliography}

@incollection {beauville,
    AUTHOR = {Beauville, A.},
     TITLE = {Le groupe de monodromie des familles universelles
              d'hypersurfaces et d'intersections compl\`etes},
 BOOKTITLE = {Complex analysis and algebraic geometry ({G}\"ottingen, 1985)},
    SERIES = {Lecture Notes in Math.},
    VOLUME = {1194},
     PAGES = {8--18},
 PUBLISHER = {Springer, Berlin},
      YEAR = {1986},
      ISBN = {3-540-16490-1},
   MRCLASS = {14J10 (14D05 14H10 14M10)},
  MRNUMBER = {855873},
MRREVIEWER = {I.\ Dolgachev},
       DOI = {10.1007/BFb0076991},
       URL = {https://doi.org/10.1007/BFb0076991},
}

@misc{woolf,
 title={Strong {F}ranchetta Conjecture for Linear Systems}, 
      author={Woolf, M.},
      year={2014},
      eprint={1402.2128},
      archivePrefix={arXiv},
      howpublished={ar{X}iv preprint, https://arxiv.org/abs/1402.2128},
      primaryClass={math.AG},
      url={https://arxiv.org/abs/1402.2128}, 
}

@article {shimada,
    AUTHOR = {Shimada, I.},
     TITLE = {Fundamental groups of algebraic fiber spaces},
   JOURNAL = {Comment. Math. Helv.},
  FJOURNAL = {Commentarii Mathematici Helvetici},
    VOLUME = {78},
      YEAR = {2003},
    NUMBER = {2},
     PAGES = {335--362},
      ISSN = {0010-2571,1420-8946},
   MRCLASS = {14F35},
  MRNUMBER = {1988200},
       DOI = {10.1007/s000140300014},
       URL = {https://doi.org/10.1007/s000140300014},
}

@article {strata3,
    AUTHOR = {Calderon, A. and Salter, N.},
     TITLE = {Framed mapping class groups and the monodromy of strata of
              abelian differentials},
   JOURNAL = {J. Eur. Math. Soc. (JEMS)},
  FJOURNAL = {Journal of the European Mathematical Society (JEMS)},
    VOLUME = {25},
      YEAR = {2023},
    NUMBER = {12},
     PAGES = {4719--4790},
      ISSN = {1435-9855,1435-9863},
   MRCLASS = {57K20 (30F30)},
  MRNUMBER = {4662301},
       DOI = {10.4171/jems/1290},
       URL = {https://doi.org/10.4171/jems/1290},
}

@article {strata2,
    AUTHOR = {Calderon, A. and Salter, N.},
     TITLE = {Higher spin mapping class groups and strata of abelian
              differentials over {T}eichm\"uller space},
   JOURNAL = {Adv. Math.},
  FJOURNAL = {Advances in Mathematics},
    VOLUME = {389},
      YEAR = {2021},
     PAGES = {Paper No. 107926, 56},
      ISSN = {0001-8708,1090-2082},
   MRCLASS = {57K20 (14H15 14M25 20F38 30F60)},
  MRNUMBER = {4289049},
MRREVIEWER = {Andrea\ Tamburelli},
       DOI = {10.1016/j.aim.2021.107926},
       URL = {https://doi.org/10.1016/j.aim.2021.107926},
}

@article {saltertoric,
    AUTHOR = {Salter, N.},
     TITLE = {Monodromy and vanishing cycles in toric surfaces},
   JOURNAL = {Invent. Math.},
  FJOURNAL = {Inventiones Mathematicae},
    VOLUME = {216},
      YEAR = {2019},
    NUMBER = {1},
     PAGES = {153--213},
      ISSN = {0020-9910,1432-1297},
   MRCLASS = {14D05 (14C20 14M25)},
  MRNUMBER = {3935040},
MRREVIEWER = {Wenfei\ Liu},
       DOI = {10.1007/s00222-018-0845-6},
       URL = {https://doi.org/10.1007/s00222-018-0845-6},
}

@article {CL1,
    AUTHOR = {Cr\'etois, R. and Lang, L.},
     TITLE = {The vanishing cycles of curves in toric surfaces {I}},
   JOURNAL = {Compos. Math.},
  FJOURNAL = {Compositio Mathematica},
    VOLUME = {154},
      YEAR = {2018},
    NUMBER = {8},
     PAGES = {1659--1697},
      ISSN = {0010-437X,1570-5846},
   MRCLASS = {14M25 (14T05 32S30)},
  MRNUMBER = {3830549},
MRREVIEWER = {Frank\ Sottile},
       DOI = {10.1112/s0010437x18007200},
       URL = {https://doi.org/10.1112/s0010437x18007200},
}

@article {CL2,
    AUTHOR = {Cr\'etois, R. and Lang, L.},
     TITLE = {The vanishing cycles of curves in toric surfaces {II}},
   JOURNAL = {J. Topol. Anal.},
  FJOURNAL = {Journal of Topology and Analysis},
    VOLUME = {11},
      YEAR = {2019},
    NUMBER = {4},
     PAGES = {909--927},
      ISSN = {1793-5253,1793-7167},
   MRCLASS = {14M25 (20F38 32S30)},
  MRNUMBER = {4040016},
MRREVIEWER = {Jongbaek\ Song},
       DOI = {10.1142/s1793525319500353},
       URL = {https://doi.org/10.1142/s1793525319500353},
}

@incollection {donaldson,
    AUTHOR = {Donaldson, S. K.},
     TITLE = {Polynomials, vanishing cycles and {F}loer homology},
 BOOKTITLE = {Mathematics: frontiers and perspectives},
     PAGES = {55--64},
 PUBLISHER = {Amer. Math. Soc., Providence, RI},
      YEAR = {2000},
      ISBN = {0-8218-2070-2},
   MRCLASS = {57R17 (53D12 53D40 57R57 57R58)},
  MRNUMBER = {1754767},
MRREVIEWER = {Nikolai\ N.\ Saveliev},
}

@book {FM,
    AUTHOR = {Farb, B. and Margalit, D.},
     TITLE = {A primer on mapping class groups},
    SERIES = {Princeton Mathematical Series},
    VOLUME = {49},
 PUBLISHER = {Princeton University Press, Princeton, NJ},
      YEAR = {2012},
     PAGES = {xiv+472},
      ISBN = {978-0-691-14794-9},
   MRCLASS = {57M50 (20F36 20F65 57M07 57N05)},
  MRNUMBER = {2850125},
MRREVIEWER = {Stephen\ P.\ Humphries},
}

@misc{CImonodromy,
      title={Monodromy and vanishing cycles for complete intersection curves}, 
      author={I. Banerjee and N. Salter},
      year={2025},
      howpublished={ar{X}iv preprint, https://arxiv.org/abs/2408.06479},
      eprint={2408.06479},
      archivePrefix={arXiv},
      primaryClass={math.AG},
      url={https://arxiv.org/abs/2408.06479}, 
}

@article {FN,
    AUTHOR = {Fadell, E. and Neuwirth, L.},
     TITLE = {Configuration spaces},
   JOURNAL = {Math. Scand.},
  FJOURNAL = {Mathematica Scandinavica},
    VOLUME = {10},
      YEAR = {1962},
     PAGES = {111--118},
      ISSN = {0025-5521,1903-1807},
   MRCLASS = {55.60},
  MRNUMBER = {141126},
MRREVIEWER = {R.\ H.\ Fox},
       DOI = {10.7146/math.scand.a-10517},
       URL = {https://doi.org/10.7146/math.scand.a-10517},
}

@article{RW,
author = {Randal-Williams, O.},
title = {{Homology of the moduli spaces and mapping class groups of framed, $r$-Spin and Pin surfaces}},
journal = {J. Topology},
volume = {7},
number = {1},
pages = {155-186},
year = {2013}
}

@article {quintic,
    AUTHOR = {Salter, N.},
     TITLE = {On the monodromy group of the family of smooth quintic plane
              curves},
   JOURNAL = {Glasg. Math. J.},
  FJOURNAL = {Glasgow Mathematical Journal},
    VOLUME = {67},
      YEAR = {2025},
    NUMBER = {2},
     PAGES = {163--184},
      ISSN = {0017-0895,1469-509X},
   MRCLASS = {14D05 (57K20)},
  MRNUMBER = {4883969},
       DOI = {10.1017/S0017089524000284},
       URL = {https://doi.org/10.1017/S0017089524000284},
}

@article {putmancutpaste,
    AUTHOR = {Putman, A.},
     TITLE = {Cutting and pasting in the {T}orelli group},
   JOURNAL = {Geom. Topol.},
  FJOURNAL = {Geometry \& Topology},
    VOLUME = {11},
      YEAR = {2007},
     PAGES = {829--865},
      ISSN = {1465-3060,1364-0380},
   MRCLASS = {57S05 (20F05 57M07 57N05)},
  MRNUMBER = {2302503},
MRREVIEWER = {Richard\ Weidmann},
       DOI = {10.2140/gt.2007.11.829},
       URL = {https://doi.org/10.2140/gt.2007.11.829},
}

@article {ishanpi1,
    AUTHOR = {Banerjee, I.},
     TITLE = {A {$\pi_1$} obstruction to having finite index monodromy and
              an unusual subgroup of infinite index in
              {$\text{Mod}(\Sigma_g)$}},
   JOURNAL = {Math. Ann.},
  FJOURNAL = {Mathematische Annalen},
    VOLUME = {392},
      YEAR = {2025},
    NUMBER = {4},
     PAGES = {5045--5064},
      ISSN = {0025-5831,1432-1807},
   MRCLASS = {14D05},
  MRNUMBER = {4958496},
       DOI = {10.1007/s00208-025-03213-7},
       URL = {https://doi.org/10.1007/s00208-025-03213-7},
}

@incollection {BSjetample,
    AUTHOR = {Beltrametti, M. C. and Sommese, A. J.},
     TITLE = {On {$k$}-jet ampleness},
 BOOKTITLE = {Complex analysis and geometry},
    SERIES = {Univ. Ser. Math.},
     PAGES = {355--376},
 PUBLISHER = {Plenum, New York},
      YEAR = {1993},
      ISBN = {0-306-44179-9},
   MRCLASS = {14E25 (14C20)},
  MRNUMBER = {1211891},
MRREVIEWER = {Gian\ Mario\ Besana},
}

@book {AGZV,
    AUTHOR = {Arnold, V. I. and Gusein-Zade, S. M. and Varchenko, A. N.},
     TITLE = {Singularities of differentiable maps. {V}olume 2},
    SERIES = {Modern Birkh\"auser Classics},
      NOTE = {Monodromy and asymptotics of integrals,
              Translated from the Russian by Hugh Porteous and revised by
              the authors and James Montaldi,
              Reprint of the 1988 translation},
 PUBLISHER = {Birkh\"auser/Springer, New York},
      YEAR = {2012},
     PAGES = {x+492},
      ISBN = {978-0-8176-8342-9},
   MRCLASS = {58K55 (32B15 32C05 32G10 32S40 58K10)},
  MRNUMBER = {2919697},
}

@article {acampo,
    AUTHOR = {A'Campo, N.},
     TITLE = {Le groupe de monodromie du d\'eploiement des singularit\'es
              isol\'ees de courbes planes. {I}},
   JOURNAL = {Math. Ann.},
  FJOURNAL = {Mathematische Annalen},
    VOLUME = {213},
      YEAR = {1975},
     PAGES = {1--32},
      ISSN = {0025-5831,1432-1807},
   MRCLASS = {32C40 (14D05)},
  MRNUMBER = {377108},
MRREVIEWER = {J.\ A.\ Morrow},
       DOI = {10.1007/BF01883883},
       URL = {https://doi.org/10.1007/BF01883883},
}

@article {nickpablo,
    AUTHOR = {Portilla Cuadrado, P. and Salter, N.},
     TITLE = {Vanishing cycles, plane curve singularities and framed mapping
              class groups},
   JOURNAL = {Geom. Topol.},
  FJOURNAL = {Geometry \& Topology},
    VOLUME = {25},
      YEAR = {2021},
    NUMBER = {6},
     PAGES = {3179--3228},
      ISSN = {1465-3060,1364-0380},
   MRCLASS = {14D05 (57R45)},
  MRNUMBER = {4347314},
MRREVIEWER = {Nicolas\ Dutertre},
       DOI = {10.2140/gt.2021.25.3179},
       URL = {https://doi.org/10.2140/gt.2021.25.3179},
}

@article {janssen2,
    AUTHOR = {Janssen, W. A. M.},
     TITLE = {Skew-symmetric vanishing lattices and their monodromy groups.
              {II}},
   JOURNAL = {Math. Ann.},
  FJOURNAL = {Mathematische Annalen},
    VOLUME = {272},
      YEAR = {1985},
    NUMBER = {1},
     PAGES = {17--22},
      ISSN = {0025-5831,1432-1807},
   MRCLASS = {11H06 (11E81 14D05)},
  MRNUMBER = {794087},
       DOI = {10.1007/BF01455924},
       URL = {https://doi-org.proxy.library.nd.edu/10.1007/BF01455924},
}

@article {janssen1,
    AUTHOR = {Janssen, W. A. M.},
     TITLE = {Skew-symmetric vanishing lattices and their monodromy groups},
   JOURNAL = {Math. Ann.},
  FJOURNAL = {Mathematische Annalen},
    VOLUME = {266},
      YEAR = {1983},
    NUMBER = {1},
     PAGES = {115--133},
      ISSN = {0025-5831,1432-1807},
   MRCLASS = {11H06 (11E57 11E88 14B05)},
  MRNUMBER = {722931},
MRREVIEWER = {\`E.\ Vinberg},
       DOI = {10.1007/BF01458708},
       URL = {https://doi-org.proxy.library.nd.edu/10.1007/BF01458708},
}

@article {ebeling,
    AUTHOR = {Ebeling, W.},
     TITLE = {An arithmetic characterisation of the symmetric monodromy
              groups of singularities},
   JOURNAL = {Invent. Math.},
  FJOURNAL = {Inventiones Mathematicae},
    VOLUME = {77},
      YEAR = {1984},
    NUMBER = {1},
     PAGES = {85--99},
      ISSN = {0020-9910,1432-1297},
   MRCLASS = {14B05 (11H06 14D05)},
  MRNUMBER = {751132},
MRREVIEWER = {Autorreferat},
       DOI = {10.1007/BF01389136},
       URL = {https://doi-org.proxy.library.nd.edu/10.1007/BF01389136},
}

@article {donaldsonlefschetz,
    AUTHOR = {Donaldson, S. K.},
     TITLE = {Lefschetz pencils on symplectic manifolds},
   JOURNAL = {J. Differential Geom.},
  FJOURNAL = {Journal of Differential Geometry},
    VOLUME = {53},
      YEAR = {1999},
    NUMBER = {2},
     PAGES = {205--236},
      ISSN = {0022-040X,1945-743X},
   MRCLASS = {53D35 (57R17)},
  MRNUMBER = {1802722},
MRREVIEWER = {Burak\ Ozbagci},
       URL = {http://projecteuclid.org.proxy.library.nd.edu/euclid.jdg/1214425535},
}

@incollection {aurouxsurvey,
    AUTHOR = {Auroux, D.},
     TITLE = {Mapping class group factorizations and symplectic 4-manifolds:
              some open problems},
 BOOKTITLE = {Problems on mapping class groups and related topics},
    SERIES = {Proc. Sympos. Pure Math.},
    VOLUME = {74},
     PAGES = {123--132},
 PUBLISHER = {Amer. Math. Soc., Providence, RI},
      YEAR = {2006},
      ISBN = {978-0-8218-3838-9; 0-8218-3838-5},
   MRCLASS = {53D35 (57M50 57R17)},
  MRNUMBER = {2264537},
MRREVIEWER = {Timothy\ Perutz},
       DOI = {10.1090/pspum/074/2264537},
       URL = {https://doi-org.proxy.library.nd.edu/10.1090/pspum/074/2264537},
}

@article {HY,
    AUTHOR = {Hirose, S. and Yasuhara, A.},
     TITLE = {Surfaces in 4-manifolds and their mapping class groups},
   JOURNAL = {Topology},
  FJOURNAL = {Topology. An International Journal of Mathematics},
    VOLUME = {47},
      YEAR = {2008},
    NUMBER = {1},
     PAGES = {41--50},
      ISSN = {0040-9383},
   MRCLASS = {57R52 (57M50 57N13 57Q45)},
  MRNUMBER = {2415773},
MRREVIEWER = {Torsten\ Asselmeyer-Maluga},
       DOI = {10.1016/j.top.2007.05.001},
       URL = {https://doi-org.proxy.library.nd.edu/10.1016/j.top.2007.05.001},
}

@article {hirose,
    AUTHOR = {Hirose, S.},
     TITLE = {Surfaces in the complex projective plane and their mapping
              class groups},
   JOURNAL = {Algebr. Geom. Topol.},
  FJOURNAL = {Algebraic \& Geometric Topology},
    VOLUME = {5},
      YEAR = {2005},
     PAGES = {577--613},
      ISSN = {1472-2747,1472-2739},
   MRCLASS = {57Q45 (20F38 57N05)},
  MRNUMBER = {2153115},
MRREVIEWER = {J.\ S.\ Birman},
       DOI = {10.2140/agt.2005.5.577},
       URL = {https://doi-org.proxy.library.nd.edu/10.2140/agt.2005.5.577},
}

@article {lonne,
    AUTHOR = {L\"onne, M.},
     TITLE = {Fundamental groups of projective discriminant complements},
   JOURNAL = {Duke Math. J.},
  FJOURNAL = {Duke Mathematical Journal},
    VOLUME = {150},
      YEAR = {2009},
    NUMBER = {2},
     PAGES = {357--405},
      ISSN = {0012-7094,1547-7398},
   MRCLASS = {14F35 (14D05 14J70)},
  MRNUMBER = {2569617},
MRREVIEWER = {Michael\ Friedman},
       DOI = {10.1215/00127094-2009-055},
       URL = {https://doi-org.proxy.library.nd.edu/10.1215/00127094-2009-055},
}

@article {kuno,
    AUTHOR = {Kuno, Y.},
     TITLE = {The mapping class group and the {M}eyer function for plane
              curves},
   JOURNAL = {Math. Ann.},
  FJOURNAL = {Mathematische Annalen},
    VOLUME = {342},
      YEAR = {2008},
    NUMBER = {4},
     PAGES = {923--949},
      ISSN = {0025-5831,1432-1807},
   MRCLASS = {57N13 (14D05)},
  MRNUMBER = {2443768},
MRREVIEWER = {Mustafa\ Korkmaz},
       DOI = {10.1007/s00208-008-0261-0},
       URL = {https://doi.org/10.1007/s00208-008-0261-0},
}

@article {monkernel,
    AUTHOR = {Salter, N.},
     TITLE = {Topological monodromy kernels for fundamental groups of discriminant complements},
   JOURNAL = {J. Topol. Anal.},
      YEAR = {to appear, 2025},
}

@misc{LS,
      title={Surfaces in 4-manifolds and extendible mapping classes}, 
      author={Lawande, S. and Saha, K.},
      year={2025},
      eprint={2502.17640},
      archivePrefix={arXiv},
      howpublished={ar{X}iv preprint, https://arxiv.org/abs/2502.17640},
      primaryClass={math.GT},
      url={https://arxiv.org/abs/2502.17640}, 
}

@article {hirose2,
    AUTHOR = {Hirose, S.},
     TITLE = {On diffeomorphisms over surfaces trivially embedded in the
              4-sphere},
   JOURNAL = {Algebr. Geom. Topol.},
  FJOURNAL = {Algebraic \& Geometric Topology},
    VOLUME = {2},
      YEAR = {2002},
     PAGES = {791--824},
      ISSN = {1472-2747,1472-2739},
   MRCLASS = {57N10 (20F38 57N05)},
  MRNUMBER = {1928177},
MRREVIEWER = {Athanase\ Papadopoulos},
       DOI = {10.2140/agt.2002.2.791},
       URL = {https://doi-org.proxy.library.nd.edu/10.2140/agt.2002.2.791},
}

@misc{hain,
      title={Mapping Class Groups of Simply Connected K\"ahler Manifolds}, 
      author={Hain, R.},
      year={2024},
      howpublished={ar{X}iv preprint, https://arxiv.org/abs/2304.01410},
      eprint={2304.01410},
      archivePrefix={arXiv},
      primaryClass={math.AG},
      url={https://arxiv.org/abs/2304.01410}, 
}

\end{document}